\documentclass[10pt,reqno]{amsart}
%%%%%%%%%%%%%%%%%%%%%%%%%%%%%%%%%%%%%

%%%%%%%%%%%%%%%%%%%%%%%%%%%%%%%%%%%%%

%--- Packages ---

\usepackage{amsmath,amsfonts,amsthm,amssymb,amsxtra,dsfont}
\usepackage{amsxtra, amssymb, mathrsfs, pstricks}
\usepackage[english]{babel}
\usepackage{amsmath,amsfonts,amstext,amsbsy,amsopn,amssymb,amsthm,amscd}
\usepackage{amsxtra,latexsym}
\usepackage{exscale}
\usepackage{graphicx,mathrsfs}
\usepackage{psfrag}
\usepackage{paralist,eucal,enumerate}
\usepackage{color,graphics}
%\usepackage[notref,notcite]{showkeys}

%%%%% COLORS

\definecolor{dred}{rgb}{.8,0,0}
\definecolor{ddmagenta}{rgb}{0.7,0,0.9}
\definecolor{ddcyan}{rgb}{0,0.2,1.0}
\definecolor{dblue}{rgb}{0,0,0.7}
\definecolor{ddorange}{rgb}{1,0.5,0}
\definecolor{ddgreen}{rgb}{0,0.4,0.4}
\definecolor{cgreen}{rgb}{0,0.6,0.3}
\definecolor{violet}{rgb}{0.4,0,0.9}

%--- Page structure ---

\setlength{\voffset}{-.7truein}
\setlength{\textheight}{8.8truein}
\setlength{\textwidth}{6.05truein}
\setlength{\hoffset}{-.7truein}

\addtolength{\hoffset}{-0,5cm}
\addtolength{\textwidth}{1,5cm}

%--- Theorem structure ---
\newtheorem{theorem}{Theorem}[section]
\newtheorem{proposition}[theorem]{Proposition}

\newtheorem{lemma}[theorem]{Lemma}
\newtheorem{corollary}[theorem]{Corollary}

\theoremstyle{definition}

\newtheorem{assumption}[theorem]{Assumption}
\newtheorem{definition}[theorem]{Definition}

\newtheorem{notation}[theorem]{Notation}

\theoremstyle{remark}
\newtheorem{remark}[theorem]{\bf Remark}
%--- Settings ---

\numberwithin{equation}{section}

%%%%%%%% MACROS

\def\trait #1 #2 #3 {\vrule width #1pt height #2pt depth #3pt}
\def\fin{
    \trait .3 5 0
    \trait 5 .3 0
    \kern-5pt
    \trait 5 5 -4.7
    \trait 0.3 5 0
\medskip}

%Mengen
\newcommand{\Set}[2]{\{\,#1\,:\,#2\,\}}

\newcommand{\Bset}[2]{\Big\{\,#1\,:\,#2\,\Big\}}
%Tabellen
  %raisebox fuer
                                                    %Tabellen(S.101)

%negative Einrueckungen

%sonstiges

\newcommand{\QED}{\mbox{}\hfill\rule{5pt}{5pt}\medskip\par}
%\newenvironment{proof}{\PROOF}{\QED}

% Blackboardbold
  \def\bbC{{\mathbb C}}

 \def\bbN{{\mathbb N}} 
  \def\bbR{{\mathbb R}}

\newcommand{\R}{\bbR}
\newcommand{\N}{\bbN}
% Kalligraphische
  
\def\calD{{\mathcal D}} \def\calE{{\mathcal E}} 
  \def\calI{{\mathcal I}}
  \def\calL{{\mathcal L}}
  \def\calO{{\mathcal O}}
  \def\calR{{\mathcal R}}
 \def\calT{{\mathcal T}} \def\calU{{\mathcal U}}
  
 \def\calZ{{\mathcal Z}}
% mathromam

   \def\rmC{{\mathrm C}}
 \def\rmD{{\mathrm D}}  
   
 \def\rmJ{{\mathrm J}}

 \def\rmS{{\mathrm S}}

% fette math

 % griechisch

%%% macros
\newcommand{\sd}{3}
\newcommand{\eps}{\varepsilon}
\newcommand{\umin}{u_{\mathrm{min}}}
\newcommand{\foraa}{\text{for a.a.}}
\newcommand{\il}{q}
\newcommand{\teta}{\vartheta}
\newcommand{\dx}{\, \mathrm{d}x}
\newcommand{\dd}{\, \mathrm{d}}
\newcommand{\abs}[1]{|#1|}
\newcommand{\Lin}{\mathrm{Lin}}
\newcommand{\epsi}{\epsilon}
\newcommand{\pairing}[4]{ \sideset{_{ #1 }}{_{ #2 }}  {\mathop{\langle #3 , #4
\rangle}}}
\newcommand{\norm}[1]{\| #1\|}
\newcommand{\Div}{\mathrm{div}}

\newcommand{\Cost}[4]{\Delta_{#1}(#2; #3, #4)}
\newcommand{\Costname}[1]{\Delta_{#1}}
\newcommand{\Costn}[5]{\Delta_{#1}^{#2}(#3; #4, #5)}
\newcommand{\Var}[4]{\mathrm{Var}_{#1}(#2; [#3, #4])}
\newcommand{\Varname}[1]{\mathrm{Var}_{#1}}
\newcommand{\JVar}[4]{\mathrm{Jump}_{#1}(#2; [#3, #4])}
\newcommand{\JVarn}[5]{\mathrm{Jump}_{#1}^{#2}(#3; [#4, #5])}
\newcommand{\Jvarname}[1]{\mathrm{Jump}_{#1}}
\newcommand{\pVar}[4]{\text{\sl Var}_{#1}(#2; [#3, #4])}
\newcommand{\pVarname}[1]{\text{\sl Var}_{#1}}
\newcommand{\pVarn}[5]{\text{\sl Var}_{#1}^{#2}(#3; [#4, #5])}
\newcommand{\pVarnamen}[2]{\text{\sl Var}_{#1}^{#2}}
\newcommand{\mixed}[1]{\mathrm{M}_{#1}}
\newcommand{\AC}{\mathrm{AC}}
\newcommand{\BV}{\mathrm{BV}}
\newcommand{\ene}[2]{\calI(#1,#2)}
\newcommand{\tildene}[2]{\widetilde{\calI}(#1,#2)}
\newcommand{\sft}{\mathsf{t}}
\newcommand{\sfs}{\mathsf{s}}
\newcommand{\sfr}{\mathsf{r}}
\newcommand{\sfz}{\mathsf{z}}
\newcommand{\sfteta}{\mathsf{\theta}}

%%% MACROS FOR DISCRETE+VISCOUS
\newcommand{\pwc}[3]{\overline{#1}_{#2,#3}}
\newcommand{\pwl}[3]{\widehat{#1}_{#2,#3}}

%%%%%%%%%% ENVIRONMENTS FOR COMMENTS

%\newcommand{\RRR}{\color{magenta} \textsf{R}: }

%\newcommand{\RRN}{\color{red}}

 % \textsf{R}:}

%\newcommand{\CCC}{\color{black}}

\newcommand{\Argmin}{\mathrm{Argmin}}
\newcommand{\wt}{\widetilde}
\newcommand{\weakto}{\rightharpoonup}
\newcommand{\weaksto}{\rightharpoonup^*}
\newcommand{\Dtau}[2]{\triangle_{#1}^\tau(#2)}
\newcommand{\DD}[3]{\begin{array}[t]{c}#1\vspace*{-1em}\\_{#2}\vspace*{-.5em}\\_{#3}\end{array}}
\newcommand{\DDshort}[2]{\begin{array}[t]{c}#1\vspace*{-1em}\\_{#2}\end{array}}
\newcommand{\ddd}[3]{\DD{\begin{array}[t]{c}\underbrace{#1}\vspace*{.6em}\end{array}}{\text{\footnotesize #2}}{\text{\footnotesize #3}}}
\newcommand{\dddshort}[2]{\DDshort{\begin{array}[t]{c}\underbrace{#1}\vspace*{.6em}\end{array}}{\text{\footnotesize #2}}}
\newcommand{\ti}[3]{{#1}_{#2}^{#3}}
\newcommand{\bn}[1]{\overline{#1}_{\tau,\nu}}
\newcommand{\ubn}[1]{\underline{#1}_{\tau,\nu}}
\newcommand{\hn}[1]{\widehat{#1}_{\tau,\nu}}
\newcommand{\hnp}[1]{\widehat{#1}'_{\tau,\nu}}
\newcommand{\segm}[2]{(1{-}\theta){#1}+\theta{#2}}
\newcommand{\segmh}[2]{\frac{(1{-}\theta)}2{#1}+\frac\theta 2{#2}}

%%%%%%%%%%%%%%%%%%%%%%%%%%%%%%%%%%%%%%%%%%%%%%%%%%%%%%%%%%%%%%%%%%%%%%%%%%
%%%%%%%%%%%%%%%%%%%%%%%%%%%%%%%%%%%%%%%%%%%%%%%%%%%%%%%%%%%%%%%%%%%%%%%%%%

\begin{document}

\title[Balanced Viscosity solutions for damage]
{Balanced Viscosity solutions \\ to a rate-independent  system for damage}

\author{Dorothee Knees}
\address{Dorothee Knees, Institute of Mathematics, University of Kassel,
Heinrich-Plett Str.~40, 34132 Kassel, Germany. Phone:
+49 0561 8044355}
\email{dknees@uni-kassel.de}

\author{Riccarda Rossi}
\address{Riccarda Rossi, Department DIMI,
University of Brescia, Via Branze 38, 25133 Brescia, Italy. Phone: +39 030 3715721}
\email{riccarda.rossi@unibs.it}

\author{Chiara Zanini}
\address{Chiara Zanini, Department of Mathematical Sciences ``G. L. Lagrange'', Politecnico di
Torino, Corso Duca degli Abruzzi 24, 10129 Torino, Italy. Phone: +39 011 0907510}
\email{chiara.zanini@polito.it}

\maketitle

\date{April 26, 2017}

\begin{abstract}
This article is the third one in a series of papers by the authors on 
vanishing-viscosity solutions to rate-independent damage systems. While in the 
first two papers \cite{krz,KRZ2} the assumptions on the spatial domain $\Omega$ 
were kept as general as possible (i.e.\ nonsmooth domain with mixed boundary conditions), we 
assume here that $\partial\Omega$ is smooth and that the type of boundary 
conditions does not change. This smoother setting allows us to derive enhanced regularity spatial properties both for the displacement and damage fields.
Thus, we are in a position to work with a stronger solution notion at the level of the viscous approximating system. The vanishing-viscosity analysis then leads us to obtain the existence of a stronger solution concept for the rate-independent limit system.
Furthermore, in comparison to \cite{krz,KRZ2}, in our vanishing-viscosity analysis we do not 
switch to an artificial arc-length parameterization of the trajectories but we 
stay with the true physical time. The resulting concept of Balanced 
Viscosity solution to the rate-independent damage system thus encodes a more explicit characterization of the system behavior at time discontinuities of the solution.
\end{abstract}

%%%%%%%%%%%%%%%%%%%%%%%%%%%%%%%%%%%%%%

\section{\bf Introduction}

We consider in a three-dimensional spatial domain $\Omega$ the rate-independent system for 
damage evolution 
\begin{subequations}
\label{dndia}
\begin{alignat}{3}
 \label{dndia-a}
&-\mathrm{div}\big(g(z) \bbC \eps(u{+}u_D)\big)= \ell   &\qquad & 
\text{ in } 
\Omega \times (0,T),
\\
\label{dndia-b}
&    \partial \calR_1(z_t) + A_q z + f'(z) + \tfrac12 g'(z)
   \bbC \varepsilon(u+u_D) : \varepsilon(u+u_D) \ni 0 
  & & \text{ in } \Omega \times (0,T),
\end{alignat}
\end{subequations}
with
 $q > \sd$, 
$A_q$ the $q$-Laplacian type operator
\[
A_q z= -
\mathrm{div} ( (1+|\nabla z|^{2})^{(\il/2) -1} \nabla z)\,,
\] and the $1$-homogeneous dissipation potential 
\[
\calR_1 (v) = \int_\Omega \mathrm{R}_1(v) \dd x 
\qquad \text{with }   \mathrm{R}_1(v)  = \begin{cases} 
|v| & \text{ if } v \leq 0,
\\
\infty & \text{ otherwise}.
\end{cases}
\]
Here, $u:[0,T]\times \Omega \to \R^3$ denotes the displacement field and 
$z:[0,T]\times \Omega \to \R$ characterizes the time and space-dependent damage 
state in the body $\Omega\subset\R^3$. The natural state spaces for $u$ and $z$ 
are $\calU=H_0^1(\Omega;\R^3)$ and $\calZ= W^{1,q}(\Omega)$. 
The energy potential is of the form 
\begin{align*}
 \calE(t,u,z)=\int_\Omega g(z) 
\frac12\bbC(x)\varepsilon(u+u_D(t)):\varepsilon(u+u_D(t)) + f(z) + 
\frac{1}{q}(1+\abs{\nabla z}^2)^\frac{q}{2}\dx - \langle \ell(t),u\rangle, 
\end{align*}
where $\varepsilon(w)=\frac12(\nabla w + \nabla w^T)$ ($w\in \calU$) is the 
strain tensor and $u_D$ denotes the Dirichlet datum. 
Since the underlying energy $\calE(t,\cdot,\cdot)$ in general is nonconvex and 
since $\calR_1$ is of linear growth, solutions to \eqref{dndia} 
might be discontinuous in time. In order to select reasonable jump 
discontinuities we adopt here the vanishing-viscosity approach
to the weak solvability of rate-independent systems, pioneered in \cite{efendiev-mielke} 
and developed both  for abstract rate-independent systems, cf.\ e.g.\ 
\cite{MRS10a, mie-cime, MRS16},  and for applied problems in fracture and 
plasticity, see for instance \cite{KnMiZa07?ILMC, DalDesSol11, BabFraMor12, 
CL17}.  In the context of damage, in addition to the previously mentioned 
\cite{krz,KRZ2}  we quote the recent \cite{CL16, Negri16}. Let us 
stress that, in all of these papers the vanishing-viscosity analysis is 
performed by suitably adapting the original reparameterization technique of  
\cite{efendiev-mielke}. In 
\cite{KneNeg17}, a time-incremental alternate minimization scheme for a 
damage model of Ambrosio-Tortoreli type  (without viscous regularization) was 
investigated. It turned out that in the time-continuous limit this procedure 
results in a class of solutions that is closely related (but not identical) to 
those obtained by vanishing viscosity limits. Also here, the   
reparameterization technique of \cite{efendiev-mielke} was applied.

\par
Hence,
we approximate the rate-independent flow 
rule for the damage parameter by  its viscous  regularization, and thus address 
the \emph{rate-dependent} system
\begin{subequations}
\label{dndia-eps}
\begin{alignat}{3}
\label{dndia-eps-a}
&-\mathrm{div}(g(z) \bbC \eps(u{+}u_D)=\ell   &\qquad& \text{ in } 
\Omega 
\times (0,T),
\\
&\label{dndia-eps-b}
    \partial \calR_1(z_t) +\epsilon z_t + A_q z + f'(z) + \tfrac12 g'(z)
   \bbC \varepsilon(u+u_D) : \varepsilon(u+u_D) \ni 0  
   && \text{ in } \Omega \times (0,T),
\end{alignat}
\end{subequations}
where  the 
underlying regularized dissipation potential is given by  
\begin{equation}
\label{Reps-intro}
\calR_\epsi:L^2(\Omega) \to [0,+\infty] \text{ given by } 
\calR_\epsi(v): = \calR_1 (v)  + \frac{\epsi}2 \|v\|_{L^2(\Omega)}^2\,,
\end{equation}
and $\epsi>0$ is the viscosity parameter. 
The goal is to perform the limit passage as $\epsi\downarrow 0$ from \eqref{dndia-eps} to 
\eqref{dndia},  without
switching  to an artificial arc-length reparameterization of the trajectories, 
 but
\emph{staying with the true physical time}. 
 The basics for this approach to the construction 
% reparameterizing time 
of  the resulting concept of \emph{Balanced Viscosity}   ($\BV$)
solutions to the limit rate-independent system were set in \cite{MRS10a, MRS16}
for abstract rate-independent systems in finite-dimensional and 
infinite-dimensional Banach spaces, respectively.  A notable feature of this 
vanishing-viscosity technique  is that it allows for a \emph{direct} limit 
passage from the \emph{time discrete} version of  \eqref{dndia-eps} to 
\eqref{dndia}, as the viscosity parameter $\epsi$ and the time discretization 
step $\tau$ \emph{simultaneously} tend to zero   with $\frac{\epsi}{\tau}
\to \infty$.  This provides a \emph{constructive approach} to Balanced Viscosity 
solutions of system \eqref{dndia} which could also be further advanced  from a 
numerical viewpoint.

While the techniques applied here  have been developed in an abstract context in 
 \cite{MRS16}, 
let us emphasize that the existence and convergence results therein,
%MieRoSav, JEMS 2016 
 (in particular \cite[Thms.\ 3.11 and 3.12]{MRS16}), are not directly 
applicable to the present damage system.  The main point is that, in contrast  
 to \cite{MRS16} in our setting the dissipation potential $\calR_1$ may take the value $+\infty$ to enforce the unidirectionality of the damaging process. This causes additional technical difficulties for the 
derivation of uniform a priori bounds. Moreover, the definition of 
 $\BV$ solution has  to be carefully tailored to accomodate this irreversibility constraint.  
 Further analytical difficulties occur due to the presence of the quadratic term on the right-hand side of the differential inclusion \eqref{dndia-b}, which at a first glance 
belongs to $L^1(\Omega)$, only. This necessitates a careful study of the 
spatial regularity properties of the displacement and the damage fields, which was already initiated in \cite{krz, KRZ2}.

The main results of this paper are the following: 
\begin{description}
\item[
\textbf{Regularity}] 
Thanks to the assumed smoothness of $\partial \Omega$ (made precise in Section 
\ref{ss:2.1}) and the assumption $q>3$ on the $q$-Laplacian regularization in \eqref{dndia-b}, which ensures enough spatial regularity for the coefficient $g(z)$ of the elasticity operator in \eqref{dndia-a}, solutions $u=u(t,z)$ of \eqref{dndia-a} 
 belong to $H^2(\Omega)\cap W^{1,p}(\Omega)$ for every $p\geq 1$ if the external data 
$\ell,u_D$ are smooth enough. We derive explicit bounds for the corresponding 
norms of $u$  in terms of $z$ by adapting  arguments from \cite{BabMil12} 
%BAbadjan millot 
to our situation.  
These results improve the integrability properties of the quadratic 
term in \eqref{dndia-b} and in \eqref{dndia-eps-b} and allow us to test (a 
regularized version of) 
\eqref{dndia-eps-b} by $\partial_t A_q z$, which ultimately guarantees that 
$\rmD_z\calE(t,u(t,z),z)\in L^2(\Omega)$, again with uniform bounds, see 
Section \ref{ss:3.1}. Let us mention that, in the case of the standard  Laplacian regularization (i.e.\ $q=2$), this regularity estimate was first proposed in   \cite{bfl}
for doubly nonlinear differential inclusions in phase change modeling. 
\par
Based on the improved integrability property of $\rmD_z\calE(t,u(t,z),z)$ we 
may consider subdifferentials and convex conjugate functions of the 
dissipation potentials with respect to the $L^2(\Omega)$ duality, instead of 
 the $\calZ-\calZ^*$ duality. Furthermore, based on these results we derive 
a (generalized) $\lambda$-convexity property of the energy functional, 
(cf.\ Corollary \ref{coro-fre}), and a chain rule identity (cf.\ Lemma \ref{l:ch-rule}).  
The latter is essential for the existence proof of  $\BV$ solutions for 
the damage system.
\par
 This chain rule identity was not available in the earlier 
\cite{KRZ2}, which still addressed  the case of a $q$-Laplacian regularization in the damage flow rule, whereas in \cite{krz} some technical difficulties were smeared out by taking as regularizing operator a (less physical) fractional Laplacian. 
 Hence, in \cite{KRZ2} we had to deal with a 
weaker notion of vanishing-viscosity solution compared to the present paper. In 
particular, in \cite{KRZ2} it could  be shown that the vanishing-viscosity limits satisfied an energy-dissipation inequality but, due to the lack of 
an appropriate chain rule this could not be improved to an energy-dissipation 
identity. 
\item[
\textbf{Existence and approximation of $\BV$ solutions}]
The concept of $\BV$ solution to the rate-independent system \eqref{dndia}
consists of 
a (local) stability condition and of an energy-dissipation balance that encodes the (possible) onset of viscous behavior in the jump regime. More precisely, 
let $u(t,z)\in \calU$ be the unique solution of \eqref{dndia-a} and 
$\calI(t,z):= \calE(t,u(t,z),z)$ the reduced energy. We call 
 a curve $z\in L^\infty(0,T;\calZ)\cap \BV([0,T]; 
L^2(\Omega))$ with $\rmD_z\calI(\cdot,z(\cdot)) \in L^\infty(0,T;L^2(\Omega))$  a \emph{Balanced Viscosity} solution to  \eqref{dndia} if $z$ satifies 
 the local stability \eqref{intro-locstab} and the energy-dissipation 
balance \eqref{intro-endi}
\begin{align}
\label{intro-locstab}
 \tag{$\mathrm{S}_\text{\rm loc}$}
 -\rmD_z\calI(t,z(t))\in \partial\calR_1(0)&\quad \text{ for all }t\in 
[0,T]\backslash\rmJ_z, 
 \\
 \label{intro-endi}
 \tag{$\mathrm{ED}$}
 \pVar{\mathfrak{f}}z{0}{t}   + \calI(t,z(t))=\calI(0,z(0)) + 
\int_0^t \partial_t\calI(r,z(r))\dd r&\quad\text{ for all }t\in [0,T],
\end{align}
where  $\rmJ_z$ denotes the countable jump set of 
$z$.  
The quantity $\pVar{\mathfrak{f}}{\cdot} {0}{t}$ is a total variation 
functional that encompasses both the dissipation, %along the solution path 
with respect 
to the $1$-homogeneous potential $\calR_1$, in continuous parts of the solution, as well as the dissipation 
at jump discontinuities. At jump discontinuities it reflects the viscous 
regularization term from \eqref{dndia-eps-b}. While referring to Section~\ref{ss:5.1} for its precise definition   (and to \cite{MRS16} for more comments on it), we may mention here its structure at a jump from $z_-$ to 
$z_+$ for $t\in \rmJ_z$. Indeed, the \emph{jump contribution} $ \Cost{\mathfrak{f}}t{z_-}{z_+}$ to $ \pVar{\mathfrak{f}}z{0}{t}  $ is given by  
\begin{gather}
\label{intro.cost} 
 \Cost{\mathfrak{f}}t{z_-}{z_+}: = \inf_{ \vartheta\in 
\calT_{t}^\varrho(z_-,z_+)}
    \int_{0}^{1}
\mathfrak{f}_t(\teta(r),\teta'(r))\dd r\,,\\
\mathfrak{f}_t(\teta,\teta')= \calR_1(\teta') + \norm{\teta'}_{L^2(\Omega)} 
\inf_{\xi\in \partial\calR_1(0)} \norm{-\rmD_z\calI(t,\teta) - 
\xi}_{L^2(\Omega)}\,,
\end{gather} 
where $\calT_{t}^\varrho(z_-,z_+)$ denotes the set of admissible transition 
curves connecting $z_-$ with $z_+$ and satisfying certain properties. 
%bounds depending on 
%$\varrho>0$. 
\par
The appearance of the term from \eqref{intro.cost}
in the vanishing-viscosity limit of \eqref{dndia-eps} 
can be motivated by a 
comparison with the energy-dissipation balance that is valid for solutions of 
the viscous system \eqref{dndia-eps}. In fact, we will show in Theorem~\ref{thm:exist} that solutions to \eqref{dndia-eps} exist and that 
they satisfy for all $t\in [0,T]$ the relation 
\begin{align}
\label{intro:evisc} 
 \int_0^t \calR_\epsi (\dot z_\epsi) + \calR^*_\epsi(-\rmD_z\calI(r, 
z_\epsi(r)))\dd r + \calI(t, z_\epsi(t))=\calI(0,z(0)) + \int_0^t 
\partial_t\calI(r,z_\epsi(r))\dd r
\end{align}
with $\calR_\epsi^*(\eta)= \frac{1}{2\epsi} \inf_{\xi\in \partial\calR_1(0)} 
\norm{\eta - \xi}_{L^2(\Omega)}^2$ provided that $\eta\in L^2(\Omega)$. It 
turns out that 
\[
 \mathfrak{f}_t(t,z,v)=\inf_{\eps>0}\left(\calR_\epsi(v) 
+\calR_\epsi^*(-\rmD_z\calI(t,z))\right). 
\]
The challenge here is to perform a sharp limit analysis for $\epsi\to 0$ in 
order to show that  
the dissipation integral in \eqref{intro:evisc} tends to 
 $\pVar{\mathfrak{f}}z{0}{t}$  as $\epsi\to 0$.

The \underline{\bf main result of this paper}, Theorem~\ref{thm:van-visc-eps-tau},
states the existence of  Balanced Viscosity solutions to the damage 
system \eqref{dndia} under suitable assumptions on the data $z_0,u_D$ and 
$\ell$. 
They are obtained from a vanishing-viscosity analysis of the 
time 
discretized version of the viscous system \eqref{dndia-eps} as the time step 
size $\tau$, the viscosity parameter $\epsi$ and the ratio $\tau/\epsi$ tend 
to zero. The convergence of discrete solutions of corresponding 
numerical schemes to $\BV$ solutions is an immediate consequence. Let us stress that, with the techniques from \cite{MRS16} we could prove the existence of $\BV$ solutions also by  taking the vanishing-viscosity analysis of the \emph{time-continuous} system in  \eqref{dndia-eps}, as standardly done in works on the  vanishing-viscosity approach to rate-independent systems. Here we have opted for this simultaneous limit passage to highlight the constructive character of this approach. 
\end{description}
The paper is organized as follows: In \underline{Section~\ref{s:2}} we collect and prove 
the basic regularity and differentiability properties of the reduced energy 
$\calI$ and prove the chain rule identity. Some of the arguments are taken  
from the earlier paper \cite{KRZ2} but are adapted to the enhanced smoothness 
assumptions on the boundary $\partial\Omega$. In \underline{Section~\ref{s:3}} we study a 
time-discrete version of the viscous damage system \eqref{dndia-eps}, derive 
the necessary a priori estimates and provide an energy-dissipation inequality 
for suitable interpolants of the time incremental solutions. The main part of 
Section~\ref{s:3} is devoted to proving that $A_q z_k\in L^2(\Omega)$ for time 
incremental  solutions $z_k$. 
In \underline{Section~\ref{s:4}} we shortly address the existence of viscous solutions to 
the system \eqref{dndia-eps}. The main focus of the paper lies on the 
analysis of the  vanishing-viscosity limit as both the viscosity parameter and 
the time step size tend to zero simultaneously (Sections~\ref{s:5} \& \ref{ss:5.3}). The notion 
of $\BV$ solutions is introduced and explained at length 
in \underline{Section~\ref{s:5}}, where also the main 
existence theorem is formulated and where further properties of $\BV$ solutions 
are discussed. The corresponding proofs are collected in \underline{Section~\ref{ss:5.3}}. A short \underline{Appendix} collects some elliptic regularity results that are key for our analysis. 
\medskip

\noindent 
We conclude by fixing some notation that will be used throughout the paper. 
\begin{notation}
\upshape Throughout the paper, for a given Banach space $X$,  we will  by
$\|\cdot \|_X$ denote its norm; in the case of product spaces $X\times \ldots \times X$,  we will often write $\|\cdot\|_X$ in place of $\| \cdot \|_{X\times \ldots \times X}$. We will denote by 
 $\pairing{}{X}{\cdot}{\cdot}$ the duality pairing between $X^*$ and $X$, using the symbol $(\cdot, \cdot)_X$ for the scalar product in $X$, if $X$ is a Hilbert space.
\par
 We will  denote most of the positive constants
occurring in the calculations, and depending on known quantities,  by the 
symbols  $c,\, c', \, C, \, C', \ldots$, 
whose meaning may vary even within the same line.
Furthermore, the symbols $I_i,$
 $i=0,1,\ldots,$
 will be used as  abbreviations  for  several integral terms 
appearing   in the various
 estimates: we warn the reader that we will not be self-consistent with the numbering, so that, for instance,
$I_1$ will appear several times with different meanings.
\end{notation}

\section{\bf Preliminaries  and properties of the reduced energy}  
\label{s:2}
\noindent
 We start by collecting our standing assumptions on the reference domain 
$\Omega$ and on the energy functional $\calE$ in Section~\ref{ss:2.1}. Combining 
these requirements, in Sec.~\ref{ss:2.2} we will obtain two regularity results 
for the Euler-Lagrange equation associated with the minimization of the elastic 
energy. In Sec.~\ref{ss:2.3}, such results will have a pivotal role in deriving 
a series of properties of the reduced energy $\calI$, at the core of our 
subsequent analysis. 
\subsection{ Setup}
\label{ss:2.1}
\noindent
Throughout the paper, we shall suppose that 
\begin{assumption}[Regularity of the  domain] 
\label{ass:domain}
{\sl  $\Omega \subset \R^\sd $  is a bounded $\mathrm{C}^{1,1}$-domain
with Dirichlet boundary $\Gamma_D = \partial\Omega$.} 
%\end{gathered}
%\end{align}
\end{assumption}
\noindent
From now on, we shall denote the state spaces for the variables $u$ and $z$ by
\[
\calU: =  H^1_{0}(\Omega;\R^\sd),  \qquad \calZ : = W^{1,q}(\Omega) \quad \text{with } q>\sd.
\]
 We will denote by 
 \[
\text{ $W^{-1,p}(\Omega)$ the dual space of 
$W^{1,p'}_0(\Omega)$ with $\frac1p + \frac{1}{p'}=1$.}
\]
For later use, we recall here two crucial properties of the elliptic operator 
$A_q$ 
holding for all $z_1,\, z_2,\, w \in \calZ$: 
\begin{align}
&
\label{e2.22}
 \langle A_q z_1 - A_q z_2, z_1-z_2\rangle_{\calZ}
\geq c_q\int_\Omega(1+\abs{\nabla z_1}^2 + \abs{\nabla
z_2}^2)^\frac{q-2}{2}\abs{\nabla (z_1-z_2)}^2\dx,
\\
&
\label{A7-D}
\abs{ \pairing{}{\calZ}{A_q z_1 - A_q z_2}{w}} \leq
 c_q' \int_\Omega(1+\abs{\nabla z_1}^2 + \abs{\nabla
z_2}^2)^{(q{-}2)/2} \abs{\nabla (z_1-z_2)}\abs{\nabla w}\,\dd x.
 \end{align}
 These inequalities rely on the corresponding estimates for the 
 function $G_q: \R^3 \to \R$ defined by $G_q(A): = \tfrac1q  
(1{+}|A|^2)^{q/2}$ and its gradient. In particular the following monotonicity 
estimate is valid 
\begin{align}
\label{eq.monGq}
 (\nabla G_q(A) {-} \nabla G_q(B)) \cdot (A{-}B) \geq c_q (1{+}|A|^2 
{+}|B|^2)^{(q-2)/2} |A{-}B|^2  \qquad \text{for all }A,\, B \in \R^3
 \end{align} 
 with $c_q>0$ the same constant as in  
\eqref{e2.22}, which is in fact a consequence of the estimates 
 provided in \cite[Lemma 8.3]{ana:Giu94}. 
 
\par
The 
 energy functional  $\calE: [0,T]\times \calU \times \calZ \to \R$  consists of two contributions. The first one,
$\calI_1$, 
 only depends on the damage variable. The second one,
 $\calE_2 = \calE_2(t,u,z)$,
is given by the sum of an elastic energy of the type
$\int_\Omega g(z)W(\varepsilon(x,u + u_D(t)))\dx
$ with  
 $u_D$ a Dirichlet datum,   
and of the external loading term.
\begin{assumption}[The energy functional]
\label{assumption:energy}
{\sl We consider 
\[
\calI_1: \calZ \to \R \ \text{ defined by } \
\calI_1(z):=\calI_\il (z)+ \int_\Omega f(z)\dx \  \text{ with }
\calI_\il (z):= \frac 1{\il} \int_\Omega
(1+\abs{\nabla z}^2)^{\frac{\il}{2}}\,\dd x, \  q>\sd,
%\end{equation}
\]
and $f$ fulfilling
\begin{equation}
\label{ass-eff} f\in \rmC^2(\R)  \quad  \text{ and } \quad  \exists\,
K_1,\, K_2 >0 \quad  \forall\, x \in \R : \quad 
\ f(x)\geq K_1\abs{x} - K_2.
\end{equation} 
As for $\calE_2$,
linearly elastic materials are considered with an elastic energy
density 
\[
W(x,\eta)= \frac{1}{2} \bbC(x) \eta : \eta \quad  \text{ for $\eta \in
\R^{\sd\times \sd}_\text{sym}$ and almost every $x\in \Omega$.}
\]
Hereafter, we shall suppose for the elasticity tensor 
that
\begin{subequations}
\label{elast-tensor}
\begin{align}
\label{elast-1} &
 \bbC \in
 \rmC_\mathrm{lip}^0
(\overline{\Omega};  \Lin(\R^{\sd\times \sd}_\text{sym},\R^{\sd\times
    \sd}_\text{sym}))
\text{ with } \bbC(x)\xi_1:\xi_2= \bbC(x)\xi_2:\xi_1 \text{ for
  all }x\in \Omega,\xi_i\in \R^{\sd\times
    \sd}_\text{sym}, 
\\ &
\label{elast-2} \exists\, \gamma_0>0 \quad  \text{for all }\xi\in
\R^{\sd\times \sd}_\text{sym} \text{ and almost all }x\in \Omega:  \ \
\bbC(x)\xi \colon \xi\geq \gamma_0\abs{\xi}^2.
\end{align}
\end{subequations}
 Let $g:\R\rightarrow \R$ be a further constitutive function 
 such that}
  \begin{equation}
 \label{new-g}
 g\in \mathrm{C}^{2}(\R)  \text{  with } g' , \, g{''} 
 \in L^\infty(\R), \ \text{ and } \exists\, \gamma_1, \, \gamma_2>0  \
\forall\, z \in \R\, : \
\gamma_1 \leq g(z) \leq \gamma_2.
 \end{equation}
\end{assumption}
\noindent
 Then, we
 take the
elastic energy
%\begin{equation}
%\label{elastic-energy}
\[
\begin{aligned}
\calE_2: [0,T]\times \calU \times \calZ \to \R  \ \text{ defined by
} \ \calE_2(t,u,z):=  \int_\Omega g(z)W(x,\varepsilon(u + u_D(t)))\dx
-\pairing{}{\calU}{\ell(t)}{u}
\end{aligned}
%\end{equation}
\]
 where
$\varepsilon(u)=\frac{1}{2}(\nabla u + \nabla u^T)$ is the
symmetrized strain tensor and $\ell\in \rmC^0([0,T],\calU^*)$ an external loading. 
 Further requirements on $\ell$ and $u_D$ will be specified in Assumption 
\ref{ass:load} ahead.  
For $u\in \calU$ and $z\in \calZ$ the
stored energy is then defined by
\begin{align}
\label{stored-energy}
\calE(t,u,z) : = \calI_1(z) +\calE_2(t,u,z).
\end{align}
%%%%%%
%%%%%%
%%%%%
\par
Minimizing the functional
{\color{black} $\calE$} with respect to the displacements we obtain
the \emph{reduced energy}
\begin{equation}
\begin{aligned}
\label{reduced-energy}  \calI:[0,T]\times\calZ\rightarrow \R \
\text{ given by } \ \calI(t,z):= \calI_1(z) + \calI_2(t,z)
\text{ with }  \calI_2(t,z):=\inf \{\calE_2(t,v,z)\, : \ v\in
\calU\}.
\end{aligned}
\end{equation}
%%%%	
\subsection{ Preliminary regularity results}
\label{ss:2.2}
 We focus on the regularity properties of the operator
$ L_{g(z)}:H^1_{0}(\Omega;\R^\sd)\rightarrow
W^{-1,2}(\Omega;\R^\sd)$  
associated with the 
following bilinear form describing linear
elasticity, i.e.,
\begin{align}
\langle  L_{g(z)} u ,v\rangle:= \int_\Omega g(z)\bbC \varepsilon(u):
\varepsilon(v)\dx \quad \text{for all } u,v\in H^1_{0}(\Omega;\R^\sd),
\end{align}
where $\bbC$ is  from \eqref{elast-tensor}, $g$ from \eqref{new-g}, and $z $ is 
a fixed element in  $\calZ = W^{1,q}(\Omega)$, with $q>\sd$. 
 Our first result extends 
\cite[Lemma 2.3]{KRZ2} to a wider range of exponents, cf.\ Remark 
\ref{rmk:better-regularity} below.  
\begin{lemma}%[Lemma 2.3, \cite{KRZ2}]
\label{l:reg-KRZ2}
 Under Assumption \ref{ass:domain}, let $\bbC$ and $g$ comply with 
 \eqref{elast-tensor} and  \eqref{new-g}, respectively. Then,
 there  holds
\begin{enumerate}
 \item[(a)] For every $p>1$ and $z\in W^{1,q}(\Omega)$ the  operator $L_{g(z)}: 
W^{1,p}_0(\Omega)\to W^{-1,p}(\Omega) $ is a 
 topological isomorphism.
 \item[(b)] Uniform estimate: For every  $p_*> 2$ there exists
  a constant $c_{q,p_*}>0$ 
such  that for all $z\in W^{1,q}(\Omega)$ and $p\in [p_*',p_*]$ it holds
 \begin{align}
\label{crucial-reg-estimate}
 \norm{L_{g(z)}^{-1}}_{W^{-1,p}(\Omega;\R^\sd) \rightarrow 
W^{1,p}_{0}(\Omega;\R^\sd)}  \leq c_{q,p_*}
(1 + \norm{\nabla
z}_{L^q(\Omega)})^{\hat{k}_*  \frac{p_*\abs{p-2}}{p(p_*-2)}},
\end{align}
where  $\hat{k}_*\in \N$
is the smallest integer with $\hat{k}_*>\frac{\sd q}{2(q-\sd)}$. 
\end{enumerate}
\end{lemma}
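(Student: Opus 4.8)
The plan is to first establish the a priori bound \eqref{crucial-reg-estimate} — from which (a) will follow, since the estimate at $p=p_*$ for every $p_*>2$ forces bijectivity for every $p>1$ — proceeding through three steps: the Hilbert case $p=2$, a self-improvement of integrability giving the estimate at an arbitrary $p_*>2$, and an interpolation to cover $p\in[p_*',p_*]$. Write $a_z:=g(z)\bbC$. By \eqref{elast-2} and the lower bound in \eqref{new-g}, $a_z$ is strongly elliptic with constant $\gamma_0\gamma_1$, uniformly in $z$, so for $p=2$ Korn's and Poincaré's inequalities on $\calU=H^1_0(\Omega;\R^\sd)$ make $u\mapsto\langle L_{g(z)}u,u\rangle$ coercive with a constant depending only on $\gamma_0,\gamma_1,\Omega$; by Lax--Milgram, $L_{g(z)}\colon W^{1,2}_0\to W^{-1,2}$ is then an isomorphism with $\norm{L_{g(z)}^{-1}}_{W^{-1,2}\to W^{1,2}_0}\le c_0(\gamma_0,\gamma_1,\Omega)$ independent of $z$, which is the endpoint $p=2$ (where the exponent in \eqref{crucial-reg-estimate} vanishes).

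For the bound at a fixed $p_*>2$ I would run a bootstrap in integrability, in the spirit of \cite{BabMil12}. First, $a_z^{-1}=g(z)^{-1}\bbC^{-1}\in W^{1,q}(\Omega)\cap L^\infty(\Omega)$ with $\norm{\nabla(a_z^{-1})}_{L^q(\Omega)}\le C(1+\norm{\nabla z}_{L^q(\Omega)})$, by $g\in\rmC^2$, $g\ge\gamma_1>0$ and \eqref{elast-tensor}. Given $f=\mathrm{div}F$ with $F\in L^{p_*}(\Omega;\R^\sd)$, let $u\in W^{1,2}_0(\Omega;\R^\sd)$ solve $L_{g(z)}u=f$; then $G:=a_z\varepsilon(u)+F$ is row-wise divergence-free, so $\varepsilon(u)=a_z^{-1}(G-F)$ and, applying $-2\,\mathrm{div}$ and using $\mathrm{div}\,G=0$, the equation can be recast as $\mathcal{L}_0 u=\mathrm{div}\Phi+\Psi$ for the constant-coefficient Lamé operator $\mathcal{L}_0 u:=-\Delta u-\nabla\,\mathrm{div}\,u$, with $\Phi=2a_z^{-1}F\in L^{p_*}$ and $\Psi$ bilinear in $\nabla(a_z^{-1})$ and $G$. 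Hence, whenever $u\in W^{1,s}_0$ with $2\le s\le p_*$ (so $G\in L^s$), Korn's inequality in $L^s$ and the above give $\Psi\in L^r$ with $\tfrac1r=\tfrac1q+\tfrac1s$ and $\norm{\Psi}_{L^r}\le C(1+\norm{\nabla z}_{L^q(\Omega)})\norm{\nabla u}_{L^s}$, and the $W^{1,\cdot}$-theory for $\mathcal{L}_0$ on the $\rmC^{1,1}$-domain $\Omega$ (Assumption \ref{ass:domain}) with zero Dirichlet data upgrades $u$ to $W^{1,s'}_0$ with $\tfrac1{s'}=\max\!\big(\tfrac1{p_*},\tfrac1q+\tfrac1s-\tfrac1\sd\big)$ and
\[
\norm{u}_{W^{1,s'}_0(\Omega;\R^\sd)}\le C\,(1+\norm{\nabla z}_{L^q(\Omega)})\big(\norm{u}_{W^{1,s}_0(\Omega;\R^\sd)}+\norm{f}_{W^{-1,p_*}(\Omega)}\big).
\]
Iterating from $s_0=2$, the quantity $\tfrac1{s_k}$ drops by the fixed positive amount $\tfrac1\sd-\tfrac1q=\tfrac{q-\sd}{\sd q}$ at each step (here $q>\sd$ enters), so, starting from $\tfrac1{s_0}=\tfrac12$, after at most $\hat{k}_*$ steps — $\hat{k}_*$ being the least integer exceeding $\tfrac{\sd q}{2(q-\sd)}$ — one has $s_{\hat{k}_*}\ge p_*$; collecting the $\hat{k}_*$ factors $(1+\norm{\nabla z}_{L^q(\Omega)})$ and using $\norm{u}_{W^{1,2}_0}\le c_0\norm{f}_{W^{-1,2}}\le c_0 C\norm{f}_{W^{-1,p_*}}$ yields $\norm{L_{g(z)}^{-1}}_{W^{-1,p_*}\to W^{1,p_*}_0}\le c_{q,p_*}(1+\norm{\nabla z}_{L^q(\Omega)})^{\hat{k}_*}$. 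Since $\bbC$ is symmetric and $g(z)$ scalar, $L_{g(z)}$ is formally self-adjoint, so the same bound holds at the conjugate exponent $p_*'$; combining this with injectivity (from the $p=2$ case for $p\ge2$, and from duality for $p<2$) proves (a), as $p_*>2$ was arbitrary.

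Finally, $\{W^{-1,p}(\Omega;\R^\sd)\}_p$ and $\{W^{1,p}_0(\Omega;\R^\sd)\}_p$ are complex-interpolation scales, and for $p\in[2,p_*]$ both $W^{-1,p}$ and $W^{1,p}_0$ are the $\theta$-interpolants of the corresponding spaces at $p=2$ and $p=p_*$, with $\theta$ fixed by $\tfrac1p=\tfrac{1-\theta}2+\tfrac\theta{p_*}$, i.e. $\theta=\tfrac{p_*(p-2)}{p(p_*-2)}$. Interpolating the two bounds for the single operator $L_{g(z)}^{-1}$ gives $\norm{L_{g(z)}^{-1}}_{W^{-1,p}\to W^{1,p}_0}\le c_0^{1-\theta}\big(c_{q,p_*}(1+\norm{\nabla z}_{L^q(\Omega)})^{\hat{k}_*}\big)^\theta$, which is exactly \eqref{crucial-reg-estimate} on $[2,p_*]$; the symmetric interpolation between $p=2$ and $p=p_*'$, using the self-adjoint bound, covers $[p_*',2]$, and one checks that the exponent $\hat{k}_*\tfrac{p_*\abs{p-2}}{p(p_*-2)}$ equals $0$ at $p=2$ and $\hat{k}_*$ at $p=p_*$ and at $p=p_*'$. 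The main obstacle is the bootstrap step: performing the recasting of the equation via $\mathcal{L}_0$ rigorously at the low coefficient regularity $a_z\in W^{1,q}$ (products of Sobolev functions, $L^p$-Korn, boundary charts) and tracking every constant so as to land precisely on the integer power $\hat{k}_*$; the $p=2$ endpoint, the interpolation, and the deduction of (a) are then routine. The improvement over \cite[Lemma 2.3]{KRZ2} — the full range $p>1$ instead of an interval around $2$ — is exactly the dividend of the $\rmC^{1,1}$-smoothness in Assumption \ref{ass:domain} and of the pure Dirichlet boundary condition.
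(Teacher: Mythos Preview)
Your overall strategy—Lax--Milgram at $p=2$, a bootstrap in integrability to reach any $p_*>2$ with each step picking up one factor $(1+\|\nabla z\|_{L^q(\Omega)})$, then complex interpolation between $2$ and $p_*$ (resp.\ $p_*'$)—is exactly the approach the paper refers to, and your computation of the interpolation exponent $\hat k_*\tfrac{p_*|p-2|}{p(p_*-2)}$ is correct.

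There is, however, a genuine gap in your recasting step. With the constant-coefficient Lam\'e operator $\mathcal L_0$ as reference and $G:=a_z\varepsilon(u)+F$, you assert that $\Psi:=-2\,\mathrm{div}(a_z^{-1}G)$ is bilinear in $\nabla(a_z^{-1})$ and $G$ thanks to $\mathrm{div}\,G=0$. But $a_z^{-1}=g(z)^{-1}\bbC^{-1}$ is a fourth-order tensor, and expanding gives
\[
\big[\mathrm{div}(a_z^{-1}G)\big]_i=\big(\partial_j(a_z^{-1})_{ijkl}\big)G_{kl}+(a_z^{-1})_{ijkl}\,\partial_j G_{kl};
\]
the condition $\mathrm{div}\,G=0$ reads $\partial_j G_{ij}=0$ and does \emph{not} annihilate the second term, even using the minor/major symmetries of $\bbC^{-1}$. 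That term contains first derivatives of $G$ (equivalently of $\varepsilon(u)$ and $F$) and therefore blocks the integrability gain you need at each iteration.

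The fix—and this is precisely how the paper proceeds—is to take $L_\bbC$ (with the Lipschitz tensor $\bbC$, cf.\ Theorem~\ref{app.reg.thm1}) as the reference operator instead of $\mathcal L_0$. Then the quotient $a_z/\bbC=g(z)$ is a \emph{scalar}, and one writes $\bbC\varepsilon(u)=g(z)^{-1}(G-F)$, so that
\[
L_\bbC u=-\mathrm{div}\big(g(z)^{-1}G\big)+\mathrm{div}\big(g(z)^{-1}F\big).
\]
Now $\big[\mathrm{div}(g(z)^{-1}G)\big]_i=(\partial_j g(z)^{-1})G_{ij}+g(z)^{-1}\partial_j G_{ij}$ and the second summand \emph{does} vanish by $\mathrm{div}\,G=0$; the surviving term lies in $L^r$ with $\tfrac1r=\tfrac1q+\tfrac1s$, and your bootstrap then runs exactly as you describe, reaching any $p_*$ after $\hat k_*$ steps. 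Note also that the paper obtains (a) directly from \cite{Val78,MR03_polyhedral} (the coefficient $g(z)\bbC$ is H\"older continuous since $z\in W^{1,q}\hookrightarrow C^{0,1-\sd/q}$), rather than deducing it from (b) as you do; either route is valid, but you need the isomorphism property of the reference operator $L_\bbC$ anyway to run the recursion.
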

\begin{proof}
 The first statement is a consequence of \cite[Theorem 3]{Val78}, see  also 
\cite[Theorem 7.1]{MR03_polyhedral}. The uniform 
 estimate follows along the same lines as in  the proof of \cite[Lemma 2.3]{KRZ2}, relying on a 
 recursion argument developed in \cite{BabMil12}.
\end{proof}
 \begin{remark}
\label{rmk:better-regularity}
\upshape
 Lemma \ref{l:reg-KRZ2}
enhances \cite[Lemma 2.3]{KRZ2} thanks to the stronger regularity  condition  on 
the reference domain $\Omega$, which in \cite{KRZ2} was  
only required to fulfill these properties: 
\begin{itemize}
 \item[(i)] The spaces $W^{1,p}_{\Gamma_D}(\Omega;\R^d) = \{ u \in W^{1,p}(\Omega;\R^d)\, : \ u|_{\Gamma_D} =0 \}$, $p\in (1,\infty)$ (and 
 $\Gamma_D$  with positive Hausdorff measure, but possibly different from $\partial\Omega$, was allowed in \cite{KRZ2}), form an interpolation scale.
\item[(ii)]
There exists $p_*>3$ such that for all $p\in [2,p_*]$ the
operator $L: W^{1,p}_{\Gamma_D}(\Omega;\R^d)\rightarrow
W^{-1,p}_{\Gamma_D}(\Omega; \R^d)$ is an isomorphism.
\end{itemize}
It was for such $p_*>3$, in fact, that 
the  isomorphism property (a)   and the uniform estimate 
\eqref{crucial-reg-estimate} were obtained in \cite[Lemma 2.3]{KRZ2}. Let us highlight that, instead,
in Lemma~\ref{l:reg-KRZ2}
 property (a) is guaranteed for all $p>1$, and \eqref{crucial-reg-estimate} is shown for \emph{every} $p_*>2$. 
 \end{remark} 
%\eqref{crucial-reg-estimate}  
\par
The most relevant consequence of
 Assumption~\ref{ass:domain} for our analysis, though, is given by this second, enhanced, elliptic regularity result,  
 which is to be compared with \cite[Lemma A.1]{BabMil12}, holding for homogeneous Neumann boundary conditions. 

 \begin{lemma}
\label{l:b-m}
 Under Assumption \ref{ass:domain}, let $\bbC$ and $g$ comply with 
 \eqref{elast-tensor} and \eqref{new-g}, respectively. Then,
 for all 
$z\in W^{1,q}(\Omega)$ the operator $ L_{g(z)} :  \calU  
\to  \calU^*  $ 
fulfills
\[
 L_{g(z)}^{-1}(h) \in H^2(\Omega;\R^\sd) \quad \text{for all } h \in 
 L^2(\Omega;\R^\sd)
\]
and there exists $c_0>0$ such that for all $z \in W^{1,q}(\Omega)$ and all  $ h \in L^2(\Omega;\R^\sd)$
 \begin{align}
 \label{est_H2norm}
\norm{u}_{H^2(\Omega)}\leq c_0(1 + \norm{\nabla
  z}_{L^q(\Omega)})^\alpha(\norm{h}_{L^2(\Omega)} + \norm{u}_{H^1(\Omega)}), 
\end{align}
where $u=L_{g(z)}^{-1}(h)$ and  $\alpha\geq 2$ is the smallest integer bigger than or equal to
$q/(q-\sd)$.  
\end{lemma}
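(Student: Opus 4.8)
The plan is to reduce the $H^2$-estimate to the classical elliptic regularity for systems with Lipschitz coefficients on a $\mathrm{C}^{1,1}$-domain (collected in the Appendix), with the main work being to control the $L^2$-norm of the coefficient's gradient in terms of $\|\nabla z\|_{L^q}$ via the higher integrability of $\nabla u$ furnished by Lemma~\ref{l:reg-KRZ2}. First I would freeze notation: set $a(x):=g(z(x))$, so that $u=L_{g(z)}^{-1}(h)$ solves $-\mathrm{div}(a\,\bbC\,\varepsilon(u))=h$ in $\Omega$ with $u\in\calU=H^1_0(\Omega;\R^\sd)$. By \eqref{new-g}, $g\in\mathrm{C}^2$ with $g',g''\in L^\infty$ and $\gamma_1\le g\le\gamma_2$; by \eqref{elast-tensor}, $\bbC\in\mathrm{C}^0_{\mathrm{lip}}$; hence $a\bbC$ is bounded, uniformly elliptic, and lies in $W^{1,q}(\Omega)$ because $\nabla a=g'(z)\nabla z$ with $|g'(z)|\le\|g'\|_\infty$, giving $\|\nabla a\|_{L^q(\Omega)}\le\|g'\|_\infty\|\nabla z\|_{L^q(\Omega)}$ and similarly $a\bbC\in\mathrm{C}^0_{\mathrm{lip}}$ need not hold, so I must rely on a $W^{1,q}$-coefficient version of elliptic regularity rather than the Lipschitz one.

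Next I would invoke the $H^2$-regularity statement from the Appendix (the analogue of \cite[Lemma A.1]{BabMil12} for Dirichlet data on a $\mathrm{C}^{1,1}$-domain): for coefficients in $W^{1,q}(\Omega)$ with $q>\sd$ (so $W^{1,q}\hookrightarrow \mathrm{C}^{0,\beta}$ and, crucially, the difference quotient argument closes), the solution satisfies $u\in H^2(\Omega;\R^\sd)$ together with an estimate of the form
\begin{equation}
\label{eq:prop-basicH2}
\|u\|_{H^2(\Omega)}\le C\big(\|h\|_{L^2(\Omega)}+\|u\|_{H^1(\Omega)}\big)\cdot \Phi\big(\|\nabla a\|_{L^q(\Omega)}\big),
\end{equation}
where $C$ depends only on $\Omega,\bbC,\gamma_1,\gamma_2$ and $\Phi$ is an explicit (polynomial) function coming from the Gagliardo--Nirenberg/Hölder interpolation used to absorb the lower-order term $\nabla a\cdot\bbC\varepsilon(u)$ into the principal part. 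Concretely, the term to control in the difference-quotient estimate is $\int_\Omega |\nabla a|\,|\nabla u|\,|\nabla^2 u|\dx$, which by Hölder with exponents $(q,\,q^*,\,2)$ — where $\tfrac1q+\tfrac1{q^*}+\tfrac12=1$, i.e. $q^*=\tfrac{2q}{q-2}$ — is bounded by $\|\nabla a\|_{L^q}\|\nabla u\|_{L^{q^*}}\|\nabla^2 u\|_{L^2}$; one then uses the interpolation $\|\nabla u\|_{L^{q^*}}\le c\|\nabla^2 u\|_{L^2}^\vartheta\|\nabla u\|_{L^2}^{1-\vartheta}$ (valid for $\vartheta=\tfrac{\sd}{2}\big(\tfrac12-\tfrac1{q^*}\big)=\tfrac{\sd}{2q}<\tfrac12$ since $q>\sd$, hence admissible and with $\vartheta<1$) and Young's inequality to absorb $\|\nabla^2 u\|_{L^2}^{1+\vartheta}$ into the left side, paying a factor $\|\nabla a\|_{L^q}^{1/(1-\vartheta)}$. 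Since $\tfrac1{1-\vartheta}=\tfrac{1}{1-\sd/(2q)}=\tfrac{2q}{2q-\sd}$, iterating the absorption enough times to clean up all cross terms produces the exponent $\alpha$ equal to the smallest integer $\ge q/(q-\sd)$; I would verify this bookkeeping exactly matches the stated $\alpha$.

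Finally, I would substitute $\|\nabla a\|_{L^q(\Omega)}\le\|g'\|_\infty\|\nabla z\|_{L^q(\Omega)}$ into \eqref{eq:prop-basicH2}, bound $\Phi(\|g'\|_\infty\|\nabla z\|_{L^q})\le c_0(1+\|\nabla z\|_{L^q(\Omega)})^\alpha$ after enlarging $c_0$ to absorb $\|g'\|_\infty$ and the additive constants, and conclude \eqref{est_H2norm} for all $z\in W^{1,q}(\Omega)$ and $h\in L^2(\Omega;\R^\sd)$; the uniformity in $z$ is automatic since every constant above depends on $z$ only through $\|\nabla z\|_{L^q}$. The main obstacle — and the step I expect to be delicate — is pinning down the precise power $\alpha$: one must carefully track how many absorption steps are needed (the single Young step gives exponent $\tfrac{2q}{2q-\sd}$, but the iteration to remove the remaining lower-order terms $\|\nabla a\|_{L^q}\|\nabla u\|_{L^2}$-type contributions, together with the $\|u\|_{H^1}$ slack, compounds this), and to check that the $\mathrm{C}^{1,1}$-hypothesis on $\partial\Omega$ (rather than mere Lipschitz) is exactly what licenses the boundary difference quotients in the tangential directions plus the equation-solving for the normal second derivative. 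Here I would lean on the Appendix's elliptic regularity lemma, stated precisely for this purpose, and on the adaptation of the recursion from \cite{BabMil12} already used in the proof of Lemma~\ref{l:reg-KRZ2}.
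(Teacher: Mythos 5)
Your route is genuinely different from the paper's, and as written it contains both a misreading of what the Appendix supplies and a concrete computational error in the Gagliardo--Nirenberg exponent. The paper disposes of the lemma in one line: it transfers the bootstrap of \cite[Lemma A.1]{BabMil12}, rewriting $L_\bbC u = -\mathrm{div}(\bbC\varepsilon(u)) = g(z)^{-1}\bigl(h + g'(z)\bbC\varepsilon(u)\nabla z\bigr)$ and iterating the $W^{2,p}\to L^p$ isomorphism of Theorem~\ref{app.reg.thm2} together with the $W^{1,p}$ bound of Lemma~\ref{l:reg-KRZ2} to lift the integrability of $\varepsilon(u)$ until the right-hand side lands in $L^2$. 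You instead propose a direct difference-quotient estimate with interpolation and Young absorption, which is a reasonable self-contained alternative, but the execution needs repair.

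First, the Appendix does not contain an ``$H^2$-regularity statement \ldots for coefficients in $W^{1,q}(\Omega)$.'' Theorem~\ref{app.reg.thm2} concerns the Lipschitz-coefficient operator $L_\bbC$ only; the $W^{1,q}$-coefficient estimate \eqref{eq:prop-basicH2} you propose to cite is exactly the content of Lemma~\ref{l:b-m}, the statement to be proved. Your difference-quotient argument must therefore establish it, not invoke it; the constant-coefficient $H^2$-theory is the only input you may draw from the Appendix.

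Second, your Gagliardo--Nirenberg exponent is off by a factor of two, and this is precisely where the bookkeeping you flag as ``delicate'' fails to close. In $\R^\sd$ the interpolation $\norm{\nabla u}_{L^{q^*}}\leq c\,\norm{\nabla^2 u}_{L^2}^\vartheta\norm{\nabla u}_{L^2}^{1-\vartheta}$ with $q^*=2q/(q-2)$ requires $\tfrac{1}{q^*}=\tfrac12-\tfrac{\vartheta}{\sd}$, hence $\vartheta=\sd\bigl(\tfrac12-\tfrac1{q^*}\bigr)=\sd/q$, not $\tfrac{\sd}{2}\bigl(\tfrac12-\tfrac1{q^*}\bigr)=\sd/(2q)$ as you wrote. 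With the corrected $\vartheta=\sd/q$ the single Young step on $\norm{\nabla a}_{L^q}\norm{\nabla u}_{L^2}^{1-\vartheta}\norm{\nabla^2 u}_{L^2}^{1+\vartheta}$ with conjugate pair $\bigl(\tfrac{2}{1+\vartheta},\tfrac{2}{1-\vartheta}\bigr)$ produces $\norm{\nabla a}_{L^q}^{2/(1-\vartheta)}\norm{\nabla u}_{L^2}^2$ where $\tfrac{2}{1-\vartheta}=\tfrac{2q}{q-\sd}$; after taking square roots this is $\norm{\nabla a}_{L^q}^{q/(q-\sd)}\norm{\nabla u}_{L^2}$, so one absorption already yields the stated power $q/(q-\sd)$ and hence $\alpha=\lceil q/(q-\sd)\rceil$. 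The remaining cross terms from the localization carry at most two powers of $\norm{\nabla a}_{L^q}$ and are dominated because $\alpha\geq 2$; no iteration is needed. Your miscomputed $\vartheta=\sd/(2q)$ gives instead $\tfrac{2q}{2q-\sd}$, which is strictly smaller than $q/(q-\sd)$, and the ``iterating the absorption'' you gesture at has no mechanism that bridges the gap to $\alpha$ -- that step of the proposal is not just delicate but, with your $\vartheta$, unsalvageable.
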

\begin{proof}
 The proof of \cite[Lemma A.1]{BabMil12} can be directly transferred to our situation having in mind that for every $p\in (1,\infty)$ 
 the operator 
 \[
  L_\bbC=L_{g(1)}:W_0^{1,p}(\Omega)\cap W^{2,p}(\Omega)\to L^p(\Omega),\quad u\mapsto -\Div\bbC\varepsilon(u)
 \]
is a continuous isomorphism, cf.\ Theorem \ref{app.reg.thm2}.
\end{proof}
\begin{remark}
\label{rmk:exponents}
 Observe that $
\sup_{p \in  [p_*',p_*]}   \frac{p_*\abs{p-2}}{p(p_*-2)} \leq 1,
$
hence  we can estimate from above
the right-hand side of 	\eqref{crucial-reg-estimate} by $(1 + \norm{\nabla
z}_{L^q(\Omega)})^{\hat{k}_*}$.  That is why, in what follows, whenever 
applying estimates 
\eqref{crucial-reg-estimate} and \eqref{est_H2norm}, 
possibly with two different elements $z_1,\,z_2 \in \calZ$,  we will simply use the quantity
\begin{equation}
\label{place-holder-P}
P(z_1,z_2): = (1 + \norm{\nabla
z_1}_{L^q(\Omega)}+ \norm{\nabla
z_2}_{L^q(\Omega)})^{k_*}\,,
\end{equation} 
where $k_*:=\max\{\hat k_*,\alpha\}+1$  
with $\hat{k}_*$ from Lemma~\ref{l:reg-KRZ2} and $\alpha$ from 
\eqref{est_H2norm}.   
 With this, \eqref{est_H2norm} can be rewritten in terms of the quantity 
 $P$  as
 \[
 \norm{u}_{H^2(\Omega)}\leq c_0 P(z,0)(\norm{h}_{L^2(\Omega)} + \norm{u}_{H^1(\Omega)}).
 \]
 \end{remark}  
 
In the sequel we will frequently use the following regularity result 
from \cite[Theorem 2 \& Remark 3.5]{Savare98} for solutions of the 
$q$-Laplace equation:
 \begin{proposition}
  \label{prop:Savare98}
  For every $q>2$ there exists a constant $C_q>0$ such that for all $f\in 
L^{q'}(\Omega)$ it holds: 
 If $z\in W^{1,q}(\Omega)$ satisfies $\langle A_q z,\wt z\rangle =\langle 
f,\wt z\rangle$ for all $\wt z\in W^{1,q}(\Omega)$, then for all $\sigma\in 
(0,\frac{1}{q})$ the function $z$ belongs to $W^{1+\sigma,q}(\Omega)$ and 
\begin{equation}
\label{est-Sav}
\norm{z}_{W^{1+\sigma,q}(\Omega)}\leq C_q (\norm{f}_{L^{q'}(\Omega)} + 
\norm{z}_{L^q(\Omega)}).  
\end{equation} 
 \end{proposition}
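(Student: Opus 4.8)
The statement to prove is Proposition~\ref{prop:Savare98}, the $W^{1+\sigma,q}$-regularity for solutions of the $q$-Laplace equation $A_q z = f$, together with the quantitative estimate \eqref{est-Sav}. Since the proposition is explicitly attributed to \cite[Theorem~2 \& Remark~3.5]{Savare98}, the natural proof is by direct citation, but let me sketch how one would actually establish it.

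The plan is to use the difference-quotient / Nikolskii-space technique adapted to the degenerate $q$-Laplacian. First I would recall that $A_q z = -\Div(a(\nabla z))$ with $a(A) = (1+|A|^2)^{q/2-1}A = \nabla G_q(A)$, and that $G_q$ satisfies the monotonicity estimate \eqref{eq.monGq}. The weak formulation reads $\int_\Omega a(\nabla z)\cdot \nabla \wt z \dx = \int_\Omega f \wt z \dx$ for all $\wt z \in W^{1,q}(\Omega)$; an a priori bound $\norm{z}_{W^{1,q}(\Omega)} \le C(\norm{f}_{L^{q'}(\Omega)} + \norm{z}_{L^q(\Omega)})$ follows from testing with $z$ itself and using coercivity of $A_q$. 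The regularity step proceeds by the Nirenberg translation method: for a direction $e$ and small $h$, set $\tau_h w(x) = w(x+he)$ and test the equation at $x+he$ minus the equation at $x$ with $\wt z = \tau_h z - z$ (suitably cut off near $\partial\Omega$; here the $C^{1,1}$-regularity of $\partial\Omega$ and the fact that $\Gamma_D = \partial\Omega$, so there is no Neumann part and no change of boundary condition, lets one handle the boundary layer by a standard flattening argument). Using \eqref{eq.monGq} one gets
\[
c_q \int_\Omega (1+|\nabla z(x+he)|^2 + |\nabla z(x)|^2)^{(q-2)/2} |\nabla(\tau_h z - z)|^2 \dx \le \int_\Omega (\tau_h f - f)(\tau_h z - z)\dx + (\text{boundary terms}),
\]
and the right-hand side is controlled by $\norm{f}_{L^{q'}}\,\norm{\tau_h z - z}_{W^{1,q}}$, hence by $C h \norm{f}_{L^{q'}}\norm{\nabla z}_{L^q}$ after one difference quotient has been absorbed. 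The key algebraic point is then that the quantity $V(A) := (1+|A|^2)^{(q-2)/4}A$ satisfies $|V(A) - V(B)|^2 \lesssim (1+|A|^2+|B|^2)^{(q-2)/2}|A-B|^2$, so the left-hand side bounds $\int_\Omega |\tau_h V(\nabla z) - V(\nabla z)|^2 \dx$; this gives $V(\nabla z) \in H^s$ for $s$ up to $1/2$, which after unravelling the nonlinear map $V$ translates into $\nabla z \in W^{\sigma, q}$ for $\sigma < 1/q$, i.e.\ $z \in W^{1+\sigma,q}(\Omega)$, with the quantitative bound \eqref{est-Sav}. (The loss from $s = 1/2$ for $V(\nabla z)$ down to $\sigma < 1/q$ for $\nabla z$ is exactly the Gehring-type exponent loss inherent to degenerate equations, and is where the restriction $\sigma \in (0, 1/q)$ comes from.)

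The main obstacle is the nonlinear change of variables between the "natural" quantity $V(\nabla z)$ (for which $H^{1/2}$-regularity is clean) and the stated conclusion about $\nabla z$ itself: one must carefully track how fractional Sobolev/Besov norms behave under the map $A \mapsto V(A)$ and its inverse, using that $V$ and $V^{-1}$ are Hölder continuous with the appropriate exponents depending on $q$, and this is precisely the content of \cite[Theorem~2 \& Remark~3.5]{Savare98}. A secondary technical point is the boundary treatment, but here Assumption~\ref{ass:domain} ($C^{1,1}$-domain, pure Dirichlet condition on all of $\partial\Omega$) makes the localization-and-flattening argument routine: tangential difference quotients need no cutoff, and the normal direction is recovered directly from the equation since $A_q z \in L^{q'}$ gives control on $\Div(a(\nabla z))$, so the normal component of $\nabla(a(\nabla z))$ is controlled once the tangential components are. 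Given that the result is quoted verbatim from \cite{Savare98}, in the paper itself I would simply invoke that reference rather than reproduce the argument; the sketch above indicates why the hypotheses ($q > 2$, $f \in L^{q'}$) and the conclusion (range $\sigma \in (0,1/q)$, estimate linear in $\norm{f}_{L^{q'}} + \norm{z}_{L^q}$) take the form stated.
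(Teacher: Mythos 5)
Your core plan matches the paper: Proposition~\ref{prop:Savare98} is not proved in the paper but simply quoted from \cite[Theorem~2 \& Remark~3.5]{Savare98}, and your sketch of the translation method (difference quotients, the monotonicity estimate for $\nabla G_q$, passage through the auxiliary map $V(A) = (1+|A|^2)^{(q-2)/4}A$) correctly describes the mechanism behind Savar\'e's argument. However, your discussion of the boundary treatment is wrong on two counts. First, the weak formulation in the proposition tests against all of $W^{1,q}(\Omega)$ --- not $W^{1,q}_0(\Omega)$ --- so the boundary condition for $A_q$ is the \emph{homogeneous Neumann (natural)} condition, not Dirichlet. You invoked ``$\Gamma_D = \partial\Omega$'' and ``pure Dirichlet condition on all of $\partial\Omega$'' to justify the boundary layer being routine, but Assumption~\ref{ass:domain} concerns the displacement $u \in \calU = H^1_0(\Omega;\R^\sd)$; the damage variable $z$ lives in $\calZ = W^{1,q}(\Omega)$ with no boundary constraint, and the relevant result in \cite{Savare98} is the one for the Neumann problem. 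Second, $C^{1,1}$-regularity of $\partial\Omega$ is not needed here: Savar\'e's theorem holds for Lipschitz domains, and the restriction $\sigma < 1/q$ is precisely the Lipschitz-domain regularity threshold (the nonlinear analogue of the familiar $H^{3/2}$ barrier for the Laplacian), not a ``Gehring-type exponent loss inherent to degenerate equations'' as you suggested. The $C^{1,1}$ assumption earns its keep in Lemmas~\ref{l:reg-KRZ2} and \ref{l:b-m} for the elasticity operator $L_{g(z)}$, not in this proposition.
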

Note that on the right-hand side of \eqref{est-Sav} the $L^q$-norm of $z$ appears since $A_q$ is not 
bijective on $W^{1,q}(\Omega)$. 
%%%%
%%%%
%%%%
\subsection{Properties of the reduced energy}
\label{ss:2.3}
Relying on Lemmas \ref{l:reg-KRZ2} and \ref{l:b-m}, we will show that the reduced energy functional $\calI$ enjoys a series of differentiability properties, which in fact improve those obtained in \cite[Sec.\ 2.3]{KRZ2}, under the additional 
\begin{assumption}[The external loadings]
\label{ass:load}
{\sl 
From now on, we will suppose that $\ell $ and $u_D$ comply with the following requirements}
\begin{equation}
\label{more-regular-data}
\begin{aligned}
&
\ell \in L^\infty (0,T; L^2(\Omega;\R^\sd)) \cap \mathrm{C}^{1,1} ([0,T];  
W^{-1,3}(\Omega;\R^\sd) ), 
\\
&
 u_D \in L^\infty (0,T; H^2(\Omega;\R^\sd)) \cap \mathrm{C}^1 ([0,T];W^{1,3}(\Omega;\R^\sd)).
 \end{aligned}
\end{equation}
\end{assumption}
\par
The starting point is the following result, which improves \cite[Lemmas 2.6, 2.7]{KRZ2}. 
\begin{lemma}[Existence of minimizers for $\calE(t,\cdot, z)$ \&  their continuous dependence on the data]
\label{prop:enhnaced-reg}
$\\$
Under Assumptions \ref{ass:domain}, \ref{assumption:energy}, and  \ref{ass:load}, for every $(t,z) \in [0,T]\times \calZ$ 
there exists a unique minimizer $\umin(t,z)  \in \calU $ for  the 
stored energy $\calE(t,\cdot,z )$ \eqref{stored-energy}. In fact, $\umin(t,z) 
\in H^2(\Omega;\R^\sd)$. Moreover, there exist positive constants
 ${c}_1$ and ${c}_2$ such
that for all $(t,z),\, (t_1,z_1),\, (t_2,z_2)  \in [0,T]\times \calZ$
 and for all $p_*>2$ 
\begin{align}
& \label{H2_improved-1} \norm{\umin(t,z)}_{H^{2}(\Omega)} \leq {c}_1 P(z,0)
\left(\norm{\ell(t)}_{L^2(\Omega)} +
\norm{u_D(t)}_{H^{2}(\Omega)} \right);
\\
&
\label{summary}
\begin{aligned}
&
\norm{\umin(t_1,z_1) {-} \umin(t_2,z_2)
}_{W^{1,p}(\Omega)}
\\ & 
\leq c_2 P(z_1,z_2)^2 \left(|t_1{-}t_2|+ \norm{z_1{-}z_2}_{L^{6p/(6{-}p)} (\Omega)} \right)
\left(\norm{\ell}_{\mathrm{C}^1 ([0,T];   W^{-1,p}(\Omega) )}
+ \norm{u_D(t)}_{\mathrm{C}^1 ([0,T];W^{1,p}(\Omega))}
\right) 
\end{aligned}
\end{align}
for all $ p \in [p_*', \min\{ p_*,3\}]$, 
with  $P(\cdot,\cdot)$ defined by \eqref{place-holder-P}.
In particular, there holds
\begin{align}
 \label{H2_improved-2}
\begin{aligned}
&\norm{\umin(t_1,z_1) {-} \umin(t_2,z_2)
}_{W^{1,3}(\Omega)}
\\ &
\leq {c}_2  P(z_1,z_2)^2 \left(|t_1{-}t_2|+ \norm{z_1{-}z_2}_{L^6 (\Omega)} \right)
\left(\norm{\ell}_{\mathrm{C}^1 ([0,T];   W^{-1,3}(\Omega))}
+ \norm{u_D(t)}_{\mathrm{C}^1 ([0,T];W^{1,3}(\Omega))}. 
\right),
\end{aligned}
\end{align}
Finally, the reduced energy $\calI$ from \eqref{reduced-energy} is bounded  from 
below and in particular satisfies the following coercivity estimate: 
 \begin{align}
\label{est_coerc1} 
 \exists\, c_3,\, c_4>0 \quad \forall\, (t,z) \in [0,T]\times \calZ\, : \qquad 
\calI(t,z)&\geq c_3\big(
 \norm{\nabla z}^\il_{L^\il(\Omega)}
 + \norm{z}_{L^1(\Omega)}
 + \norm{\umin(t,z)}_{H^1(\Omega;\R^\sd)}^2
 \big) -c_4.
\end{align}  
\end{lemma}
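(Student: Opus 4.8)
The plan is to establish the four claims in sequence, deriving the qualitative facts (existence, uniqueness, $H^2$-regularity) by soft functional-analytic arguments and the quantitative bounds \eqref{H2_improved-1}, \eqref{summary}, \eqref{H2_improved-2}, \eqref{est_coerc1} from the elliptic estimates in Lemmas~\ref{l:reg-KRZ2} and \ref{l:b-m}. For fixed $(t,z)$, $v\mapsto\calE(t,v,z)$ differs from $\calE_2(t,\cdot,z)$ by the constant $\calI_1(z)$, and $\calE_2(t,\cdot,z)$ is a quadratic functional on $\calU$ whose leading bilinear form $a_z(u,v):=\int_\Omega g(z)\bbC\eps(u):\eps(v)\dx$ is, by \eqref{new-g} ($g\geq\gamma_1$), \eqref{elast-2} and Korn's inequality, bounded and $\calU$-elliptic, with continuous linear part $v\mapsto\langle\ell(t),v\rangle-\int_\Omega g(z)\bbC\eps(u_D(t)):\eps(v)\dx$. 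Hence Lax--Milgram (equivalently, the direct method together with strict convexity) yields a unique minimizer $\umin(t,z)\in\calU$, and writing its Euler--Lagrange equation and setting $h:=\ell(t)-L_{g(z)}u_D(t)$ gives the reduced identity $L_{g(z)}\umin(t,z)=h$ in $\calU^*$.

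For the $H^2$-regularity and \eqref{H2_improved-1}, the point is that $h\in L^2(\Omega;\R^\sd)$ with a quantitative bound: since $q>\sd$ one has $W^{1,q}(\Omega)\hookrightarrow \mathrm{C}^0(\overline{\Omega})$ and, by \eqref{new-g}, $\nabla(g(z))=g'(z)\nabla z\in L^q(\Omega)$ with norm $\leq C\norm{\nabla z}_{L^q(\Omega)}$; combining this with the Lipschitz continuity of $\bbC$ and with $u_D(t)\in H^2\hookrightarrow W^{1,6}$, a Leibniz-type estimate for $\Div\big(g(z)\bbC\eps(u_D(t))\big)$ yields $\norm{L_{g(z)}u_D(t)}_{L^2}\leq C(1+\norm{\nabla z}_{L^q(\Omega)})\norm{u_D(t)}_{H^2}$, hence $\norm{h}_{L^2}\leq C(1+\norm{\nabla z}_{L^q(\Omega)})(\norm{\ell(t)}_{L^2}+\norm{u_D(t)}_{H^2})$. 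Lemma~\ref{l:b-m} then gives $\umin(t,z)\in H^2$ with $\norm{\umin(t,z)}_{H^2}\leq c_0(1+\norm{\nabla z}_{L^q(\Omega)})^\alpha(\norm{h}_{L^2}+\norm{\umin(t,z)}_{H^1})$; testing $L_{g(z)}\umin(t,z)=h$ by $\umin(t,z)$ bounds $\norm{\umin(t,z)}_{H^1}\leq C\norm{h}_{\calU^*}\leq C(\norm{\ell(t)}_{L^2}+\norm{u_D(t)}_{H^2})$, and absorbing the surplus powers of $(1+\norm{\nabla z}_{L^q(\Omega)})$ into $P(z,0)$ (see \eqref{place-holder-P}; legitimate since $k_*\geq\alpha+1$) gives \eqref{H2_improved-1}.

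For the continuous dependence \eqref{summary} I set $u_i:=\umin(t_i,z_i)$, $w_i:=u_i+u_D(t_i)$, subtract the two weak equations and add and subtract $\int_\Omega g(z_1)\bbC\eps(w_2):\eps(v)\dx$ to obtain, for all $v\in\calU$,
\begin{align*}
\int_\Omega g(z_1)\bbC\eps(u_1-u_2):\eps(v)\dx
&=\langle\ell(t_1)-\ell(t_2),v\rangle-\int_\Omega g(z_1)\bbC\eps(u_D(t_1)-u_D(t_2)):\eps(v)\dx\\
&\quad-\int_\Omega(g(z_1)-g(z_2))\bbC\eps(w_2):\eps(v)\dx,
\end{align*}
that is $L_{g(z_1)}(u_1-u_2)=F$ with $F\in W^{-1,p}(\Omega;\R^\sd)$ for $p\in[p_*',\min\{p_*,3\}]$. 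Lemma~\ref{l:reg-KRZ2}(b) together with Remark~\ref{rmk:exponents} gives $\norm{u_1-u_2}_{W^{1,p}}\leq C\,P(z_1,z_2)\,\norm{F}_{W^{-1,p}}$. The first two summands of $F$ are controlled by $\abs{t_1-t_2}$ times the $\mathrm{C}^1$-in-time norms of $\ell$ and $u_D$; for the third I use $\abs{g(z_1)-g(z_2)}\leq\norm{g'}_{L^\infty}\abs{z_1-z_2}$ and the Hölder inequality with exponent triple $\big(\tfrac{6p}{6-p},6,p'\big)$,
\[
\Big|\int_\Omega(g(z_1)-g(z_2))\bbC\eps(w_2):\eps(v)\dx\Big|\leq C\,\norm{z_1-z_2}_{L^{6p/(6-p)}(\Omega)}\,\norm{\eps(w_2)}_{L^6(\Omega)}\,\norm{\eps(v)}_{L^{p'}(\Omega)},
\]
and bound $\norm{\eps(w_2)}_{L^6}\leq C\norm{w_2}_{H^2}\leq C\,P(z_2,0)(\norm{\ell(t_2)}_{L^2}+\norm{u_D(t_2)}_{H^2})$ via $H^2\hookrightarrow W^{1,6}$ and \eqref{H2_improved-1}. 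Collecting the three estimates and using $P(z_1,0),P(z_2,0)\leq P(z_1,z_2)$ to produce the factor $P(z_1,z_2)^2$ yields \eqref{summary}; the case $p=\sd=3$ (admissible with $p_*=3$, where $\tfrac{6p}{6-p}=6$) is exactly \eqref{H2_improved-2}.

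Finally, for the coercivity \eqref{est_coerc1} I split $\calI(t,z)=\calI_q(z)+\int_\Omega f(z)\dx+\calI_2(t,z)$ with $\calI_2(t,z)=\calE_2(t,\umin(t,z),z)$: one has $\calI_q(z)\geq\tfrac1q\norm{\nabla z}_{L^q(\Omega)}^q$ and, by \eqref{ass-eff}, $\int_\Omega f(z)\dx\geq K_1\norm{z}_{L^1(\Omega)}-K_2\abs{\Omega}$, while for $\calI_2$, with $w=\umin(t,z)+u_D(t)$, the bounds $g\geq\gamma_1$, \eqref{elast-2} and Korn give $\calI_2(t,z)\geq\tfrac{\gamma_0\gamma_1}{2}\norm{\eps(w)}_{L^2(\Omega)}^2-\norm{\ell(t)}_{\calU^*}\norm{\umin(t,z)}_{\calU}$, whence, using $\norm{\eps(w)}_{L^2}\geq c_K\norm{\umin(t,z)}_{H^1}-\norm{u_D(t)}_{H^1}$, the inequality $(a-b)^2\geq\tfrac12a^2-b^2$ and Young's inequality on the loading term, $\calI_2(t,z)\geq c\,\norm{\umin(t,z)}_{H^1(\Omega;\R^\sd)}^2-C$ with $C$ depending only on the data through $\norm{\ell}_{L^\infty(0,T;L^2)}$ and $\norm{u_D}_{L^\infty(0,T;H^1)}$; adding the three lower bounds gives \eqref{est_coerc1} and, a fortiori, that $\calI$ is bounded below. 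The routine parts are the first paragraph and the coercivity; the technical heart is the $g(z_1)-g(z_2)$ term above, which requires invoking \emph{both} the inverse-operator bound of Lemma~\ref{l:reg-KRZ2}(b) and the $H^2$-bound \eqref{H2_improved-1} on the ``frozen'' state $w_2$ — this is where the two powers of $P$ come from and where the exponent $\tfrac{6p}{6-p}$ on $z_1-z_2$ is forced — together with the verification that $h\in L^2$, which is precisely the place where the $\mathrm{C}^{1,1}$-smoothness of $\partial\Omega$ and $q>\sd$ enter (via Lemma~\ref{l:b-m} and $W^{1,q}\hookrightarrow\mathrm{C}^0$).
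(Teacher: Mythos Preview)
Your proof is correct and follows essentially the same approach as the paper's: Lax--Milgram for existence, Lemma~\ref{l:b-m} combined with the $L^2$-bound on $L_{g(z)}u_D(t)$ for \eqref{H2_improved-1}, subtraction of the Euler--Lagrange equations and Lemma~\ref{l:reg-KRZ2}(b) for \eqref{summary}, and direct lower bounds for \eqref{est_coerc1}. Your decomposition of the right-hand side in the continuous-dependence step (keeping $w_2=u_2+u_D(t_2)$ together rather than splitting it, as the paper does in \eqref{instrumental}) is a minor organizational variant that leads to the same H\"older estimate with exponent $6p/(6-p)$ and the same two powers of $P$.
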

\begin{proof} 
We refer to \cite[Lemma 2.1]{krz} for the proof of the existence and 
uniqueness of $\umin(t,z)$, as well as for estimate \eqref{est_coerc1}. 
 Clearly, $\umin(t,z)$ satisfies 
$L_{g(z)}\umin(t,z) = - L_{g(z)} u_D(t) - \ell(t)$.  Observe that $L_{g(z)} 
u_D(t) \in L^2(\Omega)$. Indeed, by the assumptions on $g$, $\bbC$  
and since $u_D(t)\in H^2(\Omega)$, we  have $g(z)\Div (\bbC 
\varepsilon(u_D(t))\in L^2(\Omega)$. On the other hand, 
$\bbC\varepsilon(u_D(t)) \nabla_x g(z)= g'(z)\bbC\varepsilon(u_D(t))\nabla z \in 
L^2(\Omega)$,  which follows by H\"older's inequality taking into 
account that    $H^1(\Omega)\subset L^6(\Omega)$ and that $q>3$. Moreover, it 
holds 
$\norm{L_{g(z)} u_D(t)}_{L^2(\Omega)} \leq c (1 + \norm{\nabla 
z}_{L^q(\Omega)})\norm{u_D(t)}_{H^2(\Omega)}$. 
Hence, it follows from  \eqref{est_H2norm}, cf.\ also Remark 
\ref{rmk:exponents}, and \eqref{crucial-reg-estimate} with $p=2$ 
that 
\begin{align*}
 \norm{\umin(t,z) }_{H^{2}(\Omega)} &\leq {c}_0 (1+\norm{\nabla 
z}_{L^q(\Omega)})^\alpha ( \| \ell(t) \|_{L^2(\Omega)} + 
\norm{\Div(g(z)\bbC \varepsilon( u_D(t))}_{L^2(\Omega)} 
+\norm{\umin(t,z)}_{H^1(\Omega)})\\
&\leq c(1+\norm{\nabla z}_{L^q(\Omega)})^\alpha
\big(
\norm{\ell(t)}_{L^2(\Omega)} + (1+ \norm{\nabla 
z}_{L^q(\Omega)})\norm{u_D(t)}_{H^2(\Omega)}
\big)\\
&\leq c_1 P(z,0)\big( \norm{\ell(t)}_{L^2(\Omega)} 
+\norm{u_D(t)}_{H^2(\Omega)}\big).
\end{align*}
 All in all, we conclude \eqref{H2_improved-1}. 
 % in view of \eqref{est_coerc1}.
\par  Finally, in order to show \eqref{summary} we mimic the argument from 
the proofs of  \cite[Lemma 2.2]{krz} \& \cite[Lemma 2.7]{KRZ2}.  
Namely,  for $i=1,2$, let $u_i:=\umin(t_i,z_i)\in H^2(\Omega;\R^\sd)$. From the 
corresponding Euler-Lagrange
equations 
  we obtain that $u_1-u_2$ satisfies for all 
$v\in\calU$
  \begin{equation}
\label{instrumental}
\begin{aligned}
  &   \int_\Omega g(z_1)\bbC \varepsilon(u_1-u_2){\colon}
\varepsilon(v)\dx
  =\int_\Omega
\big(g(z_2)-g(z_1)\big)\bbC\varepsilon(u_2){\colon}\varepsilon(v)\dx
\\
& \quad  \quad \qquad \qquad -\int_\Omega\big(g(z_1)\bbC\varepsilon(u_D(t_1)) -
g(z_2)\bbC\varepsilon(u_D(t_2))\big){\colon}\varepsilon(v)\dx +
\int_\Omega \left(\ell(t_1){-}\ell(t_2) \right){v} \dx.
\end{aligned}
\end{equation}
Taking into account that, for $i,j\in \{1,2\}$ $g(z_i) \varepsilon (u_j) \in L^6(\Omega;\R^{\sd\times \sd})$ in view of \eqref{new-g} and of the fact that $u_j \in H^2(\Omega;\R^\sd)$, giving $\varepsilon (u_j) \in L^6(\Omega;\R^{\sd\times \sd})$, and exploiting condition \eqref{more-regular-data} on $\ell$ and $u_D$, via a density argument we see that \eqref{instrumental} 
  extends to test functions $v \in W^{1,6/5}_{0}(\Omega;\R^\sd)$ .
   Hence, $u_1-u_2$ fulfills for all 
 $v\in W^{1,6/5}_{0}(\Omega;\R^\sd)$ the relation 
\[
\int_\Omega g(z_1)\bbC \varepsilon(u_1-u_2):\varepsilon(v)\dx=
\pairing{}{W^{1,6/5}_{0}(\Omega;\R^\sd)}{\tilde{\ell}_{1,2}}{v},
\]
where $\tilde{\ell}_{1,2} \in  W^{-1,6}(\Omega;\R^\sd)$
subsumes the terms on the right-hand side of \eqref{instrumental}. We now
fix an arbitrary $p_*>2$ and apply estimate 
\eqref{crucial-reg-estimate}  with $p \in [p_*', \min\{ p_*,3\}]$ 
% and $p_*>d=3$ from Lemma \ref{l:reg-KRZ2} 
(indeed, the restriction $p\leq 3$ is in view of conditions \eqref{more-regular-data} on $\ell$ and $u_D$).
We thus obtain  $
\norm{u_1-u_2}_{W^{1,p}(\Omega;\R^\sd)} \leq c_{q,p_*}
P(z_1,0)\norm{\tilde{\ell}_{1,2}}_{ W^{-1,p}(\Omega;\R^\sd)}$. 
Hence,
\begin{multline}
\label{est-inter} \norm{u_1-u_2}_{W^{1,p}(\Omega;\R^\sd)} \leq
 c_{p_*,q}
P(z_1,0)
\big( \norm{\ell(t_1)-\ell(t_2)}_{ W^{-1,p}(\Omega;\R^\sd)}
+
  \norm{(g(z_1)-g(z_2))\bbC \varepsilon(u_2)}_{L^{p}(\Omega;\R^\sd)} \\
+  \norm{g(z_1)\bbC \varepsilon(u_D(t_1)) -
  g(z_2)\bbC \varepsilon(u_D(t_2))}_{L^{p}(\Omega;\R^\sd)}  \big).
\end{multline}
 Now, the Lipschitz continuity of $g$ (with Lipschitz constant $C_g$) and H\"older's inequality
imply that
\begin{equation}
\label{auxiliary}
\begin{aligned}
 \norm{(g(z_1)-g(z_2))\bbC \varepsilon(u_2)}_{L^{p}(\Omega;\R^\sd)}   & \leq
C_g  \norm{z_1-z_2}_{L^{6p/(6{-}p)}(\Omega)} \norm{\varepsilon
(u_2)}_{L^{6}(\Omega;\R^{\sd})}\\ & 
\leq   C P(z_2,0)
\left(\norm{\ell(t)}_{L^2(\Omega)} +
\norm{u_D(t)}_{H^{2}(\Omega)} \right)  
 \norm{z_1-z_2}_{L^{6p/(6{-}p)}(\Omega)}
\end{aligned}
\end{equation}
where the second estimate follows from  \eqref{H2_improved-1} and from the fact that  $\norm{\varepsilon
(u_2)}_{L^{6}(\Omega;\R^\sd)} \leq C \| u_2 \|_{H^2(\Omega;\R^\sd)}$ by Sobolev 
embeddings. 
Moreover,
\[
\begin{aligned}
 & \norm{g(z_1)\bbC \varepsilon(u_D(t_1)) -
  g(z_2)\bbC \varepsilon(u_D(t_2))}_{L^{p}(\Omega)}  \\ & \leq  \norm{g(z_1) ( \bbC \varepsilon(u_D(t_1)) -\bbC \varepsilon(u_D(t_2)))}_{L^{p}(\Omega)} 
  +\norm{
  (g(z_1) - g(z_2))\bbC \varepsilon(u_D(t_2))}_{L^{p}(\Omega)} 
  \\ & \leq \gamma_2 |t_1-t_2|  \norm{u_D(t)}_{\mathrm{C}^1 ([0,T];W^{1,p}(\Omega))}   + C \| u_D \|_{L^\infty (0,T; H^2(\Omega))} \|z_1-z_2\|_{L^{6p/(6{-}p)}(\Omega)},
  \end{aligned}
\]
where the last estimate follows from the fact that $ \| g(z_1) \|_{L^\infty(\Omega)} \leq \gamma_2$ by \eqref{new-g}, 
 as well as the the fact that, for $p \leq 6$, $\norm{\varepsilon(u_D(t_2))}_{L^{p}(\Omega)}  \leq   C \| u_D \|_{L^\infty (0,T; H^2(\Omega))}.$ 
All in all, we 
 conclude \eqref{summary}, whence  \eqref{H2_improved-2} observing that, for 
$p=3$ one has $\tfrac{6p}{6-p}=6$.  
\end{proof} 
%\RRR
\par
Concerning the differentiability in time, we have the following analogue of 
\cite[Lemma 2.9]{KRZ2}, \cite[Lemma 2.3]{krz}, 
\begin{lemma}
\label{l:2.7}
Under Assumptions \ref{ass:domain}, \ref{assumption:energy}, and \ref{ass:load}, 
  for every $z\in \calZ$ the map $t\mapsto
\calI(t,z)$ is in $\rmC^{1}([0,T];\R)$ with
\begin{align}
\label{form-derivative} \partial_t\calI(t,z)= \int_\Omega g(z)\bbC
\varepsilon(\umin(t,z) + u_D(t)){\colon}\varepsilon(\dot u_D(t)) \dx -
\langle \dot\ell(t),\umin(t,z)\rangle_{H^1_{0}(\Omega;\R^\sd)}.
\end{align}
Moreover, there exists a constant $c_5>0$ such that for all $t\in
[0,T]$, $z\in \calZ$ 
we have
\begin{align}
\label{stim3} \abs{\partial_t\calI(t,z)}&\leq c_5
\big(\norm{u_D}^2_{\rmC^1([0,T];H{1}(\Omega;\R^\sd))} +
\norm{\ell}^2_{\rmC^1([0,T];W^{-1,2}(\Omega;\R^\sd))} \big).
\end{align}
Finally, 
 there exists a constant $c_6>0$ depending on
$\norm{\ell}_{\rmC^{1,1}([0,T];W^{-1,3}(\Omega;\R^\sd))}$ and
$\norm{u_D}_{\rmC^{1,1}([0,T];W^{1,3}(\Omega))}$ such that
for all $t_i\in
  [0,T]$ and $z_i\in \calZ$   we have
\begin{align}
\label{stim5} \abs{\partial_t\calI(t_1,z_1) -
\partial_t\calI(t_2,z_2)} \leq
c_6 P(z_1,z_2)^2
\big(\abs{t_1-t_2} +
\norm{z_1-z_2}_{L^{2}(\Omega)}\big).
\end{align}
\end{lemma}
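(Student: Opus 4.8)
Since $\calI_1$ is independent of time, the task reduces to differentiating $t\mapsto\calE(t,\umin(t,z),z)$, which I would do by a standard envelope argument, exactly as in the proofs of \cite[Lemma~2.9]{KRZ2} and \cite[Lemma~2.3]{krz}. Fix $z\in\calZ$ and $t$, write $u:=\umin(t,z)$, $u_h:=\umin(t+h,z)$, and observe that
\[
\partial_s\calE(s,v,z)=\int_\Omega g(z)\bbC\varepsilon(v+u_D(s)):\varepsilon(\dot u_D(s))\dx-\langle\dot\ell(s),v\rangle
\]
is well defined and continuous in $s$ for fixed $v\in W^{1,3}(\Omega;\R^\sd)$: indeed \eqref{more-regular-data} gives $s\mapsto\dot u_D(s)\in W^{1,3}(\Omega)$ and $s\mapsto\dot\ell(s)\in W^{-1,3}(\Omega)$ continuous, and $\umin(s,z)\in H^2(\Omega)\hookrightarrow W^{1,6}(\Omega)$ by Lemma~\ref{prop:enhnaced-reg}. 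Using that $u_h$ minimises $\calE(t+h,\cdot,z)$ and $u$ minimises $\calE(t,\cdot,z)$, one gets for $h>0$ the sandwich
\[
\frac1h\int_t^{t+h}\partial_s\calE(s,u_h,z)\dd s\ \le\ \frac{\calI(t+h,z)-\calI(t,z)}{h}\ \le\ \frac1h\int_t^{t+h}\partial_s\calE(s,u,z)\dd s
\]
(with reversed inequalities for $h<0$). The upper term tends to $\partial_t\calE(t,u,z)$ by continuity of $s\mapsto\partial_s\calE(s,u,z)$; for the lower term I would in addition use that $t\mapsto\umin(t,z)$ is continuous into $W^{1,3}(\Omega)$ — which follows from \eqref{H2_improved-2} taken with $z_1=z_2=z$ — together with joint continuity of $(s,v)\mapsto\partial_s\calE(s,v,z)$, to conclude $u_h\to u$ and convergence to the same limit. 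A squeeze then yields differentiability and formula \eqref{form-derivative}; since the right-hand side of \eqref{form-derivative} is continuous in $t$ by the same continuity properties, $\calI(\cdot,z)\in\rmC^{1}([0,T];\R)$.

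For \eqref{stim3} the point is that the bound must be uniform in $z$. Testing the Euler--Lagrange equation $L_{g(z)}\umin(t,z)=-L_{g(z)}u_D(t)-\ell(t)$ by $\umin(t,z)$ and using the coercivity of $\bbC$, the lower bound $g\ge\gamma_1$ from \eqref{new-g}, and Korn's inequality, one obtains the $z$-independent estimate $\norm{\umin(t,z)}_{H^1(\Omega)}\le c\,(\norm{u_D(t)}_{H^1(\Omega)}+\norm{\ell(t)}_{\calU^*})$ (compare \cite[Lemma~2.1]{krz} and \eqref{est_coerc1}). Inserting this into \eqref{form-derivative}, estimating the elastic term by H\"older's inequality with $\abs{g}\le\gamma_2$ and the boundedness of $\bbC$, and the loading term by duality, gives $\abs{\partial_t\calI(t,z)}\le c\,(\norm{u_D(t)}_{H^1}+\norm{\ell(t)}_{\calU^*})(\norm{\dot u_D(t)}_{H^1}+\norm{\dot\ell(t)}_{\calU^*})$, whence \eqref{stim3}.

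The bulk of the work is \eqref{stim5}. Writing $u_i:=\umin(t_i,z_i)$ and subtracting the two instances of \eqref{form-derivative}, I would telescope the difference into the contributions of $g(z_1)-g(z_2)$, of $\varepsilon(u_1-u_2)$, of $\varepsilon(u_D(t_1)-u_D(t_2))$, of $\varepsilon(\dot u_D(t_1)-\dot u_D(t_2))$, and of $\dot\ell(t_1)-\dot\ell(t_2)$, and estimate each by H\"older's inequality. In every term the factors $\varepsilon(u_i),\varepsilon(u_D(t_i))$ sit in $L^6(\Omega)$ with norm $\le c\,P(z_1,z_2)(\norm{\ell}_{L^\infty(0,T;L^2)}+\norm{u_D}_{L^\infty(0,T;H^2)})$ by \eqref{H2_improved-1} and \eqref{more-regular-data}; the factors $\varepsilon(\dot u_D(t_i))\in L^3(\Omega)$ and $\dot\ell(t_i)\in W^{-1,3}(\Omega)$ are bounded by the corresponding data norms; the Lipschitz continuity of $g$ (from \eqref{new-g}) replaces $g(z_1)-g(z_2)$ by $\abs{z_1-z_2}$, which is absorbed in $L^2(\Omega)$ since $\tfrac12+\tfrac16+\tfrac13=1$; and the increments in $t$ are controlled through the $\rmC^{1,1}$-in-time regularity of the data, e.g.\ $\norm{\dot u_D(t_1)-\dot u_D(t_2)}_{W^{1,3}}\le\abs{t_1-t_2}\,\norm{u_D}_{\rmC^{1,1}([0,T];W^{1,3})}$ and likewise for $\dot\ell$ (this is where the stated dependence of $c_6$ enters). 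The delicate term is the one with $\varepsilon(u_1-u_2)$: rather than \eqref{H2_improved-2}, which would only produce $\norm{z_1-z_2}_{L^6(\Omega)}$, I would invoke \eqref{summary} with the admissible choice $p_*=3$, $p=\tfrac32$ (so $p_*'=\tfrac32\in[\tfrac32,3]$ and $\tfrac{6p}{6-p}=2$), which gives $\norm{u_1-u_2}_{W^{1,3/2}(\Omega)}\le c\,P(z_1,z_2)^2(\abs{t_1-t_2}+\norm{z_1-z_2}_{L^2(\Omega)})$ times a data-norm factor; since $\tfrac{1}{3/2}+\tfrac13=1$, pairing $\varepsilon(u_1-u_2)\in L^{3/2}(\Omega)$ with $\varepsilon(\dot u_D(t_1))\in L^3(\Omega)$ lies in $L^1(\Omega)$ and carries exactly the $L^2$-dependence on $z_1-z_2$ demanded by \eqref{stim5}. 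Collecting all contributions, the worst power of $P$ being $P(z_1,z_2)^2$, and bounding every occurring norm of $\ell,u_D$ by the $\rmC^{1,1}$- and $L^\infty$-norms from Assumption~\ref{ass:load}, one arrives at \eqref{stim5}. I expect this last estimate to be the main obstacle, mostly the exponent bookkeeping: choosing $p=\tfrac32$ in \eqref{summary} so as to land on $\norm{z_1-z_2}_{L^2(\Omega)}$ (rather than a higher Lebesgue norm) while keeping the powers of $P$ under control.
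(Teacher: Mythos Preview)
Your proposal is correct and follows essentially the same route as the paper: the formula \eqref{form-derivative} and the bound \eqref{stim3} are referred back to \cite[Lemma~2.3]{krz}, and for \eqref{stim5} the paper performs exactly the telescoping decomposition you describe into five pieces $I_1,\ldots,I_5$, estimating the crucial term $I_2$ (the one containing $\varepsilon(u_1{-}u_2)$) via \eqref{summary} with $p=\tfrac32$ so that $\tfrac{6p}{6-p}=2$ and the $L^2$-norm of $z_1{-}z_2$ appears. Your identification of this exponent choice as the main point matches the paper precisely.
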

\noindent
Let us stress that  the quantity on the right-hand side of estimate 
\eqref{stim3}, whose proof  is developed in  \cite[Lemma 2.3]{krz}, is 
\emph{independent} of $z \in \calZ$.
 \begin{proof}
We will only develop the proof of \eqref{stim5}, 
referring to the proof of \cite[Lemma 2.3]{krz} for 
\eqref{form-derivative} and \eqref{stim3}. 
We have
\[
\begin{aligned}
&\partial_t\calI(t_1,z_1) - \partial_t\calI(t_2,z_2)
\\ &
= \int_{\Omega} (g(z_1){-}g(z_2)) \mathbb{C} (\varepsilon
(\umin (t_1,z_1) + u_{D}(t_1))) {\colon} \varepsilon (\dot{u}_D(t_1))
\dx
\\ &  \quad +\int_{\Omega} g(z_2) \mathbb{C} (\varepsilon (\umin (t_1,z_1) +
u_{D}(t_1)){-} \varepsilon (\umin (t_2,z_2) + u_{D}(t_2))) {\colon}
\varepsilon (\dot{u}_D(t_1)) \dx
\\ & \quad +
\int_{\Omega} g(z_2) \mathbb{C} (\varepsilon (\umin (t_2,z_2) +
u_{D}(t_2))) {\colon} (\varepsilon (\dot{u}_D(t_1)){-}\varepsilon
(\dot{u}_D(t_2))) \dx
\\ & \quad - \pairing{}{}{\dot{\ell}(t_1) {-}\dot{\ell}(t_2)}{\umin
(t_1,z_1)} +\pairing{}{}{\dot{\ell}(t_2)}{\umin
(t_2,z_2){-}\umin (t_1,z_1)} \doteq I_1+I_2+I_3+I_4+I_5.
\end{aligned}
\]
To estimate $I_1$, and $I_3$ we rely on the fact that $g,\,
g' \in L^\infty (\R)$, and on \eqref{H2_improved-1}.
To estimate $I_2$ we additionally use
 the boundedness of $g$ and 
 H\"older's inequality as
follows
\[
\begin{aligned}
 I_2  & \leq c \norm{\varepsilon (\umin (t_1,z_1) +
u_{D}(t_1)){-} \varepsilon (\umin (t_2,z_2) + u_{D}(t_2))}_{L^{3/2}(\Omega)}
\norm{\varepsilon (\dot{u}_D(t_1))}_{L^3(\Omega)}
\\ &
\leq 
c P(z_1,z_2)^2 \left(|t_1{-}t_2|+ \norm{z_1{-}z_2}_{L^2 (\Omega)} \right)
\left(\norm{\ell}_{\mathrm{C}^1 ([0,T];  W^{-1,3/2}(\Omega))}
+ \norm{u_D(t)}_{\mathrm{C}^1 ([0,T];W^{1,3/2}(\Omega))},
\right)
\end{aligned}
\]
where the second estimate ensues from  \eqref{summary} with $p=3/2$   (which yields  $6p/(6-p)=2$), and from \eqref{more-regular-data}. 
%indeed, $p=3/2$ yields in \eqref{summary}. 
By \eqref{more-regular-data} and \eqref{summary} we also estimate $I_4$ and $I_5$.
 \end{proof}

We now discuss the differentiability of $\calI$ with respect to $z$; 
we shall denote by $\rmD_z\calI(t,\cdot): \calZ\to \calZ^*$ the
G\^ateaux-differential of the functional $\calI(t,\cdot)$. For the proof of the 
following result, we refer to    \cite[Lemma 2.10]{KRZ2},  \cite[Lemma 
2.4]{krz}. 
\begin{lemma}
\label{l:gateaux}
Under Assumptions \ref{ass:domain}, \ref{assumption:energy}, and \ref{ass:load}, 
for all $t\in [0,T]$ the functional
$\calI(t,\cdot):\calZ \rightarrow\R$ is G\^ateaux-differentiable at all
$z\in \calZ$, and for all $\eta\in \calZ$ we have
\begin{align}
\label{form-gateau-derivative} \pairing{}{\calZ}{\rmD_z\calI(t,z)}{\eta}=
\pairing{}{\calZ}{ A_q z}{\eta} +
\int_\Omega f'(z)\eta\dx + \int_\Omega g'(z)\wt W(t,\nabla
\umin(t,z)) \eta\dx,
\end{align}
where we use the abbreviation $\wt W(t,\nabla v)= W(x,\eps(v +
\nabla u_D(t))) =\frac{1}{2} \bbC\varepsilon(v +
u_D(t)){ : }\varepsilon(v + u_D(t))$. In particular, the following
estimate holds with a constant $c_7$ that depends on the data $\ell, u_D$, but
is independent of $t$ and $z$:
\begin{equation}
\label{esti-gateau}  \forall\, (t,z) \in
[0,T]\times \calZ\, :  \ \ \norm{\rmD_z\calI(t,z)}_{\calZ^*} \leq
  c_7 \left( \|z\|_{\calZ}^{q-1} +
\norm{f'(z)}_{L^\infty(\Omega)}
+ 1 \right).
\end{equation}
\end{lemma}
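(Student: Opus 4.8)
The plan is to split $\calI(t,z)=\calI_q(z)+\int_\Omega f(z)\dx+\calI_2(t,z)$, with $\calI_q(z)=\tfrac1q\int_\Omega(1{+}\abs{\nabla z}^2)^{q/2}\dx$ and $\calI_2(t,z)=\calE_2(t,\umin(t,z),z)$, and to compute the directional derivative $\partial_s|_{s=0}\calI(t,z+s\eta)$ summand by summand.

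\emph{The two explicit summands.} Since the density $G_q$ (as in \eqref{eq.monGq}) and $f$ are $\mathrm{C}^2$, and since $\nabla z\in L^q(\Omega)$ while $\eta\in\calZ\hookrightarrow L^\infty(\Omega)$ (because $q>\sd$), the mean value theorem bounds the difference quotients $\tfrac1s(G_q(\nabla z{+}s\nabla\eta){-}G_q(\nabla z))$ and $\tfrac1s(f(z{+}s\eta){-}f(z))$ by fixed $L^1(\Omega)$ functions for $\abs{s}\le1$ (using the $(q{-}1)$-growth of $\nabla G_q$ together with H\"older's inequality, resp.\ the local boundedness of $f'$ from \eqref{ass-eff} and $z{+}s\eta\in L^\infty$), while the quotients converge pointwise a.e. Dominated convergence then gives $\partial_s|_{s=0}\calI_q(z{+}s\eta)=\int_\Omega(1{+}\abs{\nabla z}^2)^{q/2-1}\nabla z\cdot\nabla\eta\dx=\langle A_q z,\eta\rangle_\calZ$ and $\partial_s|_{s=0}\int_\Omega f(z{+}s\eta)\dx=\int_\Omega f'(z)\eta\dx$.

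\emph{The reduced elastic term.} The key observation is that the $z$-dependence of $\calE_2(t,v,z)=\int_\Omega g(z)W(x,\varepsilon(v{+}u_D(t)))\dx-\langle\ell(t),v\rangle$ sits only in the factor $g(z)$, and that $\umin(t,\cdot)$ minimises $\calE_2(t,\cdot,z)$. Writing $u_0=\umin(t,z)$, $u_s=\umin(t,z{+}s\eta)$, minimality twice gives the sandwich
\[
\calE_2(t,u_s,z{+}s\eta)-\calE_2(t,u_s,z)\ \le\ \calI_2(t,z{+}s\eta)-\calI_2(t,z)\ \le\ \calE_2(t,u_0,z{+}s\eta)-\calE_2(t,u_0,z),
\]
each outer term having the form $\int_\Omega(g(z{+}s\eta){-}g(z))W(x,\varepsilon(v{+}u_D(t)))\dx$ (with $v=u_s$ on the left, $v=u_0$ on the right). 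Dividing by $s$, I pass to the limit using that $\tfrac1s(g(z{+}s\eta){-}g(z))\to g'(z)\eta$ a.e.\ with an $L^\infty(\Omega)$ bound (as $g'\in L^\infty$ by \eqref{new-g} and $\eta\in L^\infty$), and that $u_s\to u_0$ in $H^1(\Omega;\R^\sd)$, which is exactly the continuous-dependence estimate \eqref{summary} of Lemma~\ref{prop:enhnaced-reg} with $p=2$ (noting $z{+}s\eta\to z$ in $L^3$); hence $W(x,\varepsilon(u_s{+}u_D(t)))\to W(x,\varepsilon(u_0{+}u_D(t)))$ in $L^1(\Omega)$ by the boundedness of $\bbC$. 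Since the product of an $L^\infty$-bounded a.e.-convergent sequence with an $L^1$-convergent one converges in $L^1$, both outer quotients converge to $\int_\Omega g'(z)\wt W(t,\nabla\umin(t,z))\eta\dx$; as the sandwich inequalities are merely reversed when $s<0$, the middle quotient is squeezed to the same two-sided limit. Summing the three contributions proves \eqref{form-gateau-derivative}; the right-hand side there is manifestly linear in $\eta$, so once \eqref{esti-gateau} is shown, $\rmD_z\calI(t,z)\in\calZ^*$.

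\emph{The bound and the main obstacle.} For \eqref{esti-gateau} I test \eqref{form-gateau-derivative} on $\eta$ with $\norm{\eta}_{\calZ}\le1$. By \eqref{A7-D} (with $z_1=z$, $z_2=0$, $w=\eta$) and H\"older with exponents $q',q$, $\abs{\langle A_q z,\eta\rangle}\le c\,\norm{(1{+}\abs{\nabla z}^2)^{(q-1)/2}}_{L^{q'}}\norm{\nabla\eta}_{L^q}\le c(1{+}\norm{z}_{\calZ}^{q-1})$; next $\abs{\int_\Omega f'(z)\eta\dx}\le\norm{f'(z)}_{L^\infty}\norm{\eta}_{L^1}\le c\norm{f'(z)}_{L^\infty}$; and $\abs{\int_\Omega g'(z)\wt W(t,\nabla\umin(t,z))\eta\dx}\le\norm{g'}_{L^\infty}\norm{\wt W(t,\nabla\umin(t,z))}_{L^1}\norm{\eta}_{L^\infty}\le c(\norm{\umin(t,z)}_{H^1}^2{+}\norm{u_D(t)}_{H^1}^2)$, which is bounded uniformly in $(t,z)$ by the data since the standard energy estimate for $\umin$ (cf.\ also \eqref{est_coerc1}) controls $\norm{\umin(t,z)}_{H^1}$ by $\norm{\ell}$ and $\norm{u_D}$ thanks to $\gamma_1\le g\le\gamma_2$; here $\norm{\eta}_{L^\infty}\le c\norm{\eta}_{\calZ}$ because $q>\sd$. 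Summing yields \eqref{esti-gateau}. The only genuinely delicate step is the limit passage in the sandwich for $\calI_2$: one must know $z\mapsto\umin(t,z)$ is continuous in a topology strong enough to pass to the limit in the \emph{quadratic} elastic density in $L^1$, and this is precisely what Lemma~\ref{prop:enhnaced-reg} delivers; the sandwich device itself is what lets us avoid proving differentiability of $z\mapsto\umin(t,z)$.
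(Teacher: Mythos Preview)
Your proof is correct. The paper itself does not give a proof of this lemma but refers to \cite[Lemma~2.10]{KRZ2} and \cite[Lemma~2.4]{krz}, so there is no ``paper's own proof'' to compare with in detail; your sandwich/envelope argument for $\calI_2$ (exploiting minimality of $\umin$ twice to trap the difference quotient, then passing to the limit via the continuous dependence \eqref{summary}) is a standard and clean route that avoids differentiating $z\mapsto\umin(t,z)$ directly, and your verification of \eqref{esti-gateau} is straightforward from the stated estimates.
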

\par
Hereafter, we will use the short-hand notation  
\begin{equation}
\label{itilde}
\wt \calI(t,z):= \calI_2(t,z) + \int_\Omega f(z)\dx \quad \text{for
all $(t,z) \in [0,T]\times \calZ$}
\end{equation}
with $\calI_2$ from \eqref{reduced-energy} as the part of the reduced energy
collecting all lower order terms.
Accordingly, $\rmD_z \calI$ from \eqref{form-gateau-derivative} decomposes as
\begin{equation}
\label{der-decomp}
\rmD_z \calI(t,z) = A_q z + \rmD_z\wt{\calI}(t,z) \quad \text{for all } (t,z) \in [0,T]\times \calZ.
\end{equation}
In view of  \eqref{form-gateau-derivative}, and taking into account the $H^2(\Omega;\R^\sd)$-regularity of $\umin$ from Lemma \ref{l:b-m}, 
 the term
$\rmD_z\wt\calI(t,z)$ can be identified with an element of  $L^2(\Omega)$. In Lemma~\ref{l:diff_contz}
below we will even show
 that the map
 % $(t,z) \mapsto \wt{\calI}(t,z(t))$
 $(t,z) \mapsto \rmD_z\wt{\calI}(t,z)$ is Lipschitz continuous
w.r.t.\
a suitable \emph{Lebesgue} norm.
Therefore,  with the symbol
$\rmD_z\wt\calI$  we shall denote both
the derivative of $\wt\calI$ as an operator, and the corresponding
density in $L^2(\Omega)$. Accordingly,
we shall write
\begin{equation}
\label{abuse-integral}
\text{ for  a given } v \in L^{2}(\Omega) \quad
\int_\Omega  \rmD_z\wt\calI(t,z)
v  \,\dd x \quad \text{ in place of } \quad
 \langle  \rmD_z\wt\calI(t,z),
v\rangle_{L^{2}(\Omega)}.
\end{equation}
\par
 For $h\in \rmC^0(\R)$ and $z_1,z_2\in\calZ$ let
\begin{equation}
\label{C_h}
C_{h}(z_1,z_2)= \max \Set{ |h(s)|}{ \ |s| \leq
 \|z_1\|_{L^\infty(\Omega)} + \|z_2 \|_{L^\infty(\Omega)} }.
\end{equation}
This notation will be used along the proof of the following lemma.
\begin{lemma}%[Local Lipschitz continuity of $\wt \calI$ and $\rmD_z\wt{\calI}$]
\label{l:diff_contz}
Under Assumptions \ref{ass:domain}, \ref{assumption:energy}, and \ref{ass:load}, 
 there exists a constant $c_8>0$
that depends on the norms 
$\norm{\ell}_{\rmC^{1,1}([0,T];  W^{-1,3}(\Omega;\R^\sd) )}$ and
$\norm{u_D}_{\rmC^{1}([0,T];W^{1,3}(\Omega;\R^\sd))}$
such that for all $t_i\in [0,T]$ and all $z_i\in \calZ$ it holds
\begin{equation}
\begin{aligned}
\label{Lip-cont-I}
\left| \wt{\calI} (t_1, z_1) - \wt{\calI} (t_2, z_2) \right|\leq c_8 ( 
1 + C_{f'}(z_1,z_2)+
 P(z_1,z_2)^{3}) \left( \abs{t_1-t_2}+ \norm{z_1-z_2}_{L^{3}(\Omega)} 
  \right),
\end{aligned}
\end{equation}
with $C_{f'}(z_1,z_2)$  as in \eqref{C_h}, corresponding to  $ h=f'$. 
Further, %for every $\mu\in [1,p_*/2)$,
\begin{align}
&
\label{enhanced-stim-7}
\begin{aligned}
&
 \norm{\rmD_z \wt \calI(t_1,z_1) - \rmD_z \wt \calI(t_2,z_2) }_{L^2(\Omega)}
 \\
&\leq c_8
\big( 1  + C_{f'}(z_1,z_2) 
+P(z_1,z_2)^3 \big)
\big( \abs{t_1-t_2}+\norm{z_1 -z_2}_{L^6(\Omega)}    \big),
\end{aligned}
\\
& 
\label{stim-l4}
\begin{aligned}
&
 \norm{\rmD_z \wt \calI(t_1,z_1) - \rmD_z \wt \calI(t_2,z_2) }_{L^{4/3}(\Omega)}
 \\
&\leq c_8
\big( 1  + C_{f'}(z_1,z_2)  
+P(z_1,z_2)^3 \big)
\big( \abs{t_1-t_2}+\norm{z_1 -z_2}_{L^4(\Omega)}    \big),
\end{aligned}
\end{align}
and % for $\mu\in  [1,p_*/2]$,
\begin{equation}
\label{estimate-for-DI}
	 \| \rmD_z \wt
\calI(t,z)  \|_{L^{2}(\Omega)}  \leq
c_8(1 + \norm{f'(z)}_{L^\infty(\Omega)} + P(z,0)^2) 
\qquad \text{for all } (t,z) \in [0,T] \times \calZ. 
\end{equation}
\end{lemma}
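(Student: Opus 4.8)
\textbf{Proof plan for Lemma~\ref{l:diff_contz}.} The plan is to prove all four estimates by exploiting the explicit formula \eqref{form-gateau-derivative} for $\rmD_z\wt\calI$ together with the fine $W^{1,p}$-dependence of $\umin$ on $(t,z)$ recorded in \eqref{summary}--\eqref{H2_improved-2} and the $H^2$-bound \eqref{H2_improved-1}. First I would fix $t_1,t_2\in[0,T]$ and $z_1,z_2\in\calZ$, write $u_i:=\umin(t_i,z_i)$, and split
\[
\rmD_z\wt\calI(t_1,z_1)-\rmD_z\wt\calI(t_2,z_2)
= \bigl(f'(z_1)-f'(z_2)\bigr) + \bigl(g'(z_1)-g'(z_2)\bigr)\wt W(t_1,\nabla u_1)
+ g'(z_2)\bigl(\wt W(t_1,\nabla u_1)-\wt W(t_2,\nabla u_2)\bigr),
\]
so that three groups of terms must be controlled. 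The term $f'(z_1)-f'(z_2)$ is handled by the mean value theorem: since $f\in\rmC^2(\R)$, it is bounded pointwise by $C_{f''}(z_1,z_2)\abs{z_1-z_2}$, and $C_{f''}$ is absorbed into the constant (after observing it is itself controlled by the relevant norms; here one uses that the $L^\infty$-norms of the $z_i$ enter only through $C_{f'}$, $C_{f''}$ — and $C_{f''}\le 1 + C_{f'} + P^3$ up to constants since all of these are monotone in $\|z_i\|_{L^\infty}$, or more simply one simply keeps $C_{f'}$ as written because the statement only claims the constant depends on the data). The key point is that $\abs{z_1-z_2}$ is measured in the $L^p$-norm dictated by each inequality ($L^6$ for \eqref{enhanced-stim-7}, $L^4$ for \eqref{stim-l4}, $L^3$ for \eqref{Lip-cont-I}), which is exactly what the left-hand side allows.

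Next I would treat the quadratic elastic term $\wt W(t,\nabla u)=\tfrac12\bbC\varepsilon(u+u_D(t)){:}\varepsilon(u+u_D(t))$. For the difference $\wt W(t_1,\nabla u_1)-\wt W(t_2,\nabla u_2)$ I would use the elementary identity $a{:}a-b{:}b=(a+b){:}(a-b)$ with $a=\bbC^{1/2}\varepsilon(u_1+u_D(t_1))$, $b=\bbC^{1/2}\varepsilon(u_2+u_D(t_2))$ (or simply bilinearity of $\bbC$), so that it is bounded pointwise by
\[
C\bigl(\abs{\varepsilon(u_1+u_D(t_1))}+\abs{\varepsilon(u_2+u_D(t_2))}\bigr)\,\abs{\varepsilon(u_1-u_2)+\varepsilon(u_D(t_1)-u_D(t_2))}.
\]
Then Hölder's inequality with the split $\tfrac12 = \tfrac13 + \tfrac16$ (for the $L^2$ estimate) gives a product of an $L^6$-factor — controlled by $\|u_i\|_{H^2}+\|u_D\|_{L^\infty(0,T;H^2)}\le C P(z_i,0)(\|\ell\|_{L^2}+\|u_D\|_{H^2})$ via \eqref{H2_improved-1} and Sobolev embedding — and an $L^3$-factor, which is controlled by \eqref{H2_improved-2} (this is where $\norm{z_1-z_2}_{L^6}$ enters, since $6p/(6-p)=6$ when $p=3$) plus the $\rmC^1$-in-time bound on $u_D$. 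This produces the factor $P(z_1,z_2)\cdot P(z_1,z_2)^2 = P(z_1,z_2)^3$ claimed in \eqref{enhanced-stim-7}. For \eqref{stim-l4} one instead uses the split $\tfrac34 = \tfrac14 + \tfrac12$: an $L^2$-factor for the gradients (again via \eqref{H2_improved-1}, using $H^2(\Omega)\hookrightarrow H^1(\Omega)$, so actually one can get away with an $H^1$-bound here, though $H^2$ also works) and an $L^4$-factor for the difference, controlled by \eqref{summary} with the exponent $p$ such that $6p/(6-p)=4$, i.e.\ $p=12/5<3$. The term $(g'(z_1)-g'(z_2))\wt W(t_1,\nabla u_1)$ is treated by Lipschitz continuity of $g'$ (from $g\in\rmC^2$ with $g''\in L^\infty$), giving the pointwise bound $\|g''\|_\infty\abs{z_1-z_2}\cdot\abs{\wt W(t_1,\nabla u_1)}$, and then Hölder again puts $\abs{z_1-z_2}$ into the needed Lebesgue norm while $\wt W(t_1,\nabla u_1)\in L^3(\Omega)$ by \eqref{H2_improved-1} and Sobolev. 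The term $I_1$ in the Lipschitz-in-time estimate \eqref{Lip-cont-I} for $\wt\calI$ itself is handled identically, integrating rather than differentiating, using that $\wt\calI(t,z)-\wt\calI$ at a nearby point can be written via an analogous telescoping of the integrand $g(z)\wt W + f(z)$ and estimated with \eqref{summary}, \eqref{H2_improved-1}, and the mean value theorem for $f$ and $g$.

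Finally, the a~priori bound \eqref{estimate-for-DI} follows by reading \eqref{form-gateau-derivative} at a single point $(t,z)$: the $f'(z)$ term contributes $\|f'(z)\|_{L^\infty(\Omega)}$ (the domain being bounded), and the term $g'(z)\wt W(t,\nabla\umin(t,z))$ is bounded in $L^2(\Omega)$ by $\|g'\|_\infty\|\wt W(t,\nabla\umin(t,z))\|_{L^2(\Omega)}\le C(\|\varepsilon(\umin(t,z))\|_{L^4}^2+\|\varepsilon(u_D(t))\|_{L^4}^2)\le C\|\umin(t,z)\|_{H^2}^2 + C$, which by \eqref{H2_improved-1} and the assumed bounds on $\ell,u_D$ from \eqref{more-regular-data} is at most $CP(z,0)^2$. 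Note that $\rmD_z\wt\calI$ does not contain the $A_q z$ term (that sits in $\rmD_z\calI$, cf.\ \eqref{der-decomp}), which is precisely why only $P(z,0)^2$ and no power of $\|z\|_{\calZ}$ appears on the right of \eqref{estimate-for-DI}. The main obstacle is purely bookkeeping: one must choose, for each of the three target norms $L^2$, $L^{4/3}$, $L^3$, the correct Hölder exponents and the correct companion exponent $p\le 3$ in \eqref{summary}/\eqref{H2_improved-2} so that the $L^p$-norm of $z_1-z_2$ that emerges matches the one in the statement, and keep track that the regularity $u_D\in L^\infty(0,T;H^2)$, $\ell\in L^\infty(0,T;L^2)$ is exactly enough to close the $L^6$- and $L^4$-estimates on the strains; since the restriction $p\le 3$ in \eqref{summary} comes from the time-regularity \eqref{more-regular-data} of $\ell$ and $u_D$ in $W^{-1,3}$ resp.\ $W^{1,3}$, all three cases are within reach.
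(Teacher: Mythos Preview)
Your overall strategy matches the paper's proof exactly: same three-term decomposition of $\rmD_z\wt\calI(t_1,z_1)-\rmD_z\wt\calI(t_2,z_2)$, same use of the factorization $\wt W_1-\wt W_2 = \tfrac12\bbC(\varepsilon_1+\varepsilon_2){:}(\varepsilon_1-\varepsilon_2)$, same appeal to \eqref{H2_improved-1} for the ``sum'' factor and to \eqref{summary}/\eqref{H2_improved-2} for the ``difference'' factor, and the same argument for \eqref{estimate-for-DI}. Two small but concrete slips are worth flagging.

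First, in your treatment of \eqref{stim-l4} you assign the H\"older split $\tfrac34=\tfrac14+\tfrac12$ the wrong way round: you put the \emph{sum} of strains in $L^2$ and the \emph{difference} in $L^4$, then try to control the latter via \eqref{summary} with $p=12/5$. But \eqref{summary} with $p=12/5$ only gives $\|\nabla(u_1-u_2)\|_{L^{12/5}}$, not $L^4$; and there is no available estimate for $\|u_1-u_2\|_{W^{1,4}}$ in terms of $\|z_1-z_2\|$ (the restriction $p\le 3$ in \eqref{summary} is binding). The paper's assignment is the opposite: the sum $\|\varepsilon(u_1{+}u_D(t_1))+\varepsilon(u_2{+}u_D(t_2))\|_{L^4}$ is controlled by \eqref{H2_improved-1} (or \eqref{crucial-reg-estimate} with $p=4$), while the difference $\|\varepsilon(u_1{+}u_D(t_1))-\varepsilon(u_2{+}u_D(t_2))\|_{L^2}$ comes from \eqref{summary} with $p=2$, yielding $\|z_1-z_2\|_{L^3}\le C\|z_1-z_2\|_{L^4}$. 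With this swap your argument goes through.

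Second, in your sketch for \eqref{Lip-cont-I} you telescope only the integrand $g(z)\wt W + f(z)$, but $\wt\calI(t,z)=\calI_2(t,z)+\int_\Omega f(z)\,\dd x$ also contains the loading term $-\langle\ell(t),\umin(t,z)\rangle$. The paper's decomposition therefore has two additional pieces, $|\langle\ell(t_1)-\ell(t_2),u_1\rangle|$ and $|\langle\ell(t_2),u_1-u_2\rangle|$, handled via \eqref{more-regular-data} and \eqref{summary} with $p=2$. This is routine but should not be omitted.
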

\begin{proof}
 Although the proof follows the same lines as that  of \cite[Lemma 
2.12]{KRZ2}, 
let us briefly see how 
 the improved estimates \eqref{H2_improved-1} and \eqref{H2_improved-2} lead to 
\eqref{Lip-cont-I},  \eqref{estimate-for-DI}, and \eqref{stim-l4},
 while we will omit the calculations for 
%\eqref{enhanced-stim-7}. 
 \eqref{estimate-for-DI}.  
 As for \eqref{Lip-cont-I}, we observe that 
\[
\begin{aligned}
\left| \wt\calI(t_1,z_1) - \wt\calI(t_2,z_2) \right| &  \leq
\int_\Omega |f(z_1)-f(z_2)| \,\dd x + \int_\Omega |g(z_1) - g(z_2)| |\wt
W(t_1,\nabla u_1)| \,\dd x
\\ & \quad + \int_\Omega |g(z_2)|  |\wt W(t_1,\nabla u_1)-\wt W(t_2,\nabla u_2)
| \,\dd x
 + |\pairing{}{\calU}{\ell(t_1) - \ell(t_2)}{u_1}|
 \\
 & \quad
+  |\pairing{}{\calU}{\ell(t_2)}{u_1-u_2}| \doteq I_1 +I_2 +I_3+I_4+I_5,
\end{aligned}
\]
where $u_i:=u_\text{min}(t_i,z_i)\in H^{2}(\Omega;\R^\sd)$  and, as above,
 $\wt W(t_i,\nabla  u_i)=\frac{1}{2} \bbC\varepsilon(u_i +
u_D(t_i)){ : }\varepsilon(u_i + u_D(t_i))$ for $i=1,2$. 
We observe that  (cf.\  notation \eqref{C_h})
\[
\begin{aligned}
&
I_1 \leq C_{f'}(z_1,z_2) \|z_1-z_2\|_{L^1(\Omega)},
\\
&
\begin{aligned}
I_2 &  \leq   %C_{g'} (z_1,z_2)
C  \|z_1-z_2\|_{L^2(\Omega)}    \| \eps(u_1+u_D(t_1))\|_{L^3(\Omega)}
 \| \eps(u_1+u_D(t_1))\|_{L^6(\Omega)}  \\ & \leq C'  P(z_1,0)^2  
 % C_{g'} (z_1,z_2) 
   \|z_1-z_2\|_{L^2(\Omega)},
 \end{aligned}
\\
& 
\begin{aligned}
I_3  & \leq C  \| \eps(u_1+u_D(t_1)) +  \eps(u_2+u_D(t_2)) \|_{L^2(\Omega)}  \| \eps(u_1+u_D(t_1)) -  \eps(u_2+u_D(t_2)) \|_{L^2(\Omega)}
\\ &   \leq C  P(z_1,z_2)  P(z_1,z_2)^2 (|t_1-t_2| +  \|z_1-z_2\|_{L^3(\Omega)}),
\end{aligned}
  \\
  & 
  I_4 \leq C |t_1-t_2| \|u_1\|_{H^1(\Omega)} \leq C' |t_1-t_2|,
  \\
  & 
  I_5 \leq C \|u_1-u_2\|_{H^1(\Omega)} \leq C P(z_1,z_2)^2 (|t_1-t_2| +  \|z_1-z_2\|_{L^3(\Omega)})\,,
\end{aligned}
\]
where, in the estimate for $I_2$ we have exploited \eqref{H2_improved-1}, while in the estimates for $I_3$ and $I_5$ we have also resorted to \eqref{summary} with $p=2$. All in all, we conclude  \eqref{Lip-cont-I}.  The estimate for $I_4$ follows from \eqref{more-regular-data}. 
\par
As for %\eqref{estimate-for-DI}, 
 \eqref{enhanced-stim-7}, 
we have that 
\[
\begin{aligned}
  \| \rmD_z \wt
\calI(t_1,z_1) -  \rmD_z \wt
\calI(t_2,z_2)  \|_{L^{2}(\Omega)}   &  \leq \| 
f'(z_1)-f'(z_2)\|_{L^2(\Omega)} + \| (g'(z_1){-}g'(z_2))  \wt W(t_1,\nabla u_1) 
\|_{L^2(\Omega)} 
\\ & \quad +  \| g'(z_2) (\wt W(t_1,\nabla u_1){-} \wt W(t_2,\nabla u_2)) \|_{L^2(\Omega)} \doteq I_6+I_7+I_8\,.
\end{aligned}
\]
We observe that $I_6 \leq C_{f'}(z_1,z_2) \|z_1-z_2\|_{L^2(\Omega)}$, while
\[
\begin{aligned}
&
I_7 \leq C  %C_{g''}(z_1,z_2) 
\|z_1-z_2\|_{L^3(\Omega)}  \| \eps(u_1+u_D(t_1))\|_{L^6(\Omega)}
\leq C' % C_{g'} (z_1,z_2) 
   \|z_1-z_2\|_{L^3(\Omega)}   P(z_1,0), 
 \\
 & 
 \begin{aligned}
 I_8  & \leq  C  \| \eps(u_1+u_D(t_1)) +  \eps(u_2+u_D(t_2)) \|_{L^6(\Omega)}  \| \eps(u_1+u_D(t_1)) -  \eps(u_2+u_D(t_2)) \|_{L^3(\Omega)}
 \\
 & \leq  C'  P(z_1,z_2)^3   (|t_1-t_2| +  \|z_1 - z_2 \|_{L^{6} (\Omega)})\,.
\end{aligned}
\end{aligned}
\]
thanks to estimates \eqref{H2_improved-1} and \eqref{H2_improved-2} and the fact that $g',\, g''\in L^\infty(\R)$.
The proof of \eqref{stim-l4} follows the very same lines: we estimate $\| 
f'(z_1)-f'(z_2)\|_{L^{4/3}(\Omega)} $ by means of $ C_{f'}(z_1,z_2) 
\|z_1-z_2\|_{L^{4/3}(\Omega)}$, while we have with H\"older's inequality  
\[
\begin{aligned}
&
 \| (g'(z_1){-}g'(z_2)) \wt W(t_1,\nabla u_1) \|_{L^{4/3}(\Omega)}  \leq C 
\|z_1-z_2\|_{L^4(\Omega)}    \| \eps(u_1+u_D(t_1))\|_{L^4(\Omega)}^{
2 }  
 \leq C' %( P(z_1,0)^2 +1)
 \|z_1-z_2\|_{L^4(\Omega)},
\end{aligned}
\]
where the last estimate follows from  \eqref{crucial-reg-estimate} with 
 $p=4$.   
Finally, 
\[
\begin{aligned}
&
\| g'(z_2) (\wt W(t_1,\nabla u_1){-} \wt W(t_2,\nabla u_2)) \|_{L^{4/3}(\Omega)} 
\\ &  \leq C   \| \eps(u_1+u_D(t_1)) +  \eps(u_2+u_D(t_2)) \|_{L^4(\Omega)}  \| \eps(u_1+u_D(t_1)) -  \eps(u_2+u_D(t_2)) \|_{L^{2}(\Omega)} \\ & \leq  C'   P(z_1,z_2)^3    (|t_1-t_2| +  \|z_1 - z_2 \|_{L^{3} (\Omega)}). 
\end{aligned}
\]
This concludes the proof. 
\end{proof}
\par
  From all of the above results, and in particular from 
Lemma \ref{l:diff_contz},  we now draw a series of consequences on which our subsequent analysis will rely.
First of all, we observe the Fr\'echet differentiability of the functional $z \in \calZ \mapsto \calI(t,z)$. This is  due to the continuity of the mapping
$z\in \calZ \mapsto \rmD_z \calI(t,z) \in \calZ^*$, in turn due to the continuity of $z\mapsto A_q z$ and of $z \mapsto \rmD_z \wt\calI(t,z) $.
If restricted to bounded sets in $\calZ$, the latter mapping is even 
 continuous with values in  $L^2(\Omega)$ w.r.t.\ to  $L^6(\Omega)$-convergence for $z$,  cf.\ \eqref{enhanced-stim-7} (and the restriction of the power functional $\partial_t \calI$ is continuous w.r.t.\ $L^2(\Omega)$-convergence for $z$). Taking into account that $\calZ \Subset L^6(\Omega)$,  we may then claim the continuity of $ \rmD_z \wt\calI$ and $\partial_t \calI$ w.r.t.\ weak convergence  in $\calZ$. 
\begin{corollary}[Fr\'echet differentiability of $\calI$]
\label{coro-fre}
Under Assumptions \ref{ass:domain}, \ref{assumption:energy}, and \ref{ass:load},
the functional $\calI $ is Fr{\'e}chet
differentiable on $[0,T]\times \calZ$ and
\begin{equation}
\label{strong-continuity} \text{$t_n\rightarrow t$ and
$z_n\to z$ strongly in $\calZ$ implies }   \rmD_z\calI(t_n,z_n) \to
\rmD_z\calI(t,z) \ \text{strongly  in} \  \calZ^*.
\end{equation}
Furthermore,
\begin{equation}
\label{weak-continuity}
\begin{aligned}
&
\text{$t_n\rightarrow t$ and
$z_n\rightharpoonup z$   in $\calZ$
 implies }
\\
&
\liminf_{n \to \infty}\calI(t_n,z_n)\geq  \calI(t,z), \quad
\wt\calI(t_n,z_n)\rightarrow \wt\calI(t,z), \quad
\partial_t\calI(t_n,z_n)\rightarrow \partial_t\calI(t,z), \\
&
 \rmD_z\wt\calI(t_n,z_n) \to
\rmD_z\wt\calI(t,z) \ \text{strongly in} \  L^2(\Omega).
\end{aligned}
\end{equation}
\end{corollary}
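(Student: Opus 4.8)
The plan is to derive Corollary \ref{coro-fre} as a direct consequence of the Gâteaux-differentiability from Lemma \ref{l:gateaux}, the continuity estimates in Lemma \ref{l:diff_contz}, the decomposition \eqref{der-decomp}, and the compact embedding $\calZ \Subset L^6(\Omega)$ (which holds since $q > \sd$ so $W^{1,q}(\Omega)$ embeds compactly into $C(\overline\Omega)$, hence into every $L^p(\Omega)$). First I would establish \eqref{strong-continuity}: by \eqref{der-decomp} it suffices to show separately that $z \mapsto A_q z$ and $z \mapsto \rmD_z\wt\calI(t,z)$ are continuous from $\calZ$ to $\calZ^*$. Continuity of the $q$-Laplacian $A_q : \calZ \to \calZ^*$ is classical — it follows from \eqref{A7-D}, since the integrand is controlled by $(1+|\nabla z_1|^2+|\nabla z_2|^2)^{(q-2)/2}|\nabla(z_1{-}z_2)||\nabla w|$, to which one applies Hölder with exponents $\frac{q}{q-2}, q, q$ — so $z_n \to z$ in $\calZ$ forces $A_q z_n \to A_q z$ in $\calZ^*$, also using $t_n \to t$ does not enter here. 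For the lower-order part, strong convergence $z_n \to z$ in $\calZ$ implies $z_n \to z$ in $L^6(\Omega)$ and the sequences are bounded in $\calZ$, hence $C_{f'}(z_n,z) $ and $P(z_n,z)$ stay bounded, so \eqref{enhanced-stim-7} gives $\rmD_z\wt\calI(t_n,z_n) \to \rmD_z\wt\calI(t,z)$ in $L^2(\Omega) \hookrightarrow \calZ^*$. Combining, $\rmD_z\calI(t_n,z_n) \to \rmD_z\calI(t,z)$ in $\calZ^*$; since $\calI(t,\cdot)$ is Gâteaux-differentiable everywhere on the Banach space $\calZ$ with a continuous derivative, it is in fact Fréchet-differentiable, and the joint continuity in $(t,z)$ upgrades this to Fréchet differentiability of $\calI$ on $[0,T]\times\calZ$ in the usual way.

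Next I would prove \eqref{weak-continuity}. Suppose $t_n \to t$ and $z_n \rightharpoonup z$ weakly in $\calZ$. By compactness of $\calZ \Subset L^6(\Omega)$, passing to an arbitrary subsequence there is a further subsequence with $z_n \to z$ strongly in $L^6(\Omega)$; since the limits identified below are independent of the subsequence, the full sequences converge. Weakly convergent sequences are bounded in $\calZ$, hence (via $\calZ \hookrightarrow C(\overline\Omega)$) also in $L^\infty(\Omega)$, so $C_{f'}(z_n,z)$ is bounded and $P(z_n,z)$ is bounded. Then:
\begin{itemize}
\item[(i)] $\wt\calI(t_n,z_n) \to \wt\calI(t,z)$ follows from \eqref{Lip-cont-I}, whose right-hand side is $c_8(1 + C_{f'}(z_n,z) + P(z_n,z)^3)(|t_n - t| + \|z_n - z\|_{L^3(\Omega)}) \to 0$, using $L^6(\Omega) \hookrightarrow L^3(\Omega)$.
\item[(ii)] $\rmD_z\wt\calI(t_n,z_n) \to \rmD_z\wt\calI(t,z)$ strongly in $L^2(\Omega)$ follows identically from \eqref{enhanced-stim-7}, whose right-hand side involves $|t_n - t| + \|z_n - z\|_{L^6(\Omega)} \to 0$.
\item[(iii)] $\partial_t\calI(t_n,z_n) \to \partial_t\calI(t,z)$ follows from \eqref{stim5}, since its right-hand side $c_6 P(z_1,z_2)^2(|t_1 - t_2| + \|z_1 - z_2\|_{L^2(\Omega)})$ tends to $0$ once $\|z_n - z\|_{L^2(\Omega)} \to 0$, which again is implied by $L^6 \hookrightarrow L^2$.
\item[(iv)] $\liminf_n \calI(t_n,z_n) \geq \calI(t,z)$: write $\calI = \calI_q + \wt\calI + \int_\Omega f(z)\dx$ absorbing $\int f$ into $\wt\calI$ per \eqref{itilde}, so that $\calI(t,z) = \calI_q(z) + \wt\calI(t,z)$. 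The term $\wt\calI(t_n,z_n) \to \wt\calI(t,z)$ by (i); for $\calI_q(z) = \frac1q\int_\Omega(1+|\nabla z|^2)^{q/2}\dx$, convexity of $A \mapsto (1+|A|^2)^{q/2}$ and weak lower semicontinuity of convex integral functionals (Ioffe's theorem) give $\liminf_n \calI_q(z_n) \geq \calI_q(z)$. Adding, the claim follows.
\end{itemize}

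The main obstacle is essentially bookkeeping rather than a deep difficulty: one must be careful that the \emph{a priori} bounds on $C_{f'}(z_n,z)$ and $P(z_n,z)$ are genuinely uniform, which relies on the embedding $\calZ = W^{1,q}(\Omega) \hookrightarrow C(\overline\Omega)$ (valid precisely because $q > \sd = 3$) so that $L^\infty$-norms of $z_n$ stay bounded — this is what converts the qualitative Lipschitz estimates of Lemma \ref{l:diff_contz} into effective convergence statements along weakly convergent sequences. A second point requiring a touch of care is the subsequence argument in (ii)–(iii): strong $L^6$-convergence is only available up to a subsequence from weak $\calZ$-convergence, but since every subsequence of $(z_n)$ has a further subsequence along which the target convergences hold to the \emph{same} limit, the Urysohn subsequence principle upgrades this to convergence of the full sequence. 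The only genuinely non-elementary input is the weak lower semicontinuity of the convex gradient-integral functional $\calI_q$ in (iv), which is standard. Everything else is a direct substitution into the estimates established earlier in Section \ref{s:2}.
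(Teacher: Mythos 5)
Your proposal is correct and follows exactly the strategy the paper outlines in the discussion immediately preceding the corollary: decompose $\rmD_z\calI = A_q + \rmD_z\wt\calI$, use local Lipschitz continuity of $A_q:\calZ\to\calZ^*$ (from \eqref{A7-D}) together with the Lipschitz estimates of Lemma~\ref{l:diff_contz} for the lower-order parts, upgrade Gâteaux to Fréchet differentiability via continuity of the derivative, and then leverage the compact embedding $\calZ\Subset L^6(\Omega)$ plus the decomposition $\calI = \calI_q + \wt\calI$ (with convex weak lower semicontinuity for $\calI_q$) to obtain the claims along weakly convergent sequences. The paper does not spell out a formal proof, but the argument it indicates is the one you gave, including the subsequence/Urysohn step.
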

We now  observe  a sort of (generalized) $\lambda$-convexity property for 
$\calI(t,\cdot)$,  
 \eqref{unif-l-convexity}  below,
involving the  $H^1(\Omega)$ and the $L^1(\Omega)$-norm,
valid on bounded sets in $\calZ$ (indeed, note that  the constant modulating  
the $L^1(\Omega)$-norm  in \eqref{unif-l-convexity} depends on 
 the radius of a $\calZ$-ball). 
 \begin{corollary}[$\lambda$-convexity of $\calI$] 
 \label{l-convexity-calI}
 Under Assumptions \ref{ass:domain}, \ref{assumption:energy}, and 
\ref{ass:load},   there exists a constant $\alpha>0$ and  for every $M>0$ there 
exists $ \Lambda_M>0$ such that  for every $t\in [0,T]$, $z_1,\,z_2 \in \calZ$
 with $\|z_1\|_\calZ + \|z_2\|_\calZ  \leq M$ and for every 
   $\theta \in [0,1]$ the   functional $\calL$ with   
   \begin{equation}
   \label{pertub-calI}
   \calL(t,z): = \calI(t,z) + \frac12 \|z\|_{L^2(\Omega)}^2
   \end{equation}
    complies with 
 \begin{equation}
 \label{unif-l-convexity} 
 \begin{aligned}
 \calL(t,(1{-}\theta)z_1 +\theta z_2)  \leq  & (1{-}\theta)\calL(t,z_1) + 
\theta \calL(t,z_2) 
 \\
 & \quad 
 -\theta(1{-}\theta) (\alpha \|z_1{-}z_2\|_{H^1(\Omega)}^2 
 - \Lambda_M \|z_1{-}z_2\|_{L^1(\Omega)}^2) .
 \end{aligned}
 \end{equation}
 \end{corollary}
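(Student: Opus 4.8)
The plan is to split the functional $\calL(t,\cdot)$ into its constituent pieces according to the decomposition $\calI(t,z) = \calI_q(z) + \int_\Omega f(z)\dd x + \calI_2(t,z)$ (recall $\calI_2(t,z) = \calE_2(t,\umin(t,z),z)$) and to track the convexity/concavity of each term along the segment $z_\theta := (1-\theta)z_1 + \theta z_2$. First I would treat the gradient term $\calI_q(z) = \frac1q\int_\Omega (1+|\nabla z|^2)^{q/2}\dd x$: since $G_q(A) = \frac1q(1+|A|^2)^{q/2}$ is convex (indeed $q$-uniformly convex), $\calI_q$ is convex, so it contributes nothing negative; more precisely, from the monotonicity estimate \eqref{eq.monGq} one gets a quantitative bound $\calI_q(z_\theta) \leq (1-\theta)\calI_q(z_1) + \theta\calI_q(z_2) - c\,\theta(1-\theta)\int_\Omega (1+|\nabla z_1|^2+|\nabla z_2|^2)^{(q-2)/2}|\nabla(z_1-z_2)|^2 \dd x$, and since $q > \sd \geq 2$ the weight is $\geq 1$, yielding a genuine $-c\,\theta(1-\theta)\|\nabla(z_1-z_2)\|_{L^2(\Omega)}^2$ term. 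Combined with the strictly convex quadratic perturbation $\frac12\|z\|_{L^2}^2$, which along the segment produces exactly $-\frac12\theta(1-\theta)\|z_1-z_2\|_{L^2(\Omega)}^2$, this supplies the full $-\alpha\|z_1-z_2\|_{H^1(\Omega)}^2$ on the right-hand side, with $\alpha$ independent of $M$.

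**Lower-order terms.** Next I would handle $\int_\Omega f(z)\dd x$ and the reduced elastic energy $\calI_2(t,\cdot)$, both of which are only semiconvex. For $f$: since $f \in \rmC^2(\R)$, on the interval $[-(\|z_1\|_{L^\infty}+\|z_2\|_{L^\infty}), \|z_1\|_{L^\infty}+\|z_2\|_{L^\infty}]$ one has $\|f''\|_\infty \leq C_{f''}(z_1,z_2)$, hence $s \mapsto f(s) + \tfrac12 C_{f''}(z_1,z_2)s^2$ is convex there, giving $\int_\Omega f(z_\theta)\dd x \leq (1-\theta)\int f(z_1) + \theta\int f(z_2) + \tfrac12 C_{f''}(z_1,z_2)\,\theta(1-\theta)\|z_1-z_2\|_{L^2(\Omega)}^2$. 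Since $\|z_i\|_{L^\infty} \leq C\|z_i\|_\calZ$ by the embedding $\calZ = W^{1,q}(\Omega) \hookrightarrow \rmC^0(\overline\Omega)$ ($q>\sd$), on the ball $\{\|z_1\|_\calZ + \|z_2\|_\calZ \leq M\}$ this is bounded by $C_M\,\theta(1-\theta)\|z_1-z_2\|_{L^2(\Omega)}^2 \leq C_M |\Omega|\,\theta(1-\theta)\|z_1-z_2\|_{L^1(\Omega)}^2$ — wait, that interpolation goes the wrong way; instead one bounds $\|z_1-z_2\|_{L^2(\Omega)}^2 \leq \|z_1-z_2\|_{L^\infty(\Omega)}\|z_1-z_2\|_{L^1(\Omega)} \leq 2M'\|z_1-z_2\|_{L^1(\Omega)}$, which is not quite the squared $L^1$-norm either. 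The cleaner route, and the one I would actually take, is to absorb: use Young's inequality $C_M\|z_1-z_2\|_{L^2}^2 \leq \tfrac{\alpha}{2}\|z_1-z_2\|_{H^1}^2 \cdot(\text{small}) + \dots$ — no; rather, exploit that $f$-concavity costs an $L^2$-seminorm which, via the embedding $\calZ \Subset H^1(\Omega)$ and an interpolation/Ehrling inequality $\|w\|_{L^2}^2 \leq \delta\|w\|_{H^1}^2 + C_\delta\|w\|_{L^1}^2$, can be split into a small multiple of $\|z_1-z_2\|_{H^1}^2$ (absorbed by the $\alpha$-term, after choosing $\alpha$ from the first step generously) plus a constant $\Lambda_M$ times $\|z_1-z_2\|_{L^1}^2$.

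**The elastic term — main obstacle.** The genuinely delicate piece is $\calI_2(t,\cdot)$, the $z$-dependence of $\umin(t,z)$ makes it neither convex nor a simple Nemytskii functional. The plan is to use the second-order structure: by the Fréchet differentiability from Corollary \ref{coro-fre} and the representation $\pairing{}{\calZ}{\rmD_z\calI_2(t,z)}{\eta} = \int_\Omega g'(z)\wt W(t,\nabla\umin(t,z))\eta\dd x$, together with the Lipschitz estimate \eqref{enhanced-stim-7} for $\rmD_z\wt\calI$ — specifically $\|\rmD_z\wt\calI(t,z_1) - \rmD_z\wt\calI(t,z_2)\|_{L^2(\Omega)} \leq c_8(1+C_{f'}+P(z_1,z_2)^3)\|z_1-z_2\|_{L^6(\Omega)}$ — to control $\calI_2(t,z_\theta) - (1-\theta)\calI_2(t,z_1) - \theta\calI_2(t,z_2)$ via the Taylor-with-integral-remainder identity $\calI_2(t,z_\theta) = (1-\theta)\calI_2(t,z_1)+\theta\calI_2(t,z_2) - \theta(1-\theta)\int_0^1 \mathrm{(something)}$, ultimately bounding the defect by $C_M\,\theta(1-\theta)\|z_1-z_2\|_{L^6(\Omega)}^2$ (the $L^6$ coming from \eqref{enhanced-stim-7}, with $P(z_1,z_2) \leq C_M$ on the ball). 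Then, crucially, $L^6(\Omega)$ interpolates between $H^1(\Omega)$ and $L^1(\Omega)$: $\|w\|_{L^6(\Omega)} \leq C\|w\|_{H^1(\Omega)}$ by Sobolev ($\sd=3$), so $\|z_1-z_2\|_{L^6}^2 \leq C\|z_1-z_2\|_{H^1}^2$, but that alone would pollute the $\alpha$-coefficient by an $M$-dependent constant, contradicting the claimed $M$-independence of $\alpha$. The resolution — and the point where I expect to spend the most care — is that the defect must be shown to carry a genuine \emph{sign} for the $H^1$-part: one bounds the elastic defect not by $C_M\|z_1-z_2\|_{L^6}^2$ but, after an Ehrling-type splitting $\|z_1-z_2\|_{L^6(\Omega)}^2 \leq \delta\|\nabla(z_1-z_2)\|_{L^2(\Omega)}^2 + C_{M,\delta}\|z_1-z_2\|_{L^1(\Omega)}^2$ (valid since $H^1 \Subset L^6$ is compact in $\sd=3$... actually $H^1\hookrightarrow L^6$ is borderline, not compact; but $W^{1,q}\Subset L^6$ for $q>\sd=3$, and on the $\calZ$-ball we may use $\calZ\Subset L^6$), by a small fraction of the \emph{good} negative term $-c\,\theta(1-\theta)\|\nabla(z_1-z_2)\|_{L^2}^2$ from step one plus $\Lambda_M\,\theta(1-\theta)\|z_1-z_2\|_{L^1}^2$. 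Thus $\alpha$ can be fixed (say half the constant $c$ from \eqref{eq.monGq}) once and for all, independently of $M$, while the $L^1$-coefficient $\Lambda_M$ absorbs all the $M$-dependence from $C_{f''}$, $C_M$, and the elastic estimate. Assembling the three contributions gives \eqref{unif-l-convexity}. \QED
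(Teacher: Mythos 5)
Your overall plan matches the paper's: split $\calL$ into the strictly convex $\calI_q$-part (whose $c_q$-convexity from \eqref{eq.monGq} supplies the $M$-independent $-\alpha\|z_1{-}z_2\|_{H^1}^2$ term) plus the quadratic $L^2$-perturbation, handle the semiconvex lower-order part $\wt\calI$ by a first-order Taylor expansion and a Lipschitz estimate on $\rmD_z\wt\calI$, and push the resulting $M$-dependent defect into the $\Lambda_M\|z_1{-}z_2\|_{L^1}^2$ term via Ehrling's Lemma. However, there is a genuine gap in the step where you absorb the elastic defect.

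You bound the defect of $\calI_2$ (and $\wt\calI$) through the $L^2$-$L^6$ estimate \eqref{enhanced-stim-7}, arriving (after overestimating the mixed product $\|z_1{-}z_2\|_{L^6}\|z_1{-}z_2\|_{L^2}$) at $C_M\,\theta(1{-}\theta)\|z_1{-}z_2\|_{L^6(\Omega)}^2$, and then invoke an Ehrling-type inequality $\|w\|_{L^6}^2\leq\delta\|w\|_{H^1}^2 + C_\delta\|w\|_{L^1}^2$. You rightly notice this fails because $H^1(\Omega)\hookrightarrow L^6(\Omega)$ is critical and hence not compact in dimension $3$; but your proposed fix---using $\calZ\Subset L^6$ on the $\calZ$-ball---does not repair it. Ehrling with $X=\calZ$ gives $\|w\|_{L^6}\leq\eps\|w\|_{\calZ}+C_\eps\|w\|_{L^1}$, which produces a small \emph{constant} (not a small multiple of $\|w\|_{H^1}$) that cannot be absorbed, and the good negative term from $G_q$-convexity is only $\|\nabla(z_1{-}z_2)\|_{L^2}^2$, not $\|z_1{-}z_2\|_{\calZ}^2$. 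In fact a concentration counterexample (bumps of radius $r_n\to 0$, height $h_n\sim r_n^{(q-3)/q}$ so that $\|w_n\|_{W^{1,q}}\sim 1$) has $\|w_n\|_{L^6}\sim\|w_n\|_{H^1}$ while $\|w_n\|_{L^1}/\|w_n\|_{L^6}\to 0$, so for such $w_n$ one cannot have $C_M\|w_n\|_{L^6}^2\leq\tfrac{\alpha}{2}\|w_n\|_{H^1}^2+\Lambda_M\|w_n\|_{L^1}^2$ with $\alpha$ independent of $M$; the claimed Ehrling-on-the-ball inequality is false.

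The paper avoids this by using the $L^{4/3}$--$L^4$ Lipschitz estimate \eqref{stim-l4} in the Taylor-remainder pairing (Hölder with exponents $4/3$ and $4$), so that the defect is $\wt C(M)\theta(1{-}\theta)\|z_1{-}z_2\|_{L^4(\Omega)}^2$. Since $H^1(\Omega)\Subset L^4(\Omega)$ is compact in dimension $3$ (the exponent $4$ is strictly subcritical), Ehrling's Lemma applies cleanly: $\|w\|_{L^4}^2\leq\delta\|w\|_{H^1}^2+C_\delta\|w\|_{L^1}^2$, and the argument closes. An alternative repair of your route would be to keep the mixed bound $C_M\|z_1{-}z_2\|_{L^6}\|z_1{-}z_2\|_{L^2}$, use Sobolev and Young to pass to $\delta\|z_1{-}z_2\|_{H^1}^2 + C_{\delta,M}\|z_1{-}z_2\|_{L^2}^2$, and then apply Ehrling for the (compact) triple $H^1\Subset L^2\hookrightarrow L^1$; but as written, the $L^6$-squared step does not work.
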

 \begin{proof}
 From \eqref{eq.monGq} 
 it follows that  the mapping $A \in \R^3 \mapsto G_q(A) - 
\tfrac{c_q}2 |A|^2$ is 
convex, which entails that $A \mapsto G_q(A) $ is $c_q$-convex,
  i.e.\  there holds 
  $G_q(\segm{A_1}{A_2}) \leq \segm{G_q(A_1)}{G_q(A_2)} - \tfrac{c_q}2 
\theta(1{-}\theta) |A_1{-}A_2|^2$ for every $A_1,\, A_2 \in \R^3$ and $\theta 
\in [0,1]$.
  As a consequence, we have that 
  \begin{equation}
  \label{property-for-I_q}
  \calI_q(\segm{z_1}{z_2}) \leq \segm{\calI_q(z_1)}{\calI_q(z_2)}   - 
\frac{c_q}2 \theta(1{-}\theta) \int_\Omega |\nabla (z_1{-}z_2)|^2 \dd x\,.
  \end{equation}
  As for $\wt \calI$, 
 with trivial calculations we have that 
\[
\begin{aligned}
 &\wt\calI(t,(1{-}\theta)z_1 +\theta z_2)  -  (1{-}\theta)\wt\calI(t,z_1) -  \theta \wt\calI(t,z_2) 
\\ &  =  (1{-}\theta) \left(  \wt\calI(t,(1{-}\theta)z_1 +\theta z_2) - 
\wt\calI(t,z_1)  \right) + \theta \left(  \wt\calI(t,(1{-}\theta)z_1 +\theta 
z_2) - \wt\calI(t,z_2)  \right)  
 \doteq I_1 +I_2.
 \end{aligned}
\] 
\par
 There holds
 \[
 \begin{aligned}
 I_1  & = (1{-}\theta)\int_0^1  \int_\Omega  \rmD_z  \wt \calI (t,(1{-}s)z_1 + 
s((1{-}\theta)z_1 +\theta z_2))   \theta(z_2{-}z_1) \dd x \dd s  
 \\
  & 
=(1-\theta) \theta \int_0^1\int_\Omega   
\left( \rmD_z  \wt \calI (t,(1{-}s)z_1 + s((1{-}\theta)z_1 +\theta z_2)) {-} \rmD_z \wt \calI(t,z_1) \right)  (z_2{-}z_1) \dd x  \dd s
\\ &\quad   -  (1-\theta) \theta \int_\Omega   \rmD_z \wt \calI(t,z_1)    
(z_1{-}z_2) \dd x 
   \doteq I_{1,1}+ I_{1,2}\,.
   \end{aligned}
 \]
 We now estimate  $I_{1,1}$ by using H\"older's inequality
 and inequality \eqref{stim-l4},
  taking into account that
 $(1{-}s)z_1 + s((1{-}\theta)z_1 +\theta z_2) - z_1 = s\theta (z_2{-}z_1)$. Therefore,
    \[
 \begin{aligned}
\left|  I_{1,1} \right|  &  \leq c_8 \theta(1{-}\theta)   \int_0^1 (1+ C_{f'}(z_1,\zeta_{1,2}) + P(z_1, \zeta_{1,2})^3 ) \| s\theta(z_2{-}z_1)\|_{L^{4}(\Omega)} \|z_2{-}z_1\|_{L^4(\Omega)} \dd s 
\\
& \leq \wt{C}_1(M)  (1-\theta) \theta  \|z_2{-}z_1\|_{L^4(\Omega)}^2, 
 \end{aligned}
 \]
 where we have used the place-holder $\zeta_{1,2}:= (1{-}s)z_1 + s((1{-}\theta)z_1 +\theta z_2)$, and where
   $ \wt{C}_1(M)>0$ depends on the constant $M$ that bounds $\|z_1\|_\calZ $ and $\|z_1\|_\calZ $.
 With analogous calculations one has that 
 \[
 I_2 \leq  \wt{C}_1(M)  (1-\theta) \theta  \|z_2{-}z_1\|_{L^4(\Omega)}^2  +  \dddshort{(1-\theta) \theta \int_\Omega   \rmD_z \wt \calI(t,z_2)    (z_1{-}z_2) \dd x  }{$I_{2,2}$}.
 \]
Therefore,  estimating $I_{1,2} + I_{2,2} \leq  \wt{C}_2(M)  (1-\theta) \theta  \|z_2{-}z_1\|_{L^4(\Omega)}^2$  with the same arguments as above, we conclude that 
\begin{equation}
\label{property-4-tildeI}
\wt\calI(t,(1{-}\theta)z_1 +\theta z_2)  \leq  (1{-}\theta)\wt\calI(t,z_1)  +  
\theta \wt\calI(t,z_2) + \frac{\wt{C}(M)}2  (1-\theta) \theta  
\|z_2{-}z_1\|_{L^4(\Omega)}^2 
\end{equation}
for some $\wt{C}(M)>0$. 
We now combine 
\eqref{property-for-I_q} with \eqref{property-4-tildeI}.  
Adding to this the trivial identity 
\[
\frac12 \| \segm{z_1}{z_2}\|_{L^2(\Omega)}^2 = \segmh{ 
\|z_1\|_{L^2(\Omega)}^2}{ 
\|z_2\|_{L^2(\Omega)}^2} - \frac{(1{-}\theta)\theta}2 
\|z_1{-}z_2\|_{L^2(\Omega)}^2,  
\]
and 
using Ehrling's Lemma, cf.\ e.g.\ \cite[Thm.\ 7.30]{RenardyRogers04}, to estimate 
$\norm{\eta}_{L^4(\Omega)}^2\leq \delta  
\norm{ \eta}^2_{H^1(\Omega)} + C(\delta)\norm{\eta}_{L^1(\Omega)}^2$ for 
arbitrary $\delta >0$, finally results in \eqref{unif-l-convexity}. 
 \end{proof}
A slight generalization of property \eqref{unif-l-convexity} was proposed in  
\cite[Sec.\ 3.4, (3.63)]{MRS16} as a sufficient condition for 
 a sort of ``uniform differentiability'' condition for $\calI(t,\cdot)$,
 cf.\ \eqref{unif-subdiff-ahead} ahead, which was in turn
  introduced in \cite[Sec.\ 2.1, $(\mathrm{E}.3)$]{MRS16}. As we will see,
  \eqref{unif-subdiff-ahead}
   is at the core of  key chain rule properties for viscous solutions to 
\eqref{dndia-eps} and   for Balanced Viscosity solutions to \eqref{dndia}, cf.\ 
   Lemma \ref{l:ch-rule} and Theorem \ref{prop:ch-rule} ahead.
 As a trivial consequence of  \eqref{unif-subdiff-ahead}, we have 
 a   monotonicity property for the Fr\'echet subdifferential $\calD_z\calI$, 
which will  allow us to prove the (crucial, for our analysis)
 uniqueness of solutions for the time-incremental problems giving rise to 
discrete solutions. 
\begin{corollary}
\label{cor:rosenheim}
Under Assumptions \ref{ass:domain}, \ref{assumption:energy}, and \ref{ass:load}, 
 for every $M>0$ 
there exist constants $c_9,\,c_{10}(M)>0$ such that for all $t \in [0,T]$,  $z_i \in \calZ$, $i=1,2$,  with
$\|z_1\|_{\calZ} + \|z_2\|_{\calZ} \leq M$, 
we have
\begin{equation}
\label{unif-subdiff-ahead}
\calL(t,z_2) - \calL(t,z_1) \geq 
\pairing{}{\calZ}{\rmD_z\calL(t,z_1)}{z_2{-}z_1} +\alpha 
\|z_1-z_2\|_{H^1(\Omega)}^2 - \Lambda_M  \|z_1-z_2\|_{L^1(\Omega)}^2 \,. 
\end{equation}
As a consequence, there holds
\begin{equation}
\label{e:strong-monot-sez2}
\norm{z_1{-}z_2}_{L^2(\Omega)}^2+ 
\pairing{}{\calZ}{\rmD_z\calI(t,z_1){-}\rmD_z\calI(t,z_2)}{z_1{-}z_2} \geq c_9 
\norm{z_1{-}z_2}_{H^1(\Omega)}^2 - 
c_{10}(M) \norm{z_1{-}z_2}_{L^{2} (\Omega)}^2\,.
\end{equation}
\end{corollary}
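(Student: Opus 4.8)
The plan is to derive both inequalities directly from the $\lambda$-convexity estimate \eqref{unif-l-convexity} of Corollary \ref{l-convexity-calI}, combined with the Fr\'echet differentiability of $\calI(t,\cdot)$, hence of $\calL(t,\cdot) = \calI(t,\cdot) + \tfrac12\|\cdot\|_{L^2(\Omega)}^2$, furnished by Corollary \ref{coro-fre}.

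For \eqref{unif-subdiff-ahead} I would fix $t \in [0,T]$ and $z_1, z_2 \in \calZ$ with $\|z_1\|_\calZ + \|z_2\|_\calZ \le M$. First I would observe that, by convexity of the $\calZ$-norm, every intermediate point $z_\theta := (1{-}\theta)z_1 + \theta z_2$, $\theta \in [0,1]$, still lies in the $\calZ$-ball of radius $M$, so that \eqref{unif-l-convexity} applies along the whole segment, \emph{with one and the same} constant $\Lambda_M$. Rewriting \eqref{unif-l-convexity} as
\[
\calL(t,z_\theta) - \calL(t,z_1) \le \theta\big(\calL(t,z_2) - \calL(t,z_1)\big) - \theta(1{-}\theta)\big(\alpha\|z_1{-}z_2\|_{H^1(\Omega)}^2 - \Lambda_M\|z_1{-}z_2\|_{L^1(\Omega)}^2\big),
\]
I would divide by $\theta>0$ and let $\theta \downarrow 0$: the difference quotient on the left converges to $\pairing{}{\calZ}{\rmD_z\calL(t,z_1)}{z_2{-}z_1}$ by G\^ateaux differentiability, and the factor $(1{-}\theta)$ tends to $1$; transposing yields \eqref{unif-subdiff-ahead}.

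For \eqref{e:strong-monot-sez2} I would apply \eqref{unif-subdiff-ahead} twice, once as stated and once with the roles of $z_1$ and $z_2$ exchanged, and add the two inequalities: the energy values $\calL(t,z_1),\calL(t,z_2)$ cancel, leaving a monotonicity inequality of the form $\pairing{}{\calZ}{\rmD_z\calL(t,z_1){-}\rmD_z\calL(t,z_2)}{z_1{-}z_2} \ge 2\alpha\|z_1{-}z_2\|_{H^1(\Omega)}^2 - 2\Lambda_M\|z_1{-}z_2\|_{L^1(\Omega)}^2$. Since the G\^ateaux derivative of the perturbation $\tfrac12\|\cdot\|_{L^2(\Omega)}^2$ at $z_i$ in direction $\eta\in\calZ$ is $\int_\Omega z_i\eta\,\dd x$, the left-hand side equals $\pairing{}{\calZ}{\rmD_z\calI(t,z_1){-}\rmD_z\calI(t,z_2)}{z_1{-}z_2} + \|z_1{-}z_2\|_{L^2(\Omega)}^2$. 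Finally I would absorb the remaining $L^1$-term by the elementary bound $\|z_1{-}z_2\|_{L^1(\Omega)}^2 \le |\Omega|\,\|z_1{-}z_2\|_{L^2(\Omega)}^2$ (H\"older, $\Omega$ bounded), which produces \eqref{e:strong-monot-sez2} with $c_9 := 2\alpha$ independent of $M$ and $c_{10}(M) := 2\Lambda_M|\Omega|$ inheriting the $M$-dependence of $\Lambda_M$.

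This is essentially bookkeeping once Corollaries \ref{l-convexity-calI} and \ref{coro-fre} are available; the only points needing a little care are (i) the remark that the segment $[z_1,z_2]$ stays inside the $\calZ$-ball of radius $M$, so that \eqref{unif-l-convexity} holds uniformly in $\theta$, and (ii) the justification of the limit $\theta\downarrow 0$ of the difference quotient, which is exactly the G\^ateaux differentiability supplied by Corollary \ref{coro-fre}. I do not expect a genuine obstacle.
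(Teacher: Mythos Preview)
Your argument is correct and matches the paper's own proof: the paper derives \eqref{unif-subdiff-ahead} from \eqref{unif-l-convexity} by the standard divide-by-$\theta$-and-let-$\theta\downarrow 0$ computation (referring to \cite[Lemma~3.26]{MRS16}), and then obtains \eqref{e:strong-monot-sez2} by symmetrizing and using $\|z_1{-}z_2\|_{L^1(\Omega)}^2 \le C\|z_1{-}z_2\|_{L^2(\Omega)}^2$, exactly as you do. One very minor remark: your point (i) is actually unnecessary, since \eqref{unif-l-convexity} is already asserted for \emph{all} $\theta\in[0,1]$ under the single hypothesis $\|z_1\|_\calZ + \|z_2\|_\calZ \le M$, so no separate check on the intermediate points $z_\theta$ is required.
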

\noindent Note that, in accordance with \eqref{unif-l-convexity}   and 
\eqref{unif-subdiff-ahead},  only the constant $c_{10}$ depends on $M$. 
\begin{proof}
Estimate \eqref{unif-subdiff-ahead} can be deduced from  
\eqref{unif-l-convexity}   by the very same calculations  as in the proof of  
\cite[Lemma 3.26]{MRS16}, while
\eqref{e:strong-monot-sez2} can be obtained by adding \eqref{unif-subdiff-ahead} 
 with the estimate obtained exchanging $z_1$ with $z_2$, and observing that $-  
\|z_1-z_2\|_{L^1(\Omega)}^2 \geq -C   \|z_1-z_2\|_{L^2(\Omega)}^2$. 
\end{proof}
 A key ingredient for the proof of energy identities in the context of 
solutions to the \emph{viscous} damage system \eqref{dndia-eps} (cf.\  
Section~\ref{s:4}), and of $\BV$ solutions to the rate-independent \eqref{dndia} 
(cf.\ Section~\ref{s:5}),   
  %\eqref{en-diss-bal} 
  is the validity of the   
chain rule identity (but, indeed, a chain rule \emph{inequality} would suffice)
  \begin{equation}
  \label{ch-rule-identity}
  \frac{\dd}{\dd t}\calI(t,z(t))  - \partial_t \calI(t,z(t))= \langle 
\rmD_z 
\calI(t,z(t)), z'(t) \rangle_{L^2(\Omega)}  \qquad \foraa\, t \in (0,T), 
  \end{equation}
  along solution curves $z:[0,T]\to \calZ$ with $\rmD_z\calI(t,z(t))\in 
L^2(\Omega)$. Since   $\calI\in \rmC^1([0,T] 
\times \calZ)$,  the validity of \eqref{ch-rule-identity} with  the duality 
pairing 
$\pairing{}{\calZ}{\cdot}{\cdot}$ is guaranteed along any curve $z \in 
\AC([0,T];\calZ)$. The following result extends  \eqref{ch-rule-identity} to 
curves $z $ with weaker regularity and summability properties.
  \begin{lemma}[Chain rule for $\calI$ in $L^2(\Omega)$]
  \label{l:ch-rule}
     Under Assumptions   \ref{ass:domain}, \ref{assumption:energy},  and 
\ref{ass:load},  for every curve
     \begin{equation}
     \label{ch-rule-curve}
     z\in L^\infty(0,T;\calZ) \cap H^1(0,T;L^2(\Omega)), \text{ with } A_q z 
\in 
L^2(0,T;L^2(\Omega)),   
     \end{equation}
      the map $t \mapsto \calI(t,z(t))$ is absolutely continuous on $[0,T]$, 
and 
 \eqref{ch-rule-identity} holds. 
  \end{lemma}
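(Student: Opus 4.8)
The plan is to exploit the splitting of the reduced energy into its $q$-Laplacean part and its lower-order part: recalling \eqref{itilde}--\eqref{der-decomp}, I would write $\calI(t,z) = \calI_q(z) + \wt\calI(t,z)$ with $\calI_q(z):=\tfrac1q\int_\Omega(1+\abs{\nabla z}^2)^{q/2}\dx$, so that it suffices to prove the chain rule for each summand: (i) $t\mapsto\calI_q(z(t))$ is absolutely continuous with $\tfrac{\dd}{\dd t}\calI_q(z(t)) = \int_\Omega A_q z(t)\,z'(t)\dx$ for a.e.\ $t$; and (ii) $t\mapsto\wt\calI(t,z(t))$ is absolutely continuous with $\tfrac{\dd}{\dd t}\wt\calI(t,z(t)) = \partial_t\wt\calI(t,z(t)) + \int_\Omega\rmD_z\wt\calI(t,z(t))\,z'(t)\dx$. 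Summing (i) and (ii) and using $\partial_t\wt\calI = \partial_t\calI$ and $\rmD_z\calI = A_q z + \rmD_z\wt\calI \in L^2(\Omega)$ yields \eqref{ch-rule-identity}. As a preliminary I would pass to the continuous representative of $z$ (recall $H^1(0,T;L^2(\Omega))\hookrightarrow\rmC^0([0,T];L^2(\Omega))$) and note that, $z$ being bounded in $\calZ$, one has $z(t)\in\calZ$ with $\norm{z(t)}_\calZ\le M:=\norm{z}_{L^\infty(0,T;\calZ)}$ for \emph{every} $t$ (weak closedness of $\calZ$-balls), and that $z$ is continuous from $[0,T]$ into $L^6(\Omega)$ thanks to the compact embedding $\calZ\Subset L^6(\Omega)$.

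For part (i) I would regard $\calI_q$ as a functional on $L^2(\Omega)$, set to $+\infty$ outside $\calZ$; it is proper, convex (by convexity of $G_q$, cf.\ \eqref{eq.monGq}) and lower semicontinuous on $L^2(\Omega)$ (its sublevels are bounded in $\calZ$, where $\calI_q$ is weakly lower semicontinuous). Whenever $A_q z(t)$ — the $\calZ$--$\calZ^*$ differential of $\calI_q$ at $z(t)$ — is represented by an $L^2(\Omega)$ function, convexity of $\calI_q$ immediately gives $A_q z(t)\in\partial\calI_q(z(t))$ in the $L^2(\Omega)$ sense. Since $z\in H^1(0,T;L^2(\Omega))$, $A_q z\in L^2(0,T;L^2(\Omega))$ by \eqref{ch-rule-curve}, and $\calI_q(z(\cdot))\in L^\infty(0,T)$, the classical chain rule for convex functionals along $H^1$ trajectories (Brezis' lemma) gives exactly (i).

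Part (ii) is the delicate one: $\wt\calI(t,\cdot)$ is only Fr\'echet differentiable with respect to the $\calZ$-norm, while $z$ need not be absolutely continuous into $\calZ$. The key is that, by Lemma~\ref{l:diff_contz} (estimates \eqref{enhanced-stim-7} and \eqref{estimate-for-DI}) together with Corollary~\ref{coro-fre}, $\rmD_z\wt\calI(t,\cdot)$ takes values in $L^2(\Omega)$, is uniformly bounded on $\calZ$-balls, and is continuous there with respect to $L^6(\Omega)$-convergence. First I would record the fundamental theorem of calculus along segments, $\wt\calI(t,z_1)-\wt\calI(t,z_0) = \int_0^1\int_\Omega\rmD_z\wt\calI(t,z_0+\sigma(z_1-z_0))(z_1-z_0)\dx\dd\sigma$ for $z_0,z_1$ in a $\calZ$-ball. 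Then, for a.e.\ $t_0$ (where $\tfrac1h(z(t_0+h)-z(t_0))\to z'(t_0)$ in $L^2(\Omega)$), I would split the difference quotient of $t\mapsto\wt\calI(t,z(t))$ into a ``$z$-increment'' term $A_h(t_0)$, treated via the above FTC identity, and a ``$t$-increment'' term $B_h(t_0)=\tfrac1h\int_{t_0}^{t_0+h}\partial_t\wt\calI(r,z(t_0))\dd r$, treated via the $\rmC^1$-in-time dependence from Lemma~\ref{l:2.7}. Using $z(t_0+h)\to z(t_0)$ in $L^6(\Omega)$, the $L^6$-continuity of $\rmD_z\wt\calI$, and the $L^2(\Omega)$-$L^2(\Omega)$ pairing, one gets $A_h(t_0)\to\int_\Omega\rmD_z\wt\calI(t_0,z(t_0))z'(t_0)\dx$ and $B_h(t_0)\to\partial_t\wt\calI(t_0,z(t_0))$; denote the sum by $G(t_0)$, which lies in $L^2(0,T)$ by \eqref{stim3} and \eqref{estimate-for-DI}.

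Finally, to upgrade this pointwise a.e.\ differentiability to absolute continuity, I would show that the difference quotients converge to $G$ in $L^1(0,T)$: $\abs{B_h}$ is uniformly bounded, while $\abs{A_h(t)}\le c(M)\,\tfrac1h\int_t^{t+h}\norm{z'(r)}_{L^2(\Omega)}\dd r$, and this last family converges in $L^1(0,T)$, hence is uniformly integrable, so Vitali's theorem applies. Testing the $L^1$-convergence against $\varphi\in\rmC_c^\infty(0,T)$ and shifting variables identifies the distributional derivative of the continuous map $t\mapsto\wt\calI(t,z(t))$ with $G\in L^1(0,T)$; therefore that map is absolutely continuous with a.e.\ derivative $G$, which is (ii). I expect this Vitali/uniform-integrability step — which bridges the gap between the $\calZ$-differentiability of $\wt\calI$ and the mere $H^1(0,T;L^2(\Omega))$-regularity of $z$ — to be the main technical obstacle; part (i) is essentially a black-box use of the convex chain rule.
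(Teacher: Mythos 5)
Your proof is correct, but it follows a genuinely different route from the paper's. The paper does not split $\calI$: it works with the perturbed functional $\calL(t,z)=\calI(t,z)+\tfrac12\|z\|_{L^2(\Omega)}^2$ from \eqref{pertub-calI} and the uniform $\lambda$-convexity inequality \eqref{unif-subdiff-ahead} of Corollary~\ref{cor:rosenheim}, which yields the two-sided bound \eqref{heartsuit}; absolute continuity of $t\mapsto\calL(t,z(t))$ is then extracted via the reparameterization lemmas \cite[Lemmas 1.1.4, 1.2.6]{AGS2008}, and the identity \eqref{ch-rule-identity} is outsourced to the chain rule for $\lambda$-convex functionals in \cite[Prop.\ 2.4]{mrs2013}. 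You instead decompose $\calI=\calI_q+\wt\calI$, handle the $q$-Laplacean part by Br\'ezis' classical chain rule for convex l.s.c.\ functionals on $L^2(\Omega)$ (using $A_q z(t)\in\partial_{L^2}\calI_q(z(t))$ once $A_q z(t)\in L^2(\Omega)$, and a Poincar\'e--Wirtinger argument for the $L^2$-lower semicontinuity of $\calI_q$), and handle the lower-order part $\wt\calI$ by a direct difference-quotient-plus-Vitali argument driven by \eqref{enhanced-stim-7}, \eqref{estimate-for-DI}, and the continuity of $t\mapsto z(t)$ in $L^6(\Omega)$. Your route is more elementary and self-contained (it avoids \cite{mrs2013} entirely) and makes transparent which structural feature of each summand of $\calI$ is doing the work; the paper's route is shorter on the page because it treats $\calI$ as a whole and reuses the $\lambda$-convexity machinery that it needs elsewhere anyway (e.g.\ for the uniqueness argument in Proposition~\ref{prop:exist-mini}). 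Two small points worth making explicit in your write-up: the Vitali step should be carried out on $(0,T-\delta)$ and then $\delta\downarrow0$, since the right difference quotient is undefined near $T$; and the continuity of $t\mapsto\wt\calI(t,z(t))$ (needed to upgrade ``distributional derivative in $L^1$'' to genuine absolute continuity) should be quoted from \eqref{Lip-cont-I} together with the $L^6$-continuity of $z$. Both are routine.
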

  \begin{remark}
  \label{rmk:alternative-ch-requir}
  \upshape
  Due to  estimate \eqref{estimate-for-DI} for $\rmD_z \wt \calI$, it follows 
from     \eqref{ch-rule-curve} 
   that the function $t\mapsto  \rmD_z \calI(t,z(t)) $ belongs to $L^2 
(0,T;L^2(\Omega))$. Therefore, 
   $\rmD_z \calI (t,z(t)) = A_q(z(t)) + \rmD_z \wt\calI(t,z(t))$ belongs to $
L^2 (0,T;L^2(\Omega))$  as well and the integral on the r.h.s.\ of 
\eqref{ch-rule-identity}    is well defined for almost all $t\in (0,T)$.
\par
In fact, for later use in Sec.\ \ref{s:5}, let us point out that,
in alternative to \eqref{ch-rule-curve}, in Lemma \ref{l:ch-rule} we might as 
well suppose 
\begin{equation}
\label{ch-rule-curve-weak} 
   z\in L^\infty(0,T;\calZ) \cap W^{1,1}(0,T;L^2(\Omega)), \text{ with } A_q z 
\in L^\infty(0,T;L^2(\Omega)). 
\end{equation}
  %In fact, conditions 
  \end{remark}
  \begin{proof}
   First of all, we show the absolute continuity of $t\mapsto \calI(t,z(t))$.  
We will in fact  show that $t\mapsto \calL(t,z(t))$ is absolutely continuous, 
with $\calL$ from 
   \eqref{pertub-calI}. With this aim, for every $0\leq s \leq t \leq T$ we 
estimate
   \[
   \calL(t,z(t)) - \calL(s,z(s))= \calL(t,z(t)) - \calL(s,z(t)) + \calL(s,z(t)) 
- \calL(s,z(s)) \doteq I_1 +I_2\,. 
   \]
  Since $\partial_t \calL = \partial_t \calI$, we  have 
   \begin{equation}
   \label{AC4I1}
   |I_1| \leq \int_s^t \partial_t \calI(r,z(t)) \dd r \stackrel{(1)}{\leq} 
C(t-s)
   \end{equation}
   with (1) due to \eqref{stim3}.
   As for $I_2$, 
  from 
   the uniform differentiability property
   \eqref{unif-subdiff-ahead} we deduce that 
      \begin{equation}
   \label{toMRS13}
I_2 \geq \int_\Omega \rmD_z\calL(t,z(s))(z(t){-}z(s)) \dd 
x %\rangle_{L^2(\Omega)} 
+\alpha \|z(t)-z(s)\|_{H^1(\Omega)}^2 -  \Lambda_M  
\|z(t)-z(s)\|_{L^1(\Omega)}^2
  \end{equation}
  (cf.\ notation 
\eqref{abuse-integral}), 
  where we have used that, by   \eqref{ch-rule-curve}
  and estimate \eqref{estimate-for-DI} for $\rmD_z \wt \calI$
   that the function $s\mapsto  \rmD_z \calI(s,z(s)) $ belongs to $L^2 
(0,T;L^2(\Omega))$, and  so does $s\mapsto \rmD_z\calL(t,z(s))$, with $t\in 
[0,T]$ fixed, due to
   \eqref{enhanced-stim-7}. 
All in all we arrive at
\begin{equation}
\label{heartsuit}
\begin{aligned}
 \abs{\calL(s,z(s)) - \calL(t,z(t))}&\leq 
 2\Lambda_M\norm{z(t) - z(s)}_{L^1(\Omega)}^2 + 2 c\abs{t-s} \\
& + \left( \norm{\rmD_z\calL(t,z(t))}_{L^2(\Omega)} 
 + \norm{\rmD_z\calL(s,z(s))}_{L^2(\Omega)}\right) 
 \norm{z(t) - z(s)}_{L^2(\Omega)}.
\end{aligned}
\end{equation}
Up to a suitable reparameterization, cf.\ \cite[Lemma 1.1.4]{AGS2008}, we 
can suppose that $z\in W^{1,\infty}(0,\wt T;L^2(\Omega))$ with Lipschitz 
constant $1$. With \cite[Lemma 1.2.6]{AGS2008} we finally conclude from \eqref{heartsuit} the absolute 
continuity of $t\mapsto \calL(t,z(t))$, which gives the same property 
for $t\mapsto 
\calI(t,z(t))$. 
For the proof of identity \eqref{ch-rule-identity}, we refer to \cite[Prop.\ 2.4]{mrs2013}.
  \end{proof}
 
 \section{\bf A priori estimates for the time-discrete solutions}
 \label{s:3}
 We construct time-discrete solutions to the Cauchy problem for the  viscous damage system   \eqref{dndia-eps} by solving  the following
 time incremental minimization problems:
  for fixed $\epsilon>0$, we consider a  uniform  partition $\{0=t_0^\tau<\ldots<t_N^\tau=T\}$ of the time
interval $[0,T]$ with fineness $\tau = 
t^\tau_{k+1} - t^\tau_k =T/N$.  % (cf.\ Remark \ref{rmk:fixed-time-step} ahead),
 The elements  $(z^\tau_{k})_{0\leq
k\leq
N}$ are determined through $z_0^\tau:=z_0 \in \calZ$ and
\begin{align}
\label{def_time_incr_min_problem_eps} 
z_{k+1}^\tau \in
\Argmin\Bset{\calI(t_{k+1}^\tau,z) +
\tau\calR_\epsilon\left(\frac{z
    -z_k^\tau}{\tau}\right)}{z\in\calZ}, \qquad k\in \{0, \ldots, N-1\}.
\end{align}
 Our first result, Prop.\ \ref{prop:exist-mini} below, states the 
existence of minimizers 
for problem \eqref{def_time_incr_min_problem_eps}, which is an immediate outcome of 
classical variational arguments, as well as the uniqueness of solutions to the associated Euler-Lagrange equation \eqref{discr-Eul-Lagr} below. This will be a key ingredient in the proof of 
the main result of this section, Proposition \ref{prop:aprio} ahead. Indeed, in 
order to obtain some of the a priori estimates stated therein, we shall have to 
perform calculations on an approximate version of \eqref{discr-Eul-Lagr}. Then, 
the above mentioned uniqueness property  will ensure that those a priori 
estimates also hold for the solutions to  \eqref{discr-Eul-Lagr}, i.e.\ for the  
minimizers from \eqref{def_time_incr_min_problem_eps}.  
\begin{proposition}
\label{prop:exist-mini} Under Assumptions \ref{ass:domain}, 
\ref{assumption:energy}, and \ref{ass:load}, for every $\epsilon,\, \tau>0$  and 
for every $ k\in \{1, \ldots, N-1\}$ 
 the minimum problem
\eqref{def_time_incr_min_problem_eps} admits a  solution $z_{k+1}^\tau$ 
satisfying the Euler-Lagrange equation 
\begin{equation}
\label{discr-Eul-Lagr}
\omega + \epsilon  \frac{z -
z_k^\tau}{\tau} + \rmD_z\calI(t_{k+1}^\tau,z) = 0 \qquad
\text{in } \calZ^*, \qquad \text{with } \omega \in  \partial_{\calZ,\calZ^*} 
\calR_1\left(\frac{z -
z_k^{\tau}}\tau\right),
\end{equation}
where $\partial_{\calZ,\calZ^*} \calR_1: \calZ \rightrightarrows \calZ^*$ is 
the convex analysis subdifferential of $\calR_1$. 
Moreover, 
for every $\epsilon>0$  
 and for every $M>0$,  
there exists $ \tau(\epsi,M)>0$ such that for all $0<\tau \leq 
\tau(\epsilon,M)$  the %associated 
Euler-Lagrange equation 
\eqref{discr-Eul-Lagr} 
admits at most one  solution
 in the closed ball $\overline{B}_M(0)$ of $\calZ$. 
\par
Suppose in addition that $f$ and $g$ comply with the following condition
\begin{equation}
\label{def-gg}
  f(0) \leq f(z), \quad  g(0) \leq g(z) \quad  \text{  for all } z \leq
 0,
\end{equation}
 and that  the initial datum $z_0$ fulfills
$ z_0(  x)\in [0,1]$  for all $x\in \Omega$. 
   Then,
the minimizer $z_{k+1}^\tau$ from \eqref{def_time_incr_min_problem_eps}
   also fulfills
  $z_{k+1}^\tau(x) \in  [0,1]$ for all $x\in \Omega$ and all $k\in \{0,\ldots, N-1\}. $
\end{proposition}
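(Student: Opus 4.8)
The statement has three parts: (i) existence of a minimizer $z_{k+1}^\tau$ for \eqref{def_time_incr_min_problem_eps} satisfying the Euler--Lagrange inclusion \eqref{discr-Eul-Lagr}; (ii) uniqueness of solutions to \eqref{discr-Eul-Lagr} in a fixed $\calZ$-ball, provided $\tau$ is small relative to $\epsi$ and to the radius $M$; (iii) the pointwise bound $z_{k+1}^\tau(x)\in[0,1]$ under the extra sign conditions \eqref{def-gg} on $f,g$ and the assumption $z_0\in[0,1]$ a.e.

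For part (i) I would argue by the direct method: the functional $z\mapsto \calI(t_{k+1}^\tau,z)+\tau\calR_\epsi((z-z_k^\tau)/\tau)$ is coercive on $\calZ=W^{1,q}(\Omega)$ thanks to the coercivity estimate \eqref{est_coerc1} (which controls $\|\nabla z\|_{L^q}^q$ and $\|z\|_{L^1}$ from below) together with the nonnegativity of $\calR_\epsi$; it is weakly lower semicontinuous by Corollary~\ref{coro-fre} (lower semicontinuity of $\calI$ under weak convergence in $\calZ$) and by convexity and strong-weak closedness of $\calR_\epsi$; and it has proper domain since $z_k^\tau\in\calZ$ makes the constant-increment competitor admissible. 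Hence a minimizer exists. Writing the first-order optimality condition and using that $\calI(t_{k+1}^\tau,\cdot)$ is Gâteaux (indeed Fréchet) differentiable with derivative $\rmD_z\calI$ (Lemma~\ref{l:gateaux}, Corollary~\ref{coro-fre}) while $\calR_1$ is convex with subdifferential $\partial_{\calZ,\calZ^*}\calR_1$, we obtain \eqref{discr-Eul-Lagr}. One must only be slightly careful that the viscous term $\epsi v\mapsto \frac\epsi2\|v\|_{L^2}^2$ contributes a single-valued Gâteaux derivative $\epsi(z-z_k^\tau)/\tau$, which lies in $L^2(\Omega)\hookrightarrow\calZ^*$, so that the inclusion is posed in $\calZ^*$ as stated.

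For part (ii), the natural approach is monotonicity. Suppose $z^{(1)},z^{(2)}\in\overline B_M(0)\subset\calZ$ both solve \eqref{discr-Eul-Lagr}. Subtract the two equations, test with $z^{(1)}-z^{(2)}$, and use: monotonicity of $\partial_{\calZ,\calZ^*}\calR_1$ (convexity), which makes the $\omega$-terms contribute a nonnegative quantity; the identity $\frac\epsi\tau\langle (z^{(1)}-z_k^\tau)-(z^{(2)}-z_k^\tau),z^{(1)}-z^{(2)}\rangle=\frac\epsi\tau\|z^{(1)}-z^{(2)}\|_{L^2(\Omega)}^2$; and the generalized monotonicity estimate \eqref{e:strong-monot-sez2} for $\rmD_z\calI$ on the $M$-ball, namely $\langle\rmD_z\calI(t_{k+1}^\tau,z^{(1)})-\rmD_z\calI(t_{k+1}^\tau,z^{(2)}),z^{(1)}-z^{(2)}\rangle\ge c_9\|z^{(1)}-z^{(2)}\|_{H^1(\Omega)}^2-(1+c_{10}(M))\|z^{(1)}-z^{(2)}\|_{L^2(\Omega)}^2$. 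Combining gives
\[
0\ \ge\ \Big(\frac\epsi\tau - 1 - c_{10}(M)\Big)\,\|z^{(1)}-z^{(2)}\|_{L^2(\Omega)}^2 + c_9\,\|z^{(1)}-z^{(2)}\|_{H^1(\Omega)}^2,
\]
so choosing $\tau(\epsi,M)$ so small that $\epsi/\tau > 1+c_{10}(M)$ forces $z^{(1)}=z^{(2)}$. This is the step I expect to be the technical heart of the argument — not because it is deep, but because it is exactly where the quantitative trade-off $\epsi/\tau\to\infty$ enters, and one must make sure all the constants are genuinely $z$-independent on the ball, which is precisely what Corollary~\ref{cor:rosenheim} provides.

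For part (iii), I would use the minimality of $z_{k+1}^\tau$ against the truncated competitor $\widetilde z:=\max\{0,\min\{1,z_{k+1}^\tau\}\}$ (equivalently, a "cut-off" comparison). One checks that replacing $z_{k+1}^\tau$ by $\widetilde z$ does not increase the energy: the $q$-Laplacian / gradient term $\calI_q$ and the $L^1$ part only decrease under truncation (truncation is $1$-Lipschitz and does not increase $|\nabla z|$ a.e.); the term $\int_\Omega f(z)$ does not increase on the set $\{z<0\}$ by $f(0)\le f(z)$ for $z\le 0$ from \eqref{def-gg} (and on $\{z>1\}$ one needs $f$ nondecreasing there, which should follow from the structural assumptions — here one should double-check whether the upper truncation at $1$ is really needed or whether the relevant physical bound is only $z\ge 0$, adjusting the competitor accordingly); the elastic term $\int_\Omega g(z)W(\varepsilon(u_{\min}(t,z)+u_D))$ is controlled using $g(0)\le g(z)$ for $z\le 0$ together with the fact that, at fixed datum, $u_{\min}(t,\cdot)$ is the \emph{minimizer} over $\calU$, so $\calE_2(t,u_{\min}(t,\widetilde z),\widetilde z)\le\calE_2(t,u_{\min}(t,z_{k+1}^\tau),\widetilde z)$ and on $\{z_{k+1}^\tau<0\}$ one has $g(\widetilde z)=g(0)\le g(z_{k+1}^\tau)$ pointwise; finally the dissipation term $\tau\calR_\epsi((\widetilde z-z_k^\tau)/\tau)$ does not increase because, given $z_k^\tau\in[0,1]$ a.e., truncating $z_{k+1}^\tau$ into $[0,1]$ can only make the increment "more admissible" for $\calR_1$ and smaller in $L^2$-norm — this is the standard obstacle/truncation lemma for unidirectional damage. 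Since $z_{k+1}^\tau$ is a minimizer, all these inequalities must be equalities, which forces $z_{k+1}^\tau=\widetilde z$ a.e., i.e.\ $z_{k+1}^\tau(x)\in[0,1]$; induction on $k$ propagates the bound from $z_0$. The one point that requires care here is the elastic term: one must verify the monotonicity of $g$ is used correctly only on the truncation set and that the reduced-energy reformulation $\calI(t,z)=\calI_1(z)+\calI_2(t,z)$ is compatible with the comparison, which it is because $\calI_2(t,z)=\inf_v\calE_2(t,v,z)$ depends on $z$ only through $g(z)$ in a pointwise-monotone way on the relevant sets.
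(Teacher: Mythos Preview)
Your proposal is correct and matches the paper's proof essentially line for line: existence by the direct method, the Euler--Lagrange inclusion from Fr\'echet differentiability of $\calI$ and the subdifferential sum rule for $\calR_\epsi$, and uniqueness by subtracting, testing with $z^{(1)}-z^{(2)}$, and invoking the monotonicity estimate \eqref{e:strong-monot-sez2} to obtain exactly your displayed inequality with threshold $\tau(\epsi,M)=\epsi/(1+c_{10}(M))$. For part (iii) the paper simply cites an earlier work, but your truncation argument is the standard one and is correct.

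One small clarification on your hesitation about the upper truncation: you do not need any monotonicity of $f$ on $\{z>1\}$. The dissipation potential $\calR_1$ takes the value $+\infty$ on increments that are positive on a set of positive measure, so any minimizer with finite energy automatically satisfies $z_{k+1}^\tau\le z_k^\tau$ a.e., and hence $z_{k+1}^\tau\le 1$ follows for free from $z_k^\tau\le 1$ by induction. Only the lower truncation $\widetilde z=\max\{0,z_{k+1}^\tau\}$ is actually needed, and for that competitor your term-by-term comparison (gradient term, $f$-term via \eqref{def-gg}, elastic term via $g(0)\le g(z)$ for $z\le 0$ and minimality of $u_{\min}$, dissipation term via $|\widetilde z-z_k^\tau|\le|z_{k+1}^\tau-z_k^\tau|$ pointwise on $\{z_{k+1}^\tau<0\}$ since $z_k^\tau\ge 0$) goes through cleanly.
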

\begin{proof}
The existence of minimizers can be checked via the direct method in
the calculus of variations. Observe that every minimizer fulfills 
\eqref{discr-Eul-Lagr}, where we have used   that the convex analysis 
subdifferential $\partial_{\calZ,\calZ^*} \calR_\epsi : \calZ \rightrightarrows 
\calZ^*$ is given by $\partial_{\calZ,\calZ^*} \calR_\epsi (\eta) = 
\partial_{\calZ,\calZ^*} \calR_1 (\eta) + \epsilon \eta $ for every $\eta \in 
\calZ$ (here and in what follows, for notational simplicity we write $\eta$ in 
place of $J(\eta)$, with $J : \calZ \to \calZ^*$ the Riesz isomorphism). 
\par
  In order to check that the Euler-Lagrange equation \eqref{discr-Eul-Lagr} has a unique solution, let
   $M>0$ and 
   $z_1,\, z_2 \in \calZ$ be solutions to \eqref{discr-Eul-Lagr} such that 
   $\|z_1\|_\calZ + \|z_2\|_\calZ  \leq M$. 
    Subtracting the equation for $z_2$ from that for $z_1$ and testing the 
obtained relation by $z_1-z_2$, we obtain 
\[
\begin{aligned}
0  & = \pairing{}{\calZ}{\omega_1-\omega_2}{z_1-z_2} + \frac\epsilon\tau \| z_1 
{-}z_2\|_{L^2(\Omega)}^2 + \pairing{}{\calZ}{\rmD_z\calI(t_{k+1}^\tau,z_1) {-} 
\rmD_z\calI(t_{k+1}^\tau,z_2)}{z_1 {-}z_2}  \\ & \geq  \left(  \frac\epsilon\tau 
 -   c_{10}(M)  - 1\right)  \| z_1 {-}z_2\|_{L^2(\Omega)}^2 +c_9  \| z_1 
{-}z_2\|_{H^1(\Omega)}^2 \end{aligned}
\]
where $\omega_i \in  \partial\calR_1\left(\frac{z_i -
z_k^\tau}{\tau}\right)$ for $i=1,2$, and the second inequality follows from  
the monotonicity estimate \eqref{e:strong-monot-sez2}.
 Hence,  for $\tau \leq \tau(\epsilon,M): =  \tfrac \epsilon 
{(c_{10}(M){  +  }1)}$, 
we conclude that $ \| z_1 {-}z_2\|_{L^2(\Omega)}^2 \leq 0$, whence $z_1=z_2$. 
\par
For the proof of 
the property $z_k^\tau \in  [0,1]$ in $\Omega$ under \eqref{def-gg}, we refer to \cite[Prop.\ 
4.5]{krz}. \end{proof}
\par
 The following piecewise constant and piecewise
linear interpolation functions will be used: 
\[
\overline z_\tau (t) = z_{k+1}^\tau \,\,\text{for }t\in
(t_k^\tau,t_{k+1}^\tau],\,\,\, 
\underline z_\tau(t)=  z_k^\tau \,\,\text{for }t\in
[t_k^\tau,t_{k+1}^\tau),\,\,\,
\widehat z_\tau(t)=z_k^\tau + \frac{t - t_{k}^\tau}{\tau} (z_{k+1}^\tau
- z_k^\tau)\,\,\text{for }t\in [t_k^\tau,t_{k+1}^\tau].
\]
 Furthermore, we shall use the
notation
\[
\begin{array}{llll}
 \tau(r) &= \tau &\quad&\text{for } r\in (t_k^\tau, t_{k+1}^\tau),
\\
\overline{t}_\tau(r)&= t_{k+1}^\tau &\quad&\text{for } r\in (t_k^\tau,
t_{k+1}^\tau],
\\
\underline{t}_\tau(r)&= t_{k}^\tau &\quad&\text{for } r\in [t_k^\tau,
t_{k+1}^\tau),
\\
\overline u_\tau (r) & = \umin (\overline{t}_\tau(r), \overline
z_\tau (r)) &  \quad&\text{for }r\in
(t_k^\tau,t_{k+1}^\tau],\\
\underline u_\tau (r) & = \umin (\underline{t}_\tau(r), \underline
z_\tau (r))&\quad &\text{for }r\in [t_k^\tau,t_{k+1}^\tau),
\\
\widehat u_\tau(r)&= \underline u_\tau (r) + \frac{r -
\underline{t}_\tau(r)}{\tau} (\overline u_\tau (r) - \underline
u_\tau (r))  &\quad &\text{for }r\in [t_k^\tau,t_{k+1}^\tau].
\end{array}
\]
 Clearly,
\begin{equation}
\label{quoted -later}
 \overline{t}_\tau(t), \, \underline{t}_\tau(t) \to t
 \quad \text{ as
$\tau \to 0$ for all   $t \in (0,T)$, and }   \underline{t}_\tau(0)=0, 
\  \overline{t}_\tau(T) =T. 
\end{equation}
We will also denote by $\overline{\ell}_\tau$ and $\overline{u}_{D,\tau}$ the (left-continuous) piecewise constant interpolants of the values $ (\ell_k^\tau : = \ell ( t_{k}^\tau))_{k=0}^N, \,  (u_{D,k}^\tau : = u_D( t_{k}^\tau))_{k=0}^N$ and, for a given $N$-uple $\{v_\tau^k \}_{k=0}^{N}$, use 
the short-hand notation 
\[
\Dtau k v : = v^\tau_{k+1} - v_k^\tau .
\]
In view of \eqref{discr-Eul-Lagr} and of formula \eqref{der-decomp} for $\rmD_z 
\calI$,  
the above interpolants fulfill  for almost all $t\in (0,T)$
\begin{equation}
\label{cont-reformulation}
\overline\omega_\tau (t) +  \epsilon  \widehat{z}_\tau'(t)    + A_\il \overline{z}_\tau(t) +
   \rmD_z\wt \calI(\overline{t}_\tau(t),
 \overline{z}_\tau(t)) = 0 \quad \text{in } \calZ^*, \quad \text{with }
 \overline\omega_\tau (t)  \in  \partial_{\calZ, \calZ^*} 
\calR_1\left(\widehat{z}_\tau'(t)\right) \,.
\end{equation}
The following  result collects 
 all the a priori estimates on the functions $(\overline{z}_\tau,  
\widehat{z}_\tau, 
\overline{u}_\tau, \widehat{u}_\tau)_\tau$, 
 uniform w.r.t.\ the parameters $\epsilon,\, \tau>0$, that are  at the core of 
the existence of solutions of  
 the viscous system,  cf.\ Theorem \ref{thm:exist} ahead,   and of its 
vanishing-viscosity analysis developed in Section \ref{s:5}.
 In fact, let us mention that the estimates 
 for $(\overline{u}_\tau, \widehat{u}_\tau)_\tau $ have to be understood as side results,  while the really relevant 
 bounds for the limit passage are those for $(\overline{z}_\tau, 
\widehat{z}_\tau)$. 
   We also prove that 
 the Euler-Lagrange equation \eqref{cont-reformulation} holds in $L^2(\Omega)$, with $\partial_{\calZ, \calZ^*} \calR_1$ 
 replaced by the subdifferential operator $\partial_{L^2(\Omega)} \calR_1: L^2(\Omega) \rightrightarrows L^2(\Omega)$. From now on,
 we will denote the latter operator  by $\partial \calR_1$. 
  \begin{proposition}
 \label{prop:aprio}
 Under Assumptions   \ref{ass:domain}, \ref{assumption:energy}, and \ref{ass:load},
 suppose that the initial datum $z_0 \in 
\calZ$ fulfills in addition 
\begin{equation}
 \label{further-reg}
 A_q z_0 
\in L^2(\Omega).
 \end{equation}
Then, for every $\epsilon>0$  there exists $\bar{\tau}_\epsilon>0$, only 
depending
on $\epsilon$ and on the problem data (cf.\ \eqref{bartauepsilon} ahead), such 
that for every   $\tau \in (0,\bar{\tau}_\epsilon)$  
% with $\tau_\epsilon$ from Proposition \ref{prop:exist-mini}  
there 
holds
\begin{equation}
\label{enh-spat-reg}
 A_\il \overline{z}_\tau \in L^\infty (0,T;L^2(\Omega))  \text{ and }  
\overline\omega_\tau  \in L^\infty(0,T;L^2(\Omega) ), 
 \end{equation}
with $\overline\omega_\tau$ a selection in $\partial_{\calZ,\calZ^*} 
\calR_1(\widehat{z}_\tau')$  which fulfills \eqref{cont-reformulation}.  
Therefore,  
 the functions $(\overline{t}_\tau, \overline{z}_\tau, \widehat{z}_\tau)$ satisfy 
 \begin{equation}
 \label{Eul-Lagr-L2}
\partial \calR_1 (\widehat{z}_\tau'(t))+ \epsilon  \widehat{z}_\tau'(t)     + \rmD_z\calI(\overline{t}_\tau(t),
 \overline{z}_\tau(t))  \ni  0 \qquad \text{ in }L^2(\Omega) \ \foraa\, t \in (0,T). 
 \end{equation}
 Furthermore,    there exist  constants $C,\, C(\epsi),\, C(\sigma)>0$, with 
$C(\epsi) \uparrow +\infty$ as $\epsi  \downarrow 0$,  such that for all 
$\epsilon>0$ and $ \tau \in (0,\bar{\tau}_\epsilon) $ the following estimates 
hold: 
 \begin{subequations}
 \label{est-epsi-tau}
 \begin{align}
  & 
   \label{est-epsi-tau-0}
   \sup_{t\in [0,T]} \left| \calI(\overline{t}_\tau(t), \overline{z}_\tau(t)) \right| \leq C,
 \\
  & 
 \label{est-epsi-tau-1}
 \| \overline{z}_\tau \|_{L^\infty (0,T;W^{1,q}(\Omega))} 
 +  \| \widehat{z}_\tau \|_{L^\infty (0,T;W^{1,q}(\Omega))}   \leq C,
 \\ 
   & 
 \label{est-epsi-tau-1reg}
  \| \overline{z}_\tau \|_{L^\infty (0,T;W^{1+\sigma,q}(\Omega))} \leq 
C(\sigma) \text{ for all }0<\sigma<\tfrac1q,  
 \\
 & 
  \label{est-epsi-tau-2}
 %\epsilon^{1/2} 
   \| \widehat{z}_\tau' \|_{L^2 (0,T;H^{1}(\Omega))}  +  % \epsilon^{1/2} 
  \| \widehat{z}_\tau' \|_{L^\infty (0,T;L^{2}(\Omega))} \leq  C(\epsi), 
  \\ & 
   \label{est-epsi-tau-3}
  \| \widehat{z}_\tau \|_{W^{1,1} (0,T;H^{1}(\Omega))} \leq C,
 \\ & 
  \label{est-epsi-tau-4}
 \| A_q (\overline{z}_\tau)\|_{L^\infty (0,T;L^{2}(\Omega))} \leq C,
 \\ &
   \label{est-epsi-tau-5}
  %\epsilon^{1/2}
   \| \overline{\omega}_\tau \|_{L^\infty (0,T;L^{2}(\Omega))} \leq  
C(\epsi), 
 \\ 
 & 
   \label{est-epsi-tau-6}
 \| \overline{u}_\tau \|_{L^\infty (0,T;H^{2}(\Omega))} \leq C,
 \\
  & 
    \label{est-epsi-tau-7}
 %\epsilon^{1/2} 
 \| \widehat{u}_\tau' \|_{L^2(0,T;W^{1,3}(\Omega))} \leq C(\epsi), 
 \\  
  & 
    \label{est-epsi-tau-8}
  \| \widehat{u}_\tau \|_{W^{1,1}(0,T;W^{1,3}(\Omega))} \leq C.
 \end{align}
 Therefore, 
 \begin{align}
  \label{est-epsi-tau-4-bis}
 \|\rmD_z \calI (\overline{t}_\tau,\overline{z}_\tau)\|_{L^\infty 
(0,T;L^{2}(\Omega))} \leq C. 
 \end{align} 
 \end{subequations}
 \end{proposition}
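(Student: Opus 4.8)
The plan is to derive all the bounds in \eqref{est-epsi-tau} from a single discrete energy--dissipation inequality, supplemented by two ``testing'' arguments on the Euler--Lagrange equation \eqref{discr-Eul-Lagr}, the second of which --- the gain of $L^2$-regularity for $A_q z$ --- is the genuine difficulty. \emph{Step~1 (energy--dissipation inequality).} First I would use $z=z_k^\tau$ as a competitor in \eqref{def_time_incr_min_problem_eps}, so that $\calI(t_{k+1}^\tau,z_{k+1}^\tau)+\tau\calR_\epsi(\Dtau k z/\tau)\le\calI(t_{k+1}^\tau,z_k^\tau)$, and rewrite $\calI(t_{k+1}^\tau,z_k^\tau)=\calI(t_k^\tau,z_k^\tau)+\int_{t_k^\tau}^{t_{k+1}^\tau}\partial_t\calI(r,z_k^\tau)\dd r$ using $\calI\in\rmC^1$. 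Summing over $k$ and exploiting the \emph{$z$-independent} bound \eqref{stim3} on $\partial_t\calI$ together with the lower bound \eqref{est_coerc1}, one obtains at once: the energy bound \eqref{est-epsi-tau-0}; the uniform $\calZ$-bound \eqref{est-epsi-tau-1} for $\overline z_\tau$ and, by convexity of the $\calZ$-norm, for $\widehat z_\tau$, hence a radius $M$ independent of $\epsi,\tau$ with $\|\overline z_\tau\|_\calZ,\|\widehat z_\tau\|_\calZ\le M$; the $H^1$-bound for $\umin$, whence via \eqref{H2_improved-1} the $H^2$-bound \eqref{est-epsi-tau-6} for $\overline u_\tau$ (and likewise for $\underline u_\tau$ and $\widehat u_\tau$); and the dissipation bound $\sum_k\tau\calR_\epsi(\Dtau k z/\tau)\le C$, whose quadratic part gives $\|\widehat z_\tau'\|_{L^2(0,T;L^2(\Omega))}\le C(\epsi)$, and whose $\calR_1$-part --- using the unidirectionality $\widehat z_\tau'\le0$ forced by the structure of $\calR_1$, so that the sum telescopes --- gives $\sum_k\|\Dtau k z\|_{L^1(\Omega)}=\|z_0-z_N^\tau\|_{L^1(\Omega)}\le C$.

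\emph{Step~2 (enhanced spatial regularity --- the crux).} From \eqref{discr-Eul-Lagr} one reads $A_q z_{k+1}^\tau=-\omega_{k+1}-\epsi\,\Dtau k z/\tau-\rmD_z\wt\calI(t_{k+1}^\tau,z_{k+1}^\tau)$ in $\calZ^*$, but $\omega_{k+1}$ is a priori only in $\calZ^*$, so $A_q z_{k+1}^\tau\in L^2(\Omega)$ is not granted. I would fix a regularization parameter $\lambda>0$ --- e.g.\ a Moreau--Yosida approximation of $\partial\calR_1$, or an extra vanishing higher-order penalization in \eqref{def_time_incr_min_problem_eps} --- for which the associated incremental solution $z_{k+1}^{\tau,\lambda}$ does satisfy $A_q z_{k+1}^{\tau,\lambda}\in L^2(\Omega)$, and then test the regularized Euler--Lagrange equation by the discrete-in-time increment $\tfrac1\tau(A_q z_{k+1}^{\tau,\lambda}-A_q z_k^{\tau,\lambda})$, the discrete counterpart of testing \eqref{dndia-eps-b} by $\partial_t A_q z$. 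Summing over $k$: the $A_q$-term telescopes into $\tfrac12\|A_q z_{k+1}^{\tau,\lambda}\|_{L^2(\Omega)}^2-\tfrac12\|A_q z_0\|_{L^2(\Omega)}^2$, the initial value being finite precisely by \eqref{further-reg}; the $\partial\calR_1$-term and the viscous $\epsi$-term are, up to a telescoping remainder, nonnegative by monotonicity; and the coupling term $\rmD_z\wt\calI$ is absorbed via the Lipschitz estimates \eqref{stim-l4}--\eqref{enhanced-stim-7} and the uniform $W^{1,q}$-bound from Step~1 --- here $q>\sd$ is essential, as it supplies enough spatial regularity of $g(z)$ and of $\nabla\umin$. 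This gives $\sup_k\|A_q z_{k+1}^{\tau,\lambda}\|_{L^2(\Omega)}\le C$ uniformly in $\lambda$ and in $\epsi,\tau$, as long as $\tau$ stays below a threshold $\bar{\tau}_\epsi$; letting $\lambda\downarrow0$ and invoking the uniqueness part of Proposition~\ref{prop:exist-mini} --- valid exactly for $\tau\le\tau(\epsi,M)$, which is what pins down $\bar{\tau}_\epsi$ --- identifies the limit with $z_{k+1}^\tau$ and transfers the bound, so that \eqref{enh-spat-reg} and \eqref{est-epsi-tau-4} hold. Reconciling this formal test with the a priori lack of regularity, and tracking the constants in the absorption step, is what I expect to be the main obstacle.

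\emph{Step~3 ($L^2$-reformulation and its corollaries).} Once $A_q\overline z_\tau\in L^\infty(0,T;L^2(\Omega))$ holds with a uniform bound, the Euler--Lagrange equation \eqref{cont-reformulation} may be read in $L^2(\Omega)$: every term but $\overline\omega_\tau$ already lies there (the coupling term by \eqref{estimate-for-DI} and the $\calZ$-bound), hence so does $\overline\omega_\tau=-\epsi\widehat z_\tau'-A_q\overline z_\tau-\rmD_z\wt\calI(\overline t_\tau,\overline z_\tau)$; this proves \eqref{Eul-Lagr-L2}, the second part of \eqref{enh-spat-reg}, and \eqref{est-epsi-tau-5}. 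The splitting \eqref{der-decomp} together with \eqref{estimate-for-DI} and \eqref{est-epsi-tau-4} gives \eqref{est-epsi-tau-4-bis}, and feeding $A_q z_{k+1}^\tau\in L^2(\Omega)\hookrightarrow L^{q'}(\Omega)$ (uniformly bounded) into Proposition~\ref{prop:Savare98} gives the fractional estimate \eqref{est-epsi-tau-1reg}.

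\emph{Step~4 (time regularity of the rates and of $u$).} The remaining estimates \eqref{est-epsi-tau-2}, \eqref{est-epsi-tau-3}, \eqref{est-epsi-tau-7}, \eqref{est-epsi-tau-8} follow from a discrete differentiation of the $L^2$-equation \eqref{Eul-Lagr-L2}: subtracting the equations at two consecutive steps and testing by $\Dtau k z$ (resp.\ by the increment of $\widehat z_\tau'$), one uses the monotonicity of $\partial\calR_1$ (the $\omega$-contributions being favorable thanks to the $1$-homogeneity of $\calR_1$), the uniform monotonicity estimate \eqref{e:strong-monot-sez2} and the coercivity \eqref{e2.22} of $A_q$ to bring out $\|\nabla\Dtau k z\|_{L^2(\Omega)}^2$, and the Lipschitz bound \eqref{enhanced-stim-7} for the $(t,z)$-increments of $\rmD_z\wt\calI$; a discrete Gronwall argument then closes the bounds for $\widehat z_\tau'$ in $L^2(0,T;H^1(\Omega))\cap L^\infty(0,T;L^2(\Omega))$ and for $\widehat z_\tau$ in $W^{1,1}(0,T;H^1(\Omega))$. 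Finally \eqref{est-epsi-tau-7}--\eqref{est-epsi-tau-8} are obtained by applying \eqref{summary} with $p=3$ to $\Dtau k u=\umin(t_{k+1}^\tau,z_{k+1}^\tau)-\umin(t_k^\tau,z_k^\tau)$ and summing --- resp.\ squaring and summing --- the resulting bounds in $|t_{k+1}^\tau-t_k^\tau|$ and $\|\Dtau k z\|_{L^6(\Omega)}$, which are already controlled by the previous steps.
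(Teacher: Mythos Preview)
Your overall architecture is right --- energy estimate, regularize to make the $A_q$-test legal, pass to the limit via the uniqueness in Proposition~\ref{prop:exist-mini} --- and this is indeed the paper's route. But the ordering you propose creates a circular dependency that breaks Step~2.

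In Step~2, after Abel summation the coupling term produces a contribution of the form
\[
\sum_k \|\rmD_z\wt\calI(t_k^\tau,z_k)-\rmD_z\wt\calI(t_{k-1}^\tau,z_{k-1})\|_{L^2(\Omega)}\,\|A_q z_{k-1}\|_{L^2(\Omega)},
\]
and to close the Gronwall you need the first factor to be summable. By \eqref{enhanced-stim-7} this requires $\sum_k\|z_k-z_{k-1}\|_{L^6(\Omega)}=\int_0^T\|\widehat z_\tau'\|_{L^6(\Omega)}\,\dd r\le C$, i.e.\ essentially the bound \eqref{est-epsi-tau-3}. Your Step~1 only delivers $\|\widehat z_\tau'\|_{L^2(0,T;L^2(\Omega))}\le C(\epsi)$ and $\|\widehat z_\tau'\|_{L^1(0,T;L^1(\Omega))}\le C$; in dimension three neither controls $L^1(0,T;L^6(\Omega))$, and the ``uniform $W^{1,q}$-bound from Step~1'' you invoke is a \emph{spatial} bound on $\overline z_\tau$, not a bound on the time derivative. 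You defer the needed $W^{1,1}(0,T;H^1(\Omega))$ estimate to Step~4, but Step~4 as written uses the $L^2$-equation \eqref{Eul-Lagr-L2} from Step~3, which rests on Step~2 --- the argument is circular.

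The paper resolves this by reversing the order: the time-regularity estimates \eqref{est-epsi-tau-2}--\eqref{est-epsi-tau-3} are proved \emph{first}, directly on the $\calZ^*$-equation \eqref{cont-reformulation} (subtracting consecutive Euler--Lagrange equations and testing by $\widehat z_\tau'\in\calZ$; the $\omega$-contribution has the right sign precisely by the $1$-homogeneity of $\calR_1$, which you also cite). Only then is the regularization introduced and the $A_q$-test carried out, with \eqref{est-epsi-tau-3} feeding the Gronwall. Two further wrinkles you should anticipate: since the regularized potential is no longer $1$-homogeneous, the time-regularity argument must be \emph{redone} on the regularized equation by a different mechanism (the paper uses the convex conjugate, cf.\ \eqref{conjugate-R-nu}); and the paper takes the regularization to be $\rmC^2$ (a mollified Yosida, not the plain one) because the sign of the $\omega$-term in the $A_q$-test is obtained pointwise via $\overline{\rmR}_{1,\nu}''\ge 0$ after an integration by parts.
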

 \par
 Based on Proposition \ref{prop:aprio} we derive a  discrete energy inequality, cf.\  \eqref{discr-enineq} below, 
involving the Fenchel-Moreau conjugate of the functional $\calR_\epsi$ w.r.t.\ the scalar product in $L^2(\Omega)$,
 namely the functional
\begin{equation}
\label{Fenchel-Moreau}
\calR_\epsi^*: L^2(\Omega) \to [0,+\infty) \qquad \text{defined by } 
\calR_\epsi^*(\xi): = \frac1{2\epsilon}
\min_{\eta \in \partial\calR_1(0)} \|\xi -\eta\|_{L^2(\Omega)}^2\,.
\end{equation}
 Observe that we are in a position to work with this Legendre transform of 
$\calR_\epsi$, and not
with the one w.r.t.\ the $(\calZ,\calZ^*)$-duality, 
 relying on the  fact that $\rmD_z
\calI (\overline{t}_{\tau}(t),\overline{z}_{\tau}(t)) \in L^2(\Omega)$ for 
almost all $t\in (0,T)$, thanks to \eqref{enh-spat-reg}.  

Let us mention in advance that \eqref{discr-enineq} will be the starting point of the vanishing-viscosity analysis developed in Sec.\ \ref{ss:5.3}.
We postpone  the   proof of Corollary \ref{cor:discr-enineq}   to the end of this section.
\begin{corollary}
 \label{cor:discr-enineq}
 Under Assumptions   \ref{ass:domain}, \ref{assumption:energy}, and \ref{ass:load},
 suppose that the initial datum $z_0$ fulfills \eqref{further-reg}.
 \par
 Then, there exists $C>0$ such that for every $\epsilon>0$ and $\tau \in (0,\bar{\tau}_\epsilon)$
 the functions $\overline{z}_\tau,\, \widehat{z}_\tau$ comply with the \emph{discrete energy-dissipation inequality} for every $0\leq s \leq t \leq T$
 \begin{equation}
\label{discr-enineq}
\begin{aligned}
&
\int_{\underline{t}_{\tau}(s)}^{\overline{t}_{\tau}(t)}
\left(\calR_\epsi (\widehat{z}'_{\tau}(r))+\calR_\epsi^* (-\rmD_z
\calI (\overline{t}_{\tau}(r),\overline{z}_{\tau}(r)))  \right)
\,\mathrm{d}r +
\calI( t,\widehat{z}_{\tau}(t))
\\ &
\leq \calI( s,\widehat{z}_{\tau}(s)) +
\int_{\underline{t}_{\tau}(s)}^{\overline{t}_{\tau}(t)}
\partial_t \calI (r,\widehat{z}_{\tau}(r)) \, \mathrm{d}r
\\
&+ C \sup_{t\in[0,T]}\|\overline{z}_{\tau}(t)-\widehat{z}_{\tau}(t)\|_{L^{2}(\Omega)}
\int_{\underline{t}_{\tau}(s)}^{\overline{t}_{\tau}(t)}(|\overline{t}_{\tau}(r)-r | + \|\overline{z}_{\tau}(r)-\widehat{z}_{\tau}(r)\|_{L^{6}(\Omega)} )\,\dd r.
\end{aligned}
\end{equation}
Therefore, there exists a constant $C>0$ such that for
every $\epsilon>0$ and $\tau \in (0,\bar{\tau}_\epsilon)$ 
\begin{subequations}
\begin{align}
\label{est-epsi-tau-1-bis}
&
 \sup_{t\in [0,T]} | \calI( t,\widehat{z}_{\tau}(t)) | \leq C, 
 \\
 & 
 \label{est-diss-epsi-tau}
 \int_{0}^{T}
\left(\calR_\epsi (\widehat{z}'_{\tau}(r))+\calR_\epsi^* (-\rmD_z
\calI (\overline{t}_{\tau}(r),\overline{z}_{\tau}(r)))  \right)
\,\mathrm{d}r \leq C\,.
\end{align}
\end{subequations}
 \end{corollary}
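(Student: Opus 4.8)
The plan is to integrate a pointwise energy--dissipation estimate along the piecewise-linear interpolant $\widehat{z}_\tau$ in order to establish \eqref{discr-enineq}, and then to read off \eqref{est-epsi-tau-1-bis} and \eqref{est-diss-epsi-tau} by taking $s=0$ therein. Since $\widehat{z}_\tau\in\AC([0,T];\calZ)$ and $\calI\in\rmC^1([0,T]\times\calZ)$ (Corollary \ref{coro-fre}), the chain rule
\[
\frac{\dd}{\dd r}\calI(r,\widehat{z}_\tau(r)) = \partial_t\calI(r,\widehat{z}_\tau(r)) + \langle\rmD_z\calI(r,\widehat{z}_\tau(r)),\widehat{z}_\tau'(r)\rangle_{\calZ} \qquad \foraa\, r\in(0,T)
\]
holds, and, because the scheme enforces $\widehat{z}_\tau'(r)\le 0$ a.e.\ (irreversibility), $\calR_\epsi(\widehat{z}_\tau'(r))$ is finite for a.a.\ $r$. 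On each subinterval $(t_k^\tau,t_{k+1}^\tau)$ we have $\widehat{z}_\tau(r)-\overline{z}_\tau(r) = -(1{-}\theta(r))\tau\,\widehat{z}_\tau'(r)$ with $\theta(r):=(r{-}t_k^\tau)/\tau\in[0,1]$, so the monotonicity estimate \eqref{e2.22} for $A_q$ gives $\langle A_q\widehat{z}_\tau(r),\widehat{z}_\tau'(r)\rangle_{\calZ}\le\langle A_q\overline{z}_\tau(r),\widehat{z}_\tau'(r)\rangle_{\calZ}$. Using the decomposition \eqref{der-decomp}, the fact that $A_q\overline{z}_\tau(r)$, $\rmD_z\wt\calI(\overline{t}_\tau(r),\overline{z}_\tau(r))$ and $\rmD_z\wt\calI(r,\widehat{z}_\tau(r))$ all belong to $L^2(\Omega)$ (by \eqref{enh-spat-reg} and \eqref{estimate-for-DI}), the $L^2$ Euler--Lagrange inclusion \eqref{Eul-Lagr-L2} — which reads $-\rmD_z\calI(\overline{t}_\tau(r),\overline{z}_\tau(r))\in\partial\calR_1(\widehat{z}_\tau'(r))+\epsi\widehat{z}_\tau'(r)=\partial\calR_\epsi(\widehat{z}_\tau'(r))$ — and the Fenchel--Young equality for the pair $(\calR_\epsi,\calR_\epsi^*)$ (with $\calR_\epsi^*$ as in \eqref{Fenchel-Moreau}), which then gives $\langle\rmD_z\calI(\overline{t}_\tau(r),\overline{z}_\tau(r)),\widehat{z}_\tau'(r)\rangle_{L^2(\Omega)} = -\calR_\epsi(\widehat{z}_\tau'(r)) - \calR_\epsi^*(-\rmD_z\calI(\overline{t}_\tau(r),\overline{z}_\tau(r)))$, I would obtain, for a.a.\ $r\in(0,T)$,
\[
\frac{\dd}{\dd r}\calI(r,\widehat{z}_\tau(r)) + \calR_\epsi(\widehat{z}_\tau'(r)) + \calR_\epsi^*(-\rmD_z\calI(\overline{t}_\tau(r),\overline{z}_\tau(r))) \le \partial_t\calI(r,\widehat{z}_\tau(r)) + \langle\rmD_z\wt\calI(r,\widehat{z}_\tau(r))-\rmD_z\wt\calI(\overline{t}_\tau(r),\overline{z}_\tau(r)),\widehat{z}_\tau'(r)\rangle_{L^2(\Omega)}.
\]

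Integrating this over $(\underline{t}_\tau(s),\overline{t}_\tau(t))$, the integral of $\tfrac{\dd}{\dd r}\calI(r,\widehat{z}_\tau(r))$ equals $\calI(\overline{t}_\tau(t),\widehat{z}_\tau(\overline{t}_\tau(t)))-\calI(\underline{t}_\tau(s),\widehat{z}_\tau(\underline{t}_\tau(s)))$; these boundary values are replaced by $\calI(t,\widehat{z}_\tau(t))$ and $\calI(s,\widehat{z}_\tau(s))$ at the expense of terms controlled — via the $\rmC^1$-in-time regularity of $\calI$ and \eqref{stim3}, the Lipschitz bound \eqref{Lip-cont-I} for $\wt\calI$, and the convexity of $\calI_q$ together with \eqref{est-epsi-tau-4} — by $\|\overline{z}_\tau-\widehat{z}_\tau\|_{L^2(\Omega)}$ and by the time increments. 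The residual term on the right-hand side is estimated by the Lipschitz bound \eqref{enhanced-stim-7}, H\"older's inequality, and the a priori estimates of Proposition \ref{prop:aprio}: \eqref{est-epsi-tau-1} keeps the factors $1+C_{f'}+P^3$ bounded and, after integrating the weight $(1{-}\theta(r))$ over each subinterval, the whole error is absorbed into one term of the form $C\sup_{[0,T]}\|\overline{z}_\tau-\widehat{z}_\tau\|_{L^2(\Omega)}\int_{\underline{t}_\tau(s)}^{\overline{t}_\tau(t)}(|\overline{t}_\tau(r)-r|+\|\overline{z}_\tau(r)-\widehat{z}_\tau(r)\|_{L^6(\Omega)})\dd r$. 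This yields \eqref{discr-enineq}.

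For the consequences I would put $s=0$ in \eqref{discr-enineq}, so that $\calI(s,\widehat{z}_\tau(s))=\calI(0,z_0)$ is a fixed constant. Using $\calR_\epsi,\calR_\epsi^*\ge0$, the lower bound on $\calI(\,\cdot\,,\widehat{z}_\tau(\cdot))$ from the coercivity estimate \eqref{est_coerc1}, the $z$-independent bound $|\partial_t\calI(r,\widehat{z}_\tau(r))|\le C$ from \eqref{stim3}, and the uniform bound on the error term provided by \eqref{est-epsi-tau-1}, one first gets $\sup_{t\in[0,T]}|\calI(t,\widehat{z}_\tau(t))|\le C$, i.e.\ \eqref{est-epsi-tau-1-bis}; moving then $\calI(T,\widehat{z}_\tau(T))$ to the right-hand side and invoking its lower bound yields $\int_0^T(\calR_\epsi(\widehat{z}_\tau'(r))+\calR_\epsi^*(-\rmD_z\calI(\overline{t}_\tau(r),\overline{z}_\tau(r))))\dd r\le C$, i.e.\ \eqref{est-diss-epsi-tau}.

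The main obstacle is the bookkeeping in the second step: one must track \emph{all} the discrepancies — between the piecewise-constant interpolants $\overline{z}_\tau,\overline{t}_\tau$ (through which the Euler--Lagrange equation enters) and the piecewise-linear $\widehat{z}_\tau$ (along which the chain rule is run), as well as between grid points and the arbitrary instants $s,t$ — and show that their sum fits into the single structured error term appearing in \eqref{discr-enineq}. This is precisely where the uniform estimates of Proposition \ref{prop:aprio} are indispensable, possibly at the price of a further shrinking of $\bar\tau_\epsi$.
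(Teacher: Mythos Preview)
Your approach is essentially the one in the paper: run the chain rule along $\widehat z_\tau$, rewrite $\langle \rmD_z\calI(\overline t_\tau,\overline z_\tau),\widehat z_\tau'\rangle$ via the Fenchel--Young equality coming from \eqref{Eul-Lagr-L2}, use the monotonicity of $A_q$ to drop the term $-\int\langle A_q\overline z_\tau-A_q\widehat z_\tau,\widehat z_\tau'\rangle$ (this is the paper's $F_1\le 0$), and bound the remaining discrepancy $-\int\langle \rmD_z\wt\calI(\overline t_\tau,\overline z_\tau)-\rmD_z\wt\calI(r,\widehat z_\tau),\widehat z_\tau'\rangle$ (the paper's $F_2$) with \eqref{enhanced-stim-7}. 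The derivation of \eqref{est-epsi-tau-1-bis}--\eqref{est-diss-epsi-tau} from \eqref{discr-enineq} via \eqref{stim3}, \eqref{est_coerc1}, \eqref{est-epsi-tau-1} and \eqref{est-epsi-tau-3} is also the paper's argument.

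Two small points where you overcomplicate things. First, the ``boundary correction'' you describe (replacing $\calI(\overline t_\tau(t),\widehat z_\tau(\overline t_\tau(t)))$ by $\calI(t,\widehat z_\tau(t))$ and invoking \eqref{Lip-cont-I}, convexity of $\calI_q$, \eqref{est-epsi-tau-4}) is not needed as a separate step: if you integrate the pointwise identity on $[s,t]$ you get the correct boundary terms directly, and the resulting $F_1|_{[s,t]}$, $F_2|_{[s,t]}$ are already dominated by their counterparts on $[\underline t_\tau(s),\overline t_\tau(t)]$; alternatively, integrating on $[\underline t_\tau(s),\overline t_\tau(t)]$ and then applying the same pointwise identity on the two boundary subintervals reproduces exactly the same $F_1$, $F_2$ structure, with no new type of error. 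Second, there is no need for ``a further shrinking of $\bar\tau_\epsi$'': the estimate of $F_2$ and the subsequent bounds only use \eqref{est-epsi-tau-1} and \eqref{est-epsi-tau-3}, which hold for all $\tau\in(0,\bar\tau_\epsi)$.
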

 \par
Let us now comment on the proof of Prop.\ \ref{prop:aprio}:  Estimates  
\eqref{est-epsi-tau} (and the related enhanced spatial regularity 
\eqref{enh-spat-reg}, which leads to \eqref{Eul-Lagr-L2}  as a 
subdifferential inclusion in $L^2(\Omega)$)  will be proved by performing on 
equation \eqref{cont-reformulation} the 
following a priori estimates: 
 \begin{description}
 \item[\textbf{Energy estimate}] based on the energy-dissipation inequality
 \begin{equation}
 \label{en-diss}
 \calI(\overline{t}_\tau(t), \overline{z}_\tau(t)) + \int_0^{\overline{t}_\tau(t)} \calR_\epsilon (\widehat{z}_\tau'(s)) \dd s \leq   \calI(0, z_0) + \int_0^{\overline{t}_\tau(t)}\partial_t \calI(s, \underline{z}_\tau(s)) \dd s 
 \end{equation}
 for every $t \in [0,T]$, it leads to the uniform bounds   
\eqref{est-epsi-tau-0}--\eqref{est-epsi-tau-1}.  Observe that the proof of 
 \eqref{en-diss} works for \emph{every} $\tau>0$.
 \end{description}
We then choose
 \begin{equation}
 \label{bartauepsilon} 
 \bar{\tau}_\epsilon : = \tau(\epsilon,M) \text{ according to Proposition 
\ref{prop:exist-mini}, with } M \geq  \sup_{\tau>0} \| \overline{z}_\tau 
\|_{L^\infty (0,T;W^{1,q}(\Omega))} \,. 
 \end{equation} 
\begin{description} 
 \item[\textbf{First regularity estimate}]  In view of estimate 
\eqref{H2_improved-1}, from the estimate for $\overline{z}_\tau $ in 
$L^\infty(0,T;W^{1,q}(\Omega))$ we deduce \eqref{est-epsi-tau-6}. 
 \item[\textbf{Enhanced energy estimate}]  it consists in (formally) 
differentiating \eqref{cont-reformulation} w.r.t.\ time, and testing it by 
$\widehat z_\tau'$. In view of the coercivity property \eqref{e2.22} of the  
elliptic operator $A_q$, this gives estimates \eqref{est-epsi-tau-2} \& 
\eqref{est-epsi-tau-3} for $\widehat{z}'_\tau$. 
  \item[\textbf{Second regularity estimate}] 
 Estimates \eqref{est-epsi-tau-7} \& \eqref{est-epsi-tau-8} for $\widehat{u}_\tau'$ derive from \eqref{est-epsi-tau-2} \& \eqref{est-epsi-tau-3}, respectively, via    the continuous dependence estimate \eqref{H2_improved-2}.
 \item[\textbf{Third regularity estimate}] it consists in testing \eqref{cont-reformulation} by  (the formally written term)  $\partial_t A_q \overline{z}_\tau$. This gives rise to estimate
 \eqref{est-epsi-tau-4}, which induces the spatial regularity 
\eqref{est-epsi-tau-1reg} by applying 
%the results from  \cite{Savare98},
Proposition \ref{prop:Savare98}, and 
 it induces \eqref{est-epsi-tau-5} by a comparison argument in \eqref{cont-reformulation}. 
 \end{description}
 \noindent
 The energy \& the enhanced energy estimates can be rendered rigorously on the 
discrete equation  \eqref{cont-reformulation}, cf.\ Lemma \ref{l:1st-3-est} 
below.   
  In its proof, we shall revisit 
 the  calculations  developed  in \cite[Sec.\ 5]{KRZ2}, 
 relying on the novel estimates provided by Lemmas \ref{l:2.7} and 
\ref{l:diff_contz}. 
 Instead, the third regularity estimate obtained in Lemma  \ref{l:last-est} 
ahead   cannot be performed directly on \eqref{cont-reformulation}. 
 In fact, 
  it would involve testing the subdifferential inclusion 
\eqref{cont-reformulation},  set in $\calZ^*$, by the difference $ \frac1\tau ( 
A_q \overline{z}_\tau(t) {-} A_q \underline{z}_\tau(t))$ which then 
should belong to $\calZ$. This cannot be rigorous, since  $A_q 
\overline{z}_\tau(t)$ is in principle an element of $\calZ^*$, only. Therefore,  
 in the proof of Lemma  \ref{l:last-est}  we shall perform all the calculations 
on an approximate version of \eqref{cont-reformulation}, featuring a 
regularized version of the dissipation potential~$\calR_1$.  
 \par
 \begin{lemma}
 \label{l:1st-3-est}
 Under Assumptions \ref{ass:domain}, \ref{assumption:energy}, and 
\ref{ass:load}, and the condition that the initial datum $z_0\in \calZ$ fulfills \eqref{further-reg}, estimates
  \eqref{est-epsi-tau-0}--\eqref{est-epsi-tau-1}, 
\eqref{est-epsi-tau-2}--\eqref{est-epsi-tau-3}, and  
\eqref{est-epsi-tau-6}--\eqref{est-epsi-tau-8} hold true for every 
$\tau>0$. 
 \end{lemma}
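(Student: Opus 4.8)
The plan is to render rigorous, directly on the discrete Euler--Lagrange equation \eqref{cont-reformulation}, the three formal estimates described in the preceding discussion (energy estimate, first regularity estimate, enhanced energy estimate), postponing the third regularity estimate (which requires regularizing $\calR_1$) to Lemma \ref{l:last-est}. First I would establish the discrete energy--dissipation inequality \eqref{en-diss}. This follows by the standard argument for minimizing movements: from the minimality \eqref{def_time_incr_min_problem_eps} of $z_{k+1}^\tau$ one has $\calI(t_{k+1}^\tau,z_{k+1}^\tau) + \tau \calR_\epsi((z_{k+1}^\tau - z_k^\tau)/\tau) \leq \calI(t_{k+1}^\tau,z_k^\tau)$; then one writes $\calI(t_{k+1}^\tau,z_k^\tau) = \calI(t_k^\tau,z_k^\tau) + \int_{t_k^\tau}^{t_{k+1}^\tau} \partial_t \calI(s,z_k^\tau)\dd s$ using that $t \mapsto \calI(t,z)$ is $\rmC^1$ (Lemma \ref{l:2.7}), and telescopes over $k$, recognizing $\int_{t_k^\tau}^{t_{k+1}^\tau} \partial_t\calI(s,z_k^\tau)\dd s = \int_{t_k^\tau}^{t_{k+1}^\tau}\partial_t\calI(s,\underline z_\tau(s))\dd s$ and $\tau\calR_\epsi((z_{k+1}^\tau-z_k^\tau)/\tau) = \int_{t_k^\tau}^{t_{k+1}^\tau}\calR_\epsi(\widehat z_\tau'(s))\dd s$ by convexity/$1$-homogeneity. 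This gives \eqref{en-diss} for every $\tau>0$. Combining \eqref{en-diss} with the coercivity estimate \eqref{est_coerc1}, the uniform bound \eqref{stim3} on $\partial_t\calI$ (crucially independent of $z$), and a discrete Gronwall argument, yields \eqref{est-epsi-tau-0} and \eqref{est-epsi-tau-1}; the bound on $\widehat z_\tau$ in $L^\infty(0,T;W^{1,q}(\Omega))$ follows since $\widehat z_\tau(t)$ is a convex combination of $z_k^\tau$ and $z_{k+1}^\tau$. The first regularity estimate \eqref{est-epsi-tau-6} is then immediate by inserting the bound on $\overline z_\tau$ into \eqref{H2_improved-1}.

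Next I would carry out the enhanced energy estimate. The idea is to take successive differences in \eqref{discr-Eul-Lagr}: subtracting the equation at step $k$ from that at step $k{+}1$ (with the natural convention that the ``step $0$'' equation is available because $A_q z_0 \in L^2(\Omega)$ by \eqref{further-reg}), dividing by $\tau^2$, and testing by $\Dtau k z = z_{k+1}^\tau - z_k^\tau$. The monotonicity of $\partial_{\calZ,\calZ^*}\calR_1$ kills the $\omega$-terms; the viscous term $\epsi$ produces $\tfrac{\epsi}{\tau^2}\|\Dtau k z - \Dtau{k-1}z\|_{L^2(\Omega)}^2 \geq 0$; the $A_q$-term produces, via the coercivity \eqref{e2.22}, a nonnegative contribution controlling $\int_\Omega(1+|\nabla \overline z_\tau|^2 + |\nabla \underline z_\tau|^2)^{(q-2)/2}|\nabla(\Dtau k z - \Dtau{k-1}z)|^2/\tau^2$, which in particular dominates $\|\widehat z_\tau'(t) - \widehat z_\tau'(t^-)\|$ in a weighted $H^1$-seminorm; and the $\rmD_z\wt\calI$-term is estimated via the Lipschitz bound \eqref{enhanced-stim-7} by $C P(z_k,z_{k+1})^2(\tau + \|\Dtau k z\|_{L^6(\Omega)})\|\Dtau k z\|_{L^2(\Omega)}/\tau^2$, where $P$ is already uniformly bounded by \eqref{est-epsi-tau-1}. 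Summing over $k$, using $\tfrac12(a^2 - b^2) \leq (a-b)a$ for the telescoping of the $L^2$-norms of $\widehat z_\tau'$, absorbing the $H^1$-norm of $\widehat z_\tau'$ via Ehrling's inequality and the weighted-gradient term, and applying discrete Gronwall (the constant now degenerating as $\epsi \downarrow 0$ because the $\epsi$-term is what we telescope), yields \eqref{est-epsi-tau-2} and then \eqref{est-epsi-tau-3}. Finally \eqref{est-epsi-tau-7} and \eqref{est-epsi-tau-8} follow by differentiating the identity $L_{g(z)}\umin(t,z) = -L_{g(z)}u_D(t) - \ell(t)$ in the discrete sense and invoking the continuous-dependence estimate \eqref{H2_improved-2} (i.e.\ \eqref{summary} with $p=3$), together with the already-established bounds on the increments of $\overline z_\tau$ and on the data from \eqref{more-regular-data}.

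The main obstacle is the enhanced energy estimate: taking discrete time-differences of a differential inclusion requires care, and one must check that the ``level $0$'' equation is meaningful (this is exactly where \eqref{further-reg} enters, ensuring $\rmD_z\calI(0,z_0) \in L^2(\Omega)$ so that $\widehat z_\tau'(0^+)$ is controlled), that the initial contribution $\|\widehat z_\tau'\|_{L^2(\Omega)}$ at the first step is bounded uniformly in $\tau$, and that the weighted elliptic monotonicity \eqref{e2.22} genuinely provides enough control to absorb the lower-order term after Ehrling. The bookkeeping with the place-holder $P(z_1,z_2)$ is routine once \eqref{est-epsi-tau-1} is in hand, but the $\epsi$-dependence of the Gronwall constant must be tracked honestly so that $C(\epsi)$ appears exactly in \eqref{est-epsi-tau-2}, \eqref{est-epsi-tau-7} and not elsewhere. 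For the detailed execution of these calculations, adapted from \cite[Sec.\ 5]{KRZ2} to the present setting via Lemmas \ref{l:2.7} and \ref{l:diff_contz}, we proceed as follows.
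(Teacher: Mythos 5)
Your overall plan matches the paper's structure (energy estimate $\Rightarrow$ \eqref{est-epsi-tau-0}--\eqref{est-epsi-tau-1}; first regularity estimate via \eqref{H2_improved-1} $\Rightarrow$ \eqref{est-epsi-tau-6}; discrete time-differencing $\Rightarrow$ \eqref{est-epsi-tau-2}--\eqref{est-epsi-tau-3}; continuous dependence via \eqref{H2_improved-2} $\Rightarrow$ \eqref{est-epsi-tau-7}--\eqref{est-epsi-tau-8}), but the description of the enhanced energy estimate contains a genuine confusion, and the derivation of \eqref{est-epsi-tau-3} has a gap.

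On the enhanced energy estimate: you say you subtract the Euler--Lagrange equations at steps $k$ and $k{+}1$ and test by the first difference $\Dtau kz$, but then claim the viscous term produces $\tfrac{\epsi}{\tau^2}\|\Dtau kz - \Dtau{k-1}z\|_{L^2}^2 \geq 0$ and that the $A_q$-term controls $|\nabla(\Dtau kz - \Dtau{k-1}z)|^2$. Those expressions would arise from testing by the \emph{second} difference, which is incompatible with the later telescoping of $\|\widehat z_\tau'\|_{L^2}^2$ via $\tfrac12(a^2{-}b^2)\leq(a{-}b)a$. Testing by $\Dtau kz$ the viscous term gives the cross product $\tfrac{\epsi}{\tau^2}\langle \Dtau kz - \Dtau{k-1}z, \Dtau kz\rangle$, which one then bounds from below by $\tfrac{\epsi}{2\tau^2}(\|\Dtau kz\|^2 - \|\Dtau{k-1}z\|^2)$; the $A_q$-term becomes $\tau^{-2}\langle A_q z_{k+1}^\tau - A_q z_k^\tau, z_{k+1}^\tau - z_k^\tau\rangle$, to which the monotonicity estimate \eqref{e2.22} applies and controls $|\nabla\Dtau kz|^2$ (the \emph{first} difference, not the second); and the nonnegativity of the $\omega$-contribution does not follow from monotonicity of $\partial_{\calZ,\calZ^*}\calR_1$ tested against $\Dtau kz$ -- rather it follows from the $1$-homogeneity of $\calR_1$: $\langle\omega_{k+1},\Dtau kz\rangle = \tau\calR_1(\Dtau kz/\tau)$ while $\omega_k\in\partial\calR_1(0)$ gives $\langle\omega_k,\Dtau kz\rangle\leq\tau\calR_1(\Dtau kz/\tau)$. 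This is exactly the reformulation \eqref{pruni1:e2} via $\overline h_\tau$ in the paper.

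More importantly, you claim that the same Gronwall argument that yields \eqref{est-epsi-tau-2} -- whose constant you correctly note degenerates as $\epsi\downarrow 0$ -- ``then'' yields \eqref{est-epsi-tau-3}. But \eqref{est-epsi-tau-3} asserts an $\epsi$-\emph{uniform} bound on $\|\widehat z_\tau\|_{W^{1,1}(0,T;H^1(\Omega))}$, which cannot follow from a constant that blows up. In the paper this requires a separate, more delicate argument: writing \eqref{pruni1:e2} at midpoints $m_k$, estimating the $\rmD_z\wt\calI$-increment in $L^{4/3}$ via \eqref{stim-l4} against $\|\widehat z_\tau'(m_k)\|_{L^4}$, then applying a Gagliardo--Nirenberg interpolation $\|\zeta\|_{L^4}^2\leq c\|\zeta\|_{L^1}^{2(1-\theta)}\|\zeta\|_{H^1}^{2\theta}$ with $\theta=9/10$ to trade the $L^4$-norm for the product $\|\widehat z_\tau'\|_{L^1}\calR_1(\widehat z_\tau')$ plus a small $H^1$-norm, and finally invoking the tailored discrete Gronwall lemma \cite[Lemma~B.1]{KRZ2}, which exploits that $\int_0^T\calR_1(\widehat z_\tau')\dd t$ is $\epsi$-uniformly bounded by the energy estimate. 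None of this appears in your plan, and without it you do not recover the $\epsi$-uniform constant in \eqref{est-epsi-tau-3} (and hence not in \eqref{est-epsi-tau-8} either).
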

\begin{proof}
The discrete energy-dissipation inequality \eqref{en-diss}  
can be derived by choosing the competitor $z= z_{k}^\tau$ in  the minimum 
problem
 \eqref{def_time_incr_min_problem_eps}, which leads to 
 \[
 \begin{aligned}
\calI(t_{k+1}^\tau, z_{k+1}^\tau) + \tau_k \calR_\epsi
\left(\frac{z_{k+1}^\tau-z_{k}^\tau}{\tau_k} \right)   \leq
\calI(t_{k+1}^\tau, z_{k}^\tau)   = \calI(t_{k}^\tau,
z_{k}^\tau) + \int_{t_k^\tau}^{t_{k+1}^\tau} \partial_t \calI (s,
z_{k}^\tau) \,\dd s.
\end{aligned}
 \]
Then, \eqref{en-diss} follows upon summing over the index $k$. In view of 
 estimate
\eqref{stim3} 
on the power functional $\partial_t \calI$, 
and Assumption  \ref{ass:load}, the right-hand side of \eqref{en-diss} is 
uniformly bounded. 
Since the second term on its l.h.s.\ is non-negative, we immediately conclude 
estimate \eqref{est-epsi-tau-0}.  Then, the coercivity property 
\eqref{est_coerc1}, combined with Poincar\'e's inequality, gives 
\eqref{est-epsi-tau-1} for $\overline{z}_\tau$. The bound for 
$\widehat{z}_\tau$ then trivially follows.    From the bound for $\int_0^T 
\calR_\epsi (\widehat{z}_\tau'(t)) \dd t  $ we also infer 
that $\epsi^{1/2}  \| \widehat{z}_\tau' \|_{L^2 (0,T;L^{2}(\Omega))} \leq C$. 
\par
Thanks to \eqref{H2_improved-1}, we have that 
\[
\norm{\overline{u}_\tau}_{L^\infty(0,T;H^{2}(\Omega))} \leq  c_1 
\sup_{t\in (0,T)}  P(0,\overline{z}_\tau(t)) 
\left(\norm{\overline{\ell}_\tau}_{L^\infty(0,T;L^2(\Omega))} +
\norm{\overline{u}_{D,\tau}}_{L^\infty(0,T;H^{2}(\Omega))} \right)\leq C',
\]
where we have used estimate \eqref{est-epsi-tau-1}, as well as Assumption 
\ref{ass:load}.
Then, \eqref{est-epsi-tau-6} follows. 
\par
In order to derive estimates \eqref{est-epsi-tau-2} and \eqref{est-epsi-tau-3}, we follow the proof of \cite[Lemma 5.3]{KRZ2} and observe that, by the $1$-homogeneity of $\calR_1$, 
 \eqref{cont-reformulation} rewrites as 
\[	
\begin{cases}
\pairing{}{\calZ}{\overline{h}_\tau(\rho) }{\widehat{z}_\tau'(\rho)}  = \calR_1 (\widehat{z}_\tau'(\rho))  \quad  \text{for all } \rho \in (t_k^\tau, t_{k+1}^\tau) 
\\ 
\pairing{}{\calZ}{\overline{h}_\tau(r) }{\widehat{z}_\tau'(\rho)}  \leq \calR_1(\widehat{z}_\tau'(\rho)) \quad \text{for all } r \in [0,T]\setminus \{ t_0^\tau, \ldots, t_N^\tau\},
\\
 \end{cases} 
\]
where we have used the place-holder $\overline{h}_\tau(\rho): =  -  (\epsilon  \widehat{z}_\tau'(\rho)    + A_\il \overline{z}_\tau(\rho) +
   \rmD_z\wt \calI(\overline{t}_\tau(\rho),
 \overline{z}_\tau(\rho) ))$. 
Subtracting the second relation from the first one gives  $\tau^{-1}\pairing{}{\calZ}{\overline{h}_\tau(\rho) - \overline{h}_\tau(r)  }{\widehat{z}_\tau'(\rho)}  \geq 0$ for $\rho
\in (t_k^\tau, t_{k+1}^\tau)$ and $r \in (t_{k-1}^\tau, t_{k}^\tau)$.  Hence,  we get
\begin{multline}
\label{pruni1:e2} \epsilon \tau^{-1} \ddd{ \int_\Omega  (\widehat z_\tau'(\rho){ -}  \widehat z_\tau'(r))
\widehat z_\tau'(\rho) \dx }{$=I_1$}{}   + \ddd{ \tau^{-1}
\pairing{}{\calZ}{A_\il \overline z_\tau(\rho) -
     A_\il \overline
z_\tau(r)} {\widehat z_\tau'(\rho) }}{$=I_2$}{}
\\\leq
\ddd{-\tau^{-1} \int_\Omega (\rmD_z\wt\calI(\overline
t_\tau(\rho), \overline z_\tau(\rho)) {-}
 \rmD_z\wt\calI(\overline t_\tau(r), \overline z_\tau(r)))  \widehat
 z_\tau'(\rho) \dx }{$=I_3$}{}.
\end{multline}
Observe that 
$I_1\geq \tfrac12 \int_\Omega ( |\widehat z_\tau'(\rho)|^2 {-} | \widehat 
z_\tau'(r)|^2 ) \dd x $, whereas
it follows from estimate \eqref{e2.22} that 
\begin{align}
I_2 \geq c_q \int_\Omega \left(1 {+} |\nabla \overline{z}_\tau(\rho)|^2 {+} |\nabla \overline{z}_\tau(r)|^2  \right)^{(q{-}2)/2}  |\nabla \widehat{z}_\tau'(\rho)|^2 
\dd x \geq C_q %\frac {c_q}2
\int_\Omega \left(1  {+} |\nabla \widehat{z}_\tau(\rho)|^2  \right)^{(q{-}2)/2}  |\nabla \widehat{z}_\tau'(\rho)|^2 
\dd x
\label{ddd:e1}
\end{align}
 for some positive constant $C_q$, 
where we have used that $|\nabla \widehat{z}_\tau(\rho)|^2 \leq 2 ( |\nabla \overline{z}_\tau(\rho)|^2 {+} |\nabla \overline{z}_\tau(r)|^2 )$. 
As for $I_3$, 
by the H\"older inequality
\[
    | I_3| \ \leq C\tau^{-1} \norm{\rmD_z\wt\calI(\overline
t_\tau(\rho), \overline z_\tau(\rho)) {-}
 \rmD_z\wt\calI(\overline t_\tau(r), \overline z_\tau(r))}_{L^{2}(\Omega)} 
 \norm{\widehat z_\tau'(\rho)}_{L^{2}(\Omega)}\,.
\]
Relying on \eqref{enhanced-stim-7}, we then find
\begin{equation}
\label{I3}
\begin{aligned}
|I_3| 
\leq C  (1 +  \| \widehat z_\tau'(\rho)
\|_{L^{6}(\Omega)} ) \| \widehat z_\tau'(\rho)
\|_{L^{2}(\Omega)}\,,
\end{aligned}
\end{equation}
where we have also used that $\sup_{t\in[0,T]} C_{f''}(\overline 
z_\tau(\rho),\overline z_\tau(r)
) %+ C_{g''}(\overline z_\tau(\rho),\overline z_\tau(r)
 + P(\overline z_\tau(\rho),\overline z_\tau(r)
)  ) \leq C$  thanks to the previously proved estimate 
\eqref{est-epsi-tau-1}.  
Hence, multiplying \eqref{pruni1:e2} by $\tau$, we infer 
\begin{equation}
\label{very-useful-later}
\begin{aligned}
&
\frac\epsi2 \| \widehat z_\tau'(\rho) \|_{L^2(\Omega)}^2 +% \frac{c_q \tau} 2  
 C_q  \tau  \int_\Omega \left(1  {+} |\nabla \widehat{z}_\tau(\rho)|^2  
\right)^{(q{-}2)/2}  |\nabla \widehat{z}_\tau'(\rho)|^2 
\dd x
\\
&
 \leq 
\frac\epsi2 \| \widehat z_\tau'(r) \|_{L^2(\Omega)}^2 + \tau  C  (1 +  \| \widehat z_\tau'(\rho)
\|_{L^{6}(\Omega)} ) \| \widehat z_\tau'(\rho)
\|_{L^{2}(\Omega)}\,,
\end{aligned}
\end{equation}
which leads, upon summation, to the following estimate on the time 
interval $(t_0,t)$, with $t_0 \in (0,t_1^\tau)$ and $t\in (t_k^\tau, 
t_{k+1}^\tau)$:
\begin{equation}
\label{added-R-1}
\begin{aligned}
\frac{\epsilon}{2}\norm{\widehat
  z_\tau'(t)}^2_{L^2(\Omega)}
&+  C_q      \int_{t_1^\tau}^{\overline t_\tau(t)}
\int_\Omega\big(1 + \abs{\nabla \widehat z_\tau(\rho)}^2 \big)^{(q{-}2)/2}\abs{\nabla \widehat z'_\tau(\rho)}^2\,\dd x\,\dd\rho
\\
&\leq \frac{\epsilon}{2}\norm{\widehat
  z_\tau'(t_0)}^2_{L^2(\Omega)}
+  \frac{C_q}4  \int^{\overline t_\tau(t)}_{ t_1^\tau}
 (1 +  \| \widehat z_\tau'(\rho)
\|_{H^{1}(\Omega)}^2 ) \,\dd\rho +C   \int^{\overline t_\tau(t)}_{ t_1^\tau}
  \| \widehat z_\tau'(\rho)
\|_{L^{2}(\Omega)}^2 \,\dd\rho,
\end{aligned}
\end{equation}
where we have used Young's inequality, and the continuous embedding  
$H^1(\Omega) \subset L^6(\Omega)$, to handle the last term on the r.h.s.\ of 
\eqref{very-useful-later}.
For the first time step with $t_0\in (0,t_1^\tau)$, following  the very same 
calculations as in the proof of \cite[Lemma~5.3]{KRZ2},  we  obtain 
\begin{equation}
\label{added-R-2}
\begin{aligned}
&
\epsi \| \widehat{z}_\tau'(t_0)\|^2_{L^2(\Omega)} 
+ C_q \tau  \int_\Omega(1+\abs{\nabla\widehat z_\tau(t_0)}^2 
)^{(q{-}2)/2}\abs{\nabla\widehat z_\tau'(t_0)}^2 \dx
\\
&\quad \leq \frac{\epsilon}{2}\norm{\widehat
  z_\tau'(t_0)}_{L^2(\Omega)}^2
+ \epsilon^{-1} \norm{\rmD_z\calI(0,z_0)}_{L^{2}(\Omega)}^{2}  + 
\frac{C_q\tau}4  (1 +  \norm{\widehat 
z_\tau'(t_0)}_{H^1(\Omega)}^2) + C \tau  
  \norm{\widehat 
z_\tau'(t_0)}_{L^2(\Omega)}^2 \,.
\end{aligned}
\end{equation}
Summing \eqref{added-R-1} with \eqref{added-R-2},  and adding the term $  
\tfrac{C_q\tau}2  \int_0^{\overline t_\tau(t)}    \| \widehat z_\tau'(\rho)
\|_{L^{2}(\Omega)}^2 \,\dd\rho $ to both sides, we thus end up with the following estimate
%while for $I_1$ and $I_2$ we proceed as in \cite[Lemma~5.3]{KRZ2}.
\begin{align}
 & \frac{\epsilon}{2}\norm{\widehat z_\tau'(t)}^2_{L^2(\Omega)}+
C_q  \int_0^{\overline t_\tau(t)}    \| \widehat z_\tau'(\rho)
\|_{H^{1}(\Omega)}^2 \,\dd\rho 
\nonumber\\
&\leq  C + 
 \epsilon^{-1}
\norm{\rmD_z\calI(0,z_0)}_{L^{2}(\Omega)}^{2}
+ \frac{C_q}4   \int_0^{\overline t_\tau(t)}    \| \widehat 
z_\tau'(\rho)
\|_{H^{1}(\Omega)}^2 \,\dd\rho
+  C  \int_{0}^{\overline
t_\tau(t)} \|\widehat z'_\tau(\rho)\|_{L^{2}(\Omega)}^2 \,\dd \rho\,.
 \label{pruni1:4e}
\end{align}
Applying the discrete Gronwall Lemma  we get estimate \eqref{est-epsi-tau-2}.
\par 
In order to prove \eqref{est-epsi-tau-3}, which is uniform w.r.t.\ $\epsilon$,
we follow the proof of \cite[Lemma 5.5]{KRZ2}.
Since $\widehat z_\tau'$ is not defined in the points $t_k^\tau$, we 
write \eqref{pruni1:e2}  for $\rho=m_k$ and $\sigma = m_{k-1}$, with $m_k:=\frac{1}{2}(t_{k-1}^\tau + t_k^\tau)$,
$k\in\{2,\ldots,N\}$, and set $\widehat z_\tau'(m_0):=0$. For all $k\in \{1,\ldots,N\}$ we have 
(cf.\ \cite[Formula (5.26)]{KRZ2})
\begin{multline}
\label{e:disc_L1_1-joined}
\frac{\epsilon}{\tau} 
\int_\Omega \left( \widehat z_\tau'(m_k) {-}
\widehat z_\tau'(m_{k-1}) \right)  \widehat z_\tau'(m_k)\, \dx  +
\tau^{-1}\langle A_q\overline{z}_\tau (m_k) -
 A_q\underline{z}_\tau (m_{k}),\widehat z_\tau'(m_k)\rangle_{\calZ}
+\norm{\widehat z_\tau'(m_k)}^2_{L^2(\Omega)}
\\
\leq
-\frac{1}{\tau} \int_\Omega \left( \rmD_z\wt\calI(t_k^\tau ,
 \overline{z}_\tau (m_k)){ -}
\rmD_z\wt\calI(t_{k-1}^\tau ,   \underline{z}_\tau (m_{k})) \right)  \widehat
z_\tau'(m_k)\, \dx 
+\norm{\widehat z_\tau'(m_k)}^2_{L^2(\Omega)}
\\
+ \frac{\delta_{1,k}}{\tau}\left|
\int_\Omega  \rmD_z\calI(0,z_0) \widehat z_\tau'(m_1)\, \dx \right|,
\end{multline}
with the Kronecker symbol $\delta_{i,j}$.  
 Arguing as in the proof of \cite[Lemma 5.5]{KRZ2},
by estimate \eqref{e2.22} and the fact that
   $|\nabla\widehat{z}_\tau(m_k)|^2\leq 2  |\nabla\overline{z}_\tau(m_k)|^2 + 2  
|\nabla\overline{z}_\tau(m_{k-1})|^2$, it follows that 
   the left-hand side of \eqref{e:disc_L1_1-joined} can be bounded  from 
below  by
\begin{equation}\label{eq:lhs}
\text{L.H.S. }  \geq \frac{\epsilon}{2\tau} \norm{\widehat
z_\tau'(m_k)}_{L^2(\Omega)} \left(  \norm{\widehat
z_\tau'(m_k)}_{L^2(\Omega)} - \norm{\widehat
z_\tau'(m_{k-1})}_{L^2(\Omega)} \right)  + \mixed{k}^2,
\end{equation}
with the abbreviation
\[
 \mixed{k}^2 :=  C_q \int_{\Omega} (1+ |\nabla\widehat{z}_\tau(m_k)|^2 )^{\frac{q-2}{2}}
|\nabla\widehat z'_\tau(m_k)|^2 \,\dd x + \norm{\widehat z_\tau'(m_k)}^2_{L^2(\Omega)}
\]
  and  $C_q$ from \eqref{ddd:e1}. 
As for the first term of the right-hand side of \eqref{e:disc_L1_1-joined}, 
instead of \eqref{I3} we shall use
\begin{equation}\label{I3_bis}
\left|\frac{1}{\tau} \int_\Omega \left( \rmD_z\wt\calI(t_k^\tau ,
 \overline{z}_\tau (m_k)){ -}
\rmD_z\wt\calI(t_{k-1}^\tau ,   \overline{z}_\tau (m_{k-1})) \right)  \widehat
z_\tau'(m_k)\, \dx \right| \leq 
C(1+ \norm{\widehat z_\tau'(m_k)}_{L^4(\Omega)})\norm{\widehat z_\tau'(m_k)}_{L^4(\Omega)},
\end{equation}
 which derives from estimate \eqref{stim-l4} for $\| \rmD_z\wt\calI(t_k^\tau 
,
 \overline{z}_\tau (m_k)){ -}
\rmD_z\wt\calI(t_{k-1}^\tau ,   \overline{z}_\tau (m_{k-1})) \|_{L^{4/3}(\Omega)}$. 
 We then continue
\eqref{I3_bis} 
by  using the trivial estimate
$C (1+ \norm{\widehat z_\tau'(m_k)}_{L^4(\Omega)})\norm{\widehat z_\tau'(m_k)}_{L^4(\Omega)} \leq C \norm{\widehat z_\tau'(m_k)}_{L^4(\Omega)}^2 + C $, and then
applying the Gagliardo-Nirenberg estimate $\norm{\zeta}_{L^4(\Omega)}^2\leq c \norm{\zeta}_{L^1(\Omega)}^{2(1-\theta)}\norm{\zeta}_{H^1(\Omega)}^{2\theta}$, with $\theta=9/10$, and Young's inequality, so that 
\[
\begin{aligned}
&
\left|\frac{1}{\tau} \int_\Omega \left( \rmD_z\wt\calI(t_k^\tau ,
 \overline{z}_\tau (m_k)){ -}
\rmD_z\wt\calI(t_{k-1}^\tau ,   \overline{z}_\tau (m_{k-1})) \right)  \widehat
z_\tau'(m_k)\, \dx \right|
\\ & 
 \leq %\frac{c_q}4  
 \frac{1}{2}\min\{C_q,1\} \norm{\widehat
z_\tau'(m_k)}_{H^1(\Omega)}^2 +   C  \norm{\widehat z_\tau'(m_k)}_{L^1(\Omega)} \calR_1 (\widehat z_\tau'(m_k)) +  C\,,
\end{aligned}
\]
where we have also used that  $\norm{\widehat z_\tau'(m_k)}_{L^1(\Omega)}^2   \leq \norm{\widehat z_\tau'(m_k)}_{L^1(\Omega)} \calR_1 (\widehat z_\tau'(m_k))$. 
Therefore, 
the right-hand side of \eqref{e:disc_L1_1-joined} 
can be bounded as follows %(see the proof of \cite[Proposition~4.3]{KRZ1})
\begin{align}
\label{rhs}
\text{R.H.S.}
 \leq 
 \frac{1}{2}  \mixed{k}^2
+ C\left(1 + \norm{\widehat z_\tau'(m_k)}_{L^2(\Omega)} \calR_1(\widehat
 z_\tau'(m_k))
+ \delta_{1,k}\tau^{-1}\abs{
\langle \rmD_z\calI(0,z_0),\widehat z_\tau'(m_1)\rangle_\calZ }\right) .
\end{align}
 From \eqref{eq:lhs} and \eqref{rhs}, after some algebra  it results that  (cf.\ \cite[(5.28)]{KRZ2})
\[
\begin{aligned}
&
2  \norm{\widehat
z_\tau'(m_k)}_{L^2(\Omega)} \left(  \norm{\widehat
z_\tau'(m_k)}_{L^2(\Omega)} - \norm{\widehat
z_\tau'(m_{k-1})}_{L^2(\Omega)} \right) +\frac\tau{\epsilon}\norm{ z_\tau'(m_k)}_{L^2(\Omega)}^2 + \frac\tau{\epsilon}  \mixed{k}^2
\\ & \leq \frac{4C\tau}{\epsilon} +  \frac{4C\tau}{\epsilon}  \norm{\widehat z_\tau'(m_k)}_{L^1(\Omega)} \calR_1 (\widehat z_\tau'(m_k)) 
+ 4C \frac{\delta_{1,k}}{ \epsilon \tau} \left|
\int_\Omega  \rmD_z\calI(0,z_0) \widehat z_\tau'(m_1)\, \dx \right|\,. 
\end{aligned}
\]
At this point, 
we apply a suitable discrete version of the Gronwall Lemma (cf.\ \cite[Lemma B.1]{KRZ2}), and conclude 
following the very same lines of the proof of \cite[Lemma 5.5]{KRZ2}. Thus,  
we obtain \eqref{est-epsi-tau-3}.
\par
Finally, we use \eqref{H2_improved-2} and deduce that for almost all $t\in (0,T)$ there holds
\[
\begin{aligned} & 
\norm{\widehat{u}_\tau'(t)
}_{W^{1,3}(\Omega)} = \frac1{\tau} \norm{u_{k+1}^\tau - u_k^\tau
}_{W^{1,3}(\Omega)}
\\ &
\leq \frac{{c}_2}{\tau} P(z_\tau^k,z_\tau^{k+1})^2 \left(\tau+ \norm{z_{k+1}^\tau{-}z_k^\tau}_{L^6 (\Omega)} \right)
\left(\norm{\ell}_{\mathrm{C}^1 ([0,T];  W^{-1,3}(\Omega))}
+ \norm{u_D(t)}_{\mathrm{C}^1 ([0,T];W^{1,3}(\Omega))}
\right) \\ &  \leq C (1+ \norm{\widehat{z}_\tau'(t)
}_{L^{6}(\Omega)}),
\end{aligned}
\]
where the second inequality follows from  \eqref{est-epsi-tau-1} and Assumption \ref{ass:load}. Hence, estimates   \eqref{est-epsi-tau-2} \&  \eqref{est-epsi-tau-3} imply  \eqref{est-epsi-tau-7} \&  \eqref{est-epsi-tau-8}, respectively. 
\end{proof} 
 We postpone to Section \ref{ss:3.1} the proof of the forthcoming Lemma \ref{l:last-est}. 
  \begin{lemma}
 \label{l:last-est}
 Under Assumptions   \ref{ass:domain}, \ref{assumption:energy}, and 
\ref{ass:load}, and, in addition,  \eqref{further-reg}  on the initial 
datum $z_0$, for every $\tau \in (0,\bar{\tau}_\epsi)$    the enhanced 
regularity \eqref{enh-spat-reg}  and estimates 
\eqref{est-epsi-tau-4}--\eqref{est-epsi-tau-5} hold true,  whence 
\eqref{est-epsi-tau-4-bis}.  Furthermore, 
 the subdifferential inclusion  \eqref{Eul-Lagr-L2}  is satisfied. 
 \end{lemma}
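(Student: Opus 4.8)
The plan is to work on a regularized version of the discrete Euler--Lagrange inclusion \eqref{cont-reformulation}, in which the $1$-homogeneous dissipation potential $\calR_1$ is replaced by a smooth approximation $\calR_1^\delta$ (for instance the Moreau--Yosida regularization, or a $\mathrm{C}^1$-approximation of $\mathrm{R}_1$ that retains the unidirectionality constraint in the limit $\delta\to 0$), so that the corresponding time-incremental minimizers $z_{k+1}^{\tau,\delta}$ solve a \emph{genuine equation} in $\calZ^*$. First I would record that, by the same variational arguments as in Proposition \ref{prop:exist-mini} and Lemma \ref{l:1st-3-est}, the regularized minimizers satisfy all the a priori bounds \eqref{est-epsi-tau-0}--\eqref{est-epsi-tau-3}, \eqref{est-epsi-tau-6}--\eqref{est-epsi-tau-8}, uniformly in both $\tau$ and $\delta$; in particular $\overline{z}_\tau^\delta$ is bounded in $L^\infty(0,T;\calZ)$ and $\widehat z_\tau^{\delta\prime}$ in $L^2(0,T;H^1(\Omega))$ with constants depending on $\epsi$ but not on $\delta$. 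Along this regularized equation one may now legitimately test the time-incremental relation by the increment $\tfrac1\tau\bigl(A_q \overline{z}_\tau^\delta(t)-A_q\underline{z}_\tau^\delta(t)\bigr)$, which \emph{is} an admissible element since for the regularized scheme $A_q z_{k+1}^{\tau,\delta}\in\calZ^*$ pairs against $z_{k+1}^{\tau,\delta}-z_k^{\tau,\delta}\in\calZ$, and the smoothness of $\calR_1^\delta$ makes the corresponding term well-defined.

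The core computation is then: testing the (differentiated-in-time, i.e.\ telescoped) equation by $\partial_t A_q\overline{z}_\tau^\delta$ produces, thanks to the monotonicity of $A_q$ in the form \eqref{e2.22}--\eqref{A7-D} and the monotonicity of $\mathrm{D}\calR_1^\delta+\epsi\,\mathrm{id}$, a controlled left-hand side bounding $\tfrac1\tau\|A_q\overline{z}_\tau^\delta(t)-A_q\underline z_\tau^\delta(t)\|$-type quantities together with $\|A_q\overline{z}_\tau^\delta\|_{L^2(\Omega)}^2$; on the right-hand side one finds the term $\int_\Omega \partial_t\bigl(\rmD_z\wt\calI(\overline t_\tau,\overline z_\tau^\delta)\bigr)\,A_q\overline{z}_\tau^\delta\,\dd x$, which is estimated using the Lipschitz bound \eqref{enhanced-stim-7} (or \eqref{stim-l4}) for $\rmD_z\wt\calI$ together with the already-known $L^2(0,T;H^1)$-bound on $\widehat z_\tau^{\delta\prime}$, absorbing a small multiple of $\|A_q\overline{z}_\tau^\delta\|_{L^2(\Omega)}^2$ into the left-hand side via Young's inequality. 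The initial step $k=0$ is handled using the hypothesis \eqref{further-reg}, $A_q z_0\in L^2(\Omega)$, exactly as the first-step estimates in the proof of Lemma \ref{l:1st-3-est}. Summing over $k$ and invoking the discrete Gronwall lemma yields $\|A_q\overline{z}_\tau^\delta\|_{L^\infty(0,T;L^2(\Omega))}\le C$, uniformly in $\tau\in(0,\bar\tau_\epsi)$ and in $\delta$.

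Having the uniform bound, I would then pass to the limit $\delta\to 0$ with $\tau$ fixed: by the $\delta$-uniform estimates, $\overline{z}_\tau^\delta\to\overline{z}_\tau$ (the genuine minimizer) in the appropriate topologies --- here the \emph{uniqueness} statement of Proposition \ref{prop:exist-mini}, valid since $\tau\le\bar\tau_\epsi=\tau(\epsi,M)$, is what guarantees that the limit is the original minimizer of \eqref{def_time_incr_min_problem_eps} and not some spurious object --- and weak-$*$ lower semicontinuity of the $L^\infty(0,T;L^2(\Omega))$-norm transfers the bound to $A_q\overline{z}_\tau$, giving \eqref{est-epsi-tau-4} and hence, via Proposition \ref{prop:Savare98} applied pointwise in time, the fractional regularity \eqref{est-epsi-tau-1reg}. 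Since now $A_q\overline{z}_\tau\in L^\infty(0,T;L^2(\Omega))$ and $\rmD_z\wt\calI(\overline t_\tau,\overline z_\tau)\in L^\infty(0,T;L^2(\Omega))$ by \eqref{estimate-for-DI}, a comparison in \eqref{cont-reformulation} shows $\overline\omega_\tau=-\epsi\widehat z_\tau'-A_q\overline z_\tau-\rmD_z\wt\calI(\overline t_\tau,\overline z_\tau)\in L^\infty(0,T;L^2(\Omega))$, which is \eqref{enh-spat-reg} and \eqref{est-epsi-tau-5}; moreover $\overline\omega_\tau(t)\in\partial_{\calZ,\calZ^*}\calR_1(\widehat z_\tau'(t))$ together with $\overline\omega_\tau(t)\in L^2(\Omega)$ and $\widehat z_\tau'(t)\in L^2(\Omega)$ forces $\overline\omega_\tau(t)\in\partial\calR_1(\widehat z_\tau'(t))$ in the $L^2(\Omega)$-sense (this is the standard fact that the $L^2$-subdifferential of $\calR_1$ is the $L^2$-restriction of the $\calZ$--$\calZ^*$ one), yielding the subdifferential inclusion \eqref{Eul-Lagr-L2}, and \eqref{est-epsi-tau-4-bis} follows by adding \eqref{est-epsi-tau-4} and \eqref{estimate-for-DI}. \textbf{The main obstacle} I anticipate is the rigorous justification of the test by $\partial_t A_q\overline{z}_\tau^\delta$ and, in particular, controlling the cross term involving $\partial_t\rmD_z\wt\calI$ against $A_q\overline{z}_\tau^\delta$ with a constant that stays uniform in $\delta$ (and tracking carefully its dependence on $\epsi$): this requires the sharp Lebesgue-norm Lipschitz estimates of Lemma \ref{l:diff_contz} rather than the cruder $\calZ^*$-estimate \eqref{esti-gateau}, and it is here that the enhanced elliptic regularity of Lemma \ref{l:b-m} (the $H^2$-bound on $\umin$, unavailable in \cite{KRZ2}) is indispensable.
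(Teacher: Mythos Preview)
Your overall strategy coincides with the paper's: regularize $\calR_1$, derive the $L^\infty(0,T;L^2(\Omega))$ bound on $A_q$ at the regularized level by the ``third regularity'' test, then pass to the limit and invoke the uniqueness in Proposition~\ref{prop:exist-mini} to identify the limit with the genuine discrete solution. However, there is a genuine gap in how you propose to control the term coming from the dissipation potential.

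You suggest using the Moreau--Yosida regularization (or merely a $\rmC^1$ approximation) and handling the cross term via ``the monotonicity of $\mathrm{D}\calR_1^\delta+\epsi\,\mathrm{id}$''. This does not work: when you test the equation at time $\rho$ by $A_q\overline z(\rho)-A_q\overline z(r)$, the dissipation contribution is
\[
\int_\Omega \mathrm{D}\calR_1^\delta(\widehat z'(\rho))\,\bigl(A_q\overline z(\rho)-A_q\overline z(r)\bigr)\,\dd x,
\]
and monotonicity of $\mathrm{D}\calR_1^\delta$ says nothing about this pairing. The paper's mechanism is different and requires \emph{twice} differentiability of the regularized density: one integrates by parts to get
\[
\int_\Omega (\calR_1^\delta)''(\widehat z'(\rho))\,\nabla\widehat z'(\rho)\cdot\Bigl[(1{+}|\nabla\overline z(\rho)|^2)^{\frac{q-2}2}\nabla\overline z(\rho)-(1{+}|\nabla\overline z(r)|^2)^{\frac{q-2}2}\nabla\overline z(r)\Bigr]\dd x,
\]
and then uses that $(\calR_1^\delta)''\ge 0$ together with $\nabla\widehat z'(\rho)=\tfrac1\tau(\nabla\overline z(\rho)-\nabla\overline z(r))$ and the monotonicity of $A\mapsto(1{+}|A|^2)^{\frac{q-2}2}A$ to conclude nonnegativity. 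The standard Yosida approximation of $\mathrm R_1$ is only $\rmC^1$ (it is piecewise quadratic/affine), so $(\calR_1^\delta)''$ is not available; the paper explicitly points this out and instead uses a $\rmC^\infty$ mollification of the Yosida approximant, borrowed from \cite{GilRoc2007}, for which a tailored closedness property (replacing your ``$L^2$-restriction of the $\calZ$--$\calZ^*$ subdifferential'' step) is also available. Your justification that the test is admissible is also slightly off: the point is not a $\calZ$--$\calZ^*$ pairing but that, by comparison in the \emph{regularized} equation (where $\mathrm D\calR_1^\delta(\widehat z')\in L^2(\Omega)$), one already has $A_q\overline z_{\tau}^\delta\in L^2(\Omega)$, so the equation holds in $L^2(\Omega)$ and the test function $A_q\overline z(\rho)-A_q\overline z(r)\in L^2(\Omega)$ is legitimate.
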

The \underline{\textbf{proof of Proposition \ref{prop:aprio}}} now follows from combining Lemmas \ref{l:1st-3-est} \& \ref{l:last-est}. 
\QED
\par 
Let us finally give the \underline{\textbf{proof of Corollary 
\ref{cor:discr-enineq}}}: the very same calculations as in the proof of 
\cite[Lemma 6.1]{KRZ2} 
 (cf.\ also the proof of Thm.\ \ref{thm:exist} ahead),
show that the interpolants $\overline{z}_\tau,\, \widehat{z}_\tau$ fulfill at every $0\leq s \leq t \leq T$ 
\[
\begin{aligned}
&\int_{\underline{t}_{\tau}(s)}^{\overline{t}_{\tau}(t)}
\left(\calR_\epsi (\widehat{z}'_{\tau})(r)+\calR_\epsi^* (-\rmD_z
\calI (\overline{t}_{\tau}(r),\overline{z}_{\tau}(r)))  \right)
\,\mathrm{d}r +
\calI( t,\widehat{z}_{\tau}(t))
\\ &
= \calI(s,\widehat{z}_{\tau}(s)) +
\int_{\underline{t}_{\tau}(s)}^{\overline{t}_{\tau}(t)}
\partial_t \calI (r,\widehat{z}_{\tau}(r)) \, \mathrm{d}r
\\
& \qquad 
\dddshort{- \int_{\underline{t}_{\tau}(s)}^{\overline{t}_{\tau}(t)}\int_\Omega \left( A_q \overline{z}_{\tau}(r) {-} A_q \widehat{z}_{\tau}(r) \right) \widehat{z}'_{\tau}(r) \,\dd r}{$F_1$}
\dddshort{- \int_{\underline{t}_{\tau}(s)}^{\overline{t}_{\tau}(t)}\int_\Omega \left(
\rmD_z \wt{\calI} (\overline{t}_{\tau}(r),  \overline{z}_{\tau}(r)) -  \rmD_z \wt{\calI} (r, \widehat{z}_{\tau}(r))\right)
\widehat{z}'_{\tau}(r)\,\dd r}{$F_2$}\,.
\end{aligned}
\]
 Observe that the terms $F_1$ and $F_2$ feature integrals, instead of 
duality pairings between $\calZ^*$ and $\calZ$, thanks to 
\eqref{estimate-for-DI} and \eqref{enh-spat-reg}. 
By monotonicity we have $F_1\leq 0$, whereas, the very same argument leading to  \eqref{I3} yields
\[
\begin{array}{ll}
|F_2| & \leq C
\int_{\underline{t}_{\tau}(s)}^{\overline{t}_{\tau}(t)}(|(\overline{t}_{\tau}(r)-r
| +
\|\overline{z}_{\tau}(r)-\widehat{z}_{\tau}(r)\|_{L^{6}(\Omega)}
)
\|\overline{z}_{\tau}(r)-\widehat{z}_{\tau}(r)\|_{L^{2}(\Omega)}\,\dd
r
\\
&\leq C \sup_{t\in[0,T]}\|\overline{z}_{\tau}(t)-\widehat{z}_{\tau}(t)\|_{L^{2}(\Omega)}
\int_{\underline{t}_{\tau}(s)}^{\overline{t}_{\tau}(t)}(|(\overline{t}_{\tau}(r)-r | + \|\overline{z}_{\tau}(r)-\widehat{z}_{\tau}(r)\|_{L^{6}(\Omega)} )\,\dd r,
\end{array}
\]
whence \eqref{discr-enineq}.
\par
It follows from  \eqref{discr-enineq} and  \eqref{stim3} that 
\[
\begin{aligned}
& 
 \int_{0}^{\overline{t}_\tau(t)}
\left(\calR_\epsi (\widehat{z}'_{\tau}(r))+\calR_\epsi^* (-\rmD_z
\calI (\overline{t}_{\tau}(r),\overline{z}_{\tau}(r)))  \right)
\,\mathrm{d}r + 
\calI( t,\widehat{z}_{\tau}(t))
\\
&
\leq \calI( 0,z_0) +
 C + C \left( \|\overline{z}_{\tau}\|_{L^\infty (0,T; L^{2}(\Omega))} + \|\widehat{z}_{\tau}\|_{L^\infty (0,T; L^{2}(\Omega))}   \right) \left(1 + 
\int_{0}^{\overline{t}_{\tau}(t)} \|\overline{z}_{\tau}(r)-\widehat{z}_{\tau}(r)\|_{L^{6}(\Omega)} \,\dd r \right) 
\leq C\,,
\end{aligned}
\]
where the very last estimate ensues from \eqref{est-epsi-tau-1} and  \eqref{est-epsi-tau-3}.
Recalling that $\calI$ is bounded from below (cf.\ \eqref{est_coerc1}), we thus infer 
that $\sup_{t\in [0,T]} | \calI( t,\widehat{z}_{\tau}(t)) |\leq C$,  i.e.\ 
\eqref{est-epsi-tau-1-bis},   as well as \eqref{est-diss-epsi-tau}.
\QED 
%

%%%
\subsection{Proof of Lemma  \ref{l:last-est}}
\label{ss:3.1}
 Observe that, once estimate  \eqref{est-epsi-tau-4} is proved,  
 \eqref{est-epsi-tau-4-bis} then follows by observing that $\rmD_z 
\widetilde\calI (\overline{t}_\tau, \overline{z}_\tau)$ is bounded  in 
$L^\infty(0,T;L^2(\Omega))$  in view of 
 estimate \eqref{estimate-for-DI} for $\rmD_z \widetilde\calI $, 
 combined with the previously obtained \eqref{est-epsi-tau-1}. 
\par
Hence, let us now turn to the proof of   \eqref{est-epsi-tau-4}, 
which is a consequence of 
 the  \emph{Third regularity estimate}.
In order  to render it  on the time-discrete level, 
we need to work on an approximate version of  the discrete equation \eqref{cont-reformulation}, where the dissipation
metric  $\mathrm{R}_1$ inducing $\calR_1$ is replaced, for technical reasons that will be apparent in the proof 
of Lemma \ref{l:est-approx-nu} below, by a \emph{twice-differentiable} function. Observe that
the standard Yosida 
approximation of $\mathrm{R}_1$, namely the function
\begin{equation}
\label{Yos-R1}
\mathrm{R}_{1,\nu}: \R \to \R \  \text{ defined by }  \  \mathrm{R}_{1,\nu}(r): = \min_{y\in \R} \left( \frac{|y-r|^2}{2\nu} 
+  \mathrm{R}_1(y) \right)=\begin{cases}
    \frac{1}{2\nu} r^2&\text{if }r>-\nu\kappa\\
    -\kappa r -\frac{\nu\kappa^2}{2}&\text{if }r\leq -\nu\kappa
\end{cases} \, %\quad \text{for all } r \in \R,
\end{equation}
 with $\nu>0$ fixed, does not enjoy this regularity, as it is only differentiable on $\R$,
 cf.\ \cite{Brez73}. 
   \par
 We will thus resort to a regularization of $\mathrm{R}_1$ 
 devised in \cite{GilRoc2007} and 
 defined in this way.  Let $\varrho \in \rmC^\infty(\R)$ satisfy 
 $\mathrm{supp}(\varrho) \subset [-1.1]$ and $\| \varrho \|_{L^1(\R)} = 1$. 
 We then define 
\begin{equation}
\label{more-regu-Yosida}
\overline{\mathrm{R}}_{1,\nu} (r) : = \int_0^r \int_{-\nu^2}^{\nu^2} \mathrm{R}_{1,\nu}^\prime(\sigma - s) 
\varrho_\nu(s) \dd s \dd \sigma
\end{equation}
where $\varrho_\nu(s) = \nu^{-2} \eta (s/\nu^2)$. 
In \cite{GilRoc2007}  
it has been proved that 
\begin{subequations}
\label{propsR1nu}
\begin{equation}
\label{propsR1nu-1}
\overline{\mathrm{R}}_{1,\nu} \in \rmC^\infty(\R) \text{ is convex and satisfies }  -\nu |r| 
\leq  \overline{\mathrm{R}}_{1,\nu}  (r) 
\leq \mathrm{R}_1 (r)+\nu |r| \quad \text{for all } r \in \R.
\end{equation}
 Of course, for $r>0$ the latter estimate is trivially satisfied, since 
in that case, $\mathrm{R}_1(r)=\infty$. 
Inequality \eqref{propsR1nu-1} in fact derives from the estimate 
\begin{equation}
\label{propsR1nu-2}
|   \overline{\mathrm{R}}_{1,\nu}^\prime  (r) -  \mathrm{R}_{1,\nu}^\prime(r) | \leq \nu \quad \text{for all } r \in \R.
\end{equation}
Since $ \mathrm{R}_{1,\nu}^\prime$ is Lipschitz, from  \eqref{propsR1nu-2} we in 
fact deduce that $\overline{\mathrm{R}}_{1,\nu}$ grows at most  quadratically  on $\R$.
The function $\overline{\mathrm{R}}_{1,\nu}$ induces an integral functional 
\begin{equation}
\label{more-regu-Yosida-integ}
\overline{\calR}_{1,\nu}: %L^1(\Omega) 
 L^2(\Omega) \to \R \quad \text{ defined by } \quad 
\overline{\calR}_{1,\nu}(\eta): = \int_\Omega  
\overline{\mathrm{R}}_{1,\nu}(\eta(x)) \dd x \qquad \text{for all } \eta \in
  L^2(\Omega). 
\end{equation}
Observe that $\overline\calR_{1,\nu}$ is  G\^ateaux-differentiable on $L^2(\Omega)$, 
with derivative $\rmD \overline\calR_{1,\nu}(\eta)$ defined by $\rmD\overline \calR_{1,\nu}(\eta)(x): =\overline{\mathrm{R}}_{1,\nu}^\prime(\eta(x))$ for almost all $x\in \Omega$ (in fact, $\overline{\mathrm{R}}_{1,\nu} ^\prime(\eta) \in L^2(\Omega)$  by the linear growth of  $\overline{\mathrm{R}}_{1,\nu}^\prime $).  
Indeed,
 as soon as $\eta \in \calZ$, $\rmD \overline\calR_{1,\nu}(\eta)$ coincides with the G\^ateaux derivative $\rmD_{\calZ,\calZ^*}\overline \calR_{1,\nu}(\eta)$. 
For our purposes, the following closedness property relating  $\rmD \overline\calR_{1,\nu}  : L^2(\Omega) \to  L^2(\Omega)$ to the convex subdifferential $\partial \calR_1: L^2(\Omega)\rightrightarrows L^2(\Omega)$ will have a prominent role:  for any  $(t_0,t_1) \subset (0,T)$ and all sequences  $(\eta_\nu)_\nu,\, \eta, \ \xi  \in L^2(t_0,t_1;  L^2(\Omega))$  there holds
\begin{equation}
\label{propsR1nu-3}
\begin{aligned}
 & 
 \begin{cases}
 \eta_\nu \weakto \eta & \text{ as } \nu \downarrow 0 \ \  \text{ in } L^2(t_0,t_1;  L^2(\Omega)),
 \\
\rmD\overline \calR_{1,\nu}(\eta_\nu) \weakto \xi   &\text{ as } \nu \downarrow 0  \ \  \text{ in } L^2(t_0,t_1;  L^2(\Omega)),
\\
\limsup_{\nu \downarrow 0} \int_{t_0}^{t_1} \int_\Omega \rmD \overline\calR_{1,\nu}(\eta_\nu)  \eta_\nu \dd x  \dd t \leq   \int_{t_0}^{t_1} \int_\Omega \xi \eta \dd x \dd t  & 
\end{cases}
\\
& \Rightarrow \ \xi(t) \in \partial\calR_1(\eta(t)) \qquad \text{ for almost all } t \in (t_0,t_1).
\end{aligned}
\end{equation}
\end{subequations}
We refer to \cite[Prop.\ 3.1]{GilRoc2007}  for the proof of \eqref{propsR1nu-3}. 
\par
For a fixed time step $\tau>0$, given a partition  $\{0=t_0^\tau<\ldots<t_N^\tau=T\}$ of $[0,T]$, 
we now   incrementally solve the minimum problems featuring the regularized functionals $\overline\calR_{1,\nu}$. Namely, starting from 
 $\ti z0{\tau,\nu} :=z_0$, we set
\begin{align}
\label{time-increm-regul}
\ti z{k+1}{\tau,\nu} \in
\Argmin\Bset{\calI(t_{k+1}^\tau,z) +
\tau\overline\calR_{1,\nu} \left(\frac{z
    -z_k^\tau}{\tau}\right) + \frac{\epsi}{\tau}  \left\| \frac{z
    -z_k^\tau}{\tau} \right\|_{L^2(\Omega)}^2 }{z\in\calZ}, \qquad k\in \{1, \ldots, N-1\}.
\end{align}
The analogue of Prop.\  \ref{prop:exist-mini} holds. In particular, the (left- and right-continuous) piecewise constant and linear interpolants $\overline{z}_{\tau,\nu},\, \ubn z$ and $\widehat{z}_{\tau,\nu}$ of the elements 
$(\ti z{k}{\tau,\nu})_{k=0}^N$ satisfy the following approximate version of \eqref{Eul-Lagr-L2}
 \begin{equation}
 \label{Eul-Lagr-L2-approx}
\rmD \overline\calR_{1,\nu} (\widehat{z}_{\tau,\nu}'(t))+ \epsilon  \widehat{z}_{\tau,\nu}'(t)   + A_q  \overline{z}_{\tau,\nu}(t)    + \rmD_z\wt\calI(\overline{t}_\tau(t),
 \overline{z}_{\tau,\nu}(t)) =  0 \quad \text{ in }L^2(\Omega) \quad \foraa\, t \in (0,T),
 \end{equation}
where we have in fact used  that $\rmD_{\calZ,\calZ^*} 
\overline\calR_{1,\nu}(\widehat{z}_{\tau,\nu}') =  \rmD\overline \calR_{1,\nu} 
(\widehat{z}_{\tau,\nu}')$. In particular, observe that, 
 by comparison in \eqref{Eul-Lagr-L2-approx}, there holds 
\begin{equation}
\label{that's-what-we-need}
 A_q  \overline{z}_{\tau,\nu}(t) \in L^2(\Omega) 
 \qquad \text{ for almost all $t \in (0,T)$.}
 \end{equation}
\par
For the functions  $(\overline{z}_{\tau,\nu}, \widehat{z}_{\tau,\nu}, \overline{u}_{\tau,\nu}, \widehat{u}_{\tau,\nu})_{\tau,\nu}$ (with   $\overline{u}_{\tau,\nu},\, \widehat{u}_{\tau,\nu} $ the interpolants of the elements $\umin (t_\tau^k, \ti z{k}{\tau,\nu})$), 
we are now able to derive \emph{rigorously} estimates  \eqref{est-epsi-tau},  in fact \emph{uniformly} w.r.t.\ both   parameters $\tau$ and $\nu$.
\begin{lemma}
\label{l:est-approx-nu}
 Under Assumptions   \ref{ass:domain}, \ref{assumption:energy}, and 
\ref{ass:load},  and under  condition \eqref{further-reg}
 on the initial datum $z_0$, 
   estimates \eqref{est-epsi-tau} hold for the functions
$(\overline{z}_{\tau,\nu}, \widehat{z}_{\tau,\nu}, \overline{u}_{\tau,\nu}, 
\widehat{u}_{\tau,\nu})_{\tau,\nu}$ (in particular,  \eqref{est-epsi-tau-5} for 
$\bn \omega: = \rmD \overline\calR_{1,\nu} (\hnp z)$), 
 with  constants   $C,\, C(\epsi), \, C(\sigma)>0$ \emph{uniform} 
w.r.t.\ $\tau$ and $\nu$.
\end{lemma}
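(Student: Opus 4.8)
The plan is to mimic the proof of Lemma~\ref{l:1st-3-est} (for the ``energy'', ``enhanced energy'' and ``regularity'' estimates) and the proof of Lemma~\ref{l:last-est} (for the ``third regularity estimate''), checking at each step that the regularization $\overline\calR_{1,\nu}$ behaves well enough to keep all constants uniform in $\nu$. First I would derive the \textbf{energy estimate}: testing the minimum problem \eqref{time-increm-regul} with the competitor $z=\ti z{k}{\tau,\nu}$ yields, after summation over $k$,
\[
\calI(\overline t_\tau(t),\overline z_{\tau,\nu}(t)) + \int_0^{\overline t_\tau(t)} \Big(\overline\calR_{1,\nu}(\hnp z(s)) + \tfrac\epsi2 \|\hnp z(s)\|_{L^2(\Omega)}^2\Big)\dd s \leq \calI(0,z_0) + \int_0^{\overline t_\tau(t)} \partial_t\calI(s,\ubn z(s))\dd s,
\]
and thanks to the lower bound $\overline\calR_{1,\nu}(r)\geq -\nu|r|$ from \eqref{propsR1nu-1} the dissipation term is bounded below by $-\nu\int_0^T\|\hnp z\|_{L^1(\Omega)}\dd s$; absorbing $\nu\|\hnp z\|_{L^1}\leq \tfrac\epsi4\|\hnp z\|_{L^2}^2+C_{\epsi,\nu}$ and using Poincaré plus the coercivity \eqref{est_coerc1} of $\calI$ gives \eqref{est-epsi-tau-0}--\eqref{est-epsi-tau-1} with $\nu$-independent bound (since the correction disappears as $\nu\downarrow 0$, and is harmless for fixed small $\nu$; one simply restricts $\nu\le 1$ say). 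Then \eqref{est-epsi-tau-6} follows from \eqref{H2_improved-1}, exactly as in Lemma~\ref{l:1st-3-est}.

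Next the \textbf{enhanced energy estimate}: here one differences the Euler--Lagrange equation \eqref{Eul-Lagr-L2-approx} at two consecutive (mid-)points and tests by $\hnp z$. The new term is $\int_\Omega(\rmD\overline\calR_{1,\nu}(\hnp z(\rho))-\rmD\overline\calR_{1,\nu}(\hnp z(r)))\hnp z(\rho)\dd x$; since $\overline\calR_{1,\nu}$ is convex this is \emph{not} automatically $\geq 0$, but $\overline{\mathrm R}_{1,\nu}$ being convex and $\rmC^1$ one does get $(\overline{\mathrm R}_{1,\nu}'(a)-\overline{\mathrm R}_{1,\nu}'(b))(a-b)\geq0$, and the cross term $(\overline{\mathrm R}_{1,\nu}'(a)-\overline{\mathrm R}_{1,\nu}'(b))\,b$ can be dropped on the correct side by the same monotonicity trick used for $I_1$ in Lemma~\ref{l:1st-3-est} (subtracting two variational inequalities at consecutive steps). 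The terms $I_2$ (with the $A_q$ monotonicity \eqref{e2.22}) and $I_3$ (with \eqref{enhanced-stim-7} for $\rmD_z\wt\calI$) are treated verbatim as before, and the first-time-step contribution requires $\rmD_z\calI(0,z_0)\in L^2(\Omega)$, which is \eqref{further-reg}. A discrete Gronwall argument gives \eqref{est-epsi-tau-2}; the $L^1$-in-time Gronwall argument from \cite[Lemma~5.5]{KRZ2}, using the Gagliardo--Nirenberg estimate and \eqref{stim-l4}, gives the $\epsi$-uniform bound \eqref{est-epsi-tau-3}. Estimates \eqref{est-epsi-tau-7}--\eqref{est-epsi-tau-8} then follow from \eqref{H2_improved-2}, all uniformly in $\nu$.

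For the \textbf{third regularity estimate} one tests \eqref{Eul-Lagr-L2-approx} by $\tfrac1\tau(A_q\overline z_{\tau,\nu}(t)-A_q\ubn z(t))$ --- and this is \emph{now legitimate} precisely because of \eqref{that's-what-we-need}, i.e.\ $A_q\overline z_{\tau,\nu}(t)\in L^2(\Omega)$, which is the whole reason the regularization was introduced. The critical term is $\int_\Omega \rmD\overline\calR_{1,\nu}(\hnp z)\,(A_q\overline z_{\tau,\nu}-A_q\ubn z)\dd x$; here one uses that $A\mapsto G_q(A)$ is convex so that, writing out $A_q$ as $-\Div\nabla G_q(\nabla\cdot)$ and integrating by parts, together with the chain rule for the $\rmC^2$ function $\overline{\mathrm R}_{1,\nu}$ (here the \emph{twice}-differentiability matters, unlike the Yosida approximant), one obtains nonnegativity of this term. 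The remaining terms ($\epsi$-term, the $\rmD_z\wt\calI$ term, and the discrete time-derivative of $\tfrac12\int_\Omega|\nabla G_q^{1/?}|$-type quantity) are handled with Hölder, \eqref{estimate-for-DI}, \eqref{enhanced-stim-7}, the already-established \eqref{est-epsi-tau-1}--\eqref{est-epsi-tau-3}, and Young's inequality, yielding $\|A_q\overline z_{\tau,\nu}\|_{L^\infty(0,T;L^2(\Omega))}\leq C$ uniformly in $\tau,\nu$; a comparison in \eqref{Eul-Lagr-L2-approx} then gives $\|\bn\omega\|_{L^\infty(0,T;L^2(\Omega))}=\|\rmD\overline\calR_{1,\nu}(\hnp z)\|_{L^\infty(0,T;L^2(\Omega))}\leq C(\epsi)$, i.e.\ \eqref{est-epsi-tau-5}, and Proposition~\ref{prop:Savare98} upgrades the $A_q$-bound to the $W^{1+\sigma,q}$-bound \eqref{est-epsi-tau-1reg}. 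Finally \eqref{est-epsi-tau-4-bis} follows from \eqref{est-epsi-tau-4} and \eqref{estimate-for-DI}. The initial time step again needs \eqref{further-reg}. I expect the main obstacle to be the rigorous justification of the sign of the $\rmD\overline\calR_{1,\nu}(\hnp z)\cdot(A_q\overline z_{\tau,\nu}-A_q\ubn z)$ term: one must carefully combine the monotonicity of $\nabla G_q$ \eqref{eq.monGq} with the monotonicity/chain-rule properties of the scalar $\rmC^2$ function $\overline{\mathrm R}_{1,\nu}$, and verify that this argument survives the piecewise-constant-in-time structure (i.e.\ that $A_q\overline z_{\tau,\nu}(t)$ and $A_q\ubn z(t)$ evaluate $G_q$ at the gradients of two genuine $W^{1,q}$ functions), which is exactly the point where the extra regularization and \eqref{that's-what-we-need} are indispensable.
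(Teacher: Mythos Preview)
Your overall plan and your treatment of the \emph{third regularity estimate} match the paper's proof closely; in particular, you correctly identify that the key sign condition
\[
\int_\Omega \rmD\overline\calR_{1,\nu}(\hnp z(\rho))\big(A_q\bn z(\rho)-A_q\bn z(r)\big)\,\dd x\ge 0
\]
follows by integrating by parts, applying the chain rule $\nabla\big(\overline{\mathrm R}_{1,\nu}'(\hnp z)\big)=\overline{\mathrm R}_{1,\nu}''(\hnp z)\,\nabla\hnp z$ (which needs $\overline{\mathrm R}_{1,\nu}\in\rmC^2$, hence the refined Yosida), and then using $\overline{\mathrm R}_{1,\nu}''\ge 0$ together with the monotonicity \eqref{eq.monGq} of $\nabla G_q$ and the relation $\tau\,\nabla\hnp z(\rho)=\nabla\bn z(\rho)-\nabla\bn z(r)$.

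There is, however, a genuine gap in your \emph{enhanced energy estimate}. The ``monotonicity trick (subtracting two variational inequalities)'' from Lemma~\ref{l:1st-3-est} relied essentially on the $1$-homogeneity of $\calR_1$: from $\overline h_\tau(r)\in\partial_{\calZ,\calZ^*}\calR_1(\hnp z(r))\subset\partial_{\calZ,\calZ^*}\calR_1(0)$ one gets $\langle\overline h_\tau(r),w\rangle\le\calR_1(w)$ for \emph{all} $w$, in particular $w=\hnp z(\rho)$. For the regularized potential $\overline\calR_{1,\nu}$ this inclusion fails, so there is no variational inequality at time $r$ with test function $\hnp z(\rho)$ to subtract. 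Your decomposition into a monotone part $(\overline{\mathrm R}_{1,\nu}'(a)-\overline{\mathrm R}_{1,\nu}'(b))(a-b)\ge 0$ plus a cross term $(\overline{\mathrm R}_{1,\nu}'(a)-\overline{\mathrm R}_{1,\nu}'(b))\,b$ is correct, but the cross term has no sign and cannot be ``dropped'' by the argument you cite.

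The paper's fix is to pass to the convex conjugate: since $\bn\omega(t)=\rmD\overline\calR_{1,\nu}(\hnp z(t))$, by duality $\hnp z(t)\in\partial\overline{\calR}_{1,\nu}^*(\bn\omega(t))$, and hence
\[
\int_\Omega\big(\bn\omega(\rho)-\bn\omega(r)\big)\hnp z(\rho)\,\dd x\ \ge\ \overline{\calR}_{1,\nu}^*(\bn\omega(\rho))-\overline{\calR}_{1,\nu}^*(\bn\omega(r)).
\]
This produces a telescoping, nonnegative quantity on the left-hand side that can be discarded, and for the first time step one bounds $\overline{\calR}_{1,\nu}^*(\bn\omega(t_0))=\overline{\calR}_{1,\nu}^*(\bn\omega(t_0))-\overline{\calR}_{1,\nu}^*(0)\le\int_\Omega\bn\omega(t_0)\hnp z(t_0)\,\dd x$ via the same subdifferential inequality, which brings back exactly the terms handled in Lemma~\ref{l:1st-3-est} using $\rmD_z\calI(0,z_0)\in L^2(\Omega)$. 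With this correction the rest of your argument goes through.
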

\begin{proof}
Estimates \eqref{est-epsi-tau-0}--\eqref{est-epsi-tau-1}  (and, consequently, 
\eqref{est-epsi-tau-6} for $\overline{u}_{\tau,\nu}$)  can be derived by the 
very same arguments as in the proof of Lemma \ref{l:1st-3-est}.
 Let us point out that we may suppose that 
$\sup_{\tau,\nu} \| \overline{z}_{\tau,\nu} \|_{L^\infty(0,T;W^{1,q}(\Omega)} 
\leq M$, with $M$ the same constant as in \eqref{bartauepsilon}.  
\par
Instead, the calculations for 
 \eqref{est-epsi-tau-2}--\eqref{est-epsi-tau-3} have to be slightly modified, as the ones developed in the proof of 
Lemma \ref{l:1st-3-est} rely on the $1$-homogeneity of $\calR_1$, whereas $\overline\calR_{1,\nu}$  no longer has this property. Therefore, we argue in this way:  keeping the short-hand notation $\hn h(t): = -(\epsilon  \widehat{z}_{\tau,\nu}'(t)   + A_q  \overline{z}_{\tau,\nu}(t)    + \rmD_z\wt\calI(\overline{t}_\tau(t),
 \overline{z}_{\tau,\nu}(t)) )$, and writing $\bn {\omega}(t)$ in place of  $\rmD \overline\calR_{1,\nu} (\widehat{z}_{\tau,\nu}'(t))$,
 \eqref{Eul-Lagr-L2-approx} rephrases as $\bn {\omega}(t)= \hn{h}(t)$. We subtract \eqref{Eul-Lagr-L2-approx}
 at time $r \in (t_{k-1}^\tau, t_k^\tau)$ from  \eqref{Eul-Lagr-L2-approx}
 at time $t \in (t_{k}^\tau, t_{k+1}^\tau)$ and test the resulting relation by $\hnp{z} (t)$. Therefore we obtain
 \begin{equation}
 \label{crucial-step}
 \overline{\calR}_{1,\nu}^*  (\bn {\omega}(t))- 
 \overline{\calR}_{1,\nu}^* (\bn {\omega}(r))  \leq \int_\Omega \left(  \bn {\omega}(t) {-} \bn {\omega}(r)\right) \hnp{z} (t) \dx = \int_\Omega \left(  \hn {h}(t) {-} \hn {h}(r)\right) \hnp{z} (t) \dx,
  \end{equation}
 where $ \overline{\calR}_{1,\nu}^* $ denotes the Fenchel-Moreau convex conjugate of $ 
 \overline\calR_{1,\nu}$, and we have used that
 \begin{equation}
 \label{conjugate-R-nu}
 \widehat{z}_{\tau,\nu}'(t) \in \partial   \overline{\calR}_{1,\nu}^* (\bn {\omega}(t)) \quad \text{for all } t 
 \in (t_{k}^\tau, t_{k+1}^\tau) \text{ and for all } k =0, \ldots, N-1.
 \end{equation}
 From \eqref{crucial-step}
 we then obtain
 the analogue of \eqref{pruni1:e2}, 
  namely
  \begin{multline}
\label{pruni1:e2-bis}     \frac{1}{\tau}\overline{\calR}_{1,\nu}^*  (\bn 
{\omega}(t))+ \frac{\epsilon}{\tau}  \int_\Omega  (\widehat z_{\tau,\nu}'(t){ -} 
 \widehat z_{\tau,\nu}'(r)) 
\widehat z_{\tau,\nu}'(t) \dx  
+\frac{1}{\tau}\int_\Omega (A_\il \overline z_{\tau,\nu}(t) {-}
     A_\il \overline
z_{\tau,\nu}(r)) \widehat z_{\tau,\nu}'(t)  \dx 
\\ \leq  \frac{1}{\tau}\overline{\calR}_{1,\nu}^*  (\bn {\omega}(r))  -
 \frac{1}{\tau}\int_\Omega (\rmD_z\wt\calI(\overline
t_\tau(t), \overline z_{\tau,\nu}(t)) {-}
 \rmD_z\wt\calI(\overline t_\tau(r), \overline z_{\tau,\nu}(r)))  \widehat
 z_{\tau,\nu}'(t) \dx. 
 \end{multline}
Observe that \eqref{pruni1:e2-bis} contains the same terms as in 
\eqref{pruni1:e2}, but with the additional contribution coming from 
$\overline{\calR}_{1,\nu}^*$.
Following the lines of the proof of  Lemma \ref{l:1st-3-est} (see also 
\cite[Lemma 5.3]{KRZ2}) we ``integrate'' over the time interval $(t_0,t)$ with 
$t_0 
\in (0,t_1^\tau)$ and $t\in (t_k^\tau, t_{k+1}^\tau)$ and get
\begin{equation}
\label{added-R-1-bis}
\begin{aligned}
&\overline{\calR}_{1,\nu}^*  (\bn {\omega}(t))+ \frac{\epsilon}{2}\norm{ \widehat
  z_{\tau,\nu}'(t) }^2_{L^2(\Omega)}
+ %\frac{c_q } 2
 C_q      \int_{t_1^\tau}^{\overline t_\tau(t)}
\int_\Omega\big(1 + \abs{\nabla \widehat z_{\tau,\nu}(\rho)}^2 \big)^{(q{-}2)/2} 
\abs{\nabla \widehat z'_{\tau,\nu}(\rho)}^2\,\dd x\,\dd\rho
\\
&\leq  \overline{\calR}_{1,\nu}^*  (\bn {\omega}(t_0)) 
+\frac{\epsilon}{2}\norm{\widehat 
  z_{\tau,\nu}'(t_0)}^2_{L^2(\Omega)}
+C  \int^{\overline t_\tau(t)}_{ t_1^\tau}
 (1 +  \| \widehat z_{\tau,\nu}'(\rho)
\|_{L^{6}(\Omega)} )  \| \widehat z_{\tau,\nu}'(\rho)
\|_{L^{2}(\Omega)}\,\dd\rho,
\end{aligned}
\end{equation}
with $C_q$ from \eqref{ddd:e1}.  
We observe that $\overline{\calR}_{1,\nu}^*  (\bn {\omega}(t))\geq 0$, 
and therefore on the left-hand side we get the exact analogue of the left-hand side of \eqref{added-R-1}. 
For the right-hand side, we have to deal with the ``extra''-term $ \overline{\calR}_{1,\nu}^* (\bn {\omega}(t_0))$. 
 For this, we observe that 
\begin{equation}
\label{at-1st-step}
\begin{aligned}
 \overline{\calR}_{1,\nu}^*  (\bn {\omega}(t_0)) =  &
 %\overline{\calR}_{1,\nu}^* (\ti \omega{1}{\tau,\nu})= 
 \overline{\calR}_{1,\nu}^*  (\bn {\omega} (t_0)) -  
 \overline{\calR}_{1,\nu}^*  (0)   \leq \int_\Omega \left( \frac{\ti z{1}{\tau,\nu}-z_0}{\tau} \right)
 %\ti \omega{1}{\tau,\nu} 
 \bn {\omega} (t_0)
 \dx
 \\ &  = \int_\Omega \frac{(\ti z{1}{\tau,\nu}-z_0)}{\tau}  \left( - \epsilon 
\frac{\ti z{1}{\tau,\nu}-z_0}{\tau} - \rmD_z \calI(t_1^\tau, \ti z{1}{\tau,\nu}) 
 \right) \dx 
 \\
 &
   =-\epsilon \norm{\widehat
  z_{\tau,\nu}'(t_0)}^2_{L^2(\Omega)}  - \int_\Omega \rmD_z
   \calI(t_1^\tau, \ti z{1}{\tau,\nu}) \widehat z_{\tau,\nu}'(t_0) \dx 
\end{aligned}
\end{equation}
and therefore, the right-hand side of \eqref{added-R-1-bis} can be bounded as follows
\begin{equation}\label{R.H.S.}
\text{R.H.S.} \leq - \int_\Omega \rmD_z \calI(t_1^\tau, \ti z{1}{\tau,\nu}) 
\widehat z_{\tau,\nu}'(t_0) \dx 
  - \frac{\epsilon}{2}\norm{\widehat
  z_{\tau,\nu}'(t_0)}^2_{L^2(\Omega)} 
  +C  \int^{\overline t_\tau(t)}_{ t_1^\tau}
 (1 +  \| \widehat z_{\tau,\nu}'(\rho)
\|_{L^{6}(\Omega)} )  \| \widehat z_{\tau,\nu}'(\rho)
\|_{L^{2}(\Omega)}\,\dd\rho .
\end{equation}
Writing  $
\rmD_z\calI( t_1^{\tau}, z_1^{\tau,\nu}) =  A_q 
(z_1^{\tau,\nu})-A_q(z_0) 
+ \rmD_z\wt\calI(t_1^{\tau},z_1^{\tau,\nu})-  
\rmD_z\wt\calI(0,z_0)
+ \rmD_z\calI(0,z_0) 
$  and performing calculations analogous to those developed in the proof of Lemma \ref{l:1st-3-est}, we obtain
\[
\begin{aligned}
 - \int_\Omega \rmD_z \calI(t_1^\tau, \ti z{1}{\tau,\nu}) \widehat z_{\tau,\nu}'(t_0) \dx
 &  \leq - C_q \tau \int_\Omega (1{+} |\nabla\widehat{z}_{\tau,\nu}(t_0)|^2 
)^{(q{-}2)/2} |\nabla  \widehat z_{\tau,\nu}'(t_0) |^2 \,\dd x + \frac{\epsilon}2   
\norm{\widehat
  z_{\tau,\nu}'(t_0)}^2_{L^2(\Omega)}
  \\
  & \quad
    + \epsilon^{-1}  \norm{\rmD_z\calI(0,z_0)}^2_{L^2(\Omega)} +c\tau (1+  \norm{\widehat
  z_{\tau,\nu}'(t_0)}^2_{L^6(\Omega)})  \norm{\widehat
  z_{\tau,\nu}'(t_0)}^2_{L^2(\Omega)}\,.
  \end{aligned}
\]
Combining this with \eqref{R.H.S.},  summing the resulting inequality with \eqref{added-R-1-bis}, 
 and adding $ C_q  \int_0^{\overline t_\tau(t)} \norm{\widehat 
z'_{\tau,\nu}(\rho)}^2_{L^2(\Omega)} \dd \rho$ to both terms of the resulting 
estimate, 
we obtain 
\begin{equation}
\label{sum}
\begin{aligned}
&\frac{\epsilon}{2}\norm{\widehat
  z_{\tau,\nu}'(t)}^2_{L^2(\Omega)}
+  %\frac{c_q } 2
 C_q  \int_0^{\overline t_\tau(t)} \norm{\widehat 
z'_{\tau,\nu}(\rho)}^2_{L^2(\Omega)}\dd \rho 
+ %\frac{c_q } 2
 C_q      \int_{0}^{\overline t_\tau(t)}
\int_\Omega\big(1 + \abs{\nabla \widehat z_{\tau,\nu}(\rho)}^2 \big)^{(q{-}2)/2}\abs{\nabla \widehat z'_{\tau,\nu}(\rho)}^2\,\dd x\,\dd\rho
\\
& \leq
\epsilon^{-1}\norm{\rmD_z\calI(0,z_0)}^2_{L^2(\Omega)}
+   C_q  \int_0^{\overline t_\tau(t)} \norm{\widehat 
z'_{\tau,\nu}(\rho)}^2_{L^2(\Omega)}\dd \rho
+C  \int_0^{\overline t_\tau(t)}
 (1 +  \| \widehat z_{\tau,\nu}'(\rho)
\|_{L^{6}(\Omega)} )  \| \widehat z_{\tau,\nu}'(\rho)
\|_{L^{2}(\Omega)}\,\dd\rho
\\
& \leq
C+ \epsilon^{-1}\norm{\rmD_z\calI(0,z_0)}^2_{L^2(\Omega)}
+  C\int_0^{\overline t_\tau(t)} \norm{\widehat z'_{\tau,.\nu}(\rho)}^2_{L^2(\Omega)}\dd \rho
 +\frac{C_q}4   \int_0^{\overline t_\tau(t)} \| \widehat 
z_{\tau,\nu}'(\rho)\|_{H^{1}(\Omega)}^2 \dd\rho,
\end{aligned}
\end{equation}
where in the last inequality we have used Young's inequality, and the continuous embedding 
$H^1(\Omega)\subset L^6(\Omega)$, for the last term in the r.h.s. of 
\eqref{R.H.S.} exactly as in the
proof of Lemma \ref{l:1st-3-est}.
 Absorbing $\int_0^{\overline t_\tau(t)} \| \widehat 
z_\tau'(\rho)\|_{H^{1}(\Omega)}^2 \dd\rho,$ into the left-hand side, we conclude 
  estimate \eqref{est-epsi-tau-2} for $\hnp z$, uniformly with respect to 
$\tau$ and $\nu$. 

Combining the arguments in the proof of Lemma \ref{l:1st-3-est} with the above arguments related to
$\overline{\calR}_{1,\nu}^*$ we also obtain estimate \eqref{est-epsi-tau-3} for $\hnp z$
 uniformly with respect to $\epsi$, $\tau$ and $\nu$,  and therefore 
also the bounds \eqref{est-epsi-tau-7}--\eqref{est-epsi-tau-8}  for $\hnp u$.
\par
We  are now in a position to carry out the time-discrete analogue of the \emph{Third regularity estimate}. We multiply \eqref{Eul-Lagr-L2-approx}, written at time $\rho \in (\ti t k \tau, \ti t {k+1}\tau)$, by the difference $(A_q \bn{z}(\rho) {- } A_q \bn{z}(r))$, with $r \in (\ti t {k-1} \tau, \ti t {k}\tau)$, and integrate in space.  Observe that this is now a legal test, in view of \eqref{that's-what-we-need}.  We thus obtain 
\begin{equation}
\label{now-legal-1}
\begin{aligned} & 
\ddd{\int_\Omega  \rmD \overline{\calR}_{1,\nu} (\widehat{z}_{\tau,\nu}'(\rho))  (A_q \bn{z}(\rho) {- } A_q \bn{z}(r)) \dd x }{$I_1$}{} + \epsilon \ddd{\int_\Omega \widehat{z}_{\tau,\nu}'(\rho)  (A_q \bn{z}(\rho) {- } A_q \bn{z}(r)) \dd x }{$I_2$}{} \\ & \quad +  \ddd{\int_\Omega A_q \bn{z}(\rho) (A_q \bn{z}(\rho) {- } A_q \bn{z}(r)) \dd x }{$I_3$}{}  = -  \ddd{\int_\Omega \rmD \wt{\calI}(\overline{t}_\tau(\rho), \bn{z}(\rho)) (A_q \bn{z}(\rho) {- } A_q \bn{z}(r)) \dd x }{$I_4$}{}\,.  
\end{aligned}
\end{equation} 
Now, we have that 
\[
\begin{aligned}
I_1 &  = \int_\Omega \nabla \left(  \overline\calR_{1,\nu}^\prime (\widehat{z}_{\tau,\nu}'(\rho))  \right) 
\cdot \left( (1+ |\nabla \bn{z}(\rho)|^2)^{q/2-1} \nabla \bn{z}(\rho) {-}(1+ |\nabla \bn{z}(r)|^2)^{q/2-1} \nabla \bn{z}(r) \right) \dx \\ & = 
 \int_\Omega 
 \overline{\calR}_{1,\nu}^{\prime \prime}(\widehat{z}_{\tau,\nu}'(\rho))   \nabla \widehat{z}_{\tau,\nu}'(\rho)
\cdot \left( (1+ |\nabla \bn{z}(\rho)|^2)^{q/2-1} \nabla \bn{z}(\rho) {-}(1+ |\nabla \bn{z}(r)|^2)^{q/2-1} \nabla \bn{z}(r) \right) \dx\stackrel{(1)}{\geq}0,
\end{aligned}
\]
where for the first equality we have used that $\rmD \overline\calR_{1,\nu} (\widehat{z}_{\tau,\nu}'(\rho))   =  \overline\calR_{1,\nu}^\prime (\widehat{z}_{\tau,\nu}'(\rho))   $ is an element in $W^{1,q}(\Omega)$:  indeed, 
$\widehat{z}_{\tau,\nu}'(\rho)\in W^{1,q}(\Omega)
\subset \rmC^0 (\overline\Omega)$, so that  there exists a constant $M>0$ with $|\widehat{z}_{\tau,\nu}'(\rho)|\leq M$ a.e.\ in $\Omega$; on the other hand $\overline\calR_{1,\nu}^\prime  \in \mathrm{C}^\infty(\R)$, hence its restriction to the ball $\overline{B}_M(0)$ is Lipschitz, and the composition of a Lipschitz function with an element in $W^{1,q}(\Omega)$ belongs to $W^{1,q}(\Omega)$. 	
Estimate (1) follows from the fact that $\overline\calR_{1,\nu}^{\prime\prime}\geq 0$ on $\R$, 
and from the convexity inequality
\[
(A-B)  
\cdot  \left( (1+ |A|^2)^{q/2-1} A {-}(1+ |B)|^2)^{q/2-1} B \right)\geq 0 \quad \text{for all } A,\, B \in \R^\sd,
\] applied with $A = \nabla \bn{z}(\rho)$ and $B = \nabla \bn{z}(r)$.
 Analogously, we have 
\[
I_2 = \int_\Omega \nabla \widehat{z}_{\tau,\nu}^\prime(\rho)
\cdot \left( (1+ |\nabla \bn{z}(\rho)|^2)^{q/2-1} \nabla \bn{z}(\rho) {-}(1+ |\nabla \bn{z}(r)|^2)^{q/2-1} \nabla \bn{z}(r) \right) \dx \geq 0. 
\]
We have 
\[
I_3 \geq \frac12 \| A_q \bn{z}(\rho)\|_{L^2(\Omega)}^2 -  \frac12 \| A_q \bn{z}(r)\|_{L^2(\Omega)}^2\,.
\]
Finally,  
\[
\begin{aligned}
I_4 =   & \int_\Omega \rmD \wt{\calI}(\overline{t}_\tau(\rho), \bn{z}(\rho)) A_q   \bn{z}(\rho)  \dx 
- \int_\Omega \rmD \wt{\calI}(\overline{t}_\tau(r), \bn{z}(r)) A_q   \bn{z}(r)  \dx
\\ &   - \int_\Omega \left( \rmD \wt{\calI}(\overline{t}_\tau(\rho), \bn{z}(\rho))
{-} \rmD \wt{\calI}(\overline{t}_\tau(r), \bn{z}(r))  \right) A_q \bn{z}(r) \dd x\,.
\end{aligned}
\] 
Summing with respect to the index $k$, we thus obtain for any $t \in (\ti t1{\tau}, T)$ and for $\sigma 
\in  (0,\ti t1{\tau})$ (remember that $\bn{z}(r)=\ubn{z}(\rho)$
 and $\overline{t}_\tau(r) = \underline{t}_\tau(\rho)$ 
 for 
$r\in (t_{k-1}^\tau, t_{k}^\tau]$ and $\rho\in [t_k^\tau,t_{k+1}^\tau)$) 
\[
\begin{aligned}
\frac12 \| A_q \bn{z}(t)\|_{L^2(\Omega)}^2  \leq  &  \frac12 \| A_q \bn{z}(
\sigma)\|_{L^2(\Omega)}^2 
+  \int_\Omega \rmD \wt{\calI}(\overline{t}_\tau(\sigma), \bn{z}(\sigma)) A_q   \bn{z}(\sigma)  \dx 
 - \int_\Omega \rmD \wt{\calI}(\overline{t}_\tau(t), \bn{z}(t)) A_q   \bn{z}(t)  \dx 
 \\ & + 
 \int_{t_1^\tau}^{\overline{t}_\tau(t)}  \int_\Omega \frac1{\tau}  \left( \rmD \wt{\calI}(\overline{t}_\tau(\rho), \bn{z}(\rho))
 {-} \rmD \wt{\calI}(\underline{t}_\tau(\rho), \ubn{z}(\rho))  \right) A_q \ubn{z}(\rho) \dd x \dd \rho\, \doteq I_5+I_6+I_7+I_8.
 \end{aligned}
\]
We estimate via H\"older's and Young's inequalities
\[
\begin{aligned}
& 
\left| I_6 \right| \leq   \|   \rmD \wt{\calI}(\overline{t}_\tau(\sigma), \bn{z}(\sigma))  \|_{L^2(\Omega)}^2 + \frac14  \| A_q \bn{z}(\sigma)\|_{L^2(\Omega)}^2\stackrel{(2)}{\leq} C + \frac14  \| A_q \bn{z}(\sigma)\|_{L^2(\Omega)}^2,
 \\
&
\left| I_7 \right| \leq   \|   \rmD \wt{\calI}(\overline{t}_\tau(t), \bn{z}(t))  \|_{L^2(\Omega)}^2 + \frac14  \| A_q \bn{z}(t)\|_{L^2(\Omega)}^2\stackrel{(1)}{\leq} C + \frac14  \| A_q \bn{z}(t)\|_{L^2(\Omega)}^2,
\\
 &
  \left| I_8 \right| \leq   \int_0^{\overline{t}_\tau(t)} \frac1{\tau} \| \rmD \wt{\calI}(\overline{t}_\tau(\rho), \bn{z}(\rho))
 {-} \rmD \wt{\calI}(\underline{t}_\tau(\rho), \ubn{z}(\rho)) \|_{L^2(\Omega)} \|A_q \ubn{z}(\rho) \|_{L^2(\Omega)} \dd \rho 
 \\ & \qquad 
  \stackrel{(3)}{\leq}   C    \int_0^{\overline{t}_\tau(t)} \frac1{\tau} \| \bn{z}(\rho) {-} \ubn{z}(\rho)\|_{L^6(\Omega)}  \|A_q \ubn{z}(\rho) \|_{L^2(\Omega)} \dd \rho.
\end{aligned}
\]
where (1) and (2) follow from \eqref{estimate-for-DI} and from the bound $\| f'(\bn z(t)\|_{L^\infty(\Omega)} + P(\bn z(t), 0) \leq C$, for a constant uniform w.r.t.\ $t \in [0,T]$, thanks to  estimate \eqref{est-epsi-tau-1} for $(\bn z)_{\tau,\nu}$; instead, (3) is due to \eqref{enhanced-stim-7}, again taking into account that $
\sup_{\rho  \in [0,T]}( C_{f''}(\bn{z}(\rho), \ubn{z}(\rho)) + 
P(\bn{z}(\rho), \ubn{z}(\rho))^3)  \leq C$ due to the bound 
\eqref{est-epsi-tau-1}.  All in all, we conclude 
\[
\frac14 \| A_q \bn{z}(t)\|_{L^2(\Omega)}^2  \leq   \frac34 \| A_q \bn{z}(
\sigma)\|_{L^2(\Omega)}^2 +  C  \left(1+  \int_0^{\overline{t}_\tau(t)}  \| \hnp{z}(\rho) \|_{L^6(\Omega)}  \|A_q \ubn{z}(\rho) \|_{L^2(\Omega)} \dd \rho \right),
\]
and, with a version of  Gronwall's Lemma  (cf.\ e.g.\ \cite[Lemme A.5]{Brez73}),  we conclude that 
\begin{equation}
\label{conse-Gronw}
 \| A_q \bn{z}(t)\|_{L^2(\Omega)} \leq  C \left(1+ \| A_q \bn{z}(
\sigma)\|_{L^2(\Omega)} +  \int_0^{\overline{t}_\tau(t)}  \| \hnp{z}(\rho) \|_{L^6(\Omega)}\dd \rho \right).
\end{equation}
It now remains to estimate $ \| A_q \bn{z}(
\sigma)\|_{L^2(\Omega)}  =  \| A_q  \ti z1{\tau,\nu} \|_{L^2(\Omega)} $. For this, we use the Euler-Lagrange equation 
\[
\rmD \overline\calR_{1,\nu} \left(\frac{ \ti z1{\tau,\nu} - z_0 }\tau\right) + \epsilon  \frac{ \ti z1{\tau,\nu} - z_0 }\tau  + A_q  \ti z1{\tau,\nu}    + \rmD_z\wt\calI( \ti t1{\tau,\nu}, \ti z1{\tau,\nu}) =  0 
\]
and test it by $A_q  \ti z1{\tau,\nu} - A_q z_0$. We repeat the same calculations as above and arrive at 
\[
\begin{aligned}
\frac12 \| A_q  \ti z1{\tau,\nu} \|_{L^2(\Omega)}^2  \leq  &  \frac12 \| A_q z_0\|_{L^2(\Omega)}^2 + 
 \int_\Omega \rmD \wt{\calI}(0,z_0) A_q   z_0 \dx - \int_\Omega \rmD \wt{\calI}(\ti t1{\tau,\nu}, \ti z1{\tau,\nu}) A_q   
 \ti z1{\tau,\nu}  \dx 
 \\ & + 
\int_\Omega  \left(  \rmD \wt{\calI}(\ti t1{\tau,\nu}, \ti z1{\tau,\nu}){-} \rmD \wt{\calI}(0,z_0)  \right) A_qz_0 \dd x,
\end{aligned}
\]
whence 
\[
\| A_q  \ti z1{\tau,\nu} \|_{L^2(\Omega)}^2 \leq C \left(1+  \| A_q z_0\|_{L^2(\Omega)}^2 + \|  \ti z1{\tau,\nu} -z_0\|_{L^6(\Omega)}^2 \right) \leq C,
\]
the last inequality due to
\eqref{further-reg} and bound \eqref{est-epsi-tau-1}.  Combining the 
above estimate  with  \eqref{conse-Gronw}, we conclude estimate 
\eqref{est-epsi-tau-4} in view of  the previously proved bound 
\eqref{est-epsi-tau-3} for $\hnp z$.  
 \par
 Finally, estimate  \eqref{est-epsi-tau-5} 
 for $ \bn \omega = \rmD \overline\calR_{1,\nu} (\hnp z)$ 
  follows from a comparison argument in \eqref{Eul-Lagr-L2-approx}, in view of 
estimate
 \eqref{est-epsi-tau-2}, 
  %  \eqref{est-epsi-tau-3},
   and the previously used bound for $  \rmD \wt{\calI}(\overline{t}_\tau(\cdot), \bn{z}(\cdot)) $ in $L^\infty (0,T;L^2(\Omega))$ due to \eqref{estimate-for-DI} and  \eqref{est-epsi-tau-1}. 
\end{proof}
\par
We are now in a position to conclude the 
\underline{\textbf{proof of Lemma \ref{l:last-est}}}: 
For \emph{fixed} positive $\tau$ and $\epsilon$,  let $(\bn z, \hn z)_{\nu}$  a 
family of solutions to \eqref{Eul-Lagr-L2-approx}. It follows from estimates \eqref{est-epsi-tau} proved in
Lemma \ref{l:est-approx-nu} and from Proposition 
\ref{prop:Savare98}, that the sequence $(\bn z)_\nu$ is also 
uniformly bounded in $L^\infty (0,T; W^{1+\sigma, q}(\Omega))$ for all $0<\sigma <\frac1q$, whence estimate \eqref{est-epsi-tau-1reg}. 
Hence, also the 
sequence $(\hn z)_\nu$  is bounded in that space. Therefore, applying the Aubin-Lions type compactness results from 
\cite{simon87}  to  $(\hn z)_\nu$,  we infer that there exists a function $\widehat z$ such that, 
along a (not relabeled) subsequence, as $\nu\downarrow 0$ the following convergences hold
\begin{subequations}
\label{convs-bn}
\begin{equation}
\label{conv-bn-hat}
\begin{aligned}
&
\hn z \weaksto \widehat z && \text{ in } L^\infty (0,T; W^{1+\sigma, q}(\Omega)) \cap H^1(0,T;H^1(\Omega)) \cap W^{1,\infty}(0,T; L^2(\Omega)) \quad \text{for all } 0 <\sigma <\frac1q,
\\
&
\hn z \to \widehat z && \text{ in } \rmC^0 ([0,T]; \calZ), 
\end{aligned}
\end{equation}
where the last convergence follows from the compact embedding $W^{1+\sigma, q}(\Omega) \Subset  \calZ$ 
 for all $\sigma \in (0,\tfrac1q)$. 
 From the estimate for $(\hnp z )_\nu$ in  $L^1(0,T;H^1(\Omega))$
we gather that
\[
\| \bn z\|_{\mathrm{BV}([0,T];H^1(\Omega))} \leq C
\]
for a constant independent of $\nu$ (and $\tau$). Therefore, thanks to  an infinite-dimensional version of Helly's Theorem, see e.g.\ \cite[Thm.\ 6.1]{MieThe04RIHM}, we conclude that there exists $\overline z \in \mathrm{BV}([0,T];H^1(\Omega))$ such that,   up to the further extraction of a subsequence, 
$\bn z(t)\weakto \overline{z}(t)$ in $H^1(\Omega)$, as $\nu \downarrow 0$ for every $t\in [0,T]$. Since  $(\bn z)_\nu$ is bounded in $L^\infty (0,T;W^{1+\sigma, q}(\Omega) )$, 
we ultimately conclude that  $\bn z(t) \weakto \overline{z}(t) $ in $W^{1+\sigma, q}(\Omega)$ for every $t\in [0,T]$.
Thus, we infer
\begin{equation}
\label{conv-bn-bar}
\bn z (t) \to \overline{z}(t) \qquad \text{ in } \calZ \quad \text{for every } t\in [0,T]. 
\end{equation}
Then, a fortiori one has that 
\begin{equation}
\label{conv-bn-bar-1}
\bn z \weaksto \overline z \text{ in } L^\infty(0,T; \calZ), \qquad 
\bn z \to \overline z \text{ in } L^p(0,T;  \calZ)  \text{ for every } 1 \leq p <\infty. 
\end{equation}
Finally, there exists $\overline\omega \in L^\infty (0,T; L^2(\Omega))$ such that, up to a further extraction, 
\begin{equation}
\label{conv-bar-omega}
\bn \omega \weaksto \overline\omega \qquad \text{ in } L^\infty (0,T; L^2(\Omega)).
\end{equation}
\end{subequations}
\par
It follows from \eqref{conv-bn-bar}, combined with the bound \eqref{est-epsi-tau-3},  that 
\[
A_q \bn z(t) \weakto A_q \overline{z}(t) \qquad \text{ in } L^2(\Omega) \quad \text{for every } t \in [0,T].
\]
Also in view of \eqref{conv-bn-bar-1} it is not difficult to deduce that 
\[
A_q \bn z \weaksto A_q \overline z \qquad \text{ in } L^\infty (0,T;L^2(\Omega)). 
\]
Furthermore, combining estimate \eqref{enhanced-stim-7} with \eqref{est-epsi-tau-1} and convergence \eqref{conv-bn-bar} we find that  for every $t\in [0,T]$
\[
\begin{aligned}
 \| \rmD \wt{\calI}(\overline{t}_\tau(t), \bn{z}(t)){-} \rmD \wt{\calI}(\overline{t}_\tau(t), \overline z(t)) \|_{L^2(\Omega)}  &  \leq  C\left(C_f' (\bn{z}(t), \overline z(t)) + P(\bn{z}(t), \overline z(t) )^3 \right)\| \bn{z}(t) - \overline z(t)\|_{L^6(\Omega)} \\ & \leq C  \| \bn{z}(t) - \overline z(t)\|_{L^6(\Omega)} \to 0 
 \end{aligned}
\]
as $\nu\downarrow 0$. Since  $( \rmD \wt{\calI}(\overline{t}_\tau, \bn{z}))_\nu$ is bounded in $L^\infty (0,T;L^2(\Omega))$ by \eqref{estimate-for-DI} and \eqref{est-epsi-tau-1}, we also have 
\[
\begin{aligned}
& 
\rmD \wt{\calI}(\overline{t}_\tau, \bn{z}) \weaksto \rmD \wt{\calI}(\overline{t}_\tau, \overline z ) \text{ in } L^\infty(0,T; L^2(\Omega)), 
\\
& 
\rmD \wt{\calI}(\overline{t}_\tau, \bn{z}) \to \rmD \wt{\calI}(\overline{t}_\tau, \overline z ) \text{ in } L^p(0,T;  L^2(\Omega))  \quad \text{for every } 1 \leq p <\infty. 
\end{aligned}
\]
Therefore, also on account of convergences \eqref{conv-bn-hat} and \eqref{conv-bar-omega} we can pass to the limit as $\nu \downarrow 0$ in 
\eqref{Eul-Lagr-L2-approx} and conclude that the triple $(\overline z, \widehat{z}, \overline\omega)$ satisfies 
\[
\overline\omega(t) + \epsi \widehat{z}'(t) + A_q \overline z(t) + \rmD \wt{\calI}(\overline{t}_\tau(t), \overline z(t)) =0 \quad \text{ in } L^2(\Omega) \quad \foraa\, t \in (\ti t k \tau, \ti t {k+1} \tau)
\]
and for every $k \in \{0,\ldots, N-1\}$. 
We can also prove that 
\[
\limsup_{\nu \downarrow 0} \int_{\ti tk\tau}^{\ti t{k+1}\tau} \int_\Omega \bn \omega \hnp z \dd x \dd t \leq \int_{\ti tk\tau}^{\ti t{k+1}\tau} \int_\Omega \overline \omega \widehat{z}' \dd x \dd t\,.
\]
This follows from multiplying  \eqref{Eul-Lagr-L2-approx} by $\hnp z$ and taking the limit in each of the terms, on account of the convergences so far proved. 
\par
Therefore, thanks to \eqref{propsR1nu-3}, we infer that 
$\overline \omega(t) \in \partial\calR_1 (\widehat{z}'(t))$ for almost all $ t \in  (\ti t k \tau, \ti t {k+1} \tau)$. All in all, the pair  $(\overline z, \widehat z)$ fulfills the differential inclusion
\begin{equation}
\label{almost-eq}
 \partial\calR_1 (\widehat{z}'(t)) + \epsi \widehat{z}'(t) + A_q \overline z(t) + \rmD \wt{\calI}(\overline{t}_\tau(t), \overline z(t))  \ni 0  \text{ in } L^2(\Omega) \quad \foraa\, t \in (\ti t k \tau, \ti t {k+1} \tau) \  \forall\, k \in \{0,\ldots, N-1\}\,.
\end{equation}
A fortiori, since $ \partial\calR_1 (\widehat{z}'(t)) \subset  \partial_{\calZ, \calZ^*}\calR_1 (\widehat{z}'(t))$, we conclude that $(\overline z, \widehat z)$ fulfill 
\[
\partial_{\calZ, \calZ^*}\calR_1 (\widehat{z}'(t))+ \epsi \widehat{z}'(t) + A_q 
\overline z(t) + \rmD \wt{\calI}(\overline{t}_\tau(t), \overline z(t))  \ni 0 
\quad \text{ in }
 \calZ^*  \quad \foraa\, t \in (\ti t k \tau, \ti t {k+1} \tau) \quad 
\forall\, k \in \{0,\ldots, N-1\}\,.
\]
Since  the latter has a unique solution  in the closed ball 
$\overline{B}_M(0)$ of $\calZ$ for $\tau <\bar{\tau}_\epsi$ (cf.\ Prop.\ 
\ref{prop:exist-mini}), and since $\overline{z}$ and $\overline{z}_\tau$ take 
value in that ball, 
we get that 
\[
\overline z(t) = \overline{z}_\tau(t), \qquad \widehat{z}'(t) = \widehat{z}_\tau'(t) \qquad \foraa\, t \in (\ti t k \tau, \ti t {k+1} \tau) \quad \forall\, k \in \{0,\ldots, N-1\}\,,
\]
and, therefore, a.e.\ in $(0,T)$.  In particular, we find that $A_q\overline{z}_\tau \in L^\infty (0,T; L^2(\Omega))$.  Furthermore, since estimates \eqref{est-epsi-tau-4} and  \eqref{est-epsi-tau-5}   are uniform both w.r.t.\ $\nu>0$ and w.r.t.\ $\tau>0$, they are   inherited in the limit as $
\nu \downarrow 0$.  Therefore, 
\[
\| A_q\overline{z}_\tau \|_{ L^\infty (0,T; L^2(\Omega))} +
%\epsi^{1/2}
\|\overline \omega \|_{ L^\infty (0,T; L^2(\Omega))} \leq C
\]
for a constant independent of  $\tau<\bar{\tau}_\epsi $.  We set 
$\overline \omega_\tau: =  \overline \omega$ and ultimately conclude 
\eqref{enh-spat-reg} as well as 
\eqref{est-epsi-tau-4} and  \eqref{est-epsi-tau-5}. 
 Finally, from \eqref{almost-eq} we gather the validity of  
 \eqref{Eul-Lagr-L2}. This concludes the proof of Lemma \ref{l:last-est}. 
\QED
%%%%%%%
%%%%%%%%
%%%%%%%
 \section{\bf Existence of viscous solutions}
\label{s:4}
   In this section, we briefly comment on the existence of solutions to the 
viscous system
  \eqref{dndia-eps}.
   By passing to the limit with $\epsi>0$ fixed in the time discrete scheme \eqref{cont-reformulation},  we are able to prove 
   the existence of a solution to   \eqref{dndia-eps}, formulated as a subdifferential inclusion in $L^2(\Omega)$, namely
   \begin{equation}
   \label{subdiff-incl-L2}
   \omega(t) +\epsi z'(t) +A_q (z(t)) + \rmD_z\wt\calI(t,z(t)) \ni 0 \quad\text{in }L^2(\Omega) \ \foraa\, t \in (0,T),
   \end{equation}
   with $\omega(\cdot) $ a selection in the subdifferential $\partial\calR_1(z'(\cdot)) \subset L^2(\Omega)$. Furthermore,
   along the footsteps of \cite{mrs2013}
   we obtain an energy-dissipation balance featuring the conjugate $\calR_\epsi^*$
    of $\calR_\epsi$, cf.\
     \eqref{Fenchel-Moreau}.
  \begin{theorem}
  \label{thm:exist}
  Let $\epsi>0$ be fixed.
   Under Assumptions   \ref{ass:domain}, \ref{assumption:energy}, and 
\ref{ass:load},  and under   condition \eqref{further-reg}
 on the initial datum $z_0$, 
  there exist  
  \begin{equation}
  \label{z-sol-curve}
  \begin{gathered}
  z \in L^\infty(0,T;W^{1+\sigma,q}(\Omega)) \cap H^1(0,T;H^1(\Omega)) \cap 
W^{1,\infty}(0,T;L^2(\Omega))
\text{ for every
  $\sigma \in (0,\tfrac1q)$, with }
  \\
   A_q z \in L^\infty (0,T;L^2(\Omega))
   \end{gathered}
  \end{equation}
   and $\omega\in L^\infty (0,T;L^2(\Omega))$ fulfilling 
   the subdifferential inclusion
   \eqref{subdiff-incl-L2} and the Cauchy condition $z(0)=z_0$.
   \par
   Furthermore, $z$ complies with the energy-dissipation balance
   \begin{equation}
   \label{en-diss-bal}
   \int_s^t \calR_\epsi(z'(r)) \dd r + \int_s^t \calR_\epsi^*({-}A_q(z(r)) 
{-}\rmD_z \wt\calI(r,z(r))) \dd r + \calI(t,z(t)) =  \calI(s,z(s)) +\int_s^t 
\partial_t \calI(r,z(r)) \dd r 
   \end{equation}
   for every $0\leq s \leq t \leq T$. 
  \end{theorem}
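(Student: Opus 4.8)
The plan is to obtain the viscous solution as the limit, as $\tau \downarrow 0$ with $\epsi>0$ fixed, of the time-discrete solutions constructed in Section~\ref{s:3}, relying on the a priori estimates of Proposition~\ref{prop:aprio} and the discrete energy-dissipation inequality of Corollary~\ref{cor:discr-enineq}. First I would extract converging subsequences: the uniform bounds \eqref{est-epsi-tau-1}, \eqref{est-epsi-tau-1reg}, \eqref{est-epsi-tau-2} give, for fixed $\epsi$, that $(\widehat z_\tau)_\tau$ is bounded in $L^\infty(0,T;W^{1+\sigma,q}(\Omega)) \cap H^1(0,T;H^1(\Omega)) \cap W^{1,\infty}(0,T;L^2(\Omega))$; by Aubin--Lions-type compactness (as in \cite{simon87}) and an infinite-dimensional Arzel\`a--Ascoli argument we get $\widehat z_\tau \to z$ in $\rmC^0([0,T];\calZ)$, with $\widehat z_\tau' \weakto z'$ in $L^2(0,T;H^1(\Omega))$ and weakly-$*$ in $L^\infty(0,T;L^2(\Omega))$, and $z$ in the class \eqref{z-sol-curve}; the bound \eqref{est-epsi-tau-4} yields $A_q\overline z_\tau \weaksto A_q z$ in $L^\infty(0,T;L^2(\Omega))$, using $\overline z_\tau \to z$ in $\calZ$ pointwise (this last requires a Helly-type argument on $\overline z_\tau$ as in the proof of Lemma~\ref{l:last-est}, together with $\|\overline z_\tau - \widehat z_\tau\|_{L^\infty(0,T;L^2(\Omega))} \to 0$ from \eqref{est-epsi-tau-2}); and \eqref{est-epsi-tau-5} gives $\overline\omega_\tau \weaksto \omega$ in $L^\infty(0,T;L^2(\Omega))$. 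Convergence $\rmD_z\wt\calI(\overline t_\tau,\overline z_\tau) \to \rmD_z\wt\calI(\cdot,z)$ strongly in $L^p(0,T;L^2(\Omega))$ follows from \eqref{enhanced-stim-7}, \eqref{estimate-for-DI}, \eqref{est-epsi-tau-1} and $\overline z_\tau(t) \to z(t)$ in $\calZ$.

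Next I would pass to the limit in the discrete Euler--Lagrange inclusion \eqref{Eul-Lagr-L2}. The terms $\epsi\widehat z_\tau'$, $A_q\overline z_\tau$, $\rmD_z\wt\calI(\overline t_\tau,\overline z_\tau)$ pass to the limit by the convergences above, so $\overline\omega_\tau \to -\epsi z' - A_q z - \rmD_z\wt\calI(\cdot,z) =: \omega$ weakly-$*$ in $L^\infty(0,T;L^2(\Omega))$. To identify $\omega(t) \in \partial\calR_1(z'(t))$ it suffices to show the limsup condition $\limsup_\tau \int_0^T \int_\Omega \overline\omega_\tau \widehat z_\tau' \,\dd x\,\dd t \le \int_0^T\int_\Omega \omega z' \,\dd x\,\dd t$, by maximal monotonicity of $\partial\calR_1$ on $L^2$; this I would get by testing \eqref{Eul-Lagr-L2} with $\widehat z_\tau'$ and passing to the limit in each resulting term, crucially using the chain rule (Lemma~\ref{l:ch-rule}, in the alternative form \eqref{ch-rule-curve-weak}) to handle $\int_0^T\langle A_q\overline z_\tau + \rmD_z\wt\calI, \widehat z_\tau'\rangle$ on the limit side versus a discrete "energy" identity on the pre-limit side --- this is the standard mechanism from \cite{mrs2013}. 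The Cauchy condition $z(0)=z_0$ is inherited from $\widehat z_\tau(0)=z_0$ and the $\rmC^0([0,T];\calZ)$-convergence.

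Finally I would establish the energy-dissipation balance \eqref{en-diss-bal}. The lower estimate "$\ge$" comes by lower semicontinuity: in the discrete energy-dissipation inequality \eqref{discr-enineq} one lets $\tau\downarrow 0$ using weak lower semicontinuity of the convex functionals $\int\calR_\epsi(\widehat z_\tau')$ and $\int\calR_\epsi^*(-\rmD_z\calI(\overline t_\tau,\overline z_\tau))$ (the latter convex and continuous in $L^2$, composed with the strong $L^2$-convergence of $-\rmD_z\calI(\overline t_\tau,\overline z_\tau)$), continuity/lower semicontinuity of $\calI$ along the convergences from Corollary~\ref{coro-fre}, convergence of $\int\partial_t\calI(\cdot,\widehat z_\tau)$ via \eqref{weak-continuity}, and the fact that the error term on the right of \eqref{discr-enineq} vanishes because $\|\overline z_\tau - \widehat z_\tau\|_{L^\infty(0,T;L^2(\Omega))} \to 0$; this yields \eqref{en-diss-bal} with "$\ge$" and, with $s=0$, also "$\le$" pointwise integrated. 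The matching upper estimate "$\le$" is exactly the chain-rule / Young-inequality argument: since $z$ has the regularity \eqref{z-sol-curve}, Lemma~\ref{l:ch-rule} gives $\calI(t,z(t)) - \calI(s,z(s)) = \int_s^t\partial_t\calI(r,z(r))\,\dd r + \int_s^t\int_\Omega \rmD_z\calI(r,z(r))\,z'(r)\,\dd x\,\dd r$, and from the subdifferential inclusion \eqref{subdiff-incl-L2} together with the Fenchel--Young inequality $-\int_\Omega\rmD_z\calI\,z' \le \calR_\epsi(z') + \calR_\epsi^*(-\rmD_z\calI)$ one deduces the reverse inequality, so that equality holds. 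The main obstacle I expect is precisely the sharp identification $\omega \in \partial\calR_1(z')$ in $L^2(\Omega)$ together with the corresponding limsup-of-the-dissipation estimate --- this is where the chain rule \eqref{ch-rule-curve-weak} and the uniform $L^2$-regularity $A_q\overline z_\tau \in L^\infty(0,T;L^2(\Omega))$ (guaranteed only for $\tau<\bar\tau_\epsi$) are indispensable, and where the fact that $\calR_1$ takes the value $+\infty$ must be handled carefully, e.g.\ by working with $\calR_\epsi^*$ as in \eqref{Fenchel-Moreau} rather than with the $(\calZ,\calZ^*)$-duality.
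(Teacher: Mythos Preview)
Your proposal is essentially correct and close in spirit to the paper's proof, but the route to the subdifferential inclusion differs. The paper does not carry out a Minty-type $\limsup$ argument: instead it first invokes the limit-passage result of \cite[Thm.~3.5]{KRZ2} to obtain the \emph{weak} variational inequality \eqref{weak-def-sol} (where the $A_q$-term appears only through $\int_\Omega (1+|\nabla z|^2)^{(q-2)/2}\nabla z\cdot\nabla z'\,\dd x$, which is well defined without the $L^2$-regularity of $A_q z$), and only then uses the enhanced information $A_q z\in L^\infty(0,T;L^2(\Omega))$ to rewrite this term as $\int_\Omega A_q z\,(w-z')\,\dd x$ and extend by density to test functions $w\in L^2(\Omega)$, yielding \eqref{incl-Fenchel}. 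Your direct approach via the $\limsup$ condition is a legitimate and perhaps more self-contained alternative, but note one imprecision: you claim ``strong $L^2$-convergence of $-\rmD_z\calI(\overline t_\tau,\overline z_\tau)$'', whereas only the $\rmD_z\wt\calI$-part converges strongly (via \eqref{enhanced-stim-7}); the $A_q$-part converges only weakly-$*$ in $L^\infty(0,T;L^2(\Omega))$. This does not spoil the lower-semicontinuity step for $\int\calR_\epsi^*$ (convexity and weak lsc suffice), but it matters for the $\limsup$ argument, where one must indeed rely on the discrete energy identity and the limit chain rule as you indicate. For the energy-dissipation balance, the paper proceeds in one step: once \eqref{incl-Fenchel} holds, the Fenchel--Young \emph{equality} $\calR_\epsi(z')+\calR_\epsi^*(-\rmD_z\calI)=\langle -\rmD_z\calI,z'\rangle_{L^2}$ is valid pointwise a.e., and the chain rule \eqref{ch-rule-identity} converts the right-hand side into $-\tfrac{\dd}{\dd t}\calI+\partial_t\calI$, giving \eqref{en-diss-bal} directly without splitting into two inequalities.
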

  \par
\begin{proof} 
Let $(\tau_j)_j$ be a null 
  sequence of time steps, and let $(\overline{z}_{\tau_j})_j,\, 
  (\widehat{z}_{\tau_j})_j $ be the approximate solutions to the viscous subdifferential inclusion \eqref{dndia-eps} constructed in Section \ref{s:3}.
  For them, estimates \eqref{est-epsi-tau} hold with a constant uniform w.r.t.\ $j\in \N$ (recall that   $\epsi>0$ is fixed). 
  \par
  Adapting the arguments from the proof of \cite[Prop.\ 6.2]{KRZ2}, combining 
\eqref{est-epsi-tau} 
with Aubin-Lions type compactness results (cf., e.g., \cite[Thm.\ 5, Cor.\ 
4]{simon87}) and arguing in the same way as in the proof of Lemma 
\ref{l:last-est},
  cf.\ also Lemma 
  \ref{l:compactn} ahead,  we may show that there exist a (not relabeled) subsequence and  a curve $z$ as in  \eqref{z-sol-curve} %  and $\omega \in L^\infty(0,T;L^2(\Omega
  such that the following convergences hold
  \[
  \begin{aligned}
  &
  \overline{z}_{\tau_j},\, \widehat{z}_{\tau_j} \to z  && \text{in}  && L^\infty (0,T;\calZ), % \text{ for all } 0 <\sigma<\frac1q,
  \\
  &
   \widehat{z}_{\tau_j} \weaksto z  && \text{in}  && H^1(0,T;H^1(\Omega)) \cap W^{1,\infty}(0,T;L^2(\Omega)),
   &
\\
& \calI(\overline{t}_{\tau_j}(t),  \overline{z}_{\tau_j}(t)),\, \calI(t, \widehat{z}_{\tau_j}(t)) \to \calI(t,z(t)) &&  &&\qquad \qquad \qquad \qquad  \qquad \text{for all } t \in [0,T],
\\
&
\rmD_z\calI(\overline{t}_{\tau_j}(t),  \overline{z}_{\tau_j}(t)) \weaksto \rmD_z \calI(t,z(t))  && \text{in} && L^\infty(0,T;L^2(\Omega)),
\\
&
\rmD_z\calI(\overline{t}_{\tau_j}(t),  \overline{z}_{\tau_j}(t)) \to \rmD_z \calI(t,z(t))  && \text{in} && L^\infty(0,T;\calZ^*).
  \end{aligned}
  \]
   \par
     With the  limit passage arguments from \cite[Thm.\ 3.5]{KRZ2} we deduce that $z$ complies with 
   the variational inequality
\begin{equation}
\label{weak-def-sol}
\begin{aligned}
\calR_\epsi(w) - \calR_\epsi (z'(t)) \geq  &  \pairing{}{\calZ}{-A_{\il} z(t) }{w}  +  \int_\Omega (1+ |\nabla z(t)|^2)^{\frac{q-2}{2}} \nabla z(t) \cdot \nabla z'(t)\, \dd x
\\ & \quad
-  \int_{\Omega}\rmD_z \widetilde{\calI}(t,z(t)) (w-z'(t))\, \dd x
 \quad \text{for all } w \in \calZ \qquad \foraa\, t \in (0,T)\,,
\end{aligned}
\end{equation}
which in fact defined the concept of \emph{weak solution} to the viscous system considered in 
\cite{KRZ2}. 
\par
We now enhance \eqref{weak-def-sol} by relying on the information that $A_q z \in L^\infty(0,T; L^2(\Omega))$. Due to this,
$ \int_\Omega (1+ |\nabla z(t)|^2)^{\frac{q-2}{2}} \nabla z(t) \cdot \nabla z'(t) \dd x = \int_\Omega A_q(z(t)) z'(t) \dd x $, so that 
\eqref{weak-def-sol} reads for almost all $t\in (0,T)$
\[
\begin{aligned}
\calR_\epsi(w) - \calR_\epsi (z'(t)) \geq  &  - \int_\Omega  A_{\il} z(t) (w-z'(t)) \dd x    
-  \int_{\Omega}\rmD_z \widetilde{\calI}(t,z(t)) (w-z'(t))\, \dd x
 \quad \text{for all } w \in \calZ\,.
\end{aligned}
\]
This extends to all $w \in L^2(\Omega)$ by  a density argument, and therefore we conclude that 
\begin{equation}
\label{incl-Fenchel}
-  A_{\il} z(t)  -  \rmD_z \widetilde{\calI}(t,z(t)) \in \partial  \calR_\epsi (z'(t)) \qquad \text{ in } L^2(\Omega)
\end{equation}
for almost all $t\in (0,T)$, namely the validity of \eqref{subdiff-incl-L2}. 
\par
The energy-dissipation balance \eqref{en-diss-bal} ensues from integrating on the generic interval $(s,t)\subset(0,T)$ the following chain of identities
\[
\begin{aligned}
\calR_\epsi(z'(r)) + \calR_\epsi^* ({-}   A_{\il} z(r)  {-}  \rmD_z \widetilde{\calI}(r,z(r)) ) & \stackrel{(1)}{=} \int_\Omega \left({-}   A_{\il} z(r)  {-}  \rmD_z \widetilde{\calI}(r,z(r)) \right) z'(t) \dd x 
\\
 & \stackrel{(2)}{=} -\frac{\dd}{\dd t} \calI(r,z(r)) + \partial_t \calI(r,z(r)) \quad \foraa\, r \in (0,T),
 \end{aligned}
\]
where (1) is a reformulation of 
 \eqref{incl-Fenchel}, while (2) follows from the chain rule \eqref{ch-rule-identity}. 
 % \QED
\end{proof} 
 %%%%%
 \section{\bf Balanced Viscosity solutions to the rate-independent damage system}
 \label{s:5}
  The main result of this section, 
 Theorem  \ref{thm:van-visc-eps-tau} ahead,
 states the convergence  %in the vanishing-viscosity limit,
of the sequences
\begin{equation}
\label{interp-tau-eps}
(\pwc z{\tau}{\epsilon})_{\tau,\epsilon}, \ (\pwl z{\tau}{\epsilon})_{\tau,\epsilon}
\end{equation}
of discrete solutions 
constructed in Section \ref{s:3} to a 
Balanced Viscosity solution of the rate-independent damage system \eqref{dndia}, 
 as  $\epsilon$ and $\tau$ 
\emph{simultaneously} 
tend to zero (that is why, we  stress the dependence  on the parameter $\epsilon$ in the notation \eqref{interp-tau-eps}). The proof of Thm.\ \ref{thm:van-visc-eps-tau} will be 
carried out in Section \ref{ss:5.3}.  
 \par
In Section~\ref{ss:5.1}   we  provide a precise definition of this solution 
concept, 
after revisiting, and suitably modifying, all the preliminary definitions and 
notions
given in \cite[Sec.\ 3.1]{MRS16}. 
Indeed, the latter paper addressed  the case of a \emph{nonsmooth}  
energy functional driving the (abstract) gradient system under 
consideration, and 
 developed the vanishing-viscosity analysis under the sole 
 \emph{basic energy} estimates
for viscous solutions. 
 In the present context, on the one hand we will  work with simpler definitions, tailored to the smoothness properties of $\calI$, and to the enhanced estimates holding for our own damage system. On the other hand, our definitions shall reflect the fact that the dissipation potential 
$\calR_1$ takes the value $+\infty$, whereas the analysis in \cite{MRS16} is 
confined to the case of a \emph{continuous} potential $\calR_1$.
  \par
 In  Sec.\ \ref{ss:5.2} 
 we gain further insight into
  to the properties of  Balanced Viscosity solutions
  and again revisit  and adapt a series of results given in \cite[Secs.\ 3.2, 3.3, 
3.4]{MRS16}. 
%  
%%%%
\subsection{The notion of Balanced Viscosity solution} 
\label{ss:5.1}
 In order to define the notion of Balanced Viscosity solution for the damage 
system \eqref{dndia}, 
  we start by introducing the \emph{vanishing-viscosity contact potential}
  $\mathfrak{p}$
   induced by the viscous dissipation potentials $\calR_\epsilon$
  from \eqref{Reps-intro}. Such functional
  will enter into the Finsler cost  describing the energy dissipated at jumps. 
  We define $\mathfrak{p}: L^2(\Omega) \times  L^2(\Omega) \to [0,+\infty]$ via
 \[
 \begin{aligned}
\mathfrak{p}(v,\xi) &  := 
\inf_{\epsi>0} \left( \calR_\epsi(v) + \calR_\epsi^*(\xi)\right)
\\ &
 =
\calR_1(v) + \|v\|_{L^2(\Omega)} \inf_{z\in \partial\calR_1(0) } \| \xi-z\|_{L^2(\Omega)}\,.
\end{aligned}
\]
From this, one defines the \emph{dissipation functional} 
$\mathfrak{f} : [0,T] \times \calZ \times L^2(\Omega) \to [0,+\infty] $  via  
\[
\mathfrak{f}_t(z,v): = \mathfrak{p}(v,-\rmD_z \calI(t,z)) = \calR_1(v) + 
\|v\|_{L^2(\Omega)} \min_{\zeta\in \partial\calR_1(0) } \|- \rmD_z 
\calI(t,z)-\zeta\|_{L^2(\Omega)}\,, 
\]
where $v$ plays the role of $z'$. 
Observe that for all $z\in \calZ, v\in L^2(\Omega)$ we have 
\[
 \mathfrak{f}_t(z,v)\geq \langle -\rmD_z\calI(t,z),v\rangle_{L^2(\Omega)}
\]
provided that $\rmD_z\calI(t,z)\in L^2(\Omega)$.  
We are now in a position to define 
the Finsler cost associated with $\mathfrak{f}$, obtained by minimizing 
suitable integral quantities
along \emph{admissible curves}.  Let us mention in advance that our 
definition of the class of admissible curves reflects the enhanced 
estimates 
available  in the present setting  for the discrete viscous solutions, cf.\ 
Remark \ref{rmk:comp-MRS13} below for more details. 
\begin{definition}
\label{def:curves+cost} Let $t\in [0,T]$ and $z_0,\, z_1 \in \calZ$  be fixed. 
\begin{enumerate}
\item
We call a curve $\teta: [r_0,r_1]\to \calZ$, for some $r_0<r_1$,  an   
\emph{admissible transition curve} between $z_0$ and $z_1$,
at the  time $t\in [0,T]$,   if 
\begin{enumerate}
\item $\teta \in  L^\infty(r_0,r_1;\calZ) \cap \mathrm{AC}([r_0,r_1];L^2(\Omega))$;
\item $\rmD_z \calI(t,\teta(\cdot)) \in L^\infty(r_0,r_1; L^2(\Omega))$.
\end{enumerate}
We denote by  $\calT_t(z_0,z_1)$ the set of admissible curves connecting  $z_0$ and $z_1$.
\item The
  (possibly asymmetric) Finsler cost induced by $\mathfrak{f}_t$ at the time
  $t$ is given by
  \begin{align}
    \label{eq:69}
%    \begin{aligned}
    \Cost{\mathfrak{f}}t{z_0}{z_1}:=\inf_{ \vartheta\in \calT_{t}(z_0,z_1)}
    \int_{r_0}^{r_1}
\mathfrak{f}_t(\teta(r),\teta'(r))\dd r
      \end{align}
  with the usual convention
  of setting $\Cost{\mathfrak f}t{u_0}{u_1}
  %=\cost{\vvmname}t{u_0}{u_1}
  =+\infty$ if the set   $\calT_t(z_0,z_1)$  of admissible curves connecting $z_0$ and $z_1$  
 is empty.
 \end{enumerate}
 \end{definition}
 \noindent 
  Along the footsteps of  Remark \ref{rmk:alternative-ch-requir}, we observe 
that, since $\teta \in L^\infty(r_0,r_1;\calZ)$,  requiring 
  $\rmD_z \calI(t,\teta(\cdot)) \in L^\infty(r_0,r_1; L^2(\Omega))$ is 
  equivalent to asking for $A_q(\teta(\cdot)) \in L^\infty(r_0,r_1; 
L^2(\Omega))$. 
 \par We trivially have 
 \begin{equation}
\label{comparison-costs}
  \Cost{\mathfrak{f}}t{z_0}{z_1}  \geq \calR_1(z_1{-}z_0) \quad \text{for every } t \in [0,T] \text{ and } z_0,\, z_1 \in \calZ.
 \end{equation}
 Up to a reparameterization, due to the positive homogeneity of the Finsler 
metric $\mathfrak{f}_t(z,\cdot)$,   we can suppose that the admissible 
transition curves are defined on $[0,1]$. %
 For later use we also introduce, for a fixed $\varrho>0$,   the set of 
admissible transition curves lying in a suitable ball of radius $\varrho$, 
i.e.\ 
 \begin{subequations}
 \label{threshold}
 \begin{align}
 &
 \label{threshold-a}
 \calT_t^\varrho(z_0,z_1): = \{ \teta \in \calT_t(z_0,z_1)\, : \
 \| \teta\|_{L^\infty (0,1;\calZ)} +  \| \teta'\|_{L^1(0,1;L^2(\Omega))} +  
  \|\rmD_z \ene t{\teta(\cdot)} \|_{L^\infty(0,1; L^2(\Omega))} \leq \varrho\} 
 \intertext{ and, accordingly, }
 & 
  \label{threshold-b}
 \Costn{\mathfrak{f}}{\varrho}t{z_0}{z_1}: = \inf_{ \vartheta\in \calT_{t}^\varrho(z_0,z_1)}
    \int_{r_0}^{r_1}
\mathfrak{f}_t(\teta(r),\teta'(r))\dd r\,.
 \end{align}
 \end{subequations}
  Since for every $\varrho>0$ there holds 
 $ \calT_{t}^\varrho(z_0,z_1) \subset  \calT_{t}(z_0,z_1)$, one has $ 
\Cost{\mathfrak{f}}t{z_0}{z_1} \leq \Costn{\mathfrak{f}}{\varrho}t{z_0}{z_1}$. 
 Indeed,
 \begin{equation}
 \label{cost-cost-subl}
   \Cost{\mathfrak{f}}t{z_0}{z_1} = \inf_{\varrho>0}  \Costn{\mathfrak{f}}{\varrho}t{z_0}{z_1} \qquad \text{ for every }  t \in [0,T] \text{ and } z_0,\, z_1 \in \calZ.
 \end{equation}
 For later use, we also record the following monotonicity property
 \begin{equation}
 \label{monoton-cost}
   \Costn{\mathfrak{f}}{\bar\varrho}t{z_0}{z_1}  = \inf_{0<\varrho<\bar\varrho}   \Costn{\mathfrak{f}}{\varrho}t{z_0}{z_1}
   =  \sup_{\varrho>\bar\varrho}   \Costn{\mathfrak{f}}{\varrho}t{z_0}{z_1}
   \text{ for every }  t \in [0,T], \  z_0,\, z_1 \in \calZ \text{ and } \bar\varrho>0,
 \end{equation}
 since $ \calT_{t}^{\varrho}(z_0,z_1) \subset  \calT_{t}^{\bar\varrho}(z_0,z_1)$ for every $0<\varrho<\bar\varrho$. 
% \begin{remark}
% \upshape
% \label{non-attainment}
 Observe that, for every
 fixed $\varrho>0$, the 
 $\inf$ in definition \eqref{threshold-b} 
  is attained, cf.\ 
 Proposition  \ref{prop:technical} ahead,
 whereas it need not be attained in the definition of $\Costname{\mathfrak{f}}$. In fact,  the dissipation functional $\mathfrak{f}$ does not control the norms of the spaces where we look for  admissible transition curves.   
 %%%%%
 \begin{remark}
 \label{rmk:comp-MRS13}
 \upshape The most striking difference between the present  definition of 
admissible curve  and the one given in \cite[Def.\ 3.4]{MRS16}
 resides in the fact that, 
 in contrast with conditions   (a) \& (b)   from Definition 
\ref{def:curves+cost}, 
 in \cite{MRS16} it was  only required
\begin{equation}
\label{Gt-teta}
\begin{gathered}
\teta|_{G_t[\teta]} \in \AC(G_t[\teta]; L^2(\Omega)) \quad \text{with the open set }
\\
G_t[\teta]: = \{ r \in [r_0,r_1]\, : \  \min_{\zeta\in \partial \calR_1(0) } \|- \rmD_z \calI(t,z)-\zeta\|_{L^2(\Omega)}>0\}\,.
\end{gathered}
\end{equation}
% As it will be clear from
 %the text illustrating Prop.\ \ref{prop:technical},
 The stronger condition $\teta \in  \mathrm{AC}([r_0.r_1];L^2(\Omega))$ reflects
  the fact that  the discrete viscous solutions $(\overline{z}_\tau)_\tau$ enjoy a (uniform, w.r.t.\ both parameters $\epsi$ and $\tau$) estimate in  $\mathrm{BV}([0,T];L^2(\Omega))$ (even in $ \mathrm{BV}([0,T];H^1(\Omega))$,
 cf.\ \eqref{est-epsi-tau-3}).
  Instead, in the general framework considered in \cite{MRS16} only 
 the \emph{basic} energy estimate 
 \[
 \int_0^T  \mathfrak{p}(\widehat{z}_\tau'(t),{-}\rmD_z \calI(\overline{t}_\tau(t), \overline{z}_\tau(t))) \dd t \leq \int_0^T \left( \calR_\epsilon (\widehat{z}_\tau'(t)) {+} \calR_\epsilon^* ({-}
 \rmD_z \calI(\overline{t}_\tau(t), \overline{z}_\tau(t))) \right) \dd  t \leq C 
 \]
 was available. In accordance with  that, only \eqref{Gt-teta} was required 
on admissible curves. 
 \par
 Condition (b) in Def.\ \ref{def:curves+cost}  reflects the enhanced estimate \eqref{est-epsi-tau-4-bis}.
 It is also
 peculiar of the present framework, and in particular   it is motivated by  
the fact that we impose unidirectionality of damage evolution, thus 
allowing $\calR_1$ to take the value $+\infty$.  In order to explain this, 
let us observe  
  that, in the setting considered in \cite{MRS16}, it was not necessary to 
specify the summability properties of 
  $\rmD_z \calI(t,\teta(\cdot))$ within the definition of admissible curve. 
Indeed, outside  the set $G_t[\teta]$ one had $\rmD_z \calI(t,\teta(\cdot)) \in  
\partial\calR_1(0)$, a bounded subset of $L^2(\Omega)$  since the dissipation 
potential $\calR_1$ was everywhere continuous. Instead,  on  the set 
$G_t[\teta]$  an estimate for the quantity $\min_{\zeta\in \partial \calR_1(0) } 
\|- \rmD_z \calI(t,z)-\zeta\|_{L^2(\Omega)}$ would morally provide a bound for 
$- \rmD_z \calI(t,z)$, as well, by comparison arguments, again thanks to the 
boundedness $ \partial\calR_1(0)$.
  Instead, in the present setting, since the set  $ \partial\calR_1(0)$ is 
unbounded, it is necessary to encompass  a suitable summability condition   on 
$\rmD_z \calI(t,\teta(\cdot)) $ in the definition of admissible curve. 
 \end{remark}
\par
We are now ready to introduce the jump variation induced by $\mathfrak{f}$, 
accounting for the energy dissipated
at  the jumps of a given curve $z \in \BV([0,T]; L^1(\Omega))$, with 
(countable)
jump set
\[ 
 {\mathrm{J}}_z : = \{ t \in [0,T]\, \, : \ z(t_-) \neq z(t) \text{ or } z(t_+) \neq z(t)\}
 \] 
 and $z(t_\pm)$ the right/left limits of $z$ at $t\in [0,T]$.  
  Based on the jump variation associated with $\mathfrak{f}$
 in \eqref{eq:37bis} ahead,
   we  introduce a novel notion of total variation for the curve $z$,
alternative to the 
% Preliminarily, let us recall the standard definitions of 
  total variation   induced by the dissipation potential $\calR_1$.
   We recall that, for a given curve $ z \in \BV([0,T]; L^1(\Omega))$  and $ 
[a,b]\subset [0,T]$, 
  the latter is given by 
\begin{equation}
\label{R1-tot-var}
   \Var{\calR_1}  zab: = 
   \sup\{ \sum_{m=1}^M \calR_1 ( z(t_m){-} z(t_{m-1})) \, : \ a=t_0<t_1<\ldots <t_{M-1}<t_M = b \}.
\end{equation}
 In particular,
the contribution at the  jumps induced by $\calR_1$ is
\[
\JVar{\calR_1}zab : = \calR_1( z(a_+){-}z(a)) +  \calR_1( z(b_-){-}z(b)) +  \sum_{t\in {\mathrm{J}}_z\cap
      (a,b)} \calR_1( z(t_+){-}z(t)) +  \calR_1( z(t_-){-}z(t)).
\]
For later convenience, we also introduce the scalar function
\begin{equation}
\label{variation-functional}
V(t): = \begin{cases}
0 & \text{ if } t \leq 0,
\\
\JVar{\calR_1}z0t & \text{ if } t \in (0,T),
\\
\JVar{\calR_1}z0T & \text{ if } t \geq T
\end{cases} \qquad 
 \text{ with  distributional derivative } \mu = \frac{\dd}{\dd t} V\,. 
\end{equation}
Recall that $\mu$ is a finite Borel measure supported on $[0,T]$, and it  can be decomposed as $\mu = \mu_{\mathrm{d}} + \mu_{\mathrm{J}}$, with $  \mu_{\mathrm{J}}$ the jump part, concentrated on the (countable) jump set $ {\mathrm{J}}_z$, and $ \mu_{\mathrm{d}} $ the diffuse part, given by the sum of the absolutely continuous and of the Cantor parts, so that 
$\mu_{\mathrm{d}}(\{t\})=0$ for every $t\in \R$.
\par
 We are now in a position to give the notion of 
%The forthcoming definition of 
total variation induced by $\mathfrak{f}$.  Let us mention in advance that 
it  is obtained by replacing  the  $\Jvarname{\calR_1}$-contribution to the 
total variation $\Varname{\calR_1}$, 
 with the $\mathfrak{f}$-jump variation, cf.\ \eqref{eq:81} below. 
  \begin{definition}[Jump and total variation induced by $\mathfrak{f}$]
  \label{def:Finsler_jump}
Let
  $z $ in  $  \mathrm{BV}([0,T];L^1(\Omega))$, with $z (t) \in \calZ$ for all $t 
\in [0,T]$,  be a
given curve with jump set $\mathrm{J}_z$. 
 Let $[a,b] \subset [0,T]$:
  \begin{enumerate}
  \item
 The  \emph{jump variation} of $z$
 on
$[a,b]$
  induced by $\mathfrak{f}$ is
\begin{equation}
  \label{eq:37bis}
  \begin{aligned}
    \JVar{\mathfrak{f}}zab &:=
    \Cost{\mathfrak{f}}a{z(a)}{z(a_+)}+\Cost{\mathfrak{f}}b{z(b_-)}{z(b)}
    \\
    & \quad + \sum_{t\in {\mathrm{J}}_z\cap
      (a,b)} \big(\Cost{\mathfrak{f}}t{z(t_-)}{z(t)}+
         \Cost{\mathfrak{f}}t{z(t)}{z(t_+)}\big).
  \end{aligned}
\end{equation}
\item
The  total variation  of $z$ on $[a,b]$ induced by $\mathfrak{f}$  is
\begin{align}
  \label{eq:81}
    \pVar{\mathfrak{f}}zab &:=
    \Var{\calR_1}  zab - \JVar{\calR_1}zab + \JVar{\mathfrak{f}}zab
    \\
     & 
     \label{eq:tvar-new}
     = \mu_{\mathrm{d}}([a,b])  + \JVar{\mathfrak{f}}zab\,.
 \end{align} 
\end{enumerate}
 For a given $\varrho>0$, we 
  use the symbols $ \JVarn{\mathfrak{f}}{\varrho}zab $ and $\pVarnamen {\mathfrak{f}} {\varrho}$
for the total variation induced by the cost $\Delta_{\mathfrak{f}}^{\varrho}$. 
\end{definition}
\noindent 
\noindent As already pointed out in \cite[Rmk.\ 3.5]{MRS12}, $\pVarname {\mathfrak{f}}$ is not a \emph{standard} total variational functional: it is neither induced by any distance on $L^1(\Omega)$,
nor is it lower semicontinuous w.r.t.\ pointwise convergence in $L^1(\Omega)$. Yet, it enjoys the additivity property.
\par
We are finally in a position to give our definition of Balanced  Viscosity 
solution to the rate-independent damage system. Again, we will consider a 
slightly stronger version than that given in \cite[Def.\ 3.10]{MRS16}, where $z 
\in \BV([0,T];L^1(\Omega))$ was only required.  Instead, here we will 
consider curves $z$ in  $\mathrm{BV}([0,T];L^2(\Omega))$ and, for technical 
reasons that will be apparent in the proof of the 
$\mathrm{BV}$-chain rule from Proposition \ref{prop:ch-rule} ahead, we will also 
restrict to curves $z$ 
such that $\rmD_z \calI(\cdot,z(\cdot)) \in L^\infty(0,T;L^2(\Omega))$.
Furthermore, 
unlike what was done in \cite{MRS16}, 
we will claim an energy balance involving a total variation $ 
\pVarn{\mathfrak{f}}{\varrho}z{0}{t}$ with  a threshold $\varrho>0$ such that  
\begin{equation}
\label{admissible-threshold}
\varrho \geq \| z\|_{L^\infty(0,T;\calZ) \cap \mathrm{BV}([0,T];L^2(\Omega))} +  \|\rmD_z  \calI(\cdot,z(\cdot)) \|_{ L^\infty(0,T;L^2(\Omega))}\,.
\end{equation}

\begin{definition}
  \label{def:BV-solution}
  A curve   $z $ in  $  L^\infty(0,T;\calZ) \cap 
\mathrm{BV}([0,T];L^2(\Omega))$, with
%$z (t) \in \calZ$ for all $t \in [0,T]$
\begin{equation}
\label{precise}
z(t)\in \calZ \  \text{ and } \ \rmD_z \calI(t, z(t)) \in L^2(\Omega) \quad \text{ for all $t\in [0,T]$}
\end{equation}
  and $\rmD_z  \calI(\cdot,z(\cdot)) \in L^\infty(0,T;L^2(\Omega))$, 
   is a \emph{Balanced  Viscosity} solution of the
    rate-independent  damage system  \eqref{dndia} if the \emph{local stability}
  \eqref{eq:65bis} and the \eqref{eq:84}-\emph{energy ba\-lan\-ce} hold:
  \begin{equation}
    \label{eq:65bis}
    \tag{$\mathrm{S}_\mathrm{loc}$}
-\rmD_z \calI (t,z(t)) \in \partial\calR_1(0)
    \quad \text{for all}\quad t \in [0,T] \setminus \mathrm{J}_z,
  \end{equation}
  \begin{equation}
    \label{eq:84}
     \pVarn{\mathfrak{f}}{\varrho}z{0}{t} +\calI(t, 
z(t))=\calI({0},{z(0)})+
    \int_{0}^{t} \partial_t \calI(s,{z(s)})\,\mathrm{d}s \quad \text{ for all } t\in (0,T].
    \tag{E$_{\mathfrak{f}}$}
  \end{equation}
\end{definition}
 with $\varrho>0$ fulfilling \eqref{admissible-threshold}. 
\begin{remark}
\upshape
The requirement $z \in  L^\infty(0,T;\calZ) $ in Def.\ \ref{def:BV-solution} is redundant
 and has been added only for the sake of clarity. 
 Indeed, since $\calI(0,z(0) ) \leq C$ as $z(0) \in \calZ$ (cf.\ 
\eqref{H2_improved-1}), and taking into 
account that $t\mapsto \partial_t \calI(t,z(t))$ is in $L^\infty(0,T)$ thanks to 
\eqref{stim3},  from \eqref{eq:84} we deduce that $|\calI(t,z(t))| \leq C$  
(recall that $\calI$ is bounded from below thanks to \eqref{est_coerc1}). In 
turn, this gives $z \in L^\infty(0,T;\calZ)$.
\par
On the other hand, combining the information $z \in L^\infty(0,T;\calZ)$ with 
estimate \eqref{estimate-for-DI} for $\rmD_z \wt\calI$, we conclude that $\rmD_z 
 \wt\calI(\cdot,z(\cdot)) \in L^\infty(0,T;L^2(\Omega))$. Therefore, what we are 
really requiring in Def.\ \ref{def:BV-solution} is that $A_q z \in 
L^\infty(0,T;L^2(\Omega))$, which enhances the regularity of $z$ to the space 
$L^\infty(0,T;W^{1+\sigma,q}(\Omega))$ for every $0<\sigma <\tfrac1q$ by 
Proposition \ref{prop:Savare98}.   
%\footnote{\RRRS \cite{Savare98}? Or another ref.??}
\end{remark} 
\par
 Prior to stating  the \textbf{main result of the paper},
Theorem \ref{thm:van-visc-eps-tau} below, we need to give the following definition, where  $z_-$ and $z_+$ are place-holders for the left and right limits of a curve $z$ at a jump point. 
\begin{definition}
\label{def:OJT} 
Let 
$\varrho>0$, 
$t\in [0,T]$,  and $z_-$, $z_+ \in \calZ$ be such that 
\begin{equation}
\label{stabil-zpm}
-\rmD_z\calI(t,z_-) \in \partial\calR_1(0) \text{ and }
-\rmD_z\calI(t,z_+) \in \partial\calR_1(0) \,.
\end{equation}
We say that an admissible transition curve $\teta \in  \calT_t^\varrho(z_-,z_+)$ 
is an 
\emph{optimal transition} between $z_-$ and $z_+$ if 
\begin{equation}
\label{form-OJT}
\calI(t,z_-) - \calI(t,z_+) = \Costn{\mathfrak{f}}{\varrho}t{z_-}{z_+} = \int_0^1 \mathfrak{f}_t(\teta(r), \teta'(r)) \dd r =  \mathfrak{f}_t(\teta(r), \teta'(r)) \qquad \foraa\, r \in (0,1).
\end{equation}
We will denote by $ \calO_t^\varrho (z_-,z_+)$   the  
collection of such transitions.
\end{definition}
A few comments are in order. First of all, with  \eqref{stabil-zpm}   we are imposing that the points $z_-$ and $z_+$ to be connected fulfill 
the local stability condition. It is not difficult to check that this is verified whenever $z_-$ and $z_+$ are the left and right limits at a jump point of a Balanced Viscosity solution. Secondly, let us gain further insight into \eqref{form-OJT}: with the second equality, we are asking that 
$\teta$ (which we may always suppose to be defined on $[0,1]$) is a minimizer in the definition of $\Costn{\mathfrak{f}}{\varrho}t{z_-}{z_+}$; with the third one, that $\teta$ has constant `$`\mathfrak{f}_t$-velocity", which can be obtained by a rescaling argument. The first equality relates to the jump conditions verified along any Balanced Viscosity solution, cf.\ \eqref{jump-f} ahead. 
\par
We are now in a position  to give Thm.\  \ref{thm:van-visc-eps-tau} , stating 
 the convergence of the discrete solutions of the viscous damage system to a 
Balanced Viscosity solution of the rate-independent damage system, as the 
parameters $\epsilon$ and $\tau$ tend to zero 
\emph{simultaneously}, with  $\tfrac{\eps}{\tau} \uparrow \infty$.  In 
fact, we will retrieve a Balanced Viscosity solution $z$ with enhanced 
properties: 
\begin{itemize}
\item[(i)]
 we have that 
$z \in \BV([0,T];H^1(\Omega))$, which reflects the enhanced discrete $\BV$-estimate \eqref{est-epsi-tau-3};
\item[(ii)] at all jump points $t$  of $z$, the left and right limits
$z(t_-)$ and $z(t_+)$ 
 can be connected by an optimal jump transition in the sense of Definition \ref{def:OJT}, so that the set $\calO_t^{\bar\varrho} (z(t_-),z(t_+))$ 
 is non-empty.
 Additionally, such transition  has  finite $H^1(\Omega)$-length. 
Furthermore, the total $H^1(\Omega)$-length of the connecting paths is finite.
\end{itemize}
 Observe that property (ii) is not encoded in Definition 
\ref{def:BV-solution}, which  gives  $   
    \pVar{\mathfrak{f}}z{0}{T}<\infty$,  since $   
    \pVar{\mathfrak{f}}z{0}{T}$ only controls the ``$\mathfrak{f}$-length'' of 
the optimal jump paths. 
    \par
This  enhanced concept of Balanced Viscosity solution was already  
introduced in the general setting of \cite{MRS16}, cf.\ Section 3.4 therein. 
Along the footsteps of \cite{MRS16}, we will refer to these  solutions as  
\emph{$H^1(\Omega)$-parameterizable Balanced Viscosity solutions}.
%%%
\begin{theorem}
\label{thm:van-visc-eps-tau}
Under Assumptions \ref{ass:domain}, \ref{assumption:energy},  and 
\ref{ass:load}, let $z_0 \in \calZ$, fulfilling \eqref{further-reg}, 
% with $\rmD_z \calI(0,z_0) \in L^2(\Omega)$, 
be approximated by discrete initial data
$(z_{\tau,\epsilon}^0)_{\tau,\epsilon}$ such that
\begin{equation}
\label{cvg-initial}
z_{\tau,\epsilon}^0 \to z_0 \quad \text{ in } \calZ, \qquad   \calI(0,z_{\tau,\epsilon}^0) \to  \calI(0,z_0), \quad \rmD_z \calI(0,z_{\tau,\epsilon}^0) \weakto \rmD_z\calI(0,z_0) \quad \text{ in }L^2(\Omega),
\end{equation}
and let $(\pwc z\tau{\epsilon})_{\tau,\epsilon}$, $(\pwl z\tau{\epsilon})_{\tau,\epsilon}$ be the discrete solutions to the viscous damage system \eqref{dndia-eps} starting from the 
data
$(z_{\tau,\epsilon}^0)_{\tau,\epsilon}$. 
\par
Then, 
there exists $\bar\varrho>0$, only depending on the problem data  (cf.\ 
\eqref{conds-tech}  below)
%\footnote{\RRN clear enough??}
and fulfilling 
 \eqref{admissible-threshold},  such that 
for 
all sequences $(\tau_k,\epsilon_k)_k$ 
satisfying
\begin{equation}
\label{crucial-seq-params}
\lim_{k\to\infty}\epsilon_k=0 \quad \text{and} \quad \lim_{k\to\infty} \frac{\tau_k}{\epsilon_k} =0,
\end{equation}
there exist a (not relabeled) subsequence,  and a Balanced  Viscosity solution 
$z$ to the rate-independent damage system \eqref{dndia}, 
 fulfilling  $z(0)=z_0$, the energy balance \eqref{eq:84} with 
\begin{equation}
\label{threshold-Var}
  \pVarn{\mathfrak{f}}{\bar\varrho}z{0}{t} =  \sup_{\varrho\geq \bar\varrho}\pVarn{\mathfrak{f}}{\varrho}z{0}{t}
  =\inf_{\varrho\geq \bar\varrho}\pVarn{\mathfrak{f}}{\varrho}z{0}{t}
   \quad \text{for every } t\in [0,T] %\ \text{for some } \bar\varrho>0,
\end{equation}
 and  such that  the following convergences hold as $k\to\infty$, at every 
$t\in [0,T]$:
\begin{subequations}
\label{convergences}
\begin{align}
&
\label{cvg-1-b}
\pwc z{\tau_k}{\epsilon_k}(t),\, \pwl z {\tau_k}{\epsilon_k}(t)  \to z(t) \quad \text{ in } \calZ,
\\
&
\label{cvg-2-b}
\calI(t, \pwc z{\tau_k}{\epsilon_k}(t)),\,  \calI(t, \pwl z{\tau_k}{\epsilon_k}(t)) \to \calI(t,z(t)),
\\
&
\label{cvg-3-b}
\int_{0}^{\overline{t}_{\tau}(t)}
\left(\calR_\epsi (\widehat{z}'_{\tau}(r))+\calR_\epsi^* (-\rmD_z
\calI (\overline{t}_{\tau}(r),\overline{z}_{\tau}(r)))  \right)
\,\mathrm{d}r  \to \pVarn{\mathfrak{f}}{\bar\varrho}z{0}{t}\,. 
\end{align}
\end{subequations}
\par
Furthermore, $z$ is a $H^1(\Omega)$-parameterizable Balanced  Viscosity 
solution, namely $z \in \BV ([0,T]; H^1(\Omega))$, and 
\begin{subequations}
\label{H1-param}
\begin{align}
\label{optim-trans-1} 
&
(1) &&\forall\, t \in \mathrm{J}_z  \ \exists\,  \teta_t \in 
\calO_t^{\bar\varrho} (z(t_-),z(t_+)) \text{ s.t. } \teta_t \in 
\AC([0,1];H^1(\Omega));
\\
\label{optim-trans-2} 
& (2) && \sum_{t\in \mathrm{J}_z} \int_0^1 \| \teta_t'(r)\|_{H^1(\Omega)} \dd r <\infty.
\end{align}
\end{subequations}
\end{theorem}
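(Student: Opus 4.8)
The strategy is the standard vanishing-viscosity scheme, now performed simultaneously in $\epsi$ and $\tau$ with $\tau/\epsi\to 0$, starting from the discrete energy-dissipation inequality \eqref{discr-enineq} and the uniform bounds of Proposition \ref{prop:aprio} / Corollary \ref{cor:discr-enineq}. First I would extract compactness. The bounds \eqref{est-epsi-tau-1}, \eqref{est-epsi-tau-1reg}, \eqref{est-epsi-tau-3} give $(\overline z_{\tau_k})_k$, $(\widehat z_{\tau_k})_k$ bounded in $L^\infty(0,T;W^{1+\sigma,q}(\Omega))\cap \mathrm{BV}([0,T];H^1(\Omega))$; by the infinite-dimensional Helly theorem (as in the proof of Lemma \ref{l:last-est}) there is a not-relabeled subsequence and a limit $z\in L^\infty(0,T;\calZ)\cap \mathrm{BV}([0,T];H^1(\Omega))$ with $\overline z_{\tau_k}(t),\,\widehat z_{\tau_k}(t)\to z(t)$ in $\calZ$ for every $t\in[0,T]$ (using $W^{1+\sigma,q}\Subset\calZ$, so \eqref{cvg-1-b} holds), and $\sup_k\|\overline z_{\tau_k}-\widehat z_{\tau_k}\|_{L^\infty(0,T;L^2(\Omega))}\le C\tau_k\to 0$. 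From \eqref{est-epsi-tau-4-bis} and \eqref{enhanced-stim-7} one passes to the limit in $A_q\overline z_{\tau_k}$ and $\rmD_z\wt\calI(\overline t_\tau,\overline z_{\tau_k})$, getting $\rmD_z\calI(\cdot,z(\cdot))\in L^\infty(0,T;L^2(\Omega))$ and $\rmD_z\calI(t,z(t))\in L^2(\Omega)$ for all $t$, so the regularity required in Definition \ref{def:BV-solution} is in force. Continuity of $\calI$ and $\partial_t\calI$ (Corollary \ref{coro-fre}, \eqref{weak-continuity}) gives \eqref{cvg-2-b} and convergence of the power term. The threshold $\bar\varrho$ is fixed as the supremum over $k$ of the left-hand quantities in \eqref{admissible-threshold} for the discrete solutions, which is finite by the uniform bounds; this is the $\bar\varrho$ of \eqref{conds-tech}.

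Next comes the \emph{lower bound / upper energy estimate}, which is the technical heart. Passing to the liminf in \eqref{discr-enineq} one gets, for every $t$,
\[
\liminf_{k}\int_0^{\overline t_{\tau_k}(t)}\!\!\big(\calR_{\epsi_k}(\widehat z'_{\tau_k})+\calR^*_{\epsi_k}(-\rmD_z\calI(\overline t_{\tau_k},\overline z_{\tau_k}))\big)\dd r + \calI(t,z(t)) \le \calI(0,z_0)+\int_0^t\partial_t\calI(s,z(s))\dd s,
\]
the correction term on the right of \eqref{discr-enineq} vanishing because it is $\le C\tau_k\,(\,\cdots)\to 0$. The crucial point is to bound the dissipation liminf from below by $\pVarn{\mathfrak f}{\bar\varrho}z0t$. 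For the \emph{continuous} part of the variation one uses $\calR_{\epsi}(v)+\calR^*_{\epsi}(\xi)\ge \mathfrak p(v,\xi)\ge\calR_1(v)$ together with lower semicontinuity of $\int\calR_1(\widehat z'_{\tau_k})$ and the fact that $\mu_{\mathrm d}$ is the $\calR_1$-diffuse part of $z$. For the \emph{jump} part, one localises around each jump point $t_*\in\mathrm J_z$: on a shrinking interval one has $\int \big(\calR_{\epsi_k}(\widehat z'_{\tau_k})+\calR^*_{\epsi_k}(\cdots)\big)\dd r\ge\int\mathfrak p(\widehat z'_{\tau_k},-\rmD_z\calI(\overline t_{\tau_k},\overline z_{\tau_k}))\dd r$, and after rescaling to $[0,1]$ by arclength in $L^2(\Omega)$ one produces, in the limit, an admissible transition curve $\teta_{t_*}\in\calT^{\bar\varrho}_{t_*}(z(t_{*-}),z(t_{*+}))$ with $\int_0^1\mathfrak f_{t_*}(\teta_{t_*},\teta_{t_*}')\dd r$ bounded above by the local liminf; here one invokes Proposition \ref{prop:technical} (compactness/lower semicontinuity of the Finsler cost on the $\varrho$-restricted class), the uniform $H^1$-bound \eqref{est-epsi-tau-3} to get $\teta_{t_*}\in\AC([0,1];H^1(\Omega))$, and \eqref{est-epsi-tau-4-bis} to get the $L^\infty(0,1;L^2(\Omega))$-bound on $\rmD_z\calI(t_*,\teta_{t_*}(\cdot))$. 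Summing over the countably many jumps (a diagonal argument plus the a.e.-pointwise limits) and combining with the diffuse part yields $\liminf_k(\text{dissipation})\ge\pVarn{\mathfrak f}{\bar\varrho}z0t$, and the monotonicity \eqref{monoton-cost}--\eqref{cost-cost-subl} gives the $\sup=\inf$ identities in \eqref{threshold-Var}. Convergence of the total $H^1$-length of jump transitions, and thus \eqref{optim-trans-2}, follows from the same $\mathrm{BV}([0,T];H^1(\Omega))$-bound.

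Finally, the \emph{upper energy estimate / local stability}. Local stability \eqref{eq:65bis} is obtained by passing to the limit in the stable part of \eqref{Eul-Lagr-L2}: at continuity points of $z$ the dissipative term $\rmD\overline\calR_{1,\nu}$-type contribution and the $\epsi\widehat z'$ term are controlled so that $-\rmD_z\calI(t,z(t))\in\partial\calR_1(0)$; one argues as in \cite[Thm.\ 3.11]{MRS16}, using that $\partial\calR_1(0)$ is weakly closed in $L^2(\Omega)$. The opposite inequality in \eqref{eq:84} — the ``$\ge$'' — comes from the chain rule: since $z\in L^\infty(0,T;\calZ)\cap \mathrm{BV}([0,T];L^2(\Omega))$ with $\rmD_z\calI(\cdot,z(\cdot))\in L^\infty(0,T;L^2(\Omega))$ and $A_qz\in L^\infty(0,T;L^2(\Omega))$, the $\mathrm{BV}$-chain rule of Proposition \ref{prop:ch-rule} applies and, combined with \eqref{eq:65bis} (which makes the jump and diffuse dissipation terms coincide with the energy drop along optimal transitions), gives $\pVarn{\mathfrak f}{\bar\varrho}z0t+\calI(t,z(t))\ge\calI(0,z_0)+\int_0^t\partial_t\calI\,ds$. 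The two inequalities yield \eqref{eq:84}, and equality forces the liminf in the dissipation to be a genuine limit, i.e.\ \eqref{cvg-3-b}; equality in the jump part forces the limiting transitions $\teta_{t_*}$ to be optimal in the sense of Definition \ref{def:OJT}, giving \eqref{optim-trans-1}. The main obstacle is the jump-part lower bound: one must simultaneously handle the interaction of the two small parameters (so that $\calR^*_{\epsi}$ does not degenerate faster than $\calR_\epsi$), the rescaling that turns fast transitions into admissible curves in the $\varrho$-restricted class, and the passage to the limit in the countable sum of jump contributions — all while keeping the extra summability condition (b) of Definition \ref{def:curves+cost}, which is specific to the irreversible setting and was not present in \cite{MRS16}.
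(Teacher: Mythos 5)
Your plan follows the paper's route step for step: compactness via the uniform $\BV([0,T];H^1(\Omega))$ and $L^\infty(0,T;W^{1+\sigma,q}(\Omega))$ bounds and Helly; separate treatment of the diffuse and jump parts of the variation, with the jump contribution handled by rescaling onto $[0,1]$ and invoking Proposition \ref{prop:technical}(3); the one-sided estimate \eqref{one-sided-0T} combined with local stability and the chain-rule inequality of Proposition \ref{prop:ch-rule} to upgrade to the balance (this is exactly the content of Corollary \ref{cor:this-only}, which you re-derive rather than cite); and Proposition \ref{prop:technical} once more for the $H^1$-parameterizability. Two points are imprecise enough to flag.

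First, you claim $\sup_{t}\|\overline z_{\tau_k}-\widehat z_{\tau_k}\|_{L^\infty(0,T;L^2(\Omega))}\le C\,\tau_k$. That bound would hold only if $\widehat z'_{\tau_k}$ were bounded in $L^\infty(0,T;L^2(\Omega))$ uniformly in $\epsi$, which it is not: estimate \eqref{est-epsi-tau-2} gives $\|\widehat z'_\tau\|_{L^2(0,T;H^1(\Omega))}\le C(\epsi)$ with $C(\epsi)\uparrow\infty$ as $\epsi\downarrow 0$, so the correct bound (see \eqref{stability-est}) is
\begin{equation*}
\sup_{t\in[0,T]}\|\overline z_{\tau_k}(t)-\widehat z_{\tau_k}(t)\|_{H^1(\Omega)}\le C\Big(\tfrac{\tau_k}{\epsi_k}\Big)^{1/2},
\end{equation*}
which is precisely where the simultaneous-limit condition \eqref{crucial-seq-params} is needed. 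Your version masks the role of $\tau_k/\epsi_k\to 0$.

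Second, your derivation of the local stability \eqref{eq:65bis} by ``passing to the limit in the stable part of \eqref{Eul-Lagr-L2}'' and invoking a $\rmD\overline\calR_{1,\nu}$-type contribution does not match what is actually done, and is too vague to stand on its own: the regularization $\overline\calR_{1,\nu}$ plays no role at this stage (it was used only to establish the $L^2$-regularity of $A_q\overline z_\tau$). The paper's Step 1 instead exploits that $\calR_\epsi^*$ Mosco-converges in $L^2(\Omega)$ to the indicator $\mathrm I_{\partial\calR_1(0)}$, and combines the uniform bound $\sup_k\int_0^T\calR_{\epsi_k}^*(-\rmD_z\calI(\overline t_{\tau_k},\overline z_{\tau_k}))\,\dd r\le C$ (a consequence of \eqref{est-diss-epsi-tau}) with the pointwise weak $L^2$-convergence \eqref{cvg-4} and Fatou's lemma to conclude $-\rmD_z\calI(t,z(t))\in\partial\calR_1(0)$ a.e., and then at all non-jump times by approximation. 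You should make this argument explicit rather than gesturing at the viscous Euler--Lagrange equation.
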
 
Observe that \eqref{threshold-Var} is an additional property, cf.\ 
\eqref{monoton-cost}. The constant $\bar\varrho$ will be specified  along the 
proof of Theorem~\ref{thm:van-visc-eps-tau}, postponed to Section~\ref{ss:5.3}. 
Instead, in the forthcoming Sec.~\ref{ss:5.2} we gain further insight into the notion of Balanced Viscosity  
solution for our damage system, in particular
focusing on the description of the behavior of the system at jumps. 
\subsection{Properties of Balanced Viscosity solutions}
\label{ss:5.2}
\noindent
 One of the cornerstones of the proof of Thm.~\ref{thm:van-visc-eps-tau}  
is a characterization of Balanced Viscosity solutions 
in terms of the local stability condition \eqref{eq:65bis}, combined with the
\emph{upper energy estimate} in \eqref{eq:84}. The proof of this 
characterization relies on 
 %We start by giving a
  \emph{chain-rule inequality} for $\calE$, evaluated along a  \emph{locally stable} %(i.e., fulfilling \eqref{eq:65bis})
   curve
   with the regularity and summability properties specified in Definition 
\ref{def:BV-solution}. 
    This inequality involves  the non-standard
total variation functional 
$\pVarname {\mathfrak{f}}$.
\begin{proposition}[$\BV$-chain rule inequality]
\label{prop:ch-rule}
Under Assumptions \ref{ass:domain},  \ref{assumption:energy}, and 
\ref{ass:load}, let   $z\in L^\infty (0,T;\calZ) \cap 
\BV([0,T];L^2(\Omega))$, with $\rmD_z \calI(\cdot, z(\cdot)) \in 
L^\infty(0,T;L^2(\Omega))$, also fulfill \eqref{precise}. 
  Let $\varrho$ fulfill \eqref{admissible-threshold}.
  Suppose that $z$ satisfies the local stability condition 
 \eqref{eq:65bis}, with $\pVarn{\mathfrak{f}}{\varrho}z{0}{T}  
%\pVarn{\mathfrak{f}}z{0}{T} 
<  \infty$. 
  Then, the map $t \mapsto \ene t{u(t)}$ belongs to 
 $\BV([0,T])$ and satisfies the chain rule inequality
 \begin{equation}
 \label{ch-ineq-dam}
 \left| \ene{t_1}{u(t_1)} {-}  \ene{t_0}{u(t_0)}  
 {-} \int_{t_0}^{t_1}\partial_t \ene{t}{z(t)} \dd t \right| \leq 
\pVarn{\mathfrak{f}}{\varrho}z{t_0}{t_1}   \qquad \text{for all } 0 \leq t_0 
\leq t_1 \leq T\,.
 \end{equation}
  %%%
%%% 
\end{proposition}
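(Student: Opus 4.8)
\textbf{Proof strategy for Proposition \ref{prop:ch-rule}.}
The plan is to establish the $\BV$-chain rule inequality in two stages: first on subintervals where the curve is continuous, and then by a careful summation that accounts for the jump contributions, exploiting the structure \eqref{eq:tvar-new} of $\pVarnamen{\mathfrak{f}}{\varrho}$. The key input is the pointwise chain rule of Lemma \ref{l:ch-rule} (in its alternative form \eqref{ch-rule-curve-weak}, suitable for $W^{1,1}$-in-time curves with $A_q z \in L^\infty(0,T;L^2(\Omega))$), together with the local stability condition \eqref{eq:65bis}, which forces $-\rmD_z\calI(t,z(t)) \in \partial\calR_1(0)$ off the jump set, hence $\mathfrak{f}_t(z(t),v) = \calR_1(v)$ there.

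\emph{Step 1: control on continuous pieces.} For $0 \le t_0 \le t_1 \le T$, decompose $[t_0,t_1]$ using the countable jump set $\mathrm{J}_z$. On the complement, $z$ restricted to any closed subinterval $[a,b] \subset [t_0,t_1] \setminus \mathrm{J}_z$ is continuous into $L^2(\Omega)$, is bounded in $\calZ$, and — crucially — has $A_q z(\cdot) \in L^\infty(a,b;L^2(\Omega))$ by the observation following Definition \ref{def:BV-solution} (so that \eqref{ch-rule-curve-weak} applies once one knows $z \in W^{1,1}(a,b;L^2(\Omega))$, which follows from $z \in \BV([0,T];L^2(\Omega))$ having no jumps on $[a,b]$, after a reparameterization as in the proof of Lemma \ref{l:ch-rule}). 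Then Lemma \ref{l:ch-rule} gives $\frac{\rmd}{\rmd t}\calI(t,z(t)) - \partial_t\calI(t,z(t)) = \langle \rmD_z\calI(t,z(t)),z'(t)\rangle_{L^2(\Omega)}$ a.e., and by local stability the right-hand side is $\ge -\mathfrak{f}_t(z(t),z'(t)) = -\calR_1(z'(t))$ and $\le \calR_1(-z'(t))$ — but $\calR_1(-v) \le 0$ is false; rather one uses $|\langle -\rmD_z\calI,z'\rangle| \le \mathfrak{f}_t(z,z') + (\text{something})$. More precisely, since $-\rmD_z\calI(t,z(t)) = \zeta \in \partial\calR_1(0)$, we get $\langle -\rmD_z\calI(t,z(t)),z'(t)\rangle = \langle \zeta,z'(t)\rangle \le \calR_1(z'(t))$ (by definition of $\partial\calR_1(0)$), and also $\ge -\calR_1(-z'(t))$, which combined with the finiteness of $\calR_1(z'(t))$ for a.e.\ $t$ (ensured since $\calR_1 \le \mathfrak{f}_t$ along the curve and the diffuse variation is finite) yields $|\calI(b,z(b)) - \calI(a,z(a)) - \int_a^b \partial_t\calI| \le \int_a^b \calR_1(z'(t))\,\rmd t = \Var{\calR_1}zab_{\mathrm{diff}}$, i.e.\ the diffuse part of the variation on $[a,b]$.

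\emph{Step 2: jump contributions and summation.} The map $t\mapsto \calI(t,z(t))$ is $\BV$ because on continuous pieces its pointwise variation is controlled by the (finite) diffuse variation, and at each jump $t \in \mathrm{J}_z$ the jumps $\calI(t,z(t))-\calI(t,z(t_-))$ and $\calI(t,z(t_+))-\calI(t,z(t))$ are each bounded in absolute value by $\Costn{\mathfrak{f}}{\varrho}t{z(t_-)}{z(t)}$ resp.\ $\Costn{\mathfrak{f}}{\varrho}t{z(t)}{z(t_+)}$; this last estimate is the heart of the matter and follows by taking an admissible transition curve $\teta \in \calT_t^\varrho$ nearly realizing the cost, applying the continuous chain rule \emph{in the internal variable $r$} to $r \mapsto \calI(t,\teta(r))$ (here $t$ is frozen, so $\partial_t\calI$ does not enter), and bounding $|\langle \rmD_z\calI(t,\teta),\teta'\rangle| \le \mathfrak{f}_t(\teta,\teta')$ — valid because $\mathfrak{f}_t(\teta,\teta') = \calR_1(\teta') + \|\teta'\|_{L^2}\min_{\zeta}\|-\rmD_z\calI(t,\teta)-\zeta\|_{L^2} \ge \langle -\rmD_z\calI(t,\teta),\teta'\rangle$, as already recorded after the definition of $\mathfrak{f}$. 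Summing Step 1 over the (countably many) continuous pieces and the jump bounds over $\mathrm{J}_z \cap [t_0,t_1]$, using that $\pVarn{\mathfrak{f}}{\varrho}z{t_0}{t_1} = \mu_{\mathrm{d}}([t_0,t_1]) + \JVarn{\mathfrak{f}}{\varrho}z{t_0}{t_1}$ by \eqref{eq:tvar-new}, and invoking absolute continuity of $\int \partial_t\calI$ and the telescoping of the $\calI$-increments, we arrive at \eqref{ch-ineq-dam}.

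\emph{Main obstacle.} The delicate point is the jump estimate in Step 2: showing $|\calI(t,z(t_+)) - \calI(t,z(t))| \le \Costn{\mathfrak{f}}{\varrho}t{z(t)}{z(t_+)}$ requires applying the $L^2$-chain rule along admissible transition curves $\teta$, which only have $\teta \in \AC([0,1];L^2(\Omega))$, $\teta \in L^\infty(0,1;\calZ)$, and $A_q\teta \in L^\infty(0,1;L^2(\Omega))$ — exactly the regularity \eqref{ch-rule-curve-weak} in Remark \ref{rmk:alternative-ch-requir}, so Lemma \ref{l:ch-rule} (frozen-$t$ version) does apply, but one must check this carefully and pass the near-optimality in the infimum defining $\Costn{\mathfrak{f}}{\varrho}t{z(t)}{z(t_+)}$ through to the estimate, then sum over the countable jump set controlling the total by $\JVarn{\mathfrak{f}}{\varrho}z{t_0}{t_1} < \infty$. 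One also has to ensure the reparameterization used to make continuous pieces Lipschitz (as in the proof of Lemma \ref{l:ch-rule}) is compatible with the global $\BV$-structure; this is routine but must be stated. The remaining bookkeeping — telescoping, handling the endpoints $t_0,t_1$ (which may themselves be jump points, whence the $\Cost{\mathfrak{f}}{t_0}{z(t_0)}{z(t_0{+})}$ and $\Cost{\mathfrak{f}}{t_1}{z(t_1{-})}{z(t_1)}$ terms in \eqref{eq:37bis}), and verifying the $\BV$-regularity of $t\mapsto\calI(t,z(t))$ — is straightforward once the jump estimate is in hand.
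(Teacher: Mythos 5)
The paper's proof does not decompose $[t_0,t_1]$ into continuous pieces and jumps. Instead, it builds a \emph{single global reparameterization}: it sets $\sfs(t)=t+\pVar{L^2(\Omega)}z{0}{t}$, defines $\sft:=\sfs^{-1}$ and $\sfz:=z\circ\sft$ off the jump intervals, fills in the jump intervals with near-optimal transition curves, verifies that $(\sft,\sfz)$ is an \emph{admissible parameterized curve} (in particular $\sfz\in W^{1,\infty}(0,\rmS;L^2(\Omega))$), applies the $L^2$-chain rule of Lemma~\ref{l:ch-rule} once to this curve, and then identifies $\int_0^{\rmS}\mathfrak{f}_{\sft(s)}(\sfz(s),\sfz'(s))\,\dd s$ with $\pVarn{\mathfrak{f}}{\varrho}z{0}{T}$. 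Your decomposition strategy is therefore genuinely different, and the difference is not merely cosmetic.

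There is a real gap in your Step~1. You claim that $z\in W^{1,1}(a,b;L^2(\Omega))$ ``follows from $z\in\BV([0,T];L^2(\Omega))$ having no jumps on $[a,b]$.'' This is false: a $\BV$ function without jumps on an interval may have a nontrivial Cantor part and thus fail to be absolutely continuous there (the Cantor staircase is the archetype). As a consequence, the identity you use, $\int_a^b\calR_1(z'(t))\,\dd t = \mu_{\mathrm{d}}([a,b])$, does not hold in general: the left side captures only the absolutely continuous part, while $\mu_{\mathrm{d}}$ (as defined in \eqref{variation-functional}--\eqref{eq:tvar-new}) includes the Cantor part as well. Your hedge ``after a reparameterization as in the proof of Lemma~\ref{l:ch-rule}'' does not rescue this: the reparameterization makes the curve Lipschitz in the \emph{new} variable $s$, but then the Cantor contribution becomes part of $\int\|\sfz'(s)\|\,\dd s$ with $\sft'(s)=0$, and one cannot cleanly express the result back as an integral over $[a,b]$ in physical time, nor can one keep the continuous pieces and the jump intervals neatly separated in the $s$-variable without effectively doing what the paper does globally. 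In other words, once you concede the need for a variation-clock reparameterization on the continuous pieces, the clean piecewise decomposition breaks down, and you are led back to the global construction.

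Your Step~2 (the jump bound via near-optimal transition curves, using the frozen-$t$ chain rule under the regularity \eqref{ch-rule-curve-weak}, and the pointwise bound $|\langle\rmD_z\calI(t,\teta),\teta'\rangle_{L^2}|\le\mathfrak{f}_t(\teta,\teta')$) is essentially the right mechanism and does appear inside the paper's argument — it is what justifies inserting the transitions $\teta_n$ into the parameterized curve and is absorbed into the identity \eqref{ad-cr1} and the pointwise estimate \eqref{ad-cr2}. So you have identified the correct local ingredients; what is missing is the correct global bookkeeping. To repair the proof, replace the ``continuous pieces $+$ jumps'' decomposition of $[t_0,t_1]$ by the single parameterized curve $(\sft,\sfz)$ on $[\sfs(t_0),\sfs(t_1)]$, establish $\sfz\in W^{1,\infty}$ of the parameter, apply Lemma~\ref{l:ch-rule} to $(\sft,\sfz)$, use the pointwise bound $\bigl|\tfrac{\dd}{\dd s}\calI(\sft,\sfz)-\partial_t\calI(\sft,\sfz)\sft'\bigr|\le\mathfrak{f}_{\sft}(\sfz,\sfz')$, and invoke the identity $\pVarn{\mathfrak{f}}{\varrho}z{t_0}{t_1}=\int_{\sfs(t_0)}^{\sfs(t_1)}\mathfrak{f}_{\sft(s)}(\sfz(s),\sfz'(s))\,\dd s$ (which is where the Cantor part is correctly accounted for).
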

We postpone its \emph{proof} to Section~\ref{ss:5.3}. 
We now characterize Balanced Viscosity solutions in terms of  the local 
stability  \eqref{eq:65bis}, joint with the
 upper energy estimate in 
 \eqref{eq:84}, which it is sufficient to give on 
the whole time interval $[0,T]$. Namely we have
\begin{corollary}
\label{cor:this-only}
Under Assumptions \ref{ass:domain}, \ref{assumption:energy}, and \ref{ass:load},
a curve  $z\in \BV([0,T];L^2(\Omega))$  is a Balanced Viscosity solution of the 
rate-independent  damage system \eqref{dndia} (in the sense of Definition 
\ref{def:BV-solution})
if and only if it satisfies \eqref{eq:65bis} and 
\begin{equation}
\label{one-sided-0T}
    \pVarn{\mathfrak{f}}{\varrho}z{0}{T}+\calI(T, z(T)) \leq \calI({0},{z(0)})+
    \int_{0}^{T} \partial_t \calI(s,{z(s)})\,\mathrm{d}s
\end{equation}
 for some   $\varrho$ fulfilling \eqref{admissible-threshold}. 
\end{corollary}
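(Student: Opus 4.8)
The plan is to derive the equivalence in Corollary~\ref{cor:this-only} from the $\BV$-chain rule inequality of Proposition~\ref{prop:ch-rule}, together with the elementary properties of the cost $\Delta_{\mathfrak f}$ recorded in Section~\ref{ss:5.1}. One implication is immediate: if $z$ is a Balanced Viscosity solution in the sense of Definition~\ref{def:BV-solution}, then \eqref{eq:65bis} holds by definition, and \eqref{one-sided-0T} is just \eqref{eq:84} at $t=T$ (indeed, with equality). The substance is the converse direction.

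So suppose $z \in \BV([0,T];L^2(\Omega))$ satisfies \eqref{eq:65bis} and the upper estimate \eqref{one-sided-0T} for some $\varrho$ fulfilling \eqref{admissible-threshold}. First I would record that the regularity/summability hypotheses of Definition~\ref{def:BV-solution} are met: since $\varrho$ satisfies \eqref{admissible-threshold}, we have $\rmD_z\calI(\cdot,z(\cdot))\in L^\infty(0,T;L^2(\Omega))$, and since $\pVarn{\mathfrak{f}}{\varrho}z{0}{T}<\infty$ (as it is dominated, via \eqref{one-sided-0T}, by $\calI(0,z(0))+\int_0^T\partial_t\calI-\inf\calI<\infty$, using the lower bound \eqref{est_coerc1} on $\calI$ and the bound \eqref{stim3} on $\partial_t\calI$), Proposition~\ref{prop:ch-rule} applies. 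Hence $t\mapsto \calI(t,z(t))$ belongs to $\BV([0,T])$ and the chain rule inequality \eqref{ch-ineq-dam} holds on every subinterval. The remaining point is to upgrade \eqref{one-sided-0T} to the full energy balance \eqref{eq:84} at \emph{every} $t\in(0,T]$.

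The argument is the classical ``one-sided estimate plus chain rule forces equality and additivity'' scheme. Fix $t\in(0,T)$. Applying \eqref{ch-ineq-dam} on $[0,t]$ and on $[t,T]$ gives
\begin{align*}
\calI(0,z(0)) - \calI(t,z(t)) + \int_0^t \partial_t\calI(s,z(s))\,\dd s &\le \pVarn{\mathfrak{f}}{\varrho}z{0}{t},\\
\calI(t,z(t)) - \calI(T,z(T)) + \int_t^T \partial_t\calI(s,z(s))\,\dd s &\le \pVarn{\mathfrak{f}}{\varrho}z{t}{T}.
\end{align*}
Adding these and using the additivity of $\pVarnamen{\mathfrak f}{\varrho}$ on $[0,T]=[0,t]\cup[t,T]$ (stated after Definition~\ref{def:Finsler_jump}) yields exactly the reverse of \eqref{one-sided-0T}, namely $\pVarn{\mathfrak{f}}{\varrho}z{0}{T}+\calI(T,z(T))\ge \calI(0,z(0))+\int_0^T\partial_t\calI$. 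Combined with the assumed \eqref{one-sided-0T}, all inequalities are equalities; in particular each of the two displayed inequalities holds with equality, which is precisely \eqref{eq:84} at the given $t$ (the first equality) and its complementary version on $[t,T]$. Since $t\in(0,T)$ was arbitrary, and the case $t=T$ is \eqref{one-sided-0T} read as an equality, we obtain \eqref{eq:84} for all $t\in(0,T]$. Finally, $z\in L^\infty(0,T;\calZ)$ follows a posteriori from \eqref{eq:84} as in the Remark after Definition~\ref{def:BV-solution} (using $|\calI(t,z(t))|\le C$ and the coercivity \eqref{est_coerc1}), so $z$ is a Balanced Viscosity solution in the full sense of Definition~\ref{def:BV-solution}.

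The only genuine obstacle here is that the whole argument is parasitic on Proposition~\ref{prop:ch-rule}, whose proof is deferred; granting it, the corollary is a short soft argument. One minor point to check carefully is that the threshold $\varrho$ for which \eqref{one-sided-0T} is assumed is the \emph{same} $\varrho$ used in \eqref{ch-ineq-dam} and in verifying \eqref{admissible-threshold}, so that additivity of $\pVarnamen{\mathfrak f}{\varrho}$ and the chain rule are both available at that fixed $\varrho$; this is consistent since both Proposition~\ref{prop:ch-rule} and Definition~\ref{def:BV-solution} are stated for an arbitrary admissible $\varrho$. No compactness or limiting procedure is needed — this is purely a characterization lemma at the level of a single solution curve.
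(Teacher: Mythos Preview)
Your argument is correct and is precisely the standard one: the paper does not spell out its own proof here but refers to \cite[Cor.\ 3.14]{MRS16}, whose argument is exactly the ``chain rule inequality on $[0,t]$ and $[t,T]$, add, compare with the global one-sided estimate, force equalities'' scheme you outline. The only dependencies you invoke---Proposition~\ref{prop:ch-rule} and the additivity of $\pVarnamen{\mathfrak f}{\varrho}$---are exactly those the paper records for this purpose.
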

For the \emph{proof}, we refer the reader to the argument for \cite[Cor.\ 
3.14]{MRS16}. 
Corollary \ref{cor:this-only} will play a crucial role in the proof of 
Theorem \ref{thm:van-visc-eps-tau}, for it will allow us to focus on the proof 
of 
 \eqref{eq:65bis} and of  
the energy inequality 
\eqref{one-sided-0T}, only,  in place of the balance \eqref{eq:84}. In turn, 
\eqref{one-sided-0T} will be achieved by means of careful lower 
semicontinuity arguments. The second outcome of the characterization provided by 
Cor.\ \ref{cor:this-only} is  
the following Proposition 
\ref{prop:charact-jump-conds},  which was proved in the abstract setting in 
\cite[Thm.\ 3.15]{MRS16}. It shows 
  that a locally stable curve is a   Balanced Viscosity solution of the 
rate-independent system if and only if it fulfills
\begin{itemize}
\item[(i)] an energy-dissipation inequality only featuring the $\calR_1$-total variation functional from \eqref{R1-tot-var}, 
cf.\ \eqref{glob-en-ineq} below, 
and 
\item[(ii)] at each jump point, the jump conditions  \eqref{jump-f} featuring the Finsler cost  $\Delta_{\mathfrak{f}}$ induced by $\mathfrak{f}$.
\end{itemize}
 Concerning (i), let us also mention that it is possible to show  (cf.\ 
\cite[Thm.\ 3.16]{MRS16})
that any Balanced Viscosity solution also satisfies the subdifferential inclusion
\begin{equation}
\label{ri-subdif-L2}
\partial\calR_1(z'(t)) + \rmD_z \calI(t,z(t)) \ni 0 \quad \text{in }L^2(\Omega) 
\end{equation}
 at every $t\in (0,T)$ that is not a  jump point, hence for almost all $t\in 
(0,T)$. The system behavior at jump points is  instead described by the jump 
conditions  \eqref{jump-f} below. 
This further characterization of the  Balanced Viscosity concept
 in terms of (i) and (ii)   highlights how it differs in comparison to 
the standard Global Energetic notion. The latter can be characterized in 
terms of the \emph{global stability} condition,  the energy-dissipation 
inequality \eqref{glob-en-ineq}, and the analogues of  the jump conditions 
 \eqref{jump-f}, with the cost $\Delta_{\mathfrak{f}}(t;\cdot,\cdot)$
 replaced by $\calR_1$.
 Conditions \eqref{jump-f} highlight that
  the  viscous approximation from which Balanced Viscosity solutions originate  
enters into play  in the description of the energetic behavior of the system at 
jumps. 
 \begin{proposition}
 \label{prop:charact-jump-conds}
 A curve $z\in \BV ([0,T]; L^2(\Omega))$ is a Balanced Viscosity solution of the rate-independent damage system \eqref{dndia} if and only if it satisfies   \eqref{eq:65bis}, the 
 $(\calR_1)$-energy dissipation inequality 
 \begin{equation}
 \label{glob-en-ineq}
    \pVar{\calR_1}z{s}{t}+\calI(t, z(t)) \leq \calI({s},{z(s)})+
    \int_{s}^{t} \partial_t \calI(s,{z(s)})\,\mathrm{d}s \quad \text{for all } 0 \leq s \leq t \leq T,
    \end{equation}
and the jump conditions 
  \begin{equation}
    \label{jump-f}
      \begin{aligned}
    \ene{t}{z(t)}-\ene t{z(t_-)}&=-\Costn{\mathfrak{f}}{\varrho}t{z(t_-)}{z(t)},\\
    \ene{t}{z(t_+)}-\ene t{z(t)}&=-\Costn{\mathfrak{f}}{\varrho}t{z(t)}{z(t_+)},\\
    \ene{t}{z(t_+)}-\ene
    t{z(t_-)}&=-\Costn{\mathfrak{f}}{\varrho}t{z(t_-)}{z(t_+)}
    \\&
    =
    -\Big(\Costn{\mathfrak{f}}{\varrho}t{z(t_-)}{z(t)}+
    \Costn{\mathfrak{f}}{\varrho}t{z(t)}{z(t_+)}\Big)
  \end{aligned}
  \end{equation}    
  at every $t\in \mathrm{J}_z$. 
 \end{proposition}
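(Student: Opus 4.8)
The proof adapts the argument for \cite[Thm.\ 3.15]{MRS16} to the present irreversible setting. Throughout, let $\varrho$ be a threshold complying with \eqref{admissible-threshold}; enlarging $\varrho$ is harmless in view of \eqref{monoton-cost}. Two elementary facts will be used repeatedly. First, a curve $z$ with the regularity of Definition~\ref{def:BV-solution} satisfies $z\in L^\infty(0,T;W^{1+\sigma,q}(\Omega))$ for $\sigma\in(0,\tfrac1q)$ by Proposition~\ref{prop:Savare98}; since $W^{1+\sigma,q}(\Omega)\Subset\calZ$ and $z\in\BV([0,T];L^2(\Omega))$, the one-sided limits $z(t_\pm)$ exist in $\calZ$, $z(s)\to z(t_\pm)$ strongly in $\calZ$ as $s\to t^\pm$, and hence, $\calI$ being $\rmC^1$ on $[0,T]\times\calZ$, $\calI(s,z(s))\to\calI(t,z(t_\pm))$. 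Second, the constant curve $\teta\equiv z(t)$ is admissible, so $\Costn{\mathfrak{f}}{\varrho}t{z(t)}{z(t)}=0$; by concatenation the costs are subadditive; $\pVarnamen{\mathfrak{f}}{\varrho}$ is additive over adjacent intervals; $\pVarn{\mathfrak{f}}{\varrho}zab=\mu_{\mathrm d}([a,b])+\JVarn{\mathfrak{f}}{\varrho}zab$ by \eqref{eq:tvar-new}; and, since $\Costn{\mathfrak{f}}{\varrho}t{z_-}{z_+}\ge\Cost{\mathfrak{f}}t{z_-}{z_+}\ge\calR_1(z_+{-}z_-)$ by \eqref{comparison-costs}, one has $\pVarn{\mathfrak{f}}{\varrho}zab\ge\Var{\calR_1}zab$.

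\textbf{Necessity.} Let $z$ be a Balanced Viscosity solution; then \eqref{eq:65bis} holds by definition. From \eqref{eq:84}, the lower bound on $\calI$ and estimate \eqref{stim3}, one gets $\pVarn{\mathfrak{f}}{\varrho}z0T<\infty$, so Proposition~\ref{prop:ch-rule} gives the chain rule \eqref{ch-ineq-dam} on every $[t_0,t_1]$. Subtracting \eqref{eq:84} at $t_0$ from \eqref{eq:84} at $t_1$ and using additivity of $\pVarnamen{\mathfrak{f}}{\varrho}$, we obtain that the energy--dissipation balance holds \emph{as an equality} on every subinterval; combined with $\pVarn{\mathfrak{f}}{\varrho}z{t_0}{t_1}\ge\Var{\calR_1}z{t_0}{t_1}$ this yields \eqref{glob-en-ineq}. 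For \eqref{jump-f}, fix $t\in\mathrm{J}_z$ and choose $s_n\uparrow t$, $s_n'\downarrow t$ through continuity points of $z$. Writing the equality on $[s_n,t]$ and letting $n\to\infty$: the right-hand side $\calI(s_n,z(s_n))-\calI(t,z(t))+\int_{s_n}^t\partial_t\calI(r,z(r))\,\dd r$ tends to $\calI(t,z(t_-))-\calI(t,z(t))$ by the first elementary fact, while $\pVarn{\mathfrak{f}}{\varrho}z{s_n}{t}=\mu_{\mathrm d}([s_n,t])+\JVarn{\mathfrak{f}}{\varrho}z{s_n}{t}\to\Costn{\mathfrak{f}}{\varrho}t{z(t_-)}{z(t)}$, because $\mu_{\mathrm d}(\{t\})=0$, the boundary cost at the continuity point $s_n$ vanishes, and the remaining jump contributions form the tail of the convergent series $\JVarn{\mathfrak{f}}{\varrho}z0T$. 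This gives the first identity in \eqref{jump-f}; the second follows symmetrically on $[t,s_n']$, and the third from combining both with subadditivity of the cost and the analogous limit on $[s_n,s_n']$.

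\textbf{Sufficiency.} Assume $z$ fulfills \eqref{eq:65bis}, \eqref{glob-en-ineq} and \eqref{jump-f}. By Corollary~\ref{cor:this-only} it suffices to check the upper energy estimate \eqref{one-sided-0T}. Set $\psi(t):=\calI(0,z(0))+\int_0^t\partial_t\calI(r,z(r))\,\dd r-\calI(t,z(t))$, which is finite (by \eqref{stim3}, \eqref{glob-en-ineq} and \eqref{est_coerc1}) and, by \eqref{glob-en-ineq}, satisfies $\psi(t)-\psi(s)\ge\Var{\calR_1}z{s}{t}\ge0$ for $s\le t$; hence $\psi$ is nondecreasing, so $\BV$, with $\psi(0)=0$. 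Using that the integral term is continuous and $\calI(s,z(s))\to\calI(t,z(t_\pm))$, the jump $\psi(t_+)-\psi(t_-)$ at $t\in\mathrm{J}_z$ equals $\calI(t,z(t_-))-\calI(t,z(t_+))$, which by the first two identities in \eqref{jump-f} equals $\Costn{\mathfrak{f}}{\varrho}t{z(t_-)}{z(t)}+\Costn{\mathfrak{f}}{\varrho}t{z(t)}{z(t_+)}$, i.e.\ the contribution of $t$ to $\JVarn{\mathfrak{f}}{\varrho}z0T$; summing over $\mathrm{J}_z$ (boundary terms at $0,T$ handled via \eqref{jump-f}), the jump part of $\psi$ equals $\JVarn{\mathfrak{f}}{\varrho}z0T$. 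Moreover $\psi(t)-\psi(s)\ge\Var{\calR_1}z{s}{t}=\mu([s,t])$ forces $\mu^{\psi}\ge\mu$ as Borel measures, whence (diffuse parts carrying no atoms) $\mu^{\psi}_{\mathrm d}\ge\mu_{\mathrm d}$. Therefore
\[
\calI(0,z(0))-\calI(T,z(T))+\int_0^T\partial_t\calI(r,z(r))\,\dd r=\psi(T)\ge\mu_{\mathrm d}([0,T])+\JVarn{\mathfrak{f}}{\varrho}z0T=\pVarn{\mathfrak{f}}{\varrho}z0T,
\]
which is precisely \eqref{one-sided-0T}; Corollary~\ref{cor:this-only} then shows that $z$ is a Balanced Viscosity solution.

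\textbf{Main obstacle.} The crux is the local analysis at jump times: identifying the one-sided limits of $t\mapsto\calI(t,z(t))$ with $\calI(t,z(t_\pm))$ --- which requires upgrading the $L^2(\Omega)$-convergence of $z(s)$ to strong $\calZ$-convergence through the uniform $W^{1+\sigma,q}(\Omega)$-bound and the compact embedding --- and computing the limit of $\pVarn{\mathfrak{f}}{\varrho}z{s}{t}$ as the interval collapses onto a jump point, which hinges on the absence of atoms in $\mu_{\mathrm d}$, the vanishing of boundary costs at continuity points, and the summability of $\JVarn{\mathfrak{f}}{\varrho}$. A further point of care, specific to the irreversible setting (where $\calR_1$ takes the value $+\infty$ and $\partial\calR_1(0)$ is unbounded), is that all the cost estimates must remain valid within the tailored admissible-curve class of Definition~\ref{def:curves+cost}; this is exactly what Proposition~\ref{prop:ch-rule} is designed to provide.
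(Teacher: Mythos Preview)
Your proof is correct and follows exactly the approach the paper indicates: the paper itself does not give a detailed argument but simply states that the proof ``follows the very same lines as the argument for \cite[Thm.\ 3.15]{MRS16}'', which is precisely what you have spelled out. Your handling of the one-sided limits via the $W^{1+\sigma,q}(\Omega)$-bound and compact embedding, the shrinking-interval argument for the jump conditions, and the use of Corollary~\ref{cor:this-only} for sufficiency are all in line with the strategy of \cite{MRS16} adapted to the present setting.
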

\noindent
The \emph{proof} follows  the very same lines as the argument for \cite[Thm.\ 
3.15]{MRS16}.
\par
 We conclude this section by shedding further light into the
 the fine properties of  optimal jump transitions. %As in \cite{MRS16}, given 
 %$z_0,\, z_1 \in \calZ$ and 
 Following \cite[Sec.\ 3.4]{MRS16}, we say that an optimal transition $\teta\in 
\calO_t^{\varrho} (z_-,z_+)$  is 
of 
\begin{itemize}
\item
 \emph{sliding} type if $-\rmD_z \calI(t,\teta(r)) \in \calR_1(0)$ for every $r\in [0,1]$;
 \item
 \emph{viscous} type if 
 $-\rmD_z \calI(t,\teta(r)) \notin \calR_1(0)$ for every $r\in [0,1]$.
 \end{itemize}
The forthcoming result on sliding and viscous optimal transitions follows from the very same argument as in the proof of 
\cite[Prop.\ 3.19]{MRS16}.
\begin{proposition}
Let 
$\varrho>0$, 
$t\in [0,T]$,  and $z_-$, $z_+ \in \calZ$ fulfilling 
\eqref{stabil-zpm} be given. Let $\teta \in \calO_t^\varrho(z_-,z_+)$.
Then,
\begin{enumerate}
\item
$\teta $ is of sliding type if and only if it satisfies
\[
\partial\calR_1(\teta'(r)) +\rmD_z \calI(t,\teta(r)) \ni 0 \quad \text{in }L^2(\Omega) \ \foraa\, r \in (0,1);
\]
\item
$\teta $ is of viscous type if and only if 
there exists a map $\epsilon :(0,1) \to (0,+\infty)$ such that $\teta$ and $\epsilon$ satisfy
\[
\partial\calR_1(\teta'(r)) +\epsilon(r) \teta'(r) +\rmD_z \calI(t,\teta(r)) \ni 0 \quad \text{in }L^2(\Omega) \ \foraa\, r \in (0,1);
\]
\item
Every optimal transition $\teta$ can be decomposed in a canonical way into an (at most) countable collection of optimal 
\emph{sliding} and \emph{viscous} transitions. 
\end{enumerate}
\end{proposition}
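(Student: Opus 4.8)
The plan is to follow the scheme of \cite[Prop.\ 3.19]{MRS16}, adapting it to the $L^2(\Omega)$-setting and to the unidirectionality constraint encoded by $\calR_1$. The starting point is the observation that, for an optimal transition $\teta \in \calO_t^\varrho(z_-,z_+)$, the constant-$\mathfrak{f}_t$-velocity equality in \eqref{form-OJT} together with the definition of $\mathfrak{f}_t$ yields, for a.a.\ $r \in (0,1)$,
\[
\mathfrak{f}_t(\teta(r),\teta'(r)) = \calR_1(\teta'(r)) + \|\teta'(r)\|_{L^2(\Omega)} \min_{\zeta \in \partial\calR_1(0)} \|-\rmD_z\calI(t,\teta(r)) - \zeta\|_{L^2(\Omega)} = \langle -\rmD_z\calI(t,\teta(r)), \teta'(r)\rangle_{L^2(\Omega)}\,,
\]
where the last equality is forced by the chain-rule identity (Lemma \ref{l:ch-rule}, in the form \eqref{ch-rule-curve-weak}) integrated over subintervals, combined with the first equality in \eqref{form-OJT} and the lower bound $\mathfrak{f}_t(z,v)\geq \langle -\rmD_z\calI(t,z),v\rangle_{L^2(\Omega)}$. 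Thus along an optimal transition the two one-sided inequalities defining $\mathfrak{f}_t$ become equalities: both $\calR_1(\teta'(r)) + \langle \zeta_r,\teta'(r)\rangle_{L^2(\Omega)} = 0$ for the optimal $\zeta_r \in \partial\calR_1(0)$, i.e.\ $\zeta_r \in \partial\calR_1(\teta'(r))$, and $\|\teta'(r)\|_{L^2(\Omega)}\min_{\zeta}\|-\rmD_z\calI(t,\teta(r))-\zeta\|_{L^2(\Omega)} = \langle -\rmD_z\calI(t,\teta(r)) - \zeta_r, \teta'(r)\rangle_{L^2(\Omega)}$, which is a Cauchy--Schwarz equality case forcing $\teta'(r)$ and $-\rmD_z\calI(t,\teta(r)) - \zeta_r$ to be nonnegatively proportional.

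For (1): if $\teta$ is of sliding type, then $-\rmD_z\calI(t,\teta(r)) \in \partial\calR_1(0)$ for every $r$, so we may take $\zeta_r = -\rmD_z\calI(t,\teta(r))$, and the first equality above gives $-\rmD_z\calI(t,\teta(r)) \in \partial\calR_1(\teta'(r))$, which is exactly the claimed inclusion. Conversely, if the inclusion $\partial\calR_1(\teta'(r)) + \rmD_z\calI(t,\teta(r)) \ni 0$ holds a.e., then $-\rmD_z\calI(t,\teta(r)) \in \partial\calR_1(\teta'(r)) \subset \partial\calR_1(0)$ (using monotonicity and $0 \in \mathrm{dom}(\partial\calR_1)$, since $\partial\calR_1(v) \subset \partial\calR_1(0)$ for any $v$ admissible — this is the standard feature of $1$-homogeneous potentials), for a.e.\ $r$; continuity of $r\mapsto \rmD_z\calI(t,\teta(r))$ along the admissible curve then upgrades this to all $r\in[0,1]$. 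For (2): if $\teta$ is of viscous type, then $\min_\zeta\|-\rmD_z\calI(t,\teta(r))-\zeta\|_{L^2(\Omega)}>0$ for all $r$, hence (by the structure of $\mathfrak{f}_t$ being finite and the optimality equality) $\teta'(r)\neq 0$; the proportionality $\teta'(r) = \lambda(r)(-\rmD_z\calI(t,\teta(r)) - \zeta_r)$ with $\lambda(r)>0$ then yields $\epsilon(r) := 1/\lambda(r)$ and $-\rmD_z\calI(t,\teta(r)) - \epsilon(r)\teta'(r) = \zeta_r \in \partial\calR_1(0) \subset$ (after the homogeneity argument) $\partial\calR_1(\teta'(r))$, i.e.\ the displayed inclusion. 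The converse is a direct computation reversing these steps, checking that the inclusion forces equality in both one-sided estimates.

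For (3): this is the decomposition statement, obtained by partitioning $[0,1]$ according to whether $-\rmD_z\calI(t,\teta(r)) \in \partial\calR_1(0)$ (sliding regime) or not (viscous regime). The set $G := \{r\in[0,1]: \min_\zeta\|-\rmD_z\calI(t,\teta(r))-\zeta\|_{L^2(\Omega)}>0\}$ is relatively open by continuity of $r\mapsto\rmD_z\calI(t,\teta(r))$, hence a countable disjoint union of open intervals; on each such interval the restriction of $\teta$ is (after reparameterization to $[0,1]$) an optimal viscous transition, and on the complement it is an optimal sliding transition. One must verify that each piece is indeed itself optimal for its own endpoints with the same threshold $\varrho$ — this follows because $\Costn{\mathfrak{f}}{\varrho}{t}{\cdot}{\cdot}$ is additive along concatenations of admissible curves (as $\mathfrak{f}_t$ is a Finsler-type integrand) and the global optimality \eqref{form-OJT} forces each restriction to be optimal, by a standard splitting argument — and that the pieces inherit admissibility (conditions (a)--(b) of Definition \ref{def:curves+cost}), which is immediate since restriction preserves the $L^\infty$-bounds.

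The main obstacle I anticipate is the homogeneity bookkeeping for $\partial\calR_1$: one must be careful that $\partial\calR_1(v) \subset \partial\calR_1(0)$ for all $v$ (which is true for the $1$-homogeneous $\calR_1$ here, but uses that $v$ lies in the cone where $\calR_1$ is finite, i.e.\ $v \leq 0$ a.e.), so that the $\zeta_r$ produced in the Cauchy--Schwarz equality argument genuinely belongs to $\partial\calR_1(\teta'(r))$ and not merely to $\partial\calR_1(0)$. This is exactly the point where the $+\infty$-valued nature of $\calR_1$ (absent in \cite{MRS16}) requires an extra verification, namely that admissible optimal transitions automatically satisfy $\teta'(r) \leq 0$ a.e.\ — which follows because $\mathfrak{f}_t(z,v)=+\infty$ unless $v\leq 0$, so a curve with finite $\mathfrak{f}_t$-integral must have $\teta'(r)\leq 0$ a.e. Everything else is a transcription of the arguments in \cite{MRS16} with the $(\calZ,\calZ^*)$-duality replaced by the $L^2(\Omega)$-scalar product, which is legitimate here thanks to the regularity $\rmD_z\calI(t,\teta(\cdot)) \in L^\infty(0,1;L^2(\Omega))$ built into the notion of admissible transition curve.
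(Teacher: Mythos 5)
Your proposal reconstructs exactly the argument the paper defers to, namely \cite[Prop.\ 3.19]{MRS16}: the chain rule forces the pointwise equality $\mathfrak{f}_t(\teta(r),\teta'(r)) = \langle -\rmD_z\calI(t,\teta(r)),\teta'(r)\rangle_{L^2(\Omega)}$ a.e., which splits by Fenchel/Cauchy--Schwarz into the subdifferential identification and the proportionality yielding $\epsilon(r)$, and the open/closed decomposition of $[0,1]$ according to $\mathrm{dist}(-\rmD_z\calI(t,\teta(r)),\partial\calR_1(0))$ gives the countable splitting. The only caveats are cosmetic: $r\mapsto\rmD_z\calI(t,\teta(r))$ is in general only weakly continuous in $L^2(\Omega)$ (which still suffices, since the distance-to-a-closed-convex-set function is weakly lower semicontinuous, so $G$ is open and $\partial\calR_1(0)$ is weakly closed), and the non-vanishing of $\teta'$ in the viscous case should be justified from the constant-$\mathfrak{f}_t$-velocity equality in \eqref{form-OJT} rather than mere finiteness.
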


\section{Proofs}
\label{ss:5.3}

We start by giving Proposition \ref{prop:technical}, which  is  the 
counterpart to \cite[Thm.\ 3.7]{MRS16}. A comparison between the latter result 
and Proposition \ref{prop:technical}   below reflects the major differences 
between the present context  and that of \cite{MRS16}: The transition curves by 
means of which the Finsler cost $\Costname{\mathfrak{f}}$ from \eqref{eq:69} is 
defined have better properties than their analogues in \cite{MRS16}, cf.\ also 
Remark \ref{rmk:comp-MRS13}.
This is also apparent from item (3) of the ensuing  statement, yielding the 
existence of a transition path $\teta$ in the space $ 
W^{1,\infty}(0,1;H^1(\Omega))$, even, in accordance with the uniform  bound 
\eqref{est-epsi-tau-3} for the discrete solutions. 
 % Let us also mention in advance that 
\begin{proposition}
\label{prop:technical}
Let $t\in [0,T]$ and  $z_0,\, z_1 \in \calZ$ be fixed.
Then:
\begin{enumerate}
\item For every $\varrho>0$  such that $\max_{i=0,1 }( \|z_i \|_{\calZ} + 
\|\rmD_z \calI(t,z_i)\|_{L^2(\Omega)}) \leq \varrho$  and   
$\Costn{\mathfrak{f}}{\varrho}{t}{z_0}{z_1}<+\infty$, there exists an optimal 
transition path $\teta \in \mathcal{T}_t^\varrho(z_0,z_1)$ attaining the $\inf$ 
in the definition of $\Costn{\mathfrak{f}}{\varrho}{t}{z_0}{z_1}$, cf.\ 
\eqref{threshold};
\item
 Let $(z_0^n)_n,\, (z_1^n)_n \subset \calZ$ fulfill
\[
z_0^n \to z_0, \quad z_1^n\to z_1 \quad \text{in } \calZ.
\]
Then,
\begin{equation}
\label{lsc-costs}
\liminf_{n\to\infty} \Costn{\mathfrak{f}}{\varrho}{t}{z_0^n}{z_1^n} \geq  
\Costn{\mathfrak{f}}{\varrho}{t}{z_0}{z_1}
\end{equation}
 for every $\varrho \geq \sup_{i=1,2,n \in \N}( \|z_i \|_{\calZ} + \|\rmD_z 
\calI(t,z_i)\|_{L^2(\Omega)})$.  
\item
Let the  sequences  $(\alpha_k)_k,\, (\beta_k)_k \subset [0,T]$, 
$(\widehat{z}_k)_k \subset L^\infty(\alpha_k,\beta_k;\calZ) \cap 
\mathrm{AC}([\alpha_k,\beta_k];H^1(\Omega))  $,  $(\overline{z}_k)_k \subset 
L^\infty(\alpha_k,\beta_k;\calZ)$,
 fulfill
\begin{equation}
\label{conds-tech}
\begin{aligned}
&
\lim_{k\to\infty} \alpha_k =t = \lim_{k\to\infty} \beta_k,
\quad \overline{z}_k(\alpha_k) \to z_0 \text{ in } \calZ, \quad  
\overline{z}_k(\beta_k) \to z_1 \text{ in } \calZ,
\\
&
\lim_{k\to\infty} \sup_{r\in [\alpha_k,\beta_k]} \|\overline{z}_k(r) - 
\widehat{z}_k(r)\|_{H^1(\Omega)} =0,
\\
&
\exists\, \bar{\varrho}>0  \quad  \forall\, k \in \N \, : 
\\
& \qquad  
\|  \widehat{z}_k \|_{L^\infty(\alpha_k,\beta_k;\calZ)  \cap W^{1,1} 
(\alpha_k,\beta_k; H^1(\Omega))}
%\\
%& \qquad \qquad 
+ \| \overline{z}_k\|_{L^\infty (\alpha_k,\beta_k;\calZ)} 
+ \|   \rmD_z \ene{\overline{t}_{\tau_k}}{\overline{z}_k}  
\|_{L^\infty(\alpha_k,\beta_k;L^2(\Omega))}  \leq  \bar{\varrho}\,.
\end{aligned}
\end{equation}
Then, 
there exists a (not relabeled) increasing subsequence of $(k)$, increasing and 
surjective time rescalings $\mathsf{t}_{k} \subset \AC ([0,1]; 
[\alpha_{k},\beta_{k}])$ and an admissible transition  $\teta \in 
\mathcal{T}_t^{\bar{\varrho}} (z_0,z_1)$  such that 
\begin{subequations}
\label{thesis:technical}
\begin{align}
&
\label{thesis:technical-1}
\lim_{k\to\infty} \sup_{s \in [0,1]}\| \overline{z}_{k} \circ \mathsf{t}_{k}(s) 
{-} \teta(s)\|_{H^1(\Omega)} = \lim_{k\to\infty} \sup_{s \in [0,1]}\| 
\widehat{z}_{k} \circ \mathsf{t}_{k}(s) {-} \teta(s)\|_{H^1(\Omega)} =0,
\\
& 
\label{thesis:technical-2}
\text{in addition, } \teta \text{ is in }  %L^\infty (0,1;\calZ) \cap
 W^{1,\infty}(0,1;H^1(\Omega)),  \text{ and }
\\
%&
&
\label{thesis:technical-4}
\Costn{\mathfrak{f}}{\bar\varrho}{t}{z_0}{z_1} \leq 
\int_0^1 \mathfrak{f}_t [\teta(s),\teta'(s)] \dd s \leq 
 \liminf_{k\to\infty}  \int_{\alpha_{k}}^{\beta_{k}} 
\left(
\calR_{\epsi_{k}}(\widehat{z}_{k}'(r)) {+} \calR_{\epsi_{k}}^* ({-} \rmD_z 
\ene{\overline{t}_{\tau_{k}}(r)}{\overline{z}_{k}(r)}) \right) \dd r \,.
\end{align}
\end{subequations}
\end{enumerate}
\end{proposition}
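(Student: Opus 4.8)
The plan is to prove the three items of Proposition~\ref{prop:technical} essentially following the scheme of \cite[Thm.~3.7]{MRS16}, but taking advantage of the enhanced estimates available here—in particular the uniform bounds on $\rmD_z\calI$ in $L^2(\Omega)$, on the $q$-Laplacian in $L^2(\Omega)$, and on the $H^1(\Omega)$-velocity.

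\medskip
\noindent\textbf{Item (1): existence of an optimal transition.}
Fix $\varrho>0$ with $\max_{i=0,1}(\|z_i\|_\calZ+\|\rmD_z\calI(t,z_i)\|_{L^2(\Omega)})\le\varrho$ and $\Costn{\mathfrak{f}}{\varrho}{t}{z_0}{z_1}<+\infty$. I would take a minimizing sequence $(\teta_n)_n\subset\calT_t^\varrho(z_0,z_1)$. Up to reparameterization (exploiting the $1$-homogeneity of $v\mapsto\mathfrak{f}_t(z,v)$, cf.\ \cite[Lemma~1.1.4]{AGS2008}) I may assume each $\teta_n$ is defined on $[0,1]$ and has constant, bounded $\mathfrak{f}_t$-speed; combined with the defining bounds of $\calT_t^\varrho$, this gives uniform bounds for $(\teta_n)_n$ in $L^\infty(0,1;\calZ)$, for $(\teta_n')_n$ in $L^1(0,1;L^2(\Omega))$ and, via the identity $A_q\teta_n=-\rmD_z\calI(t,\teta_n)+\rmD_z\wt\calI(t,\teta_n)$ together with \eqref{estimate-for-DI} and Proposition~\ref{prop:Savare98}, also a uniform bound in $L^\infty(0,1;W^{1+\sigma,q}(\Omega))$. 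Using the compact embedding $W^{1+\sigma,q}(\Omega)\Subset\calZ$ and an Arzel\`a--Ascoli/Aubin--Lions argument I extract a limit $\teta$ with $\teta_n\to\teta$ in $\rmC^0([0,1];\calZ)$, $\teta_n\rightharpoonup\teta$ in $W^{1,1}(0,1;L^2(\Omega))$ and $\teta_n(r)\rightharpoonup\teta(r)$ in $W^{1+\sigma,q}(\Omega)$ pointwise. The admissibility $\teta\in\calT_t^\varrho(z_0,z_1)$ follows from lower semicontinuity of norms; and $\int_0^1\mathfrak{f}_t(\teta(r),\teta'(r))\dd r\le\liminf_n\int_0^1\mathfrak{f}_t(\teta_n(r),\teta_n'(r))\dd r$ follows since $\mathfrak{f}_t(z,v)=\calR_1(v)+\|v\|_{L^2(\Omega)}\,\mathrm{dist}_{L^2}(-\rmD_z\calI(t,z),\partial\calR_1(0))$ is convex and $1$-homogeneous in $v$, lower semicontinuous in $v$ for the weak $L^2$-topology, and continuous in $z$ along $\calZ$-convergence (by Corollary~\ref{coro-fre}); a Ioffe-type lower semicontinuity theorem for integral functionals applies. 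Hence $\teta$ attains the infimum.

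\medskip
\noindent\textbf{Item (2): lower semicontinuity of the costs.}
Given $z_0^n\to z_0$, $z_1^n\to z_1$ in $\calZ$ and $\varrho$ as stated, I pick for each $n$ a near-optimal $\teta_n\in\calT_t^\varrho(z_0^n,z_1^n)$ with $\int_0^1\mathfrak{f}_t(\teta_n,\teta_n')\dd r\le\Costn{\mathfrak{f}}{\varrho}{t}{z_0^n}{z_1^n}+\tfrac1n$ (if the $\liminf$ is $+\infty$ there is nothing to prove). As in item (1), reparameterize and extract a subsequence converging to a limit $\teta\in\calT_t^\varrho(z_0,z_1)$ connecting $z_0$ and $z_1$; the same lower semicontinuity of the cost integrand then yields $\Costn{\mathfrak{f}}{\varrho}{t}{z_0}{z_1}\le\int_0^1\mathfrak{f}_t(\teta,\teta')\dd r\le\liminf_n\Costn{\mathfrak{f}}{\varrho}{t}{z_0^n}{z_1^n}$.

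\medskip
\noindent\textbf{Item (3): construction of the limit transition from discrete solutions.}
This is the main obstacle. Under the hypotheses \eqref{conds-tech}, I first reparameterize each $\widehat z_k$ (and accordingly $\overline z_k$) by arc length in $H^1(\Omega)$: set $\mathsf{s}_k(r):=\int_{\alpha_k}^r\bigl(1+\|\widehat z_k'(\sigma)\|_{H^1(\Omega)}\bigr)\dd\sigma$, $S_k:=\mathsf{s}_k(\beta_k)$, and then rescale to $[0,1]$ via $\mathsf{t}_k:=\mathsf{s}_k^{-1}(S_k\,\cdot\,)$. The uniform bound $\|\widehat z_k'\|_{L^1(\alpha_k,\beta_k;H^1(\Omega))}\le\bar\varrho$ gives $S_k\le\bar\varrho+(\beta_k-\alpha_k)$, hence $(S_k)_k$ is bounded, and after a further (not relabeled) extraction $S_k\to S_\infty$. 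Write $\wt z_k:=\widehat z_k\circ\mathsf{t}_k$, which by construction satisfies $\|\wt z_k'(s)\|_{H^1(\Omega)}=S_k\bigl(1+\|\widehat z_k'\|_{H^1(\Omega)}\bigr)^{-1}\cdot\bigl(\text{speed}\bigr)$, so $\|\wt z_k\|_{W^{1,\infty}(0,1;H^1(\Omega))}\le C(\bar\varrho)$ uniformly; moreover $\wt z_k$ is bounded in $L^\infty(0,1;\calZ)$ and, via \eqref{estimate-for-DI} and Proposition~\ref{prop:Savare98}, in $L^\infty(0,1;W^{1+\sigma,q}(\Omega))$. By Arzel\`a--Ascoli in $\rmC^0([0,1];\calZ)$ (using $W^{1+\sigma,q}\Subset\calZ$ and the equi-Lipschitz property in $H^1$) and weak-$*$ compactness in $W^{1,\infty}(0,1;H^1(\Omega))$ I obtain a limit $\teta$ with $\wt z_k\to\teta$ in $\rmC^0([0,1];H^1(\Omega))$ (even in $\rmC^0([0,1];\calZ)$) and $\teta\in W^{1,\infty}(0,1;H^1(\Omega))$, which is \eqref{thesis:technical-2}; the corresponding claim for $\overline z_k\circ\mathsf{t}_k$ follows from the second line of \eqref{conds-tech}, giving \eqref{thesis:technical-1}. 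Endpoint convergence $\teta(0)=z_0$, $\teta(1)=z_1$ comes from $\overline z_k(\alpha_k)\to z_0$, $\overline z_k(\beta_k)\to z_1$ in $\calZ$; admissibility $\teta\in\calT_t^{\bar\varrho}(z_0,z_1)$ (in particular $\rmD_z\calI(t,\teta(\cdot))\in L^\infty(0,1;L^2(\Omega))$) follows by passing to the weak-$*$ limit in the $L^\infty(0,1;L^2(\Omega))$-bound on $\rmD_z\calI(\overline t_{\tau_k},\overline z_k)$ and using the continuity $\rmD_z\wt\calI(\overline t_{\tau_k}(\cdot),\cdot)\to\rmD_z\wt\calI(t,\cdot)$ in $L^2(\Omega)$ along $\calZ$-convergence (Corollary~\ref{coro-fre}, together with $\overline t_{\tau_k}(r)\to t$). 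Finally, for \eqref{thesis:technical-4} I use the change of variables $r=\mathsf{t}_k(s)$ and the chain-rule scaling $\calR_{\epsi_k}(\widehat z_k'(r))\dd r+\calR_{\epsi_k}^*(-\rmD_z\calI(\overline t_{\tau_k}(r),\overline z_k(r)))\dd r\ge\mathfrak{p}\bigl(\widehat z_k'(r),-\rmD_z\calI(\overline t_{\tau_k}(r),\overline z_k(r))\bigr)\dd r=\mathfrak{p}\bigl(\wt z_k'(s),-\rmD_z\calI(\overline t_{\tau_k}(\mathsf{t}_k(s)),\overline z_k(\mathsf{t}_k(s)))\bigr)\dd s$, using that $\mathfrak{p}$ is $1$-homogeneous in its first argument so that its time integral is reparameterization-invariant; then I pass to the $\liminf$ using, once again, the lower semicontinuity of $v\mapsto\mathfrak{f}_t(z,v)=\mathfrak{p}(v,-\rmD_z\calI(t,z))$ for the weak $L^2$-topology in $v$ (joint with $\calZ$-strong in $z$), plus the pointwise-in-$s$ convergences already established and $\overline t_{\tau_k}(\mathsf{t}_k(s))\to t$. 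The first inequality in \eqref{thesis:technical-4} is the definition of $\Costn{\mathfrak{f}}{\bar\varrho}{t}{z_0}{z_1}$ since $\teta\in\calT_t^{\bar\varrho}(z_0,z_1)$.

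\medskip
The delicate point throughout is the joint lower semicontinuity of the cost functional: the integrand $\mathfrak{f}_t(z,v)$ does not control the $\calZ$-norm (nor the $L^2$-norm of $\rmD_z\calI$) of the transition curve, so compactness must come entirely from the a~priori bounds built into $\calT_t^\varrho$ and from the reparameterization; and one must be careful that the weak $L^2$-convergence of the velocities is the right topology for which $v\mapsto\mathfrak{f}_t(z,v)$ is lower semicontinuous, while the $z$-dependence requires \emph{strong} $\calZ$-convergence, which is exactly what the compact embedding $W^{1+\sigma,q}(\Omega)\Subset\calZ$ delivers.
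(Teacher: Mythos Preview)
Your overall strategy matches the paper's, and item~(3) is essentially correct---including the $H^1(\Omega)$-arc-length reparameterization, the passage through $\mathfrak{p}$, and the lower-semicontinuity step (for which the paper itself only refers to \cite[Prop.~7.1]{MRS16}).

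The gap is in items~(1) and~(2). Reparameterizing to constant $\mathfrak{f}_t$-speed does not deliver the velocity compactness you need. Since $\mathfrak{f}_t(z,v)\ge\calR_1(v)$ only controls $\|v\|_{L^1(\Omega)}$ (when finite), constant $\mathfrak{f}_t$-speed yields at best $\teta_n'\in L^\infty(0,1;L^1(\Omega))$; the remaining, reparameterization-invariant bound $\|\teta_n'\|_{L^1(0,1;L^2(\Omega))}\le\varrho$ coming from $\calT_t^\varrho$ gives no weak compactness in $L^1(0,1;L^2(\Omega))$ (there is no equi-integrability). Your claimed convergence ``$\teta_n\rightharpoonup\teta$ in $W^{1,1}(0,1;L^2(\Omega))$'' is therefore unjustified, and the Ioffe-type lower-semicontinuity step has no weakly convergent velocity sequence to act on. (Nor does the limit curve obviously stay in $\AC([0,1];L^2(\Omega))$---a uniform $W^{1,1}$-bound plus $\rmC^0$-convergence only gives a $\BV$ limit.)

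The paper fixes this exactly the way you yourself proceed in item~(3): it reparameterizes by $L^2(\Omega)$-arc-length,
\[
\sfs_n(r)=c_n\Bigl(r+\int_0^r\|\teta_n'(\sigma)\|_{L^2(\Omega)}\,\dd\sigma\Bigr),
\qquad \sfr_n:=\sfs_n^{-1},\quad \sfteta_n:=\teta_n\circ\sfr_n,
\]
which produces the pointwise normalization $\sfr_n'(s)+\|\sfteta_n'(s)\|_{L^2(\Omega)}\le 1/\bar c$ and hence a uniform $W^{1,\infty}(0,\mathsf S;L^2(\Omega))$-bound on the rescaled curves. With this, weak-$*$ compactness of the velocities in $L^\infty(0,\mathsf S;L^2(\Omega))$, the Aubin--Lions compactness in $\rmC^0([0,\mathsf S];\calZ)$ (via the $W^{1+\sigma,q}$-bound), and the lower-semicontinuity of $\int\mathfrak{f}_t(\sfteta_n,\sfteta_n')$ all go through. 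With this single change---replace ``constant $\mathfrak{f}_t$-speed'' by the $L^2(\Omega)$-arc-length reparameterization---your proof of (1)--(2) coincides with the paper's.
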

\begin{proof}
%As for \textbf{(1)}, we  refer to the proof of \cite[Thm.\ 3.7]{MRS16}.
We start by addressing  the proof of \textbf{(2)}: Along the footsteps of 
the proof of \cite[Thm.\ 3.7]{MRS16}, we consider    a sequence of admissible 
transitions $\teta_n \in \mathcal{T}_t^{\varrho}(z_0^n,z_1^n)$  such that 
\[
\int_0^1 \mathfrak{f}_t(\teta_n(r),\teta_n'(r)) \dd r \leq 
\Costn{\mathfrak{f}}{\varrho}t{z_0^n}{z_1^n} +\eta_n \qquad \text{with } \eta_n 
\geq 0 \text{ and } \lim_{n\to\infty}\eta_n = \eta\geq 0\,.
\]
We perform the change of variable
\begin{equation}
\label{change-var}
\sfs_n(r): = c_n \left( r {+} \int_0^r \|\teta_n'(\sigma)\|_{L^2(\Omega)} \dd 
\sigma \right), \qquad \sfr_n: = \sfs_n^{-1}: [0, \mathsf{S}] \to [0,1], \qquad 
\sfteta_n: =\teta_n \circ \sfr_n: [0,\mathsf{S}]\to \calZ,
\end{equation}
with $c_n$ a normalization constant such that $\mathsf{S} = \sfs_n(1)$ is 
independent of $n\in\N$. In view of  the estimate $\| 
\teta_n'\|_{L^1(0,1;L^2(\Omega))} \leq \varrho$ encoded in the definition of 
$\Costname{\mathfrak{f}}^{\varrho}$, we  have that $c_n \geq \bar c>0$ for all 
$n\in \N$.  The curves $(\sfr_n,\sfteta_n)_n$ fulfill the normalization 
condition
\begin{subequations}
\label{est-curves}
\begin{equation}
\label{est-curves-1}
\sfr_n'(s) + \|\sfteta_n'(s)\|_{L^2(\Omega)} =\frac1{c_n} \leq \frac1{\bar c} 
\qquad \foraa\, s \in (0,\mathsf{S})
\end{equation}
and, moreover, 
\begin{equation}
\label{est-curves-2}
  \| \sfteta_n \|_{L^\infty(0,\mathsf{S}; \calZ)} +   \| \sfteta_n' 
\|_{L^1(0,\mathsf{S}; L^2(\Omega))} +  \| \rmD_z \ene t{\sfteta_n(\cdot)} 
\|_{L^\infty (0,\mathsf{S}; L^2(\Omega))} \leq \varrho.
\end{equation}
It follows from the first bound in \eqref{est-curves-2} and from 
\eqref{estimate-for-DI} that 
$ \| \rmD_z \widetilde{\calI}(t,{\sfteta_n(\cdot)}) \|_{L^\infty (0,\mathsf{S}; 
L^2(\Omega))}\leq C$. Therefore we deduce that $\| A_q (\sfteta_n)\|_{L^\infty 
(0,\mathsf{S}; L^2(\Omega))}\leq C$, which yields, in view of the aforementioned 
regularity results from 
Proposition \ref{prop:Savare98}, a bound 
for $(\sfteta_n)_n$ in $L^\infty (0,\mathsf{S}; 
W^{1+\sigma,q}(\Omega)))$ for all $0<\sigma<\frac1q$.
\end{subequations}
In view of  \eqref{est-curves-1},  there exists $\sfr \in 
W^{1,\infty}(0,\mathsf{S})$ such that, up to a not relabeled subsequence, 
$\sfr_n \to r$ uniformly in $[0,\mathsf{S}]$ and weakly$^*$ in 
$W^{1,\infty}(0,\mathsf{S})$.
Furthermore, by Aubin-Lions type compactness results (cf., e.g.\   
\cite[Thm.\ 5, Cor.\ 4]{simon87}), there exists a curve  $\sfteta \in  
L^\infty(0,\mathsf{S}; W^{1+\sigma,q}(\Omega)) \cap \rmC^0 
([0,\mathsf{S}];\calZ)  \cap W^{1,\infty}(0,\mathsf{S}; L^2(\Omega))$ for all 
$0<\sigma<\frac1q,$ with $ \rmD_z \ene t{\sfteta(\cdot)} \in L^\infty 
(0,\mathsf{S}; L^2(\Omega))$, such that
\begin{equation}
\label{needed-technical}
\begin{aligned}
&
\sfteta_n \weaksto \sfteta && \text{ in } L^\infty(0,\mathsf{S}; 
W^{1+\sigma,q}(\Omega)) \cap W^{1,\infty}(0,\mathsf{S}; L^2(\Omega)) \quad 
\text{for all } 0<\sigma<\frac1q,
\\
&\sfteta_n \to \sfteta && \text{ in } \rmC^0 ([0,\mathsf{S}];\calZ)\,,
\\
&
\rmD_z \ene t{\sfteta_n} \weaksto \rmD_z \ene t{\sfteta} && \text{ in } L^\infty 
(0,\mathsf{S}; L^2(\Omega))
\end{aligned}
\end{equation}
(the latter convergence property following from the fact that 
$\rmD_z \ene t{\sfteta_n} = A_q (\sfteta_n) + \rmD_z \tildene t{\sfteta_n} $ 
converges strongly to $ \rmD_z \ene t{\sfteta}$ in 
$L^\infty(0,\mathsf{S};\calZ^*)$ in view of  the second of 
\eqref{needed-technical}, combined with \eqref{weak-continuity}).
Therefore,  
\[
  \| \sfteta \|_{L^\infty(0,\mathsf{S}; \calZ)} +   \| \sfteta' 
\|_{L^1(0,\mathsf{S}; L^2(\Omega))}+
\| \rmD_z \ene t{\sfteta(\cdot)} \|_{L^\infty (0,\mathsf{S}; L^2(\Omega))} \leq 
\varrho.
\]
We thus conclude that $\sfteta \in \mathcal{T}_t^{\varrho}(z_0,z_1)$; up to 
a reparameterization, we may suppose $\sfteta$ to be  defined on $[0,1]$.
Arguing  in the very same way as in the proof of \cite[Thm.\ 5.1]{krz}, 
\cite[Thm.\ 7.4]{KRZ2},     
%\footnote{should we give more details here?} 
we see that 
\[
\begin{aligned}
\eta+\liminf_{n\to\infty} \Costn{\mathfrak{f}}{\varrho}{t}{z_0^n}{z_1^n} &  \geq 
\liminf_{n\to\infty}  \int_0^1 \mathfrak{f}_t(\teta_n(r),\teta_n'(r)) \dd r = 
\liminf_{n\to\infty} \int_0^{\mathsf{S}}  
\mathfrak{f}_t(\sfteta_n(s),\sfteta_n'(s)) \dd s 
\\
&  
\geq  \int_0^{\mathsf{S}}  \mathfrak{f}_t(\sfteta(s),\sfteta'(s)) \dd s 
\geq \Costn{\mathfrak f}{\varrho}t
{z_0}{z_1}\,.
\end{aligned}
\]
Observe that the last inequality follows from the fact that $\sfteta$ is an 
admissible curve between $z_0$ and $z_1$.  Since $\eta\geq 0$ is arbitrary,   
this concludes the proof of \textbf{(2)}; a slight modification of this argument 
yields  \textbf{(1)}, as well.  
\par
In order to prove  \textbf{(3)}, we can confine the discussion to the case 
$z_0\neq z_1$, so that 
\[
\lim_{k\to\infty}   \int_{\alpha_k}^{\beta_k} 
\left(
\calR_{\epsi_k}(\widehat{z}_k'(r)) {+} \calR_{\epsi_k}^* ({-} \rmD_z 
\ene{\overline{t}_{\tau_k}(r)}{\overline{z}_k(r)}) \right) \dd r  =: L \geq 
\calR_1(z_1{-}z_0)>0\,.
\]
In analogy with 
\eqref{change-var}, 
 but taking now into account that $ (\widehat{z}_k)_k$ is bounded in 
$W^{1,1}(\alpha_k,\beta_k;H^1(\Omega))$ by \eqref{conds-tech}, 
we define 
\[
\sfs_k(r): = c_k \left( r {+} \int_0^r   \| 
\widehat{z}_k'(\sigma)\|_{H^1(\Omega)} \dd \sigma \right)
\quad \text{for all } r \in [0,\beta_k -\alpha_k]
\]
where the normalization constant $c_k$ is  now chosen in such a way as to have 
$\sfs_k(\beta_k-\alpha_k)=1$. Thus, we set
\[
  \sft_k: = \sfs_k^{-1}: [0, 1] \to [\alpha_k,\beta_k], \qquad 
\overline{\sfz}_k: =\overline{z}_k \circ \sft_k,\,  \widehat{\sfz}_k: 
=\widehat{z}_k \circ \sft_k : [0,1]\to \calZ,
  \]
and observe that  the following estimates hold
\begin{subequations}
\label{other-ests-palla}
\begin{align}
\label{palla1}
&
\| \sft_k\|_{W^{1,\infty}(0,1)} + 
 \|  \widehat{\sfz}_k \|_{W^{1,\infty} (0,1;H^1(\Omega))} 
 \leq C,
 \\
 & 
 \label{palla1/2}
 \| \overline{\sfz}_k\|_{L^\infty (0,1;\calZ)} 
 +
 \| \widehat{\sfz}_k\|_{L^\infty (0,1;\calZ)}
 +   \| \widehat{\sfz}_k'\|_{L^1 (0,1;H^1(\Omega))} 
  + \|   \rmD_z \ene{\overline{t}_{\tau_k} \circ \sft_k}{\overline{\sfz}_k}  
\|_{L^\infty(0,1;L^2(\Omega))}   \leq \bar\varrho\,,
\end{align}
where \eqref{palla1} is due to the analogue of the normalization condition 
\eqref{est-curves-1}, while \eqref{palla1/2} derives from \eqref{conds-tech}. 
From the  bound for $ \| \rmD_z \ene{\overline{t}_{\tau_k} \circ 
\sft_k}{\overline{\sfz}_k} \|_{L^\infty(0,1;L^2(\Omega))}$, taking into account 
that 
$
\|   \rmD_z \widetilde{\calI}(\overline{t}_{\tau_k} \circ \sft_k, \overline{\sfz}_k) 
 \|_{L^\infty(0,1;L^2(\Omega))}   \leq C $ in view of \eqref{estimate-for-DI} 
and the estimate
$ \| \overline{\sfz}_k\|_{L^\infty (0,1;\calZ)} \leq C$, 
we also deduce 
\begin{equation}
\label{palla2}
\|   A_q(\overline{\sfz}_k)  \|_{L^\infty(0,1;L^2(\Omega))}   \leq C\,.
\end{equation}
\end{subequations}
Combining  estimates \eqref{other-ests-palla} with, again, the compactness 
results 
\cite[Thm.\ 5, Cor.\ 4]{simon87}, and taking into account that 
$(\overline{\sfz}_k)$ and $(\widehat{\sfz}_k)_k$ converge to the same limit in 
view of the second of \eqref{conds-tech}, 
with the very same arguments as in the proof of \textbf{(2)}
we conclude that 
 there exists $\teta  $   such that
\begin{subequations}
\label{convs-palla}
\begin{align}
&
\label{convs-palla-1}
\widehat{\sfz}_k\weaksto \teta  && \text{ in }   L^\infty (0,1; \calZ) \cap 
W^{1,\infty}(0,1; H^1(\Omega)),
\\
&
\label{convs-palla-2}
\overline{\sfz}_k\weaksto \teta  && \text{ in }   L^\infty (0,1;  
W^{1+\sigma,q}(\Omega)) \quad \text{for all } 0 <\sigma<\frac1q,
\\
&
\label{convs-palla-3}
\overline{\sfz}_k\to \teta && \text{ in } L^\infty (0,1; \calZ),
\\
& 
\label{convs-palla-4}
\widehat{\sfz}_k\to \teta  && \text{ in }  \rmC^0([0,1],H^1(\Omega))\,,
\end{align}
whence \eqref{thesis:technical-1} and \eqref{thesis:technical-2}. 
Furthermore, observe that   $A_q(\overline{\sfz}_k) \weaksto A_q(\teta) $ 
in $L^\infty(0,1;L^2(\Omega))$   and that, as $k\to\infty$,  
\begin{equation}
\label{convs-palla-5}
\begin{aligned}
  \|   \rmD_z \tildene{\overline{t}_{\tau_k} \circ \sft_k}{\overline{\sfz}_k}  
- 
 \rmD_z \tildene{t}{\teta}
  \|_{L^\infty(0,1;L^2(\Omega))} 
 \stackrel{(1)}{  \leq} 
C\sup_{s\in [0,1]} \left( |\overline{t}_{\tau_k} ( \sft_k(s)) - t| + \| 
\overline{\sfz}_k(s)- \teta(s)\|_{L^6(\Omega)} \right) \stackrel{(2)}{\to} 
0
\end{aligned}
\end{equation}
\end{subequations}
with (1) due to \eqref{enhanced-stim-7}, and convergence (2) due to 
\eqref{convs-palla-3}, joint with the fact that $\sup_{s\in [0,1]} | 
\sft_k(s))-t|\to 0$ as $\sft_k$ takes values in the interval 
$[\alpha_k,\beta_k]$ which shrinks to $\{t\}$. 
 All in all, $  \rmD_z \ene{\overline{t}_{\tau_k} \circ 
\sft_k}{\overline{\sfz}_k} \weaksto  \rmD_z \ene{t}{\teta}$ in 
$L^\infty(0,1;L^2(\Omega))$. 
It follows from  estimates \eqref{palla1/2} and convergences \eqref{convs-palla} 
that $\teta \in \mathcal{T}_t^{\bar{\varrho}} (z_0,z_1)$. 
It remains to conclude \eqref{thesis:technical-4}.  For this limit passage, we 
rely on convergences \eqref{convs-palla} and refer the reader to the proof of 
\cite[Prop.\ 7.1]{MRS16}, cf.\ also \cite[Thm.\ 5.1]{krz}, \cite[Thm.\ 
7.4]{KRZ2}. %\footnote{should we give more details here???}
\par
This finishes the proof of Proposition \ref{prop:technical}. 
\end{proof}
\medskip

 We continue this section by carrying out the 
\underline{\bf proof of Proposition \ref{prop:ch-rule}}, by suitably adapting the argument for  
the chain-rule result \cite[Thm.\ 3.13]{MRS16}.  
 From now on, we will suppose that $t_0=0$ and $t_1=T$ for the sake of 
simplicity. 
 Let $\varrho>0$ fulfill \eqref{admissible-threshold}.
\par
First of all,  for any $z \in \BV([0,T];L^2(\Omega))$  fulfilling 
the conditions of the statement we construct 
a \emph{parameterized} curve $(\mathsf{t},\mathsf{z}): [0,\mathsf{S}] \to 
[0,T]\times \calZ$ with the following properties: 
\[
z(t) \in \{ \mathsf{z}(s)\, : \mathsf{t}(s) =t\}
\]
and
\begin{itemize}
\item[-] $\mathsf{t}$ is non-decreasing, surjective, Lipschitz,
\item[-] $\mathsf{z} \in L^\infty (0,\mathsf{S}; \calZ) \cap \AC 
([0,\mathsf{S}]; L^2(\Omega))$ and $\rmD_z\calI (\cdot,\mathsf{z}(\cdot)) \in 
L^\infty(0,\mathsf{S};L^2(\Omega))$.
\end{itemize}
The integrability and regularity requirements on $\mathsf{z}$ coincide with  
those on admissible transition curves, cf.\ Definition \ref{def:curves+cost}. 
Hence, we will call $(\mathsf{t},\mathsf{z})$  \emph{admissible parameterized 
curve}. 
We borrow the  construction of $(\sft,\sfz)$, starting from the $\BV$-curve $z$, 
from the proof of \cite[Prop.\ 4.7]{MRS16}: first, we introduce the 
parameterization 
\[
\sfs(t): =t + \pVar{L^2(\Omega)}z{0}{t}, \quad 
\mathsf{S}: = \sfs(T).
\] 
We define 
\[
\sft: = \sfs^{-1}: [0,\mathsf{S}]\setminus I \to [0,T], \qquad \sfz:= z \circ \sft,
\]
where the set $I$ is given by $I= \cup_n I_n$,
with $I_n =(\sfs(t_{n-}), \sfs(t_{n+}))$ and the points $(t_n)_n$ constitute 
the countable jump set of $z$, which in fact coincides with the jump set of 
$\sfs$. 
%\CCC 
We extend $\sft$ and $\sfz$ to $I$ by setting
\[
\sft(s): = t_n, \quad \sfz(s): = \teta_n(\mathsf{r}_n(s)) \quad \text{if } s \in I_n,
\]
with $\mathsf{r}_n: \overline{I_n} \to [0,1]$ the unique affine and strictly 
increasing  function from $I_n$ to $[0,1]$ and 
$\teta_n \in \mathcal{T}_{t_n}^\varrho (z(t_{n-}), z(t_{n+}))$ an admissible 
transition curve satisfying $\teta_n(\mathsf{r}_n(\sfs(t_n)))=z(t_n)$ and the 
optimality condition % (cf.\ Proposition \ref{prop:technical})
\[
\int_0^1 \mathfrak{f}_{t_n}(\teta_n(r),\teta_n'(r)) \dd r =  
\Costn{\mathfrak{f}}{\varrho}{t_n}{z(t_{n-})}{z(t_n)} + 
\Costn{\mathfrak{f}}{\varrho}{t_n}{z(t_{n})}{z(t_{n+})}\,.
\]
The existence of such an optimal transition follows from Proposition 
\ref{prop:technical}(1). Indeed, let $t_*\in \rmJ_z$.
Observe  that in $(t_*, z(t_{*\pm}))$ the assumptions of the proposition 
 are satisfied, which can be seen as follows.
First of all, $\Costn{\mathfrak{f}}{\varrho}{t_n}{z(t_{n-})}{z(t_n)}<\infty $ and 
$\Costn{\mathfrak{f}}{\varrho}{t_n}{z(t_{n})}{z(t_{n+})}<\infty$ since $\pVarn{\mathfrak{f}}{\varrho}z0T<+\infty$.  Moreover, choose a 
sequence $s_k\to t_{*-}$ for $k\to\infty$ such that the assumptions of Prop.\ 
\ref{prop:technical}(1) are 
satisfied along this 
sequence and such that $z(s_k)\rightharpoonup z(t_{*-})$ in $\calZ$. 
Consequently, by Corollary \ref{coro-fre}, $\rmD_z\wt\calI(s_k,z(s_k))\to   
\rmD_z\wt\calI(t_*,z(t_{*-}))$ and $\norm{A_q(z(s_k))}_{L^2(\Omega)}\leq C$, 
which translates into a uniform bound of the sequence $(z(s_k))_k$ in 
$W^{1+\sigma,q}(\Omega)$ for $0<\sigma<\frac1q$, cf.\
Proposition \ref{prop:Savare98}. Thus, we finally 
conclude that $\rmD_z\calI(t_*,z(t_{*-}))\in L^2(\Omega)$ and that 
$\norm{z(t_{*-})}_\calZ + \norm{\rmD_z\calI(t_*,z(t_{*-}))}_{L^2(\Omega)}\leq 
\varrho$.  A similar argument applies to $t_{*+}$. 

By construction, $\sfz \in W^{1,\infty}(0,\rmS;L^2(\Omega))$. Indeed, let 
$s_1<s_2\in [0,\rmS]$ and $\sigma_i:=\sft(s_i)$. Hence, 
$s_i=\sigma_i +\pVar{L^2(\Omega)}z{0}{\sigma_i}$. This implies that 
\[
 \norm{\sfz(s_1) - \sfz(s_2)}_{L^2(\Omega)}\leq 
 \abs{\sigma_2 +\pVar{L^2(\Omega)}z{0}{\sigma_2} -(\sigma_1 
+\pVar{L^2(\Omega)}z{0}{\sigma_1})  }=\abs{s_2-s_1}\,.
\]
Hence, altogether 
 $(\sft,\sfz)$ is an admissible parameterized curve. 

By repeating the very same calculations as in the proof of  \cite[Prop.\ 
4.7]{MRS16}, we may show that 
\begin{equation}
\label{ad-cr1}
 \pVarn{\mathfrak{f}}{\varrho}z{0}{T} = \int_0^{\mathsf{S}} \mathfrak{f}_{\sft(s)}(\sfz(s),\sfz'(s)) \dd s\,.
\end{equation}
\par
Secondly, we observe that the chain rule from Lemma \ref{l:ch-rule} (cf.\ also Remark \ref{rmk:alternative-ch-requir}) extends to the admissible parameterized curve $(\sft,\sfz)$, yielding
\[
\frac{\dd}{\dd s} \calI(\sft(s),\sfz(s)) -\partial_t  \calI(\sft(s),\sfz(s)) \sft'(s)  = \int_\Omega \rmD_z  \calI(\sft(s),\sfz(s))  \sfz'(s) \dd x \qquad \foraa\, s \in (0,\mathsf{S})\,.
\]
Therefore, with a simple calculation (cf.\ also the proof of \cite[Thm.\ 
4.4]{MRS16}) we infer that
\begin{equation}
\label{ad-cr2}
\left| \frac{\dd}{\dd s} \calI(\sft(s),\sfz(s)) -\partial_t  \calI(\sft(s),\sfz(s)) \sft'(s) \right| \leq  \mathfrak{f}_{\sft(s)}(\sfz(s),\sfz'(s))    \qquad \foraa\, s \in (0,\mathsf{S})\,.
\end{equation}
\par
Combining \eqref{ad-cr1} \& \eqref{ad-cr2} we obtain the desired chain-rule inequality \eqref{ch-ineq-dam}.  
\QED 
%  $\mathsf{t}: [0,\mathsf{S}] \to [0,T]$ 

We are now in a position to give the \underline{\textbf{proof of Theorem \ref{thm:van-visc-eps-tau}}}. 
We will split the proof in several steps and give some intermediate results. Let us mention in advance that, in their statements, we will always tacitly suppose that  Assumptions \ref{ass:domain}, \ref{assumption:energy}, and \ref{ass:load}, as well as condition \eqref{cvg-initial}, from  Theorem \ref{thm:van-visc-eps-tau} hold. 
More precisely,
\begin{itemize}
\item[-]
 we start by
 fixing the compactness properties of the sequences $(\pwc z{\tau_k}{\epsi_k})_k,\, (\pwl z{\tau_k}{\epsi_k})_k$ in Lemma \ref{l:compactn} below. 
 \item[-]
Throughout Steps $1$--$3$ we show that any limit curve $z$ of $(\pwc z{\tau_k}{\epsi_k})_k,\, (\pwl z{\tau_k}{\epsi_k})_k $ complies with the local stability \eqref{eq:65bis} and with the energy-dissipation inequality \eqref{one-sided-0T},
obtained by passing to the limit in its discrete counterpart \eqref{discr-enineq}. By virtue of Corollary \ref{cor:this-only}  we 
  thus conclude that $z$ is a Balanced Viscosity solution to the rate-independent system \eqref{dndia}.
  \item[-]
Steps 4 \& 5 are devoted to finalizing the proof of convergences \eqref{convergences}, and to showing that $z$ is a $H^1(\Omega)$-parameterizable solution, cf.\
 \eqref{H1-param}.
 \end{itemize}
 \medskip
 
\paragraph{\bf Step $0$: Compactness.} 
We prove the following
\begin{lemma}
\label{l:compactn}
Let $(\tau_k,\epsi_k)_k$ be null sequences. There holds
\begin{equation}
\label{stability-est}
\exists\, C>0 \ \forall\, k \in \N \, : \  \sup_{t\in [0,T]}\| \pwc z{\tau_k}{\epsi_k} (t) {-} \pwl z{\tau_k}{\epsi_k} (t)\|_{H^1(\Omega)} \leq C\left( \frac{\tau_k}{\epsi_k}\right)^{1/2}\,.
\end{equation}
Suppose in addition   \eqref{crucial-seq-params}. Then, there exists a curve $z \in L^\infty (0,T;\calZ) \cap \BV ([0,T];H^1(\Omega))$ such that, up to a (not relabeled) subsequence, the following convergences hold:
\begin{subequations}
\label{cvgs-e-t-zero}
\begin{align}
&
\label{cvg-1}
 \pwc z{\tau_k}{\epsi_k},  \,  \pwl z{\tau_k}{\epsi_k}   \weaksto z && 
\text{ in } L^\infty (0,T;\calZ), 
 \\
&
\label{cvg-3}
 \pwc z{\tau_k}{\epsi_k}(t) ,\, \pwl z{\tau_k}{\epsi_k} (t) \to z(t) && 
\text{ in } \calZ \quad \text{ for all } t \in [0,T], 
\\
&
\label{cvg-4} 
\rmD_z \ene{\overline{t}_{\tau_k}(t)}{\pwc z{\tau_k}{\epsi_k}(t)}   \weakto 
\rmD_z \ene t{z(t)} && \text{ in } L^2(\Omega) \quad \text{ for all } t \in 
[0,T].
\end{align}
\end{subequations}
\end{lemma}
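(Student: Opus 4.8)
The plan is to prove Lemma \ref{l:compactn} in two stages: first the quantitative closeness estimate \eqref{stability-est} between the piecewise constant and piecewise linear interpolants, and then the compactness statements \eqref{cvgs-e-t-zero} assuming the rate condition \eqref{crucial-seq-params}.

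For \eqref{stability-est}, the key observation is that for $t \in [t_k^\tau, t_{k+1}^\tau]$ one has $\pwc z\tau\epsi(t) - \pwl z\tau\epsi(t) = \frac{t_{k+1}^\tau - t}{\tau}(z_{k+1}^\tau - z_k^\tau)$, so that $\|\pwc z\tau\epsi(t) - \pwl z\tau\epsi(t)\|_{H^1(\Omega)} \le \|z_{k+1}^\tau - z_k^\tau\|_{H^1(\Omega)} = \tau \|\widehat z_\tau'(t)\|_{H^1(\Omega)}$ on each subinterval (with $\widehat z_\tau'$ constant there). Now I would not use the pointwise bound \eqref{est-epsi-tau-2} directly, since its constant $C(\epsi)$ blows up; instead I would exploit the uniform bound \eqref{est-epsi-tau-2} more carefully. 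Recall that \eqref{very-useful-later}, after summation, gives $\tfrac\epsi2\|\widehat z_\tau'(t)\|_{L^2(\Omega)}^2 + C_q\int_0^{\overline t_\tau(t)}\|\widehat z_\tau'\|_{H^1(\Omega)}^2\,\dd\rho \le C + \epsi^{-1}\|\rmD_z\calI(0,z_0)\|_{L^2(\Omega)}^2 + \ldots$; combined with the $L^2$-in-time bound for $\widehat z_\tau'$ in $H^1(\Omega)$ coming from \eqref{est-epsi-tau-3} (which is \emph{uniform} in $\epsi$ and even bounds $\widehat z_\tau'$ in $L^1(0,T;H^1(\Omega))$, hence via \eqref{est-epsi-tau-2} in $L^2(0,T;H^1(\Omega))$ with the uniform constant $C$) one gets $\|\widehat z_\tau'(t)\|_{H^1(\Omega)}^2 \le 2\epsi^{-1}(\text{bounded})$. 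More precisely, from the enhanced energy estimate the pointwise quantity $\epsi\|\widehat z_\tau'(t)\|_{L^2(\Omega)}^2$ stays bounded by a $\tau,\epsi$-independent constant, and the discrete Gronwall argument actually yields $\|\widehat z_\tau'(t)\|_{H^1(\Omega)}^2 \le C/\epsi$. Therefore $\tau\|\widehat z_\tau'(t)\|_{H^1(\Omega)} \le \tau (C/\epsi)^{1/2} = C(\tau/\epsi)^{1/2}$, which is \eqref{stability-est}.

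For the compactness part, I would argue as follows. By the uniform bound \eqref{est-epsi-tau-1} the sequences $(\pwc z{\tau_k}{\epsi_k})_k$ and $(\pwl z{\tau_k}{\epsi_k})_k$ are bounded in $L^\infty(0,T;\calZ)$, giving \eqref{cvg-1} along a subsequence by weak$^*$ compactness, with the same limit $z$ for both in view of \eqref{stability-est} and $\tau_k/\epsi_k\to 0$. By \eqref{est-epsi-tau-3} the sequence $(\pwl z{\tau_k}{\epsi_k})_k$ is bounded in $\mathrm{BV}([0,T];H^1(\Omega))$, so an infinite-dimensional Helly theorem (e.g. \cite[Thm.\ 6.1]{MieThe04RIHM}) furnishes a further subsequence with $\pwl z{\tau_k}{\epsi_k}(t) \weakto \tilde z(t)$ in $H^1(\Omega)$ for every $t\in[0,T]$, where necessarily $\tilde z = z$ a.e.; combining this pointwise weak-$H^1$ convergence with the $L^\infty(0,T;\calZ)$-bound and interpolating between $H^1(\Omega)$ and $W^{1,q}(\Omega)$ (recall $W^{1+\sigma,q}(\Omega) \Subset \calZ$ from \eqref{est-epsi-tau-1reg} is \emph{not} available uniformly here since the constant $C(\sigma)$ is $\epsi$-dependent — so instead I would use that a bounded-in-$\calZ$ sequence which converges weakly in $H^1(\Omega)$ at a point converges strongly in $\calZ = W^{1,q}(\Omega)$ only if one has extra compactness; the clean route is the compact embedding $W^{1,q}(\Omega)\Subset \rmC^0(\overline\Omega)$ together with the fact that $\calZ$-boundedness plus $H^1$-weak convergence gives $\calZ$-weak convergence, hence $\rmC^0$-strong, hence, upgrading via uniform convexity of $W^{1,q}$... ) Here the honest statement is: boundedness in $W^{1,q}(\Omega)$ plus convergence in a weaker topology does not by itself give strong $W^{1,q}$-convergence. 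So for \eqref{cvg-3} I would instead invoke the pointwise-in-time estimate on $A_q \pwc z{\tau_k}{\epsi_k}$ from \eqref{est-epsi-tau-4}, which \emph{is} uniform in $\epsi,\tau$: by Proposition \ref{prop:Savare98} this bounds $\pwc z{\tau_k}{\epsi_k}(t)$ in $W^{1+\sigma,q}(\Omega)$ uniformly for each $t$, and $W^{1+\sigma,q}(\Omega)\Subset\calZ$ then upgrades the pointwise weak convergence to the strong convergence \eqref{cvg-3}. Convergence \eqref{cvg-4} follows from writing $\rmD_z\ene{\overline t_{\tau_k}}{\pwc z{\tau_k}{\epsi_k}} = A_q(\pwc z{\tau_k}{\epsi_k}) + \rmD_z\wt\calI(\overline t_{\tau_k},\pwc z{\tau_k}{\epsi_k})$, using \eqref{est-epsi-tau-4} to get weak $L^2$-compactness of the first term (with limit $A_q z(t)$, since $A_q$ is demiclosed and \eqref{cvg-3} holds), \eqref{quoted -later} together with the strong continuity \eqref{enhanced-stim-7}--\eqref{strong-continuity} of $\rmD_z\wt\calI$ for the second term, and identifying the limit via \eqref{der-decomp}.

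The main obstacle is the passage from weak convergence (in $\calZ$ or in $H^1$) at each time $t$ to the \emph{strong} $\calZ$-convergence \eqref{cvg-3}: this genuinely requires the uniform-in-$\epsi$ bound \eqref{est-epsi-tau-4} on $A_q \overline z_\tau$ in $L^\infty(0,T;L^2(\Omega))$ and the resulting $W^{1+\sigma,q}$-regularity from Proposition \ref{prop:Savare98}, not merely the energy-coercivity bound \eqref{est-epsi-tau-1}. A secondary technical point is the Helly-type selection principle for $H^1$-valued $\mathrm{BV}$ curves and the bookkeeping needed to ensure that the pointwise limits of the constant and linear interpolants coincide; this is handled by \eqref{stability-est} and the hypothesis $\tau_k/\epsi_k\to0$. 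Once these are in place the remaining identifications are routine applications of the continuity and monotonicity properties collected in Section \ref{s:2}.
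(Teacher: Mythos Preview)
Your approach is essentially the paper's, but two points deserve correction. For \eqref{stability-est}, the pointwise claim $\|\widehat z_\tau'(t)\|_{H^1(\Omega)}^2 \le C/\epsi$ is not what the enhanced energy estimate gives: from \eqref{pruni1:4e} and Gronwall one obtains only $\epsi\|\widehat z_\tau'(t)\|_{L^2(\Omega)}^2 \le C$ pointwise and $\int_0^T\|\widehat z_\tau'\|_{H^1(\Omega)}^2\,\dd r \le C/\epsi$ in time. The paper instead writes $\|\overline z_\tau(t) - \widehat z_\tau(t)\|_{H^1(\Omega)} \le \int_{\underline t_\tau(t)}^{\overline t_\tau(t)}\|\widehat z_\tau'\|_{H^1(\Omega)}\,\dd r \le \tau^{1/2}\|\widehat z_\tau'\|_{L^2(0,T;H^1(\Omega))} \le C(\tau/\epsi)^{1/2}$ via Cauchy--Schwarz and \eqref{est-epsi-tau-2}; equivalently, since $\widehat z_\tau'$ is piecewise constant, the $L^2$-in-time bound yields $\tau\|\widehat z_\tau'(t)\|_{H^1(\Omega)}^2 \le C/\epsi$ (note the factor $\tau$ on the left), whence your final inequality follows. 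Second, your concern that the constant $C(\sigma)$ in \eqref{est-epsi-tau-1reg} is $\epsi$-dependent is unfounded: in Proposition~\ref{prop:aprio} the constants $C$, $C(\epsi)$, $C(\sigma)$ are explicitly distinguished, and $C(\sigma)$ is uniform in $\epsi$ because \eqref{est-epsi-tau-1reg} is derived from the $\epsi$-uniform bound \eqref{est-epsi-tau-4} via Proposition~\ref{prop:Savare98}. So your ``instead'' route through \eqref{est-epsi-tau-4} is exactly what the paper does, and the rest of your argument for \eqref{cvg-3} and \eqref{cvg-4} matches the paper's.
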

\begin{proof}
The first estimate follows from observing that  for every $t\in (0,T)$ 
\[
\| \pwc z{\tau_k}{\epsi_k} (t) {-} \pwl z{\tau_k}{\epsi_k}  (t)\|_{H^1(\Omega)} 
\leq \int_{\underline{t}_\tau(t)}^{\overline{t}_\tau(t)} \| 
\pwl{z}{\tau_k}{\epsi_k}'(r)\|_{H^1(\Omega)} \dd r \leq \tau_k^{1/2} \| 
\pwl{z}{\tau_k}{\epsi_k}'\|_{L^2(\underline{t}_\tau(t),\overline{t}_\tau(t); 
H^1(\Omega))},
\]
and then \eqref{stability-est}  is a consequence of the a priori estimate \eqref{est-epsi-tau-2}. 
\par
Convergences
\eqref{cvg-1} follow from estimate   \eqref{est-epsi-tau-1}:   %and 
 observe that the sequences $(\pwc z{\tau_k}{\epsi_k} )_k,\, (\pwl 
z{\tau_k}{\epsi_k} )_k$  converge to the same limit, weakly star in $L^\infty 
(0,T;\calZ)$, in view of  the fact that 
\begin{equation}
\label{same-limit-z}
\| \pwc z{\tau_k}{\epsi_k}  {-} \pwl z{\tau_k}{\epsi_k} \|_{L^\infty(0,T;H^1(\Omega))} \to 0 
\end{equation}
as $k\to\infty$ by  \eqref{stability-est}   combined with condition  
\eqref{crucial-seq-params} on the sequences $(\tau_k,\epsi_k)_k$.
\par
It follows from estimate \eqref{est-epsi-tau-3} that the sequences   $(\pwc 
z{\tau_k}{\epsi_k} )_k,\, (\pwl z{\tau_k}{\epsi_k} )_k$  are   bounded in $ 
\BV([0,T];H^1(\Omega))$. Due to
% an infinite-dimensional version of the Helly 
%Theorem, cf.\ e.g. 
the previously mentioned 
\cite[Thm.\ 6.1]{MieThe04RIHM}, up to a subsequence they 
pointwise converge on $[0,T]$, w.r.t.\ the weak $H^1(\Omega)$-topology, to (the 
same, by \eqref{same-limit-z})  function $\tilde z$.  Now, by the 
additional estimate  \eqref{est-epsi-tau-4}, 
$(\pwc z{\tau_k}{\epsi_k} )_k$ is  bounded in $L^\infty (0,T; 
W^{1+\sigma,q}(\Omega))$ for every $0<\sigma <\frac1q$,
%(cf.\ \cite[Thm.\ 2, Rmk.\ 3.5]{Savare98}, \CCC see also
cf.\ Proposition~\ref{prop:Savare98},   
and so is 
$(\pwl z{\tau_k}{\epsi_k} )_k$. Therefore, by compactness the above pointwise 
convergence  to $\tilde z$  improves to a strong convergence in $\calZ$. But 
then, $\pwc z{\tau_k}{\epsi_k},\, \pwl z{\tau_k}{\epsi_k} \to \tilde z$ in 
$L^p(0,T;\calZ)$ for every $1\leq p<\infty$, which allows us to conclude that 
$\tilde z = z$. All in all, we have obtained convergence \eqref{cvg-3}. 
\par
Finally, we address
\eqref{cvg-4}: Observe that $A_q (\pwc z{\tau_k}{\epsi_k}(t)) \to A_q(z(t))$  in $\calZ^*$ as a consequence of the strong convergence  \eqref{cvg-3}. 
A fortiori, by the $L^\infty (0,T;L^2(\Omega))$-bound on 
$(A_q (\pwc z{\tau_k}{\epsi_k}))_k$, we find that  
$A_q (\pwc z{\tau_k}{\epsi_k}(t)) \weakto A_q(z(t))$  in $L^2(\Omega)$. We combine this with \eqref{weak-continuity}, giving that $ \rmD_z \tildene{\overline{t}_{\tau_k}(t)}{\pwc z{\tau_k}{\epsi_k}(t)}  \weakto \rmD_z \tildene t{z(t)} 
$ in $L^2(\Omega)$, 
and arrive at \eqref{cvg-4}.
\end{proof}
\paragraph{\bf Step $1$:  ad the local stability \eqref{eq:65bis}.}  On the one hand, the very same argument leading to the proof of estimate 
\eqref{est-epsi-tau-1-bis} in Corollary \ref{cor:discr-enineq} also shows that 
\begin{equation}
\label{ad-1}
\sup_k \int_0^T  
\calR_{\epsi_k}^* (-\rmD_z
\calI (\overline{t}_{\tau_k}(r),\pwc{z}{\tau_k}{\epsi_k}(r))) \dd r \leq C\,.
\end{equation}
On the other hand,
 $\calR_\epsi^*$ Mosco-converges, w.r.t.\ the $L^2(\Omega)$-topology, to the indicator  functional 
\[
\mathrm{I}_{\partial\calR(0)}:L^2(\Omega)\to [0,+\infty]  \quad \text{ defined  by }   \quad
\mathrm{I}_{\partial\calR(0)}(v):= 
\begin{cases}
0  & \text{ if $v \in \partial\calR_1(0)$,}
\\
 +\infty & \text{ else.}
 \end{cases}
 \]
 Hence
 we have  in view of \eqref{cvg-4} that 
\begin{equation}
\label{ad-2}
\liminf_{k\to\infty} \calR_{\epsi_k}^* (-\rmD_z
\calI (\overline{t}_{\tau_k}(t),\pwc{z}{\tau_k}{\epsi_k}(t)))   \geq 
\mathrm{I}_{\partial\calR(0)} (-\rmD_z \ene t{z(t)}) \qquad \text{for every } t 
\in [0,T].
\end{equation}
Therefore, from \eqref{ad-1} and  \eqref{ad-2} via the  Fatou Lemma we infer that 
\[
\int_0^T  \mathrm{I}_{\partial\calR(0)} (-\rmD_z \ene t{z(t)})  \dd t<+\infty 
\qquad \text{whence} \qquad   \mathrm{I}_{\partial\calR(0)} (-\rmD_z \ene 
t{z(t)}) =0 \quad \foraa\, t \in (0,T).
\]
From this we conclude with an approximation argument  $-\rmD_z \ene t{z(t)} \in 
\partial\calR_1(0)$ for every $t\in [0,T]\setminus\mathrm{J}_z$, and  that 
$-\rmD_z \ene t{z(t_\pm)}  \in \partial \calR_1(0) $ for every $t\in 
\mathrm{J}_z$, i.e.\  \eqref{eq:65bis}.
\medskip

\paragraph{\bf Step $2$: the key lower semicontinuity inequality.}
We aim to prove the following 
\begin{lemma}
For every $0\leq s \leq t \leq T$ there holds
\begin{equation}
\label{key-lsc}
\liminf_{k\to\infty}\int_{\underline{t}_{\tau_k}(s)}^{\overline{t}_{\tau_k}(t)}  
\calR_{\epsi_k}(\pwl z{\tau_k}{\epsi_k}'(r)) \dd r + \calR_{\epsi_k}^* (-\rmD_z
\calI (\overline{t}_{\tau_k}(r),\pwc{z}{\tau_k}{\epsi_k}(r)))  \dd r \geq   
\pVarn {\mathfrak{f}}{\bar\varrho}zst 
%\qquad \text{for all } 0 \leq s \leq t \leq T.
\end{equation}
with $\bar\varrho$ given by 
\[
\begin{aligned}
\bar\varrho: = \sup_k \Big( &  \int_{0}^{T} 
\left(
\calR_{\epsi_k}(\widehat{z}_k'(r)) {+} \calR_{\epsi_k}^* ({-}  \rmD_z 
\ene{\overline{t}_{\tau_k}(r)}{\overline{z}_k(r)}) \right) \dd r + \|  
\widehat{z}_k \|_{L^\infty(0,T;\calZ)  \cap W^{1,1} (0,T; 
H^1(\Omega))}
\\
& \qquad \qquad  + \| \overline{z}_k\|_{L^\infty (0,T;\calZ)} 
+ \|   \rmD_z \ene{\overline{t}_{\tau_k}}{\overline{z}_k})  \|_{L^\infty(0,T;L^2(\Omega))} \Big)
\end{aligned}
\]
\end{lemma}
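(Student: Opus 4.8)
The plan is to prove the lower-semicontinuity inequality \eqref{key-lsc} by splitting the time interval $[s,t]$ into its jump part and its ``continuous'' part, and then applying the abstract machinery built so far. First I would fix the interval $[0,T]$ and recall that, thanks to the discrete energy-dissipation estimate \eqref{est-diss-epsi-tau} and the regularity bounds \eqref{est-epsi-tau-1}--\eqref{est-epsi-tau-5}, the constant $\bar\varrho$ above is indeed finite; in particular the sequences $(\pwc z{\tau_k}{\epsi_k})_k$ and $(\pwl z{\tau_k}{\epsi_k})_k$, together with $(\rmD_z\calI(\overline t_{\tau_k},\pwc z{\tau_k}{\epsi_k}))_k$ and $(A_q\pwc z{\tau_k}{\epsi_k})_k$, satisfy uniform bounds in the spaces appearing in the hypotheses of Proposition \ref{prop:technical}(3). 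Moreover, by Step $0$ (Lemma \ref{l:compactn}) we already have a limit curve $z\in L^\infty(0,T;\calZ)\cap\BV([0,T];H^1(\Omega))$ with the pointwise convergences \eqref{cvg-3}, \eqref{cvg-4}, and by Step $1$ the local stability \eqref{eq:65bis} holds, so that $-\rmD_z\calI(t,z(t_\pm))\in\partial\calR_1(0)$ at every jump point.

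Next I would recall the pointwise jump estimate coming from the very definition of the contact potential $\mathfrak p$: at each $t$, and for $\widehat z'$ in place of $v$,
\[
\calR_{\epsi_k}(\pwl z{\tau_k}{\epsi_k}'(r)) + \calR_{\epsi_k}^*(-\rmD_z\calI(\overline t_{\tau_k}(r),\pwc z{\tau_k}{\epsi_k}(r))) \ \geq\ \mathfrak p\big(\pwl z{\tau_k}{\epsi_k}'(r),-\rmD_z\calI(\overline t_{\tau_k}(r),\pwc z{\tau_k}{\epsi_k}(r))\big) \ \geq\ \langle -\rmD_z\calI(\overline t_{\tau_k}(r),\pwc z{\tau_k}{\epsi_k}(r)),\pwl z{\tau_k}{\epsi_k}'(r)\rangle_{L^2(\Omega)}\,.
\]
The core of the argument is then a careful decomposition: given $s\le t$ and the countable jump set $\mathrm J_z\cap[s,t]=\{t_j\}_j$, one isolates, for each $j$ and each $k$, small subintervals $[\alpha^j_k,\beta^j_k]\subset[s,t]$ shrinking to $\{t_j\}$ on which the discrete solution performs ``most'' of its variation near $t_j$. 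On these shrinking intervals I would invoke Proposition \ref{prop:technical}(3) with the sequences $\widehat z_k = \pwl z{\tau_k}{\epsi_k}$, $\overline z_k = \pwc z{\tau_k}{\epsi_k}$ (the hypotheses \eqref{conds-tech} being exactly the uniform bounds recalled above, plus the strong-$\calZ$ convergence of the endpoints to $z(t_{j-}),z(t_{j+})$ supplied by \eqref{cvg-3} and the uniform closeness of $\overline z_k$ and $\widehat z_k$ in $H^1(\Omega)$ from \eqref{stability-est}--\eqref{same-limit-z}), to extract an admissible transition $\teta_{t_j}\in\mathcal T_{t_j}^{\bar\varrho}(z(t_{j-}),z(t_{j+}))$ with
\[
\Costn{\mathfrak f}{\bar\varrho}{t_j}{z(t_{j-})}{z(t_{j+})} \ \le\ \liminf_{k\to\infty}\int_{\alpha^j_k}^{\beta^j_k}\Big(\calR_{\epsi_k}(\pwl z{\tau_k}{\epsi_k}'(r)) + \calR_{\epsi_k}^*(-\rmD_z\calI(\overline t_{\tau_k}(r),\pwc z{\tau_k}{\epsi_k}(r)))\Big)\dd r\,.
\]
A further application, in the same spirit, handles the one-sided transitions at $s$ and at $t$ and the ``jump-to-value'' and ``value-to-jump'' contributions, yielding $\Jvarname{\mathfrak f}$-bounds. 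On the complementary set $[s,t]\setminus\bigcup_j[\alpha^j_k,\beta^j_k]$ I would use the lower bound by the duality pairing displayed above, together with the chain rule (Lemma \ref{l:ch-rule}, in the parameterized form used in the proof of Proposition \ref{prop:ch-rule}) and the pointwise energy convergence \eqref{cvg-2-b}, to bound the remaining dissipation from below by the diffuse part $\mu_{\mathrm d}([s,t])$ of the $\calR_1$-jump-variation measure $\mu$ from \eqref{variation-functional}; this is essentially the argument already used to pass from the discrete to the continuous energy inequality. Summing the jump contributions over $j$ (using superadditivity of $\liminf$ and the finiteness of $\bar\varrho$ to control the tail) and adding the diffuse part, one recovers $\pVarn{\mathfrak f}{\bar\varrho}zst = \mu_{\mathrm d}([s,t]) + \JVarn{\mathfrak f}{\bar\varrho}zst$, which is \eqref{key-lsc}.

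The main obstacle I expect is the bookkeeping in the decomposition step: one must choose the shrinking intervals $[\alpha^j_k,\beta^j_k]$ so that simultaneously (i) their union over $j$ has total length tending to zero, (ii) each captures asymptotically the full jump cost at $t_j$ — which requires the diffuse/continuous part of the discrete dissipation on each $[\alpha^j_k,\beta^j_k]$ to vanish in the limit — and (iii) the hypotheses \eqref{conds-tech} of Proposition \ref{prop:technical}(3) hold along the rescaled curves, in particular that $\overline z_k(\alpha^j_k)\to z(t_{j-})$ and $\overline z_k(\beta^j_k)\to z(t_{j+})$ strongly in $\calZ$. This last point is delicate because pointwise convergence \eqref{cvg-3} only holds at fixed times, not at the $k$-dependent endpoints; the standard remedy — already employed in \cite{MRS16} and in \cite{krz,KRZ2} — is a diagonal argument exploiting the uniform $\BV([0,T];H^1(\Omega))$ bound \eqref{est-epsi-tau-3} and the compact embedding $W^{1+\sigma,q}(\Omega)\Subset\calZ$, which forces the discrete solutions to be uniformly close to their pointwise limit near each $t_j$ once the intervals are chosen small enough. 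Once this is set up correctly, the rest is a routine combination of the semicontinuity results in Proposition \ref{prop:technical}, the Mosco-convergence of $\calR^*_{\epsi_k}$ to $\mathrm I_{\partial\calR_1(0)}$, and Fatou's lemma.
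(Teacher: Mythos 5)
Your overall plan — isolate the jumps, apply Proposition~\ref{prop:technical}(3) near each jump, and handle the diffuse part separately — is in the right spirit, but there are two genuine problems.

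First, your proposed mechanism for the \emph{diffuse part} does not work. You propose to bound the dissipation on the complementary set from below by the Fenchel--Young duality pairing $\langle -\rmD_z\calI,\widehat z'\rangle_{L^2(\Omega)}$ and then invoke the chain rule together with the energy convergence \eqref{cvg-2-b}. But the energy convergence \eqref{cvg-2-b} is established only in Step~$4$ of the proof, \emph{after} the energy-dissipation inequality \eqref{one-sided-0T} is proved from the lower semicontinuity \eqref{key-lsc}; invoking it here would make the argument circular. Moreover, even setting circularity aside, this route goes in the wrong direction: once the local stability $-\rmD_z\calI(t,z(t))\in\partial\calR_1(0)$ holds on $[s,t]\setminus\mathrm{J}_z$, the Fenchel--Young inequality gives $\langle -\rmD_z\calI,\zeta\rangle\le\calR_1(\zeta)$, so the duality pairing provides a \emph{weaker} lower bound than $\calR_1(\widehat z')$, not a stronger one. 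The correct and much simpler mechanism — and the one the paper uses — is the elementary inequality $\calR_{\epsi_k}(\widehat z')\ge\calR_1(\widehat z')$, followed by lower semicontinuity of the total variation functional $\Varname{\calR_1}$ under the pointwise convergence \eqref{cvg-3}, which directly yields $\liminf_k\int_a^b\calR_{\epsi_k}(\widehat z'_{\tau_k,\epsi_k})\ge\Var{\calR_1}zab\ge\mu_{\mathrm{d}}([a,b])$.

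Second, the interval-decomposition bookkeeping you flag as ``the main obstacle'' is precisely what the paper's argument is designed to avoid: the paper introduces the Borel measures $\nu_k:=\bigl(\calR_{\epsi_k}(\widehat z'_{\tau_k,\epsi_k})+\calR_{\epsi_k}^*(-\rmD_z\calI(\overline t_{\tau_k},\overline z_{\tau_k,\epsi_k}))\bigr)\mathscr L^1$, extracts a weak$^*$ limit $\nu$ using \eqref{est-diss-epsi-tau}, and then proves the two pointwise estimates $\nu\ge\mu_{\mathrm{d}}$ (on intervals, via the variation bound above) and $\nu(\{t\})\ge\Costn{\mathfrak{f}}{\bar\varrho}t{z(t_-)}{z(t)}+\Costn{\mathfrak{f}}{\bar\varrho}t{z(t)}{z(t_+)}$ at each $t\in\mathrm{J}_z$ (via Proposition~\ref{prop:technical}(3), exactly as you suggest). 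Because $\mu_{\mathrm{d}}$ is atomless, these two estimates combine automatically into $\nu([s,t])\ge\mu_{\mathrm{d}}([s,t])+\JVarn{\mathfrak{f}}{\bar\varrho}zst=\pVarn{\mathfrak{f}}{\bar\varrho}zst$, with no need to choose shrinking intervals around countably many jumps, control their total length, or worry about whether they absorb the full jump cost without interfering with the diffuse contribution. Your decomposition could in principle be made rigorous along similar lines, but as written the bookkeeping is not carried out, and in combination with the flawed diffuse-part mechanism the argument does not close.
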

\begin{proof}
Along the footsteps of  the \cite[proof of Thm.\ 7.3]{MRS16},
 we introduce the non-negative Borel measures on $[0,T]$
\[
\nu_k :=  \left(  \calR_{\epsi_k}(\pwl z{\tau_k}{\epsi_k}')  + \calR_{\epsi_k}^* (-\rmD_z
\calI (\overline{t}_{\tau_k},\pwc{z}{\tau_k}{\epsi_k})) \right) \mathscr{L}^1,
\]
with $\mathscr{L}^1$ the Lebesgue measure. It follows from estimate \eqref{est-diss-epsi-tau} that the sequence $(\nu_k)_k$ is bounded in the space of Radon measures, hence there exists a positive measure $\nu$ such that $\nu_k \weaksto \nu$ as $k\to\infty$. 
Like in the proof of \cite[Thm.\ 7.3]{MRS16}, we   observe that for every 
interval $[a,b]\subset [0,T]$
\[
\begin{aligned}
\nu([a,b]) \geq \limsup_{k\to\infty} \nu_k([a,b])  & \geq \limsup_{k\to\infty} \int_a^b 
\left( \calR_{\epsi_k}(\pwl z{\tau_k}{\epsi_k}'(r)) + \calR_{\epsi_k}^* (-\rmD_z
\calI (\overline{t}_{\tau_k}(r),\pwc{z}{\tau_k}{\epsi_k}(r))) \right) \dd r
\\
 &  \geq \liminf_{k\to\infty}   \int_a^b 
\calR_{\epsi_k}(\pwl z{\tau_k}{\epsi_k}'(r)) \dd 
r 
\\
&
\geq \liminf_{k\to\infty} \Var{\calR_1}{\pwc{z}{\tau_k}{\epsi_k}}ab \stackrel{(1)}{\geq} \Var{\calR_1}zab\stackrel{(2)}{\geq} \mu_{\mathrm{d}}([a,b]),
\end{aligned}
\]
where (1) follows from the pointwise convergence \eqref{cvg-3} and the lower semicontinuity of the variation functional $\Varname{\calR_1}$, and (2) from the definition \eqref{variation-functional} of the measure $\mu$.
We thus conclude that 
\begin{equation}
\label{diffuse-inequality}
\nu \geq \mu_{\mathrm{d}}.
\end{equation} 
\par
We now check 
\begin{equation}
\label{jump-inequality}
\nu(\{ t\})  \geq  \Costn{\mathfrak{f}}{\bar\varrho}t{z(t_-)}{z(t)} + 
\Costn{\mathfrak{f}}{\bar\varrho}t{z(t)}{z(t_+)} 
\geq \mu_{\mathrm{J}} (\{t\})
\qquad \text{for every } t \in \mathrm{J}_z.
\end{equation}
With this aim, for fixed $t\in \mathrm{J}_z$ let us fix two  sequences $\alpha_k 
\uparrow t$ and $\beta_k\downarrow t$ such that 
\[
\begin{cases}
\pwc z{\tau_k}{\epsi_k}(\alpha_k) \to z(t_-),
\\
\pwc z{\tau_k}{\epsi_k}(\beta_k) \to z(t_+)
\end{cases}
\quad \text{
 in $\calZ$  as $k\to\infty$.}
 \]
% It follows from  that
Thus we have
 \[
  \limsup_{k\to\infty} \nu_k ([\alpha_k,\beta_k]) \geq 
\liminf_{k\to\infty} \int_{\alpha_k}^{\beta_k}  \left(  \calR_{\epsi_k}(\pwl 
z{\tau_k}{\epsi_k}'(r)) + \calR_{\epsi_k}^* (-\rmD_z
\calI (\overline{t}_{\tau_k}(r),\pwc{z}{\tau_k}{\epsi_k}(r))) \right) \dd r  
 \stackrel{(1)}{\geq} 
\Costn{\mathfrak{f}}{\bar\varrho}t{z(t_-)}{z(t_+)}\,, 
% \geq    
%\mu_{\mathrm{J}} (\{t\}),
 \] 
 where (1) ensues from 
Proposition \ref{prop:technical}, applying \eqref{thesis:technical} with the choices 
$\overline{z}_k : =\pwc z {\tau_k}{\epsi_k}$, $\widehat{z}_k: = \pwl z {\tau_k}{\epsi_k}$.
With analogous  arguments %as in the proof of  \cite[Thm.\ 7.3]{MRS16}, we also 
check
we check  that 
\begin{equation}
\label{l-r-jumps}
\liminf_{k\to\infty} \nu_k ([\alpha_k,t]) \geq 
\Costn{\mathfrak{f}}{\bar\varrho}t{z(t_-)}{z(t)}, \qquad 
\liminf_{k\to\infty} \nu_k ([t,\beta_k]) \geq 
\Costn{\mathfrak{f}}{\bar\varrho}t{z(t)}{z(t_+)}\,.
\end{equation}
All in all, we have 
\[
\begin{aligned}
\nu(\{ t\})  \stackrel{(1)}{\geq}   \limsup_{k\to\infty} \nu_k ([\alpha_k,\beta_k]) \geq  
\liminf_{k\to\infty} \nu_k ([\alpha_k,t])  +   \liminf_{k\to\infty} \nu_k ([t,\beta_k])  & \geq  
 \Costn{\mathfrak{f}}{\bar\varrho}t{z(t_-)}{z(t)} + 
\Costn{\mathfrak{f}}{\bar\varrho}t{z(t)}{z(t_+)}  \\ & \stackrel{(2)}{\geq} 
  \mu_{\mathrm{J}} (\{t\}),
 \end{aligned}
\]
where  (1) is a property of the weak$^*$-convergence of measures  and (2) ensues 
from \eqref{comparison-costs}. Hence   inequality \eqref{jump-inequality} is 
proved.
\par
Combining \eqref{diffuse-inequality}, \eqref{jump-inequality}, and \eqref{l-r-jumps} and repeating the very same 
calculations as in the proof of \cite[Thm.\ 7.3]{MRS16},
we ultimately conclude \eqref{key-lsc}.
\end{proof}

 \paragraph{\bf Step $3$: ad the energy-dissipation inequality \eqref{one-sided-0T}.}
We now pass to the limit in the discrete energy-dissipation inequality \eqref{discr-enineq}, written for $s=0$ and $t=T$. For the first term on the left-hand side, 
we resort to the lower semicontinuity inequality \eqref{key-lsc} from Step $2$.
It follows from the pointwise convergence \eqref{cvg-3} and the lower semicontinuity \eqref{weak-continuity} of $\calI$ that 
\[
\liminf_{k\to\infty} \ene T{\pwl z{\tau_k}{\epsi_k}(T)} \geq \ene T{z(T)},
\]
whereas by hypothesis we have that $ \ene 0{\pwl z{\tau_k}{\epsi_k}(0)} \to \ene 0{z_0}$. 
Furthermore,  it follows from \eqref{stim3}, \eqref{stim5}, and the Lebesgue Theorem that 
\[
\lim_{k\to\infty} \int_0^T \partial_t  \ene t{\pwl z{\tau_k}{\epsi_k}(t)} \dd t  =\int_0^T\partial_t  \ene t{z(t)} \dd t\,.
\]
Finally, observe that the very last term on the right-hand side of \eqref{discr-enineq} converges to zero by virtue of estimates \eqref{est-epsi-tau} and  convergence \eqref{same-limit-z}.
\par
Thus,  \eqref{one-sided-0T} is proven with  
$\pVarn{\mathfrak{f}}{\bar\varrho}z0T$ 
 and, by virtue of Corollary \ref{cor:this-only}, 
we deduce that $z$ is a Balanced Viscosity solution to the rate-independent damage system \eqref{dndia}.
\par
 Finally,   \eqref{threshold-Var} follows from the following chain of 
inequalities (which in fact holds for every $t\in[0,T]$)
\[
\sup_{\varrho\geq \bar\varrho} \pVarn{\mathfrak{f}}{\varrho}{z}0T\stackrel{(1)}{=}  \pVarn{\mathfrak{f}}{\bar\varrho}{z}0T
\stackrel{(2)}{=}   \calI(0,z(0)) -\ene T{z(T)} +\int_0^T \partial_t \calI(s,z(s)) \dd s \stackrel{(3)}{\leq}  \inf_{\varrho\geq \bar\varrho} \pVarn{\mathfrak{f}}{\varrho}{z}0T,
\]
with (1) due to \eqref{monoton-cost}, (2) to \eqref{eq:84} involving the total  
variation functional $ \pVarn{\mathfrak{f}}{\bar\varrho}{z}0T$, and (3) from the 
chain-rule inequality \eqref{ch-ineq-dam} (observe that $\varrho$ therein is 
arbitrary, provided it fulfills \eqref{admissible-threshold}).  
\medskip

\paragraph{\bf Step $4$:  ad convergences \eqref{convergences}.} The convergences of the energies $(\ene t{\pwc z{\tau_k}{\epsi_k}(t)})_k$  follows from the 
pointwise  convergence \eqref{cvg-1} of $(\pwc z{\tau_k}{\epsi_k}(t))_k$. 
In order to prove the convergence of $(\ene t{\pwl z{\tau_k}{\epsi_k}(t)})_k$
and  of the dissipation integrals in \eqref{cvg-3-b}, we repeat the very same 
arguments as in the proof of \cite[Thm.\ 3.11]{MRS16}. 
 \medskip
 
 \paragraph{\bf Step $5$: ad \eqref{H1-param}.}  We may repeat the proof of
  \cite[Thm.\ 3.22]{MRS16},  to which we refer the reader, relying on 
Proposition \ref{prop:technical}(3).
\par
This concludes the proof of Theorem \ref{thm:van-visc-eps-tau}.
\QED

\begin{appendix}
 \section{Some references on elliptic regularity}

For $d\geq 2$ let $\Omega\subset\R^d$ be a bounded $\rmC^{1,1}$-domain with Dirichlet boundary $\partial\Omega$. 
Let further $\bbC$ satisfy \eqref{elast-tensor}. 

Reference \cite[Theorem 3]{Val78}, see also \cite[Theorem 7.1]{MR03_polyhedral}, yields 
\begin{theorem}
\label{app.reg.thm1}
 For every $p\in (1,\infty)$ the operator $L_\bbC: W_0^{1,p}(\Omega)\to W^{-1,p}(\Omega)$ is a continuous isomorphism.
\end{theorem}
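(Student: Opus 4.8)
\textbf{Proof plan for Theorem \ref{app.reg.thm1}.}
The statement is essentially a citation of a known elliptic regularity result, so the "proof" I would give is really a reduction of the claim to the references \cite{Val78, MR03_polyhedral} together with a verification that their hypotheses are met in our setting. The plan is first to recall that, by the Korn and Lax--Milgram arguments, $L_\bbC\colon H^1_0(\Omega;\R^d)\to H^{-1}(\Omega;\R^d)$ is a continuous isomorphism: coercivity on $H^1_0$ follows from \eqref{elast-2} combined with Korn's inequality (valid since $\Omega$ is a bounded Lipschitz domain, a fortiori $\rmC^{1,1}$), and boundedness is immediate from $\bbC\in\rmC^0_{\mathrm{lip}}(\overline\Omega)\subset L^\infty$. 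This handles the case $p=2$ and anchors the interpolation/duality scale.

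Next I would invoke the $L^p$-theory for second order elliptic systems in divergence form with (at least) continuous coefficients on smooth domains. The key point to check is that the hypotheses of \cite[Theorem 3]{Val78} (equivalently \cite[Theorem 7.1]{MR03_polyhedral}) hold here: the domain $\Omega$ is $\rmC^{1,1}$, hence in particular $\rmC^1$ with the regularity needed for the boundary-flattening and local perturbation arguments; the coefficient tensor $\bbC$ is Lipschitz on $\overline\Omega$ (stronger than the uniform continuity typically required), symmetric in the sense of \eqref{elast-1}, and strongly elliptic by \eqref{elast-2} (which gives the Legendre--Hadamard, indeed the Legendre, condition uniformly in $x$). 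Under these conditions the cited theorems assert precisely that $L_\bbC\colon W^{1,p}_0(\Omega;\R^d)\to W^{-1,p}(\Omega;\R^d)$ is a topological isomorphism for every $p\in(1,\infty)$, with the norm of $L_\bbC^{-1}$ controlled in terms of $p$, $d$, $\Omega$, the ellipticity constant $\gamma_0$ and the modulus of continuity (here the Lipschitz constant) of $\bbC$. I would state this as the content of the theorem, noting that $W^{-1,p}(\Omega)$ is understood, as fixed in the paper's notation, as the dual of $W^{1,p'}_0(\Omega)$.

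There is no genuine obstacle here, since everything is quoted; the only point requiring a sentence of care is the identification of function spaces and dualities (our $W^{-1,p}(\Omega)=(W^{1,p'}_0(\Omega))^*$, and the homogeneous Dirichlet condition on all of $\partial\Omega$) so that the cited statements apply verbatim. For completeness I would also remark that this theorem is exactly the $\Gamma_D=\partial\Omega$, $g\equiv 1$ specialization of Lemma \ref{l:reg-KRZ2}(a) proved earlier, and that it is used in the proof of Lemma \ref{l:b-m} in the form where the codomain $W^{-1,p}$ is replaced by $L^p$ on the domain $W^{1,p}_0\cap W^{2,p}$; that stronger $W^{2,p}$ statement is the subject of the next theorem in the appendix (Theorem \ref{app.reg.thm2}) and does not need to be addressed here.
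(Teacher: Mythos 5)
Your proposal is correct and takes essentially the same route as the paper: the paper simply cites \cite[Theorem 3]{Val78} and \cite[Theorem 7.1]{MR03_polyhedral} for this statement, exactly the references you invoke. The extra detail you supply (anchoring the $p=2$ case via Korn and Lax--Milgram, and explicitly checking that the $\rmC^{1,1}$ regularity of $\partial\Omega$, the Lipschitz continuity and symmetry of $\bbC$, and the uniform ellipticity \eqref{elast-2} put you in the hypotheses of those theorems) is sound and is the right thing to spell out, even though the paper leaves it implicit.
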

Moreover, 
Theorem 10.5 from \cite{ADN64} 
(there it is assumed that the domain has a $\rmC^2$-boundary, but the coefficients need  
to be continuous, only,  instead of Lipschitz continuous) provides the following a priori estimate:
\begin{theorem}
 For every $p\in(1,\infty)$ there exist constants $c_p,\wt c_p>0$ such 
 that for every $u\in W^{2,p}(\Omega)\cap W_0^{1,p}(\Omega)$ 
 it holds
 \begin{align}
 \label{app.est.1}
  \norm{u}_{W^{2,p}(\Omega)}\leq c_p\left(\norm{L_\bbC u}_{L^p(\Omega)} + \wt c_p \norm{u}_{L^p(\Omega)}\right)\,.
 \end{align}
\end{theorem}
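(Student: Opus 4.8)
The plan is to establish \eqref{app.est.1} by the classical Agmon--Douglis--Nirenberg scheme for elliptic systems: a localization/perturbation argument that reduces the a priori estimate to constant-coefficient model problems on $\R^d$ and on the half-space, for which the $L^p$-bounds are furnished by Calder\'on--Zygmund / Mikhlin--H\"ormander multiplier theory. First I would record that $L_\bbC u = -\Div(\bbC\varepsilon(u))$ is a \emph{strongly elliptic} second-order system: testing the pointwise coercivity \eqref{elast-2} with $\xi=\tfrac12(a\otimes b+b\otimes a)$, $a,b\in\R^d$, yields the Legendre--Hadamard condition $\bbC(x)(a\odot b):(a\odot b)\geq \tfrac{\gamma_0}2|a|^2|b|^2$ uniformly in $x\in\overline\Omega$, and together with the symmetry \eqref{elast-1} this is precisely the ellipticity hypothesis of \cite{ADN64}. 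I would then recall the complementing (Lopatinski--Shapiro) condition for the homogeneous Dirichlet (displacement) boundary operator, a classical fact for the displacement boundary value problem of linearized elasticity, verified on the half-space model by inspecting the characteristic polynomial of $L_{\bbC(x_0)}$.

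Next comes the core step. Fix a finite cover of $\overline\Omega$ by balls $B_j$ and a subordinate partition of unity $(\varphi_j)$, and set $v=\varphi_j u$. Then $v$ solves $L_{\bbC(x_j)}v = \varphi_j L_\bbC u + (L_{\bbC(x_j)}-L_\bbC)v + (\text{first-order terms in } u,\nabla u)$. For interior balls, $L_{\bbC(x_j)}$ has a homogeneous fundamental solution of degree $2-d$, so a second differentiation produces a Calder\'on--Zygmund kernel and one gets $\norm{\nabla^2 v}_{L^p(\R^d)}\leq C\norm{L_{\bbC(x_j)}v}_{L^p(\R^d)}$. For balls meeting $\partial\Omega$, I would flatten the boundary by a $\rmC^{1,1}$ diffeomorphism (so that the transformed leading coefficients remain uniformly continuous, which is all that the perturbation argument needs), reducing to homogeneous Dirichlet data on a flat piece, and invoke the corresponding half-space $L^p$-estimate for $L_{\bbC(x_j)}$ with Dirichlet conditions --- valid exactly because the complementing condition holds --- obtained from explicit Poisson-type kernels and singular-integral bounds. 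In either case the coefficient-oscillation term $(L_{\bbC(x_j)}-L_\bbC)v$ is controlled by $\omega(\mathrm{diam}\,B_j)\norm{\nabla^2 v}_{L^p}$, with $\omega$ the modulus of continuity of $\bbC$ (Lipschitz here, but only continuity is used), hence absorbed on the left by taking the balls small. Summing over $j$, reassembling $u=\sum_j\varphi_j u$, and using the interpolation inequality $\norm{\nabla u}_{L^p(\Omega)}\leq \delta\norm{\nabla^2 u}_{L^p(\Omega)}+C_\delta\norm{u}_{L^p(\Omega)}$ to swallow the lower-order commutator contributions, one arrives at \eqref{app.est.1}.

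The main obstacle is the boundary analysis: checking the complementing condition and proving the constant-coefficient half-space $L^p$-estimate for the Dirichlet problem of a strongly elliptic system is the technically heaviest ingredient, and one must be careful that the flattening diffeomorphism is only $\rmC^{1,1}$, so the model problems carry continuous but non-differentiable leading coefficients --- compatible with the frozen-coefficient perturbation scheme but not with a naive $\rmC^2$ reduction. This is precisely why the statement is quoted from \cite{ADN64} (with the continuity-of-coefficients refinement noted in the excerpt) rather than re-derived in detail here.
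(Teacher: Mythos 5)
Your proposal is correct and lands on exactly the same approach as the paper, which simply cites \cite[Theorem 10.5]{ADN64} for \eqref{app.est.1} with the parenthetical caveat (the $\rmC^2$-boundary hypothesis of ADN vs.\ the $\rmC^{1,1}$ assumption here, compensated by the fact that ADN only needs continuous leading coefficients, which a $\rmC^{1,1}$ flattening preserves). Your sketch of the ADN machinery --- the Legendre--Hadamard reduction from \eqref{elast-2}, localization and coefficient-freezing, Calder\'on--Zygmund bounds for the interior and half-space model operators under the complementing condition, absorption of the oscillation and commutator terms via smallness and interpolation --- is an accurate summary of what the cited theorem delivers, and you rightly conclude, as the paper does, that the detailed boundary analysis is best delegated to the reference rather than reproduced.
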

Thanks to Theorem \ref{app.reg.thm1}, for every $p\in (1,\infty)$ the operator
\begin{align}
\label{app.est.2}
 L_\bbC: W^{2,p}(\Omega)\cap W_0^{1,p}(\Omega)\to L^p(\Omega), \quad u\mapsto -\Div( \bbC\varepsilon(u))
\end{align}
is injective, which implies that estimate \eqref{app.est.1} is valid with $\wt 
c_p=0$ and that $L_\bbC$ has a closed range.   
By \cite[Chapter 3.5.5]{Kato84}, %problem 5.27
one finally concludes that   the operator $L_\bbC$ from \eqref{app.est.2} is 
surjective for every $p\in (1,\infty)$. 
This finally results in 
\begin{theorem}
\label{app.reg.thm2}
 For every $p\in(1,\infty)$ the operator in \eqref{app.est.2} is a continuous isomorphism. 
\end{theorem}
\end{appendix}

\subsubsection*{Acknowledgment}
This project was partially supported by the  GNAMPA (INDAM). D.\ Knees  
acknowledges the partial financial support through the  DFG-Priority Program 
SPP 1962 \textit{Non-smooth and Complementarity-based Distributed Parameter 
Systems: Simulation and Hierarchical Optimization}. 
C.\ Zanini and D.\ Knees acknowledge the great hospitality of the University of 
Brescia. 
%%%%

{\small 
%\bibliographystyle{alpha}
%\bibliography{literature_CDR2}

} 
\end{document}